\documentclass{article}
\pagestyle{plain}
\usepackage{amsfonts, amsmath, amssymb, color, mathrsfs, cite, amsthm,enumitem,verbatim,mathrsfs, multicol}

\newcommand{\norm}{\|\cdot\|}
\newcommand{\seq}{\subseteq}
\renewcommand{\epsilon}{\varepsilon}
\newcommand{\n}{\mathbb{N}}
\renewcommand{\r}{\mathbb{R}}
\newcommand{\ball}{B_{X^*}}


\def\N{{\mathbb N}}

\def\R{{\mathbb R}}
\def\ve{\varepsilon}


\setlength{\textheight}{23true cm}
\setlength{\textwidth}{16.5true cm}
\addtolength{\topmargin}{-2true cm}
\addtolength{\oddsidemargin}{-2.0true cm} 
 2


\newtheorem{theorem}{Theorem}[section]
\newtheorem{lemma}[theorem]{Lemma}

\newtheorem{corollary}[theorem]{Corollary}
\newtheorem{proposition}[theorem]{Proposition}
\newtheorem{remark}[theorem]{Remark}
\newtheorem{question}[theorem]{Question}
\newtheorem{example}[theorem]{Example}
\newtheorem{conjecture}[theorem]{Conjecture}
\newtheorem*{thm*}{Theorem}

\parindent = 0.0cm


\begin{document}

\begin{center}
{\Large \bf James' weak compactness theorem: an exposition}
\end{center}

\medskip

\vspace{2\baselineskip}

\begin{center}{\sc  Warren B.~Moors\footnote{Corresponding author.  See back page for details} \ and \ Samuel J.~White}
 
\vspace*{0.5 true cm}

\end{center}

{\small \begin{quotation}
\noindent {\bf Abstract.}  The purpose of this paper is to provide a proof of James' weak compactness theorem that is able to be
taught in a first year graduate class in functional analysis.\end{quotation}

\medskip

\noindent {\bf AMS (2010) subject classification:} Primary 46B20; Secondary 46B10, 46B50.
\vspace{0.5true cm}

\noindent {\bf Keywords:} James theorem, weak compactness, Banach space, Linear topology.}


\section{Introduction}

The purpose of this paper is to provide a proof of James' weak compactness theorem that is able to be
taught in a first year graduate class in functional analysis.  Usually when one teaches a first course in functional analysis one teaches the
basic finite dimensional material, Hilbert space material, the open mapping theorem, the closed graph theorem, the uniform boundedness theorem and the Hahn-Banach theorem, plus applications.
Then  one might consider the spectral theory of compact normal operators, or even an introduction to $C^*$-algebras.  However, what is often neglected is the study of linear
topology, which then makes it difficult to even start to contemplate how one might prove James' theorem on weak compactness.  So, what we propose here is a way of presenting 
James' theorem on weak compactness to an audience unfamiliar with linear topology, or anything other than, the most basic facts concerning  normed linear spaces.  
 
 \medskip
 
 For the authors, James' theorem on weak compactness is one of the true delights of functional analysis. Its proof is a beautiful synthesis of linear algebra and topology. The one down-side
 of this theorem is that its proof has an unfortunate reputation of being very difficult.  We hope, among other things, to dispel this myth.   
 
 \medskip
 
 We shall start with a brief history of this problem.  Back in 1933 (see, \cite{Mazur}) S.~Mazur conjectured that a Banach space $(X,\norm)$, over the real numbers, is reflexive if, and only if, every continuous linear functional
 defined on $X$ attains its maximum value on the closed unit ball of $X$.   In 1957 (see, \cite{James57}), R.~James confirmed this conjecture for separable Banach spaces, i.e., those spaces that contain a countable dense subset.
 Later, in 1963 (see, \cite{James63}), R.~James completely confirmed the conjecture for arbitrary Banach spaces. One year after this, in \cite{James}, R.~James extended this result to show that a closed and bounded convex subset  $C$ of a Banach space $X$ is weakly compact if, and only if, every continuous linear functional defined on $X$ attains its maximum value over $C$.  The fact that this result does not extend to non-complete normed linear spaces
 was established in \cite{James-counterexample}, again by James.  Almost immediately, even in 1965 (see, \cite{Pryce}), there was a search for a simpler proof of James' weak compactness theorem.  
 The proof in \cite{Pryce} is indeed very clear and easy to read, and is in fact the basis of a lot of the work in this paper. However, \cite{Pryce} still contains a series of seven technical lemmas.  In 1972, R.~James (see, \cite{James-easy}) provided a simpler proof of 
 his own weak compactness theorem, and in \cite{S.Simons}, S.~Simons, using an inequality that now bears his name, proved the weak compactness theorem for separable Banach spaces.  
 Since these early results there have been many attempts at providing a simple proof of James' theorem.  Most of these require additional assumptions on the space.  
 One approach which is quite appealing is that of $(I)$-generation.  This first appeared in \cite{FonfLindenstraussPhelps} and then again in \cite{FonfLindenstrauss}, 
 but it has since been shown (see,  \cite{Kalenda-equivalent}) that this approach is
 essentially equivalent to the approach of S. Simons from 1972. In addition to the already mentioned papers the interested reader may also want to see the papers \cite{Cascales-simple, Godefroy, Morillion-easy, Kalenda-easy,
 WarrenSimple, MoorsWhite}, where several ``simple'' proofs of James' theorem are given.  The paper \cite{Cascales-simple} also has some interesting applications and historical facts.
 
 \medskip
 
We now return to the mathematics.  In this paper all vector spaces and all normed linear spaces will be over the field of real numbers. The key concept, which runs throughout this paper, is the notion of a convex set.  
 A subset $C$ of a vector space $(V, +, \cdot)$, over the real numbers, is called {\it convex} if, for every pair of points $x,y \in C$ and $0<\lambda <1$, we have $\lambda x + (1-\lambda)y \in C$.  We encourage the reader to follow 
 the  role that this concept plays throughout the rest of this paper.

\medskip

The structure of the reminder of this paper is as follows:  Section 2 contains the necessary background material. In particular, it contains Subsections; 2.1 on Weak topologies, 2.2 on Linear topology, 
2.3 on the Hahn-Banach Theorem, 2.4 on the Weak$^*$ topology.  Readers with a background in linear topology may wish to skip this section.  Section 3 contains three proofs of James' theorem given in three
subsections; 3.1 on James' theorem for separable Banach spaces; 3.2 on James' theorem for  spaces with a weak$^*$ sequentially compact dual ball, 3.3 the general version of James' theorem.  In Subsection 3.4 some applications
of James' theorem are given. In Section 4 a generalisation of James' theorem is given. To achieve this, this section contains Subsection 4.1 which gives the necessary background in convex analysis, then in 
Subsection 4.2 the necessary set-valued analysis is given. In Subsection 4.3 the generalisation of  James' theorem is presented. Finally, in Section 5, we give a variational principle that is based upon the 
generalised version of James' theorem.  The paper ends with an index of notation and assumed knowledge and a bibliography.


\section{Preliminaries}


In this section of the paper we will present the necessary background material that is required in order to prove James' theorem on weak compactness of closed and bounded convex subsets of a given Banach space.


\subsection{Weak topologies on sets}


An important part of general topology concerns the generation of topologies on a given set.  In this subsection we will show how to construct topologies that make a given function (set of functions) continuous.

\begin{proposition}\label{weak1} Let $f:X \to Y$ be a function between sets $X$ and $Y$.  If $\tau_Y$ is a topology on $Y$ then $\tau_X := \{f^{-1}(U):U \in \tau_Y\}$
is a topology on $X$ and $f:(X, \tau_X) \to (Y, \tau_Y)$ is continuous.  Furthermore, if $\tau$ is any topology on $X$ such that $f:(X,\tau) \to (X, \tau_Y)$ is continuous 
then $\tau_X \subseteq \tau$.  This is, $\tau_X$ is the weakest topology on $X$ that makes $f$ continuous (when $Y$ is endowed with the topology $\tau_Y$).
\end{proposition}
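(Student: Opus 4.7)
The plan is to verify the three assertions of the proposition in sequence, exploiting throughout that the preimage operation $f^{-1}$ commutes with arbitrary unions and intersections and sends $\emptyset$ to $\emptyset$ and $Y$ to $X$. These set-theoretic identities are what make the whole statement almost mechanical.

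First I would verify that $\tau_X$ is a topology on $X$ by checking the three axioms. Since $\emptyset, Y \in \tau_Y$, we get $\emptyset = f^{-1}(\emptyset) \in \tau_X$ and $X = f^{-1}(Y) \in \tau_X$. For arbitrary unions, given a family $\{U_i\}_{i \in I} \subseteq \tau_Y$, the identity $\bigcup_{i \in I} f^{-1}(U_i) = f^{-1}\bigl(\bigcup_{i \in I} U_i\bigr)$ together with $\bigcup_{i\in I} U_i \in \tau_Y$ shows the union lies in $\tau_X$. For finite intersections, the analogous identity $\bigcap_{i=1}^n f^{-1}(U_i) = f^{-1}\bigl(\bigcap_{i=1}^n U_i\bigr)$ and closure of $\tau_Y$ under finite intersections do the job.

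Next, continuity of $f:(X,\tau_X) \to (Y,\tau_Y)$ is immediate from the definition: for every $U \in \tau_Y$, its preimage $f^{-1}(U)$ belongs to $\tau_X$ by construction, which is precisely the open-set formulation of continuity.

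Finally, for the minimality statement, suppose $\tau$ is any topology on $X$ making $f:(X,\tau) \to (Y,\tau_Y)$ continuous. Then for each $U \in \tau_Y$ we have $f^{-1}(U) \in \tau$ by continuity, so every generating element of $\tau_X$ lies in $\tau$, whence $\tau_X \subseteq \tau$. There is no real obstacle here; the proof is a direct verification, and the only minor care needed is to write the preimage identities correctly and to note that a small typo in the proposition (``$f:(X,\tau) \to (X,\tau_Y)$'') should read ``$f:(X,\tau) \to (Y,\tau_Y)$''.
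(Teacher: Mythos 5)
Your proof is correct and follows essentially the same route as the paper's: verify the topology axioms via the preimage identities, observe that continuity is immediate from the definition of $\tau_X$, and deduce minimality from the fact that continuity of $f:(X,\tau)\to(Y,\tau_Y)$ forces each $f^{-1}(U)$ into $\tau$. Your remark about the typo ($Y$ in place of $X$ in the codomain) is also well taken.
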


\begin{proof} First we will show that $\tau_X$ is a topology on $X$. Now, $\varnothing \in \tau_X$ since $\varnothing = f^{-1}(\varnothing)$ and $\varnothing \in \tau_Y$. Similarly,
$X \in \tau_X$ since $X = f^{-1}(Y)$ and $Y \in \tau_Y$.  Next, suppose that $V_1 \in \tau_X$ and $V_2 \in \tau_2$. Then, by the definition of $\tau_X$, there exists $U_1 \in \tau_Y$
and $U_2 \in \tau_Y$ such that $V_1 = f^{-1}(U_1)$ and $V_2 = f^{-1}(U_2)$. Therefore,
$$V_1 \cap V_2 = f^{-1}(U_1) \cap f^{-1}(U_2) = f^{-1}(U_1 \cap U_2).$$
Since $\tau_Y$ is a topology on $Y$, $U_1 \cap U_2 \in \tau_Y$.  Hence, $V_1 \cap V_2 \in \tau_X$.  Finally, to show that $\tau_X$ is a topology on $X$, suppose that $\{V_\alpha:\alpha \in A\} \subseteq \tau_X$.
Then, by the definition of $\tau_X$, there exist $\{U_\alpha:\alpha \in A\} \subseteq \tau_Y$ such that $V_\alpha = f^{-1}(U_\alpha)$ for each $\alpha \in A$.  Therefore,
$$\mbox{$\bigcup_{\alpha \in A}$} V_\alpha = \mbox{$\bigcup_{\alpha \in A}$} f^{-1}(U_\alpha) = f^{-1}\left(\mbox{$\bigcup_{\alpha \in A}$} U_\alpha \right)\!.$$
Since $\tau_Y$ is a topology on $Y$, $\bigcup_{\alpha \in A} U_\alpha  \in \tau_Y$.  Hence, $\bigcup_{\alpha \in A} V_\alpha \in \tau_X$.  Thus, $\tau_X$ is a topology on $X$.  To show that $f:(X, \tau_X) \to (Y, \tau_Y)$ is continuous we consider the following.  
Let $U \in \tau_Y$.  Then $f^{-1}(U) \in \tau_X$, by the definition of $\tau_X$.  Therefore, by the definition of continuity, $f:(X, \tau_X) \to
(Y, \tau_Y)$ is continuous.  For our last step of the proof, we will show that $\tau_X$ is the weakest topology on $X$ that makes $f$ continuous.  To this end, let $\tau$ be any topology
on $X$ such that $f:(X, \tau) \to(Y, \tau_Y)$ is continuous.  Let $V \in \tau_X$. Then, by the definition of $\tau_X$, there exists an $U \in \tau_Y$ such that $V = f^{-1}(U)$. Since we are
assuming that $f:(X, \tau) \to(Y, \tau_Y)$ is continuous, $V = f^{-1}(U) \in \tau$. Thus, $\tau_X \subseteq \tau$.  This completes the proof.
\end{proof}

\noindent The topology $\tau_X$ in Proposition \ref{weak1}  is called the {\it weak topology on $X$ generated by $f$ and $\tau_Y$}, or more briefly, when the context is clear,  the {\it weak topology on $X$}.

\medskip

When we have more than one function we still have the following result.

\begin{proposition}\label{weak2} Let $X$ and $Y$ be sets and let $\tau_Y$ be a topology on $Y$.  If $\mathcal F$ is a nonempty family of functions from $X$ into $Y$ then 
$$\mathcal{B} := \left\{\mbox{$\bigcap_{1 \leq k \leq n}$} f_k^{-1}(U_k): n \in \N, U_k \in \tau_Y   \mbox{ and }  f_k \in \mathcal{F}\right\}$$
is a base for a topology $\tau_X$ on $X$.  Furthermore, the topology $\tau_X$ is the weakest topology on $X$ that make each $f \in \mathcal{F}$ continuous, when $Y$ is
endowed with the topology $\tau_Y$.
\end{proposition}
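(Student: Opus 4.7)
The plan is to verify the two base axioms for $\mathcal{B}$, then check continuity and minimality in a manner parallel to the proof of Proposition \ref{weak1}.

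First I would observe that $X \in \mathcal{B}$: picking any $f \in \mathcal{F}$ (using that $\mathcal{F}$ is nonempty) and taking $n=1$, $U_1 = Y \in \tau_Y$, we have $X = f^{-1}(Y) \in \mathcal{B}$. In particular $X = \bigcup_{B \in \mathcal{B}} B$. Next, for the second base axiom, suppose $B_1, B_2 \in \mathcal{B}$ and $x \in B_1 \cap B_2$. Write
$$B_1 = \bigcap_{1 \leq k \leq n} f_k^{-1}(U_k) \quad \text{and} \quad B_2 = \bigcap_{1 \leq j \leq m} g_j^{-1}(V_j),$$
with $f_k, g_j \in \mathcal{F}$ and $U_k, V_j \in \tau_Y$. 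Then $B_1 \cap B_2$ is itself a finite intersection of sets of the form $h^{-1}(W)$ with $h \in \mathcal{F}$ and $W \in \tau_Y$, hence $B_1 \cap B_2 \in \mathcal{B}$. Taking this set as the required base element completes the verification that $\mathcal{B}$ is a base for a topology $\tau_X$ on $X$.

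Next I would show each $f \in \mathcal{F}$ is continuous with respect to $\tau_X$. For any $U \in \tau_Y$, the set $f^{-1}(U)$ is (with $n=1$) an element of $\mathcal{B}$, hence lies in $\tau_X$; this is exactly the definition of continuity of $f : (X, \tau_X) \to (Y, \tau_Y)$.

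Finally, for minimality, let $\tau$ be any topology on $X$ such that every $f \in \mathcal{F}$ is continuous as a map $(X,\tau) \to (Y, \tau_Y)$. For each $f_k \in \mathcal{F}$ and $U_k \in \tau_Y$ we have $f_k^{-1}(U_k) \in \tau$ by continuity, and because $\tau$ is closed under finite intersections, every element of $\mathcal{B}$ belongs to $\tau$. Since $\tau$ is closed under arbitrary unions and every member of $\tau_X$ is a union of elements of $\mathcal{B}$, we conclude $\tau_X \subseteq \tau$. There is no real obstacle here; the only point needing care is the bookkeeping in showing $\mathcal{B}$ is closed under pairwise intersection, which makes the second base axiom trivial rather than a concern about refinement.
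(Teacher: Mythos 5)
Your proof is correct and follows essentially the same route as the paper's: both hinge on showing that $\mathcal{B}$ covers $X$ and is closed under finite intersections, and both derive minimality from the observation that any topology making each $f\in\mathcal{F}$ continuous must contain $\mathcal{B}$. The only cosmetic difference is that you invoke the standard base criterion where the paper explicitly builds $\tau_X$ as the collection of all unions of members of $\mathcal{B}$ and verifies the topology axioms by hand (you also spell out the continuity of each $f$ with respect to $\tau_X$, which the paper leaves implicit).
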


\begin{proof} Firstly, it is easy to see that $\varnothing$ and $X$ are members of $\mathcal{B}$.  Indeed, since $\mathcal{F} \not= \varnothing$ we may take a function $f \in \mathcal{F}$.
Then $\varnothing = f^{-1}(\varnothing)$ and so $\varnothing \in \mathcal{B}$ since $\varnothing \in \tau_Y$.  Similarly, 
 $X = f^{-1}(Y)$ and so $X \in \mathcal{B}$ since $Y \in \tau_Y$.
Next, let us observe that $\mathcal{B}$ is closed under taking finite intersections.  Suppose $V_1 \in \mathcal{B}$ and $V_2 \in \mathcal{B}$. Then there exists $n_1 \in \N$, $f'_k \in \mathcal{F}$
and $U_k' \in \tau_Y$ for each $1 \leq k \leq n_1$ such that $V_1 = \bigcap_{1 \leq k \leq {n_1}} (f'_k)^{-1}(U'_k)$. Similarly, there exists $n_2 \in \N$, $f''_k \in \mathcal{F}$
and $U_k'' \in \tau_Y$ for each $1 \leq k \leq n_2$ such that $V_2 = \bigcap_{1 \leq k \leq {n_2}} (f''_k)^{-1}(U''_k)$.  Let $ n := n_1 + n_2$ and for each $1 \leq k \leq n_1$
let $U_k := U'_k$ and for each $1 \leq k \leq n_2$ let $U_{{n_1}+k} := U''_k$.  For each $1 \leq k \leq n_1$ let $f_k := f'_k$ and for each $1 \leq k \leq n_2$ let $f_{{n_1}+k} := f''_k$.  Then,
$$V_1 \cap V_2 = \mbox{$\bigcap_{1 \leq k \leq {n_1}}$} (f'_k)^{-1}(U'_k) \cap  \mbox{$\bigcap_{1 \leq k \leq {n_2}}$} (f''_k)^{-1}(U''_k) =  \mbox{$\bigcap_{1 \leq k \leq {n}}$} f_k^{-1}(U_k) \in \mathcal{B}.$$
 We now define $\tau_X$ to be the set of all subsets of $X$ that can be expressed as a union of members of $\mathcal{B}$.  From above we see that $\varnothing$ and $X$ are members of $\tau_X$,
 and $\tau_X$ is closed under taking finite intersections.  For the details of this last claim consider the following.  Let $V_1 \in \tau_X$ and $V_2 \in \tau_X$. Then there exist disjoint
 sets $I_1$ and $I_2$ such that $V_1 = \bigcup_{i \in I_1} B_i$ for some $B_i \in \mathcal{B}$ and $V_2 = \bigcup_{i \in I_2} B_i$ for some $B_i \in \mathcal{B}$.  Let $I := I_1 \times I_2$ then
 $$V_1 \cap V_2 = (\mbox{$\bigcup_{i \in I_1} $} B_i) \cap (\mbox{$\bigcup_{i \in I_2} $} B_i) = \mbox{$\bigcup_{(i,j) \in I}  $} B_i  \cap B_j \in \tau_X \mbox{\quad since, $B_i \cap B_j \in \mathcal{B}$}$$
 
 So it remains to show that $\tau_X$ is closed under arbitrary unions.  Suppose that $\{U_i:i \in I\} \subseteq \tau_X$.   Then for each
 $i \in I$, there exist disjoint sets  $J_i$ such that $U_i = \bigcup_{j \in J_i} B_j$, where $B_j \in \mathcal{B}$.  Let $J := \bigcup_{i \in I} J_i$.  Then $\bigcup_{I \in I} U_i = \bigcup_{j \in J} B_j \in \tau_X$.
 We now show that $\tau_X$ is the weakest topology on $X$ the makes each function in $\mathcal{F}$ continuous.  So suppose that $\tau$ is a topology on $X$ that makes each function in
 $\mathcal{F}$ continuous.  Then clearly $\mathcal{B} \subseteq \tau$ since $f^{-1}(U) \in \tau$ for each $f \in \mathcal{F}$ and each $U \in \tau_Y$.  Since $\tau_X$ is the smallest topology
 on $X$ that contains $\mathcal{B}$ we must have that $\tau_X \subseteq \tau$.  
\end{proof}

\noindent The topology $\tau_X$ in Proposition \ref{weak2}  is call the {\it weak topology on $X$ generated by $\mathcal F$ and $\tau_Y$}, or more briefly, 
when the context is clear, the {\it weak topology on $X$}. For further information on general topology see \cite{Engelking, Kelley}.


\subsection{Linear topologies}


\noindent Let $(V, +, \cdot)$ be a vector space over the field of real numbers and let $\tau$ be a topology on $V$.  Then $(V, +, \cdot, \tau)$ is called a {\it linear topological space} or a
{\it topological vector space}  if vector addition from $V \times V$ into $V$ is continuous, when $V \times V$ is considered with the product topology and scalar multiplication
from $\R \times V$ into $V$ is continuous, again when we consider $\R \times V$ with the product topology and $\R$ with the usual topology.

\medskip

An important feature of linear topological spaces is that they are always {\it regular}.  That is, if $(X, +, \cdot, \tau)$ is linear topological space, $C$ is a closed subset of $X$ and $x \in X \setminus C$ then
there exist disjoint open sets $U$ and $V$ such that $x \in U$ and $C \subseteq V$.  To see this, suppose that $x = x+0 \in X \setminus C$; which is open. Therefore, from the continuity of addition, there
exist open neighbourhoods $U$ of $x$ and $W$ of $0$ such that $U+W \subseteq X \setminus C$, i.e,. $(U+W) \cap C = \varnothing$.  Therefore, $U \cap (C + (-W)) = \varnothing$. Let $V := C + (-W) = \bigcup_{c \in C} c-W$.
Then $V$ is an open set containing the set $C$ and $U \cap V = \varnothing$.  Thus, $(X, \tau)$ is a regular topological space.

\medskip

\noindent Let $(V, +, \cdot, \tau)$ be a linear topological space over $\R$. We shall say that $(V, +, \cdot, \tau)$ is a {\it locally convex space} if for each open set $U$ in $V$, containing $0$,
there exists an open convex set $W$ such that $0 \in W \subseteq U$, or, equivalently, $(V, \tau)$ has a local base consisting of open convex sets. 

\medskip

\noindent If $(X, \norm)$ is a normed linear space and $\varnothing \not= \mathcal{F} \subseteq X^*$ - the set of all continuous linear functionals on $X$, then $\sigma(\mathcal{F}, X)$ denotes the weak topology on $X$ generated by $\mathcal F$. We shall
simply call $\sigma(X^*, X)$ the {\it weak topology on $X$} and write $(X, \mathrm{weak})$ for $(X, \sigma(X^*, X))$.

\medskip

Sometimes it is convenient to work with a more concrete representation of the $\sigma(\mathcal{F}, X)$-topology.  Fortunately such a representation exists and furthermore, the representation is very similar to the
way in which open sets are defined in metric spaces.  Let $(X, \norm)$ be a normed linear space, let $x_0 \in X$, $\varepsilon >0$ and let $F$ be a nonempty finite subset of $X^*$.  Then 
$$N_X(x_0,F,\varepsilon) :=  \mbox{$\bigcap_{f \in F}$} \{x \in X: |f(x) - f(x_0)| < \varepsilon\}.$$
Note: sometimes it is also convenient to write $N_X(x_0, f_1, f_2, \ldots, f_n, \varepsilon)$ when the finite set $F$ is enumerated as $F := \{f_1, f_2, \ldots, f_n\}$.  When the context is clear we simply write, $N(x_0,F,\varepsilon)$ or 
$N(x_0, f_1, f_2, \ldots, f_n, \varepsilon)$.

\medskip

Given a nonempty subset $\mathcal{F}$ of $X^*$ we shall say that a subset $U$ of $X$ is {\it $\mathcal{F}$-open} if for every $x_0 \in U$ there exists a nonempty finite subset $F$ of $\mathcal{F}$ and an $\varepsilon >0$
such that $N(x_0,F,\varepsilon) \subseteq U$.

\begin{proposition}\label{coincide}  If $(X, \norm)$ is a normed linear space and $\varnothing \not= \mathcal{F} \subseteq X^*$ then the set of all $\mathcal{F}$-open sets forms a topology on $X$.  
Furthermore, the set of all $\mathcal{F}$-open sets coincides with the $\sigma(\mathcal{F}, X)$-topology on $X$.
\end{proposition}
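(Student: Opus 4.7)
The plan is to split the proposition into its two assertions and handle each by direct verification, using Proposition \ref{weak2} to unpack what the $\sigma(\mathcal{F},X)$-topology looks like concretely. The only substantive observation is that the sets $N(x_0,F,\varepsilon)$ are precisely the ``interval-type'' basic neighbourhoods in the base $\mathcal{B}$ from Proposition \ref{weak2}, and that general open sets in $\mathbb{R}$ can be replaced by open $\varepsilon$-intervals.

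For the first assertion, I would verify the three topology axioms for the family $\tau$ of all $\mathcal{F}$-open sets. Both $\varnothing$ and $X$ are $\mathcal{F}$-open (vacuously, and because $N(x_0,F,\varepsilon)\subseteq X$ for any admissible $F,\varepsilon$). Closure under arbitrary unions is immediate: if $x_0\in\bigcup_i U_i$ with each $U_i$ $\mathcal{F}$-open, pick some $U_j\ni x_0$ and use its witness $F,\varepsilon$. For finite intersections it suffices to treat two sets: given $x_0\in U_1\cap U_2$, take witnesses $(F_1,\varepsilon_1)$, $(F_2,\varepsilon_2)$ and set $F:=F_1\cup F_2$, $\varepsilon:=\min(\varepsilon_1,\varepsilon_2)$; then clearly
$$N(x_0,F,\varepsilon)\subseteq N(x_0,F_1,\varepsilon_1)\cap N(x_0,F_2,\varepsilon_2)\subseteq U_1\cap U_2.$$

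For the second assertion I would show the two topologies contain each other. For $\tau\subseteq\sigma(\mathcal{F},X)$, note that every $\mathcal{F}$-open set $U$ can be written as $U=\bigcup_{x\in U} N(x,F_x,\varepsilon_x)$ with each $N(x,F_x,\varepsilon_x)\subseteq U$, so it is enough to show that each $N(x_0,F,\varepsilon)$ is $\sigma(\mathcal{F},X)$-open. But
$$N(x_0,F,\varepsilon)=\bigcap_{f\in F} f^{-1}\bigl((f(x_0)-\varepsilon,\,f(x_0)+\varepsilon)\bigr)$$
is a finite intersection of preimages of open subsets of $\mathbb{R}$ under elements of $\mathcal{F}$, i.e., an element of the base $\mathcal{B}$ from Proposition \ref{weak2}. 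For the reverse inclusion $\sigma(\mathcal{F},X)\subseteq\tau$, it suffices to show that each $B\in\mathcal{B}$ is $\mathcal{F}$-open; so let $B=\bigcap_{1\le k\le n} f_k^{-1}(U_k)$ with each $U_k$ open in $\mathbb{R}$, fix $x_0\in B$, and for each $k$ choose $\varepsilon_k>0$ with $(f_k(x_0)-\varepsilon_k,f_k(x_0)+\varepsilon_k)\subseteq U_k$. Setting $F:=\{f_1,\dots,f_n\}$ and $\varepsilon:=\min_k\varepsilon_k$ gives $N(x_0,F,\varepsilon)\subseteq B$, so $B\in\tau$; since $\tau$ is already shown to be closed under arbitrary unions, every $\sigma(\mathcal{F},X)$-open set lies in $\tau$.

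There is no real obstacle here: the argument is entirely bookkeeping. The one point that deserves care is remembering that the base $\mathcal{B}$ in Proposition \ref{weak2} uses arbitrary open $U_k\subseteq\mathbb{R}$, not just intervals, so the ``$\subseteq$'' direction from $\sigma(\mathcal{F},X)$ into $\tau$ requires shrinking each $U_k$ to a symmetric open interval about $f_k(x_0)$ and then taking a common $\varepsilon$.
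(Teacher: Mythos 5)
Your proof is correct and takes essentially the same route as the paper's: verify the three topology axioms directly (with the same choice $F:=F_1\cup F_2$, $\varepsilon:=\min(\varepsilon_1,\varepsilon_2)$ for intersections), then identify $N(x_0,F,\varepsilon)$ with the basic sets $\bigcap_{k} f_k^{-1}\bigl((f_k(x_0)-\varepsilon,f_k(x_0)+\varepsilon)\bigr)$ of Proposition \ref{weak2} to get one inclusion. The only (harmless) variation is in the inclusion $\sigma(\mathcal{F},X)\subseteq\tau$: you check directly that each basis element of $\mathcal{B}$ is $\mathcal{F}$-open by shrinking each $U_k$ to a symmetric interval, whereas the paper observes that every $f\in\mathcal{F}$ is continuous for the $\mathcal{F}$-open topology and then invokes the minimality of $\sigma(\mathcal{F},X)$ from Proposition \ref{weak2}; both arguments are equally short and rest on the same facts.
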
 
\begin{proof} First we will show that the set of all $\mathcal{F}$-open sets forms a topology on $X$.  It is easy to see that vacuously, $\varnothing$ is $\mathcal{F}$-open.  To see that $X$ is $\mathcal{F}$-open, consider
any element $x_0 \in X$.  Now, let $x^*$ be any element in  $\mathcal{F}$.  Then $N(x_0,\{x^*\}, 1) \subseteq X$.  Therefore, $X$ is $\mathcal{F}$-open.  Next, suppose that $U$ and $V$ are both $\mathcal{F}$-open
subsets of $X$.  We will show that $U \cap V$ is also $\mathcal{F}$-open.  To this end, let $x_0 \in U \cap V$.  Then, since $x_0 \in U$, there exists a finite subset $F_U$ of $\mathcal{F}$ and an $\varepsilon_U >0$ such
that $N(x_0, F_U, \varepsilon_U) \subseteq U$. Similarly, there exists a finite subset $F_V$ of $\mathcal{F}$ and an $\varepsilon_V >0$ such that $N(x_0, F_V, \varepsilon_V) \subseteq V$.
Let $F := F_U \cup F_V$ and $\varepsilon := \min\{\varepsilon_U, \varepsilon_V\}$.  Then,
$$x_0 \in N(x_0, F, \varepsilon) \subseteq N(x_0, F_U, \varepsilon_U) \cap N(x_0, F_V, \varepsilon_V) \subseteq U \cap V.$$
So it remains to show that an arbitrary union of $\mathcal{F}$-open sets is again $\mathcal{F}$-open. Let $\{U_\alpha:\alpha \in A\}$ be a family of $\mathcal{F}$-open sets.  Let $x_0$ be any element of 
$\bigcup_{\alpha \in A} U_\alpha$.  Then there exists an $\alpha_0 \in A$ such that $x_0 \in U_{\alpha_0}$. Since $U_{\alpha_0}$ is $\mathcal{F}$-open there exists a finite subset $F$ of $\mathcal{F}$ and an
$\varepsilon >0$ such that $N(x_0, F, \varepsilon) \subseteq U_{\alpha_0}$.  Now, $U_{\alpha_0} \subseteq \bigcup_{\alpha \in A} U_\alpha$ and so $N(x_0, F, \varepsilon) \subseteq \bigcup_{\alpha \in A} U_\alpha$.
Therefore, $\bigcup_{\alpha \in A} U_\alpha$ is $\mathcal{F}$-open.  We will now show that the two topologies coincide.  Suppose that $U$ is an $\mathcal{F}$-open set.  Then, for each $x \in U$, there
exists a finite subset $F_x$ of $\mathcal{F}$ and an $\varepsilon_x >0$ such that $x \in N(x,F_x,\varepsilon_x) \subseteq U$.  Therefore, $\bigcup_{x \in U} N(x,F_x,\varepsilon_x) = U$.  Thus, to show that
$U$ is $\sigma(\mathcal{F}, X)$-open it is sufficient to show that every set of the form:  $N(x,F,\varepsilon)$ is $\sigma(\mathcal{F}, X)$-open, where $x \in X$, $F$ is a finite subset of $\mathcal{F}$ and $\varepsilon >0$.
So suppose that $x_0 \in X$, $F = \{f_1,f_2, \ldots, f_n\} \subseteq \mathcal{F}$ and $\varepsilon >0$.  Let $U_k := (f_k(x_0)-\varepsilon, f_k(x_0) + \varepsilon)$ for each $1 \leq k \leq n$. Then,
$$N(x,F, \varepsilon) = \mbox{$\bigcap_{f \in F}$} \{x \in X: |f(x) - f(x_0)| < \varepsilon\} = \mbox{$\bigcap_{1 \leq k \leq n}$} f_k^{-1}(U_k). \mbox{\quad \quad $(*)$}$$
Therefore, by the definition of the $\sigma(\mathcal{F},X)$-topology, $N(x,F, \varepsilon)$ is $\sigma(\mathcal{F},X)$-open. To show that every $\sigma(\mathcal{F},X)$-open set is $\mathcal{F}$-open it is sufficient to show that each member of $\mathcal{F}$ is continuous with respect to the topology generated by the $\mathcal{F}$-open sets. However, this is obvious from the definition of the $\mathcal{F}$-open sets. If you want to see the details, then let $f \in \mathcal{F}$, $x_0 \in X$ and $\varepsilon >0$.  Then 
$$f(N(x_0, \{f\}, \varepsilon)) \subseteq (f(x_0)-\varepsilon, f(x_0)+\varepsilon).$$
This completes the proof.
\end{proof}

\begin{remark}\label{coincide-remark} It follows from Proposition \ref{coincide} and equation $(*)$ that for  each $x \in X$, finite set $\varnothing \not= F \subseteq \mathcal{F}$ and $\varepsilon >0$, the set $N(x,F,\varepsilon)$ is $\sigma(\mathcal{F},X)$-open in $X$.
\end{remark}

By using Proposition \ref{coincide} and Remark \ref{coincide-remark}, one can easily deduce the following result.

\begin{proposition}\label{relative-topology} Let $Y$ be a subspace of a normed linear space $(X,\norm)$ and let $\varnothing \not= \mathcal{F} \subseteq X^*$.  Then a subset $U$ of $Y$ is
open in the relative $\sigma(\mathcal{F},X)$-topology on $Y$ if, and only if, for each $y \in U$ there exists a finite subset $F$ of $\mathcal{F}$ and an $\varepsilon >0$ such that
$N_X(y,F,\varepsilon) \cap Y \subseteq U$.
\end{proposition}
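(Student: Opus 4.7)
The plan is to reduce everything to the concrete description of $\sigma(\mathcal{F},X)$-open sets furnished by Proposition \ref{coincide} and Remark \ref{coincide-remark}, after which both implications amount to bookkeeping with basic neighbourhoods of the form $N_X(\cdot,F,\varepsilon)$. Recall that the relative $\sigma(\mathcal{F},X)$-topology on $Y$ consists, by definition, of the sets $V\cap Y$ where $V$ is $\sigma(\mathcal{F},X)$-open in $X$.

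For the forward implication, I would assume $U$ is open in the relative topology and pick a $\sigma(\mathcal{F},X)$-open set $V\subseteq X$ with $U=V\cap Y$. By Proposition \ref{coincide}, $V$ is $\mathcal{F}$-open, so for any $y\in U\subseteq V$ there is a finite $F\subseteq\mathcal{F}$ and an $\varepsilon>0$ with $N_X(y,F,\varepsilon)\subseteq V$. Intersecting with $Y$ gives $N_X(y,F,\varepsilon)\cap Y\subseteq V\cap Y=U$, which is the required condition.

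For the converse, I would be given a subset $U\subseteq Y$ such that for each $y\in U$ one can choose a finite set $F_y\subseteq \mathcal{F}$ and $\varepsilon_y>0$ with $N_X(y,F_y,\varepsilon_y)\cap Y\subseteq U$, and define
$$V:=\mbox{$\bigcup_{y\in U}$} N_X(y,F_y,\varepsilon_y).$$
By Remark \ref{coincide-remark}, each $N_X(y,F_y,\varepsilon_y)$ is $\sigma(\mathcal{F},X)$-open in $X$, so $V$ is $\sigma(\mathcal{F},X)$-open as a union of such sets. Trivially $y\in N_X(y,F_y,\varepsilon_y)\subseteq V$ for every $y\in U$, giving $U\subseteq V\cap Y$, while the hypothesis directly gives $V\cap Y=\bigcup_{y\in U}\bigl(N_X(y,F_y,\varepsilon_y)\cap Y\bigr)\subseteq U$. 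Hence $U=V\cap Y$ is open in the relative topology.

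There is no real obstacle here: the only point worth flagging is that the neighbourhoods $N_X(y,F_y,\varepsilon_y)$ must be taken in $X$ (not in $Y$) so that Remark \ref{coincide-remark} applies directly; their traces on $Y$ then automatically supply a local base for the relative topology.
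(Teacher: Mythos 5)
Your proof is correct and is precisely the ``easy deduction'' from Proposition \ref{coincide} and Remark \ref{coincide-remark} that the paper intends (the paper itself omits the argument). Both directions are handled exactly as one would expect: the forward direction by restricting a basic $\mathcal{F}$-neighbourhood of $V$ to $Y$, and the converse by assembling the open set $V=\bigcup_{y\in U}N_X(y,F_y,\varepsilon_y)$ whose trace on $Y$ is $U$.
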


\begin{proposition} If $(X, \norm)$ is a normed linear space and $\mathcal{F} \subseteq X^*$, then $(X, \sigma(\mathcal{F}, X))$ is a locally convex topological space.
\end{proposition}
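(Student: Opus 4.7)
The plan is to verify the three requirements separately: (a) continuity of vector addition $+ : X \times X \to X$, (b) continuity of scalar multiplication $\cdot : \R \times X \to X$, and (c) the existence of a local base at each point consisting of open convex sets. Throughout, the main tool will be Proposition \ref{coincide} (together with Remark \ref{coincide-remark}), which lets me work with the concrete basic neighbourhoods $N(x_0, F, \varepsilon)$ instead of abstract $\sigma(\mathcal{F}, X)$-open sets. Fix $x_0, y_0 \in X$, $\lambda_0 \in \R$, and $\mathcal{F} \neq \varnothing$ (if $\mathcal{F} = \varnothing$ the topology is indiscrete and the result is trivial).

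For (a), I would start with an arbitrary basic neighbourhood $N(x_0 + y_0, F, \varepsilon)$ of the sum. By linearity of each $f \in F$, for any $x, y \in X$ one has
\[
|f(x + y) - f(x_0 + y_0)| \leq |f(x) - f(x_0)| + |f(y) - f(y_0)|.
\]
Hence the open rectangle $N(x_0, F, \varepsilon/2) \times N(y_0, F, \varepsilon/2)$ is mapped by addition into $N(x_0 + y_0, F, \varepsilon)$, which gives continuity of $+$ at $(x_0, y_0)$.

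For (b), I would start with $N(\lambda_0 x_0, F, \varepsilon)$ and use the triangle-style decomposition
\[
|f(\lambda x) - f(\lambda_0 x_0)| \leq |\lambda - \lambda_0|\,|f(x_0)| + |\lambda|\,|f(x) - f(x_0)|.
\]
Letting $M := \max_{f \in F} |f(x_0)|$, I would choose $\delta > 0$ so small that $\delta M < \varepsilon/2$ and so that $(|\lambda_0| + \delta) \eta < \varepsilon/2$ for a suitable $\eta > 0$; then the neighbourhood $(\lambda_0 - \delta, \lambda_0 + \delta) \times N(x_0, F, \eta)$ is mapped into $N(\lambda_0 x_0, F, \varepsilon)$. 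This is the most computationally fussy step, and writing it cleanly is the main (mild) obstacle, but the estimate is completely standard.

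For (c), the local convexity is immediate from the form of the basic sets. For any $f \in X^*$ and $c \in \R$, the set $\{x \in X : |f(x) - c| < \varepsilon\}$ is convex because $f$ is linear and $(-\varepsilon, \varepsilon) + c$ is convex; a finite intersection of convex sets is convex, so each $N(x_0, F, \varepsilon)$ is convex. By Proposition \ref{coincide} (and Remark \ref{coincide-remark}) these sets form a neighbourhood base at $x_0$ in the $\sigma(\mathcal{F}, X)$-topology, so $(X, \sigma(\mathcal{F}, X))$ has a base of open convex neighbourhoods at every point, completing the proof.
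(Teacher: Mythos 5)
Your proposal is correct and follows essentially the same route as the paper: continuity of addition via the split $|f(x+y)-f(x_0+y_0)|\leq |f(x)-f(x_0)|+|f(y)-f(y_0)|$ on basic neighbourhoods halved in radius, continuity of scalar multiplication via the decomposition $|f(\lambda x)-f(\lambda_0 x_0)|\leq |\lambda-\lambda_0||f(x_0)|+|\lambda||f(x)-f(x_0)|$ with the same two-term budget, and local convexity from the convexity of the sets $N(x_0,F,\varepsilon)$. (Your use of $M:=\max_{f\in F}|f(x_0)|$ is in fact slightly cleaner than the paper's $f$-dependent $\varepsilon_1$, but this is a cosmetic difference, not a different argument.)
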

\begin{proof} Let us first show that  $(X, \sigma(\mathcal{F}, X))$ is a linear topology.  Let $S:X \times X \to X$ be defined by, $S(x,y) := x+y$.  We need to show that $S$ is continuous. To this end, let $W$ be a $\sigma(\mathcal{F}, X)$-open subset of $X$
and let $(x,y) \in S^{-1}(W)$, i.e., $S(x,y) \in W$.  By Proposition \ref{coincide} there exists a finite subset $F$ of $\mathcal{F}$ and an $\varepsilon >0$ such that $N(S(x,y),F,\varepsilon) \subseteq W$. 
We claim that $S(N(x,F,\varepsilon/2) \times N(y,F,\varepsilon/2)) \subseteq N(S(x,y),F,\varepsilon) \subseteq W$.  To see this, let $(x',y') \in N(x,F,\varepsilon) \times N(y,F,\varepsilon)$ and let $f \in F$. Then,
$|f(x')-f(x)| < \varepsilon/2$ and $|f(y')-f(y)|< \varepsilon/2$, and so
\begin{eqnarray*}
|f(S(x',y')) - f(S(x,y))| &=& |f(x'+y')-f(x+y)| \\
&=& |f(x'-x) + f(y'-y)| \\
&\leq& |f(x'-x)| + |f(y'-y)| \\
& =& |f(x')-f(x)| + |f(y')-f(y)| < \varepsilon/2 +  \varepsilon/2 = \varepsilon.
\end{eqnarray*}
Therefore, $S(x',y') \in N(S(x,y),F,\varepsilon)$; which proves the claim.  Now since both $N(x,F,\varepsilon/2)$ and  $N(y,F,\varepsilon/2)$ are $\sigma(\mathcal{F}, X)$-open we see that $S^{-1}(W)$ is open in $X \times X$,
with the product topology and so $S$ is continuous. Let $M:\R \times X \to X$ be defined by, $M(r,x) := rx$. We need to show that $M$ is continuous.  To this end, let $W$ be a $\sigma(\mathcal{F}, X)$-open subset of $X$ and
let $(r,x) \in M^{-1}(W)$, i.e., $M(r,x) \in W$. By Proposition \ref{coincide} there exists a finite subset $F$ of $\mathcal{F}$ and an $1 > \varepsilon >0$ such that $N(M(r,x),F,\varepsilon) \subseteq W$.  Set 
$$\varepsilon_1 := \frac{\varepsilon}{2(|f(x)| +1)} \mbox{\quad and \quad } \varepsilon_2 := \frac{\varepsilon}{2(|r|+1)}.$$
We claim that $M((r -\varepsilon_1,r+\varepsilon_1)\times  N(x,F,\varepsilon_2)) \subseteq N(M(r,x),F,\varepsilon) \subseteq W$.
To see this is true, let \\ $(r',x') \in (r -\varepsilon_1,r+\varepsilon_1)\times  N(x,F,\varepsilon_2)$ and let $f \in F$. Then,
$|r'-r| < \varepsilon_1$ and $|f(x')-f(x)|< \varepsilon_2$, and so
\begin{eqnarray*}
|f(M(r',x')) -f(M(r,x))| &=& |f(r'x')-f(rx)| \\
&=& |f(r'x') - f(r'x) - [f(rx) -f(r'x)] | \\
& \leq& |f(r'x') - f(r'x)| + |f(rx) -f(r'x)|  \\
&=& |r'| |f(x')-f(x)| + |r-r'||f(x)| \\
&\leq& (|r| +1)\varepsilon_2  + \varepsilon_1 |f(x)| <  \varepsilon \mbox{\quad \ since, $|r'| \leq |r| +\varepsilon_1 < |r|+1$}.
\end{eqnarray*}
Therefore, $M(r',x') \in N(M(r,x),F,\varepsilon)$; which proves the claim.  Now since $(r -\varepsilon_1,r+\varepsilon_1)$ is open in $\R$ and  $N(x,F,\varepsilon_2)$ is $\sigma(\mathcal{F}, X)$-open 
we see that $M^{-1}(W)$ is open in $\R \times X$, with the product topology and so $M$ is continuous.  This shows that $(X, \sigma(\mathcal{F}, X))$ is a linear topological space.  To see that 
$(X, \sigma(\mathcal{F}, X))$ is locally convex we merely appeal to Proposition \ref{coincide} and the fact that for each finite subset $F$ of $\mathcal{F}$ and $\varepsilon >0$, $N(0,F,\varepsilon)$ is
a convex open neighbourhood of $0$.
\end{proof}

The beauty of linear topology lies in the interplay between linear algebra and topology.  This is highlighted in Proposition \ref{cont-equivalence}, which is based upon the following result from linear algebra.

\begin{lemma}\label{FiniteDimensional}
	Let $V$ be a vector space over $\r$ and suppose that $(f_i)_{i=1}^n$ are linear functionals on $V$. 
	If $g$ is a linear functional on $V$ such that $\bigcap_{i=1}^n\ker(f_i)\seq\ker(g)$, then $g\in \emph{span}\lbrace f_1,\dots , f_n\rbrace$.
\end{lemma}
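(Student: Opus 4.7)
The plan is to bundle the $f_i$ into a single linear map into a finite-dimensional space and exploit the hypothesis to factor $g$ through it. Concretely, I would define
$$T:V\to\R^n,\qquad T(v):=(f_1(v),f_2(v),\dots,f_n(v)).$$
This $T$ is linear, and by definition $\ker(T)=\bigcap_{i=1}^n\ker(f_i)$, so the hypothesis reads $\ker(T)\subseteq\ker(g)$. The point is that this containment is precisely the condition needed to push $g$ down through $T$.

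Next I would define a functional on the image $T(V)\subseteq\R^n$ by the rule $\tilde g(T(v)):=g(v)$. The containment $\ker(T)\subseteq\ker(g)$ is exactly what makes this well defined: if $T(v)=T(v')$ then $v-v'\in\ker(T)\subseteq\ker(g)$, so $g(v)=g(v')$. Linearity of $\tilde g$ on $T(V)$ is then immediate from the linearity of $g$ and $T$. Thus $g=\tilde g\circ T$ with $\tilde g:T(V)\to\R$ linear.

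Because $T(V)$ is a subspace of the finite-dimensional space $\R^n$, standard linear algebra lets me extend $\tilde g$ to a linear functional on all of $\R^n$ (pick a basis of $T(V)$, extend it to a basis of $\R^n$, and define the extension arbitrarily on the new basis vectors, for instance by zero). Every linear functional on $\R^n$ has the form $(t_1,\dots,t_n)\mapsto \sum_{i=1}^n a_i t_i$ for some scalars $a_1,\dots,a_n\in\R$. Evaluating the extension on $T(v)=(f_1(v),\dots,f_n(v))$ for $v\in V$ gives
$$g(v)=\tilde g(T(v))=\sum_{i=1}^n a_i f_i(v),$$
which says exactly that $g=\sum_{i=1}^n a_i f_i\in\mathrm{span}\{f_1,\dots,f_n\}$.

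No step here is genuinely hard; the only subtlety is the well-definedness of $\tilde g$ on $T(V)$, which is where the hypothesis $\bigcap\ker(f_i)\subseteq\ker(g)$ is used. The rest is bookkeeping plus the finite-dimensional extension, so I would expect the entire proof to occupy just a few lines.
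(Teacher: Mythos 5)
Your proof is correct, and it shares the paper's central idea: bundle the $f_i$ into $T:V\to\R^n$ and factor $g$ through $T$ using the kernel containment. Where you diverge is in how the possible failure of surjectivity of $T$ is handled. The paper first assumes, without loss of generality, that $(f_i)_{i=1}^n$ is a minimal family with $\bigcap_i\ker(f_i)\seq\ker(g)$, and uses that minimality to construct vectors $x_k$ with $f_k(x_k)=1$ and $f_i(x_k)=0$ for $i\neq k$, thereby forcing $T$ to be onto so that the induced functional $g^*$ is defined on all of $\R^n$ at once. You instead define $\tilde g$ only on the subspace $T(V)\seq\R^n$ and then invoke the elementary finite-dimensional extension of a linear functional from a subspace to the whole space. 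Both routes are sound; yours avoids the minimality reduction and the explicit construction of the near-dual-basis vectors, at the cost of an extension step (which is trivial in finite dimensions and needs no choice principle). The paper's version has the side benefit that its surjectivity argument is reused elsewhere in spirit (it exhibits concrete witnesses $x_k$), but as a proof of this lemma alone your variant is, if anything, slightly cleaner.
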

\begin{proof}
	Define $T:V\rightarrow\r^n$ by 
	$$T(x):=(f_1(x),\dots , f_n(x)) \ \text{for all} \ x\in V.$$
	 Observe that $T$ is clearly linear and that $\ker(T)=\bigcap_{i=1}^n\ker(f_i)$. We may assume that $(f_i)_{i=1}^n$ is a minimal (in terms of cardinality) family of functions such that $\bigcap_{i=1}^n\ker(f_i)\seq\ker(g)$, and from this we claim 
	 that $T$ is also surjective. \\ \\
	Fix $1\leq k\leq n$. Then, by the minimality assumption on $n$, we have that $\bigcap\lbrace\ker(f_i):1\leq i\leq n, \ i\neq k\rbrace\not\seq \ker(g)$, and so in particular $\bigcap\lbrace\ker(f_i):1\leq i\leq n, \ i\neq k\rbrace\not\seq \bigcap_{i=1}^n\ker(f_i)$.
	Now we may choose $$x_k\in \mbox{$\bigcap$}\lbrace\ker(f_i):1\leq i\leq n, \ i\neq k\rbrace\setminus \mbox{$\bigcap_{i=1}^n$}\ker(f_i).$$
	Then, after scaling $x_k$ if necessary, we have that $f_k(x_k)=1$ and $f_i(x_k)=0$ for $i\neq k$. Therefore, $T(x_k)=e_k$, where $e_k$ is the $k^{\textrm{th}}$ standard basis vector of $\r^n$. Then, since $1\leq k\leq n$ was arbitrary, we have that $\r^n=\text{span}\lbrace e_1,\dots, e_n\rbrace\seq T(V)$ and so $T$ is surjective as claimed. \\ \\
	Now define $g^*:\r^n\rightarrow\r$ by 
	$$g^*(x):=g(z) \ \text{for any} \ z\in T^{-1}(x).$$
	Then $g^*$ is well-defined.  Indeed, let $x\in \r^n$. Since $T$ is onto, we have that $T^{-1}(x)\neq\varnothing$. So, suppose $z_1,z_2\in T^{-1}(x)$. Then $T(z_1)=x=T(z_2)$ and so $z_1-z_2\in \ker(T)\seq\ker(g)$. Thus $g(z_1)=g(z_2)$ as required. Moreover, a routine calculation shows that $g^*$ is linear, so $g^*\in (\r^n)^*$. 
	Since $(\r^n)^*=\text{span}\lbrace e_1^*,\dots,e_n^*\rbrace$, there exist $(c_i)_{i=1}^n$ such that $g^*=\sum_{i=1}^n c_ie_i^*$, where here $e_i^*(e_j)=\delta_{ij}$, the $ij$-Kroeneker delta. \\ \\
	Finally, note that $x\in T^{-1}(T(x))$ and so $g^*(T(x))=g(x)$ for all $x\in V$. Therefore,
	$$\mbox{$g=g^*\circ T=\left(\sum_{i=1}^nc_ie_i^*\right)\circ T=\sum_{i=1}^nc_i\left(e_i^*\circ T\right)=\sum_{i=1}^nc_if_i$}$$
	since $e^*_i\circ T=f_i$ for all $1\leq i\leq n$, and thus $g\in \text{span}\lbrace f_1,\dots , f_n\rbrace$. 
\end{proof}

\begin{proposition}\label{cont-equivalence} If $(X, \norm)$ is a normed linear space and $\mathcal{F} \subseteq X^*$, then the following are equivalent:
\begin{enumerate}
\item[{\rm (i)}] $x^* \in X^*$ is $\sigma(\mathcal{F}, X)$-continuous;
\item[{\rm (ii)}] $x^* \in X^*$ is bounded on a $\sigma(\mathcal{F}, X)$ neighbourhood of $0$;
\item[{\rm (iii)}] $x^* \in \mbox{span}(\mathcal{F})$.
\end{enumerate}
\end{proposition}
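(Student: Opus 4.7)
The natural plan is to establish the cyclic chain (iii)$\Rightarrow$(i)$\Rightarrow$(ii)$\Rightarrow$(iii), with the last implication being the substantive one that invokes Lemma \ref{FiniteDimensional}.

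For (iii)$\Rightarrow$(i), I would write $x^* = \sum_{i=1}^n c_i f_i$ with $f_i \in \mathcal{F}$ and observe that each $f_i$ is $\sigma(\mathcal{F},X)$-continuous directly from the definition of the weak topology (Proposition \ref{weak2}); since $\sigma(\mathcal{F},X)$ is a linear topology, finite linear combinations of continuous linear functionals are continuous, giving (i). The implication (i)$\Rightarrow$(ii) is essentially immediate: if $x^*$ is $\sigma(\mathcal{F},X)$-continuous, then $U := (x^*)^{-1}((-1,1))$ is a $\sigma(\mathcal{F},X)$-open neighbourhood of $0$ on which $|x^*| < 1$, so $x^*$ is bounded on $U$.

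The main work is (ii)$\Rightarrow$(iii). Suppose $|x^*| \leq M$ on some $\sigma(\mathcal{F},X)$-neighbourhood of $0$; by Proposition \ref{coincide} I may assume this neighbourhood has the form $N(0, F, \varepsilon)$ for some finite $F = \{f_1, \ldots, f_n\} \subseteq \mathcal{F}$ and some $\varepsilon > 0$. The goal is to apply Lemma \ref{FiniteDimensional} to the finite family $(f_i)_{i=1}^n$ and the functional $g := x^*$, so I need to verify the kernel containment $\bigcap_{i=1}^n \ker(f_i) \subseteq \ker(x^*)$. The standard homogeneity trick does this: if $x \in \bigcap_{i=1}^n \ker(f_i)$, then for every scalar $\lambda \in \mathbb{R}$ we still have $f_i(\lambda x) = 0$ for each $i$, so $\lambda x \in N(0,F,\varepsilon)$, and consequently $|\lambda||x^*(x)| = |x^*(\lambda x)| \leq M$ for every $\lambda$. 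Letting $|\lambda| \to \infty$ forces $x^*(x) = 0$, as required. Lemma \ref{FiniteDimensional} then yields $x^* \in \operatorname{span}\{f_1, \ldots, f_n\} \subseteq \operatorname{span}(\mathcal{F})$.

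The only subtle point is recognising that the boundedness hypothesis in (ii), after shrinking to a basic neighbourhood, can be upgraded to a statement about the \emph{entire line} $\mathbb{R}x$ whenever $x$ lies in the common kernel, because that line sits inside $N(0,F,\varepsilon)$; this is where linearity of $x^*$ combines with the algebraic structure of the basic weak neighbourhoods. Once this observation is made, Lemma \ref{FiniteDimensional} does the rest of the linear-algebraic work, and no further topological input is needed.
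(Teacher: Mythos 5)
Your proposal is correct and follows essentially the same route as the paper: the substantive step (ii)$\Rightarrow$(iii) uses exactly the paper's argument that the common kernel $\bigcap_{i=1}^n\ker(f_i)$ is a subspace contained in $N(0,F,\varepsilon)$, on which a bounded linear functional must vanish, so that Lemma \ref{FiniteDimensional} applies. The only cosmetic difference is in (iii)$\Rightarrow$(i), where you invoke continuity of sums of continuous functionals in a linear topology while the paper factors $x^*$ through a continuous map into $\R^n$; these are interchangeable one-line arguments.
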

\begin{proof} $(i) \Longrightarrow (ii)$. Suppose that $x^* \in X^*$ is $\sigma(\mathcal{F}, X)$-continuous on $X$.  Then, in particular, $x^*$ is continuous at $0 \in X$.  Therefore, there exists a $\sigma(\mathcal{F}, X)$-open neighbourhood
$N$ of $0$ such that $|x^*(x)| = |x^*(x)-x^*(0)| <1$ for all $x \in N$.  Hence $x^*$ is bounded on $N$.  $(ii) \Longrightarrow (iii)$. Suppose that $x^*$ is bounded on a $\sigma(\mathcal{F}, X)$-open neighbourhood $N$ of $0$.
Then there exists a finite subset $G$ of $\mathcal{F}$ and an $0 < \varepsilon$ such that $N(0, G,\varepsilon) \subseteq N$.  Let $S := \bigcap_{y^* \in G} \mathrm{ker}(y^*)$.  Then $S$ is a subspace of $X$ and furthermore, 
$S \subseteq N(0, G,\varepsilon) \subseteq N$. Hence, $x^*|_S$ is bounded on $S$. Thus, $x^*|_S \equiv 0$ and so $\bigcap_{y^* \in G} \mathrm{ker}(y^*) = S \subseteq \mathrm{ker}(x^*)$. The result now
follows from Lemma \ref{FiniteDimensional}. $(iii) \Longrightarrow (i)$.  Suppose that $x^* = \sum_{k=1}^n \lambda_k y^*_k$, where $\lambda_k \in \R$ and $y^*_k \in \mathcal{F}$ for all $1 \leq k \leq n$. Define $f:\R^n \to \R$
by, $f(x_1, x_2, \ldots, x_n) := \sum_{k=1}^n \lambda_k x_k$ and $F:X \to \R^n$ by, $F(x) := (y^*_1(x), y^*_2(x), \ldots, y^*_n(x))$.  Then $f$ is a continuous function on $\R^n$ and $F$ is a $\sigma(\mathcal{F}, X)$-continuous function on $X$.  
Therefore, $x^* = f \circ F$, is also a $\sigma(\mathcal{F}, X)$-continuous function on $X$.
\end{proof}

\begin{remark} It follows from Proposition \ref{cont-equivalence} that for any normed linear space $(X, \norm)$ and any $\mathcal{F} \subseteq X^*$, $\sigma(\mathcal{F}, X) = 
\sigma(\mbox{span}(\mathcal{F}), X)$.  To see this, first note the general fact that if $\mathcal{F} \subseteq \mathcal{F}'$ then $\sigma(\mathcal{F}, X) \subseteq \sigma(\mathcal{F}',X)$
(i.e., to make more functions continuous you need more open sets) and then the equivalence of (i) and (iii) above.
\end{remark}

\begin{proposition}\label{linearmap} If $T:X \to Y$ is a continuous linear operator acting between normed linear spaces $(X\norm_X)$ and $(Y,\norm_Y)$ then $T:(X,\mathrm{weak}) \to (Y,\mathrm{weak})$ is also continuous.
\end{proposition}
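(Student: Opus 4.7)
The plan is to verify continuity of $T \colon (X,\mathrm{weak}) \to (Y,\mathrm{weak})$ by checking it on a subbase, exploiting the fact that the weak topology on $Y$ is, by definition, $\sigma(Y^{*},Y)$ and so is generated by the family of functions $Y^{*}$ together with the usual topology on $\R$. Concretely, I would invoke Proposition \ref{weak2}: a map into $(Y,\sigma(Y^{*},Y))$ is continuous if and only if its composition with each $y^{*}\in Y^{*}$ is continuous.

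First I would fix an arbitrary $y^{*}\in Y^{*}$ and observe that $y^{*}\circ T \colon X \to \R$ is the composition of two norm-continuous linear maps, hence is itself a norm-continuous linear functional on $X$, i.e.\ $y^{*}\circ T \in X^{*}$. Applying Proposition \ref{cont-equivalence} with $\mathcal{F} := X^{*}$ (so that $\mathrm{span}(\mathcal{F}) = X^{*}$), the implication (iii)$\Rightarrow$(i) tells us that every member of $X^{*}$ is $\sigma(X^{*},X)$-continuous; in particular $y^{*}\circ T$ is weakly continuous on $X$. Since this holds for every $y^{*}\in Y^{*}$, the universal property from Proposition \ref{weak2} delivers weak-to-weak continuity of $T$.

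Alternatively, if one prefers to avoid appealing to Proposition \ref{weak2} for the ``$\Leftarrow$'' direction, the same conclusion can be obtained directly via the concrete description of weakly open sets from Proposition \ref{coincide}. Given $x_{0}\in X$ and a basic weak neighbourhood $N_{Y}(T(x_{0}), \{y_{1}^{*},\ldots,y_{n}^{*}\}, \varepsilon)$ of $T(x_{0})$ in $Y$, one checks in a single line that
\[
T^{-1}\bigl(N_{Y}(T(x_{0}), \{y_{1}^{*},\ldots,y_{n}^{*}\}, \varepsilon)\bigr)
= N_{X}\bigl(x_{0}, \{y_{1}^{*}\circ T,\ldots, y_{n}^{*}\circ T\}, \varepsilon\bigr),
\]
and the latter is $\sigma(X^{*},X)$-open by Remark \ref{coincide-remark}, since each $y_{i}^{*}\circ T$ belongs to $X^{*}$.

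There is no real obstacle here: the entire content of the proposition is the trivial but crucial observation that norm-continuity of $T$ ensures $y^{*}\circ T \in X^{*}$ for every $y^{*}\in Y^{*}$. Everything else is bookkeeping with the definitions already established.
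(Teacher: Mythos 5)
Your proof is correct, and your ``alternative'' route is in fact essentially the paper's own argument: the paper fixes $x_0\in T^{-1}(W)$, pulls back a basic neighbourhood $N_Y(T(x_0),\{y_1^*,\dots,y_n^*\},\varepsilon)\subseteq W$ via the functionals $x_k^*:=y_k^*\circ T\in X^*$, and verifies the inclusion $T(N_X(x_0,\{x_1^*,\dots,x_n^*\},\varepsilon))\subseteq N_Y(T(x_0),\{y_1^*,\dots,y_n^*\},\varepsilon)$ by the same one-line computation; your observation that one actually has the exact equality $T^{-1}(N_Y(\cdot))=N_X(\cdot)$ is a slightly cleaner way to say the same thing. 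Your primary route, via the universal property of the initial topology, is also sound, but note one small imprecision: Proposition \ref{weak2} as stated only says that $\sigma(Y^*,Y)$ is the weakest topology making each $y^*\in Y^*$ continuous; it does not literally assert that a map \emph{into} $(Y,\sigma(Y^*,Y))$ is continuous whenever its post-compositions with all $y^*\in Y^*$ are. That characterization is true and follows in one line from the base description (preimages of the subbasic sets $(y^*)^{-1}(U)$ under $T$ are $(y^*\circ T)^{-1}(U)$, which are open), but strictly speaking you would need to supply that line --- which is exactly what your concrete alternative does. Either way, you have correctly isolated the whole content of the proposition, namely that $y^*\circ T\in X^*$ for every $y^*\in Y^*$.
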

\begin{proof} Let $W$ be a weak open subset of $Y$.  We will show that $T^{-1}(W)$ is open in the weak topology on $X$. To this end, let $x_0 \in T^{-1}(W)$.  Then $T(x_0) \in W$ and so
by Proposition \ref{coincide} there exist $\{y_1^*, y_2^*, \ldots, y_n^*\} \subseteq Y^*$ and $\varepsilon >0$ such that $N_Y(T(x_0),\{y_1^*, y_2^*, \ldots, y_n^*\},\varepsilon) \subseteq W$. For each
$1 \leq k \leq n$ let $x_k^* := y^*_k \circ T$.  Then $\{x_1^*, x_2^*, \ldots, x_n^*\} \subseteq X^*$.  We claim that 
$T(N_X(x_0, \{x_1^*, x_2^*, \ldots, x_n^*\}, \varepsilon)) \subseteq N_Y(T(x_0),\{y_1^*, y_2^*, \ldots, y_n^*\},\varepsilon) \subseteq W$.  To see this, let $y \in T(N_X(x_0, \{x_1^*, x_2^*, \ldots, x_n^*\}, \varepsilon))$.
Then there exists a $x \in N_X(x_0, \{x_1^*, x_2^*, \ldots, x_n^*\}, \varepsilon)$ such that $y = T(x)$.  Fix $1 \leq k \leq n$.  Then,
\begin{eqnarray*}
|y_k^*(y) - y_k^*(T(x_0))| &=& |y_k^*(T(x)) - y_k^*(T(x_0))| \\
&=& |x_k^*(x) - x_k^*(x_0)| < \varepsilon, \mbox{\quad since $x \in N_X(x_0, \{x_1^*, x_2^*, \ldots, x_n^*\}, \varepsilon)$.}
\end{eqnarray*}
Therefore, $y \in N_Y(T(x_0),\{y_1^*, y_2^*, \ldots, y_n^*\},\varepsilon) \subseteq W$.  This completes the proof of the claim. Hence 
$$x_0 \in N_X(x_0, \{x_1^*, x_2^*, \ldots, x_n^*\}, \varepsilon) \subseteq T^{-1}(W).$$
Thus, by Proposition \ref{coincide}, $T^{-1}(W)$ is open in the weak topology on $X$.
\end{proof}


\subsection{Hahn-Banach Theorem}


A real-valued function $p$ defined on a vector space $V$ is called
{\it sublinear} if for every $x,y \in V$ and $0 \leq \lambda < \infty$,
$p(\lambda x) = \lambda p(x)$ and $p(x+y) \leq p(x) +p(y)$. 

\medskip

Although it is easy, using linear algebra, to construct linear functionals on a vector space, it is not so easy to construct continuous linear functions on a linear topological space.  The key to constructing
continuous linear functionals on locally convex spaces is given next.

\begin{theorem}[Hahn-Banach Theorem \cite{DunfordSchwartz1}] \label{Hahn-Banach}Let $Y$ be a subspace of a vector space $V$ (over $\mathbb{R}$) and let
$p:V \rightarrow \mathbb{R}$ be a sublinear functional on $V$. If $f$ is a linear functional
on $Y$ and $f(y) \leq p(y)$ for all $y \in Y$ then there exists a linear functional $F:V \rightarrow
\mathbb{R}$ such that $F|_Y = f$ and $F(x) \leq p(x)$ for all $x \in V$.
\end{theorem}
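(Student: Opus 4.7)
The plan is to carry out the standard two-stage proof: first show we can always extend $f$ by one extra dimension while maintaining the domination $F(\cdot) \le p(\cdot)$, then use Zorn's lemma to push this local extension all the way up to $V$.

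For the one-dimensional extension, suppose $Y \ne V$ and pick any $v_0 \in V \setminus Y$. Set $Y' := Y + \R v_0$; each element of $Y'$ has a unique representation $y + tv_0$ with $y \in Y$ and $t \in \R$. Any linear extension $F$ of $f$ to $Y'$ is determined by a single real parameter $\alpha := F(v_0)$, so the task reduces to choosing $\alpha$ so that $f(y) + t\alpha \le p(y + tv_0)$ for every $y \in Y$ and $t \in \R$. Splitting into the cases $t > 0$ and $t < 0$ and using the positive homogeneity of $p$, this reduces to requiring
$$f(y_1) - p(y_1 - v_0) \le \alpha \le p(y_2 + v_0) - f(y_2) \quad \text{for all } y_1, y_2 \in Y.$$
Such an $\alpha$ exists precisely when the supremum of the left-hand side over $y_1$ does not exceed the infimum of the right-hand side over $y_2$. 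This is the key computation, and it follows from the subadditivity of $p$ together with the hypothesis $f \le p$ on $Y$: for any $y_1, y_2 \in Y$,
$$f(y_1) + f(y_2) = f(y_1 + y_2) \le p(y_1 + y_2) = p\bigl((y_1 - v_0) + (y_2 + v_0)\bigr) \le p(y_1 - v_0) + p(y_2 + v_0),$$
which rearranges to exactly the required inequality. Any $\alpha$ in the resulting interval then yields a valid extension of $f$ to $Y'$ that is still dominated by $p$.

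For the global step, consider the poset $\mathcal{P}$ of all pairs $(Z, g)$ where $Y \subseteq Z \subseteq V$ is a subspace, $g:Z \to \R$ is linear, $g|_Y = f$, and $g(z) \le p(z)$ for every $z \in Z$. Order $\mathcal{P}$ by $(Z_1, g_1) \le (Z_2, g_2)$ whenever $Z_1 \subseteq Z_2$ and $g_2|_{Z_1} = g_1$. The set $\mathcal{P}$ is nonempty since it contains $(Y,f)$, and every chain has an upper bound obtained by taking the union of the domains and defining the functional in the obvious consistent way. Zorn's lemma therefore produces a maximal element $(Z^*, F)$. The one-dimensional extension step forces $Z^* = V$: otherwise, picking $v_0 \in V \setminus Z^*$ and applying the argument above to the pair $(Z^*, F)$ (with $Z^*$ in the role of $Y$) would yield a strict enlargement of $(Z^*, F)$ in $\mathcal{P}$, contradicting maximality. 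Hence $F$ is defined on all of $V$ with the required properties.

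The main obstacle is the one-dimensional extension, and within that, the verification that the sup-over-$y_1$ really does lie below the inf-over-$y_2$; once that chain of inequalities above is written down, the rest of the proof is bookkeeping plus a direct invocation of Zorn's lemma. I would spell out carefully the case distinction $t > 0$ versus $t < 0$ and the use of $p(\lambda x) = \lambda p(x)$ for $\lambda \ge 0$, since this is where the positive-homogeneity (rather than full homogeneity) of the sublinear functional is essential.
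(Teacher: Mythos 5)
Your proposal is correct and follows essentially the same route as the paper: Zorn's lemma applied to the poset of dominated extensions, combined with the one-dimensional extension step whose heart is the inequality $f(y_1)+f(y_2)\leq p(y_1-v_0)+p(y_2+v_0)$ guaranteeing that the supremum of the lower bounds for $\alpha$ does not exceed the infimum of the upper bounds. The only difference is cosmetic: the paper handles the case split $t>0$ versus $t<0$ by parametrising with $\lambda^{-1}m$ rather than dividing through by $t$, and presents the Zorn argument before the extension step rather than after.
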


\begin{proof} Let $\mathscr P$ be the collection of all ordered pairs $(M',f')$, where $M'$ is a subspace of $V$
containing $Y$ and $f':M' \rightarrow \mathbb{R}$ is a linear functional defined on $M'$
such that $f'|_Y = f$ and satisfies $f'(x) \leq p(x)$ for all $x \in M'$. $\mathscr P$ is
nonempty because $(Y,f) \in \mathscr{P}$. We partially order $\mathscr P$ by, 
$(M',f') \leq (M'', f'')$ if $M' \subseteq M''$ and $f''|_{M'} = f'$. If $\{(M_\alpha , f_\alpha): \alpha \in A \}$
is a nonempty totally ordered sub-family of $\mathscr P$, then set $M' := \bigcup \{M_\alpha:\alpha \in A \}$
and define the linear functional $f':M' \rightarrow \mathbb{R}$ by, $f'(x) := f_\alpha(x)$ if
$x \in M_\alpha$. Then $(M',f') \in \mathscr{P}$ and $(M_\alpha, f_\alpha) \leq (M',f')$ for all
$\alpha \in A$. Therefore, by Zorn's lemma, $\mathscr P$ has a maximal element $(M,F)$. We must show that $M =V$.
So suppose, in order to obtain a contradiction, that $M \not= V$ and pick $x_0 \in V \setminus M$ and put
$M^* := \mbox{span}\{M,x_0\}$. We will define $F^*:M^*\rightarrow \mathbb{R}$ so that $(M^*,F^*) \in \mathscr{P}$
and $(M,F) < (M^*,F^*)$; which will be our desired contradiction. For each $\alpha \in \mathbb{R}$ we define
$F_\alpha$ on $M^*$ by, $F_\alpha(m+\lambda x_0) := f(m) + \lambda \alpha$.  It is easy to check that
$F_\alpha$ is well defined and linear on $M^*$. Moreover, $F_\alpha|_M =f$. So it remains to show that
$F_{\alpha}(x) \leq p(x)$ for all $x \in M^*$. To achieve this, we need to select the right value of $\alpha \in \mathbb{R}$.

\medskip

{\it Selection of $\alpha$:}  For any $m_1, m_2 \in M$ and $0< \lambda_1 < \infty$ and $0< \lambda_2 < \infty$
we have:
$$f(\lambda^{-1}_1m_1+\lambda_2^{-1}m_2) \leq p(\lambda^{-1}_1m_1+\lambda_2^{-1}m_2) \leq p(\lambda^{-1}_1m_1 -x_0)+p(\lambda_2^{-1}m_2+x_0).$$
Therefore,
$$f(\lambda^{-1}_1m_1)- p(\lambda^{-1}_1m_1 -x_0) \leq p(\lambda_2^{-1}m_2+x_0) - f(\lambda_2^{-1}m_2)$$
for all $m_1, m_2 \in M$ and $0< \lambda_1 < \infty$, $0< \lambda_2 < \infty$. Hold $m_2$ and $\lambda_2$ fixed and take the supremum
over $m_1 \in M$ and $0<\lambda_1 < \infty$. Then for each $m_2 \in M$ and $0< \lambda_2 < \infty$ we have that:
$$\sup_{\stackrel{m \in M}{0<\lambda<\infty}}\left(f(\lambda^{-1}m)-p(\lambda^{-1}m-x_0)\right) \leq
p(\lambda_2^{-1}m_2 +x_0)-f(\lambda_2^{-1}m_2).$$
Now we take the infimum over $m_2 \in M$ and $0< \lambda_2< \infty$. Then,
$$a := \sup_{\stackrel{m \in M}{0<\lambda<\infty}}\left(f(\lambda^{-1}m)-p(\lambda^{-1}m-x_0)\right)
\leq \inf_{\stackrel{m \in M}{0<\lambda<\infty}}\left(p(\lambda^{-1}m+x_0) - f(\lambda^{-1}m)\right) =:b.$$
Choose $\alpha^* \in [a,b]$. Then from the left-hand side of the equation we get that:
$$f(m) +(-\lambda )\alpha^* \leq p(m+(-\lambda) x_0) \mbox{\quad for all $m \in M$ and $0<\lambda<\infty$.}$$
From the right-hand side of the equation we get that:
$$f(m) +\lambda\alpha^* \leq p(m+\lambda x_0) \mbox{\quad for all $m \in M$ and $0<\lambda<\infty$.}$$
From these two equations we see that:
$$F^*(x) := F_{\alpha^*}(x) \leq p(x) \mbox{\quad for all $x \in M^*$.}$$
That is, $(M,F) < (M^*,F^*) \in \mathscr{P}$. 
\end{proof}

We now give some applications of this famous theorem.

\begin{corollary}\label{HB in norm} Let $Y$ be a subspace of a normed linear space $(X,\norm)$ (over $\mathbb{R}$). If
$f \in Y^*$ then there exists an $F \in X^*$ such that $F|_Y =f$ and $\|F\|=\|f\|$. 
\end{corollary}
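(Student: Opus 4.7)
The plan is to apply the Hahn--Banach Theorem (Theorem \ref{Hahn-Banach}) with a sublinear functional tailored to the norm structure, so that the extension automatically satisfies the needed norm bound. Specifically, I would define $p : X \to \mathbb{R}$ by $p(x) := \|f\|\,\|x\|$. This $p$ is sublinear: positive homogeneity follows from $\|\lambda x\| = \lambda\|x\|$ for $\lambda \geq 0$, and subadditivity follows from the triangle inequality for $\|\cdot\|$.

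Next, I would verify the hypothesis of Theorem \ref{Hahn-Banach} on $Y$. For every $y \in Y$,
$$f(y) \leq |f(y)| \leq \|f\|\,\|y\| = p(y),$$
so $f \leq p$ on $Y$. Applying Theorem \ref{Hahn-Banach}, I obtain a linear functional $F : X \to \mathbb{R}$ with $F|_Y = f$ and $F(x) \leq p(x) = \|f\|\,\|x\|$ for all $x \in X$.

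To upgrade the one-sided bound $F(x) \leq \|f\|\,\|x\|$ to $|F(x)| \leq \|f\|\,\|x\|$, I would apply the same inequality to $-x$, using linearity: $-F(x) = F(-x) \leq \|f\|\,\|{-x}\| = \|f\|\,\|x\|$. Combining these gives $|F(x)| \leq \|f\|\,\|x\|$ for all $x \in X$, which simultaneously shows that $F$ is continuous (so $F \in X^*$) and that $\|F\| \leq \|f\|$.

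Finally, for the reverse inequality $\|F\| \geq \|f\|$, I would argue that since $F$ agrees with $f$ on $Y$, the unit ball of $Y$ is contained in the unit ball of $X$, so
$$\|f\| = \sup_{y \in Y,\ \|y\| \leq 1} |f(y)| = \sup_{y \in Y,\ \|y\| \leq 1} |F(y)| \leq \sup_{x \in X,\ \|x\| \leq 1} |F(x)| = \|F\|.$$
Combining both inequalities yields $\|F\| = \|f\|$, completing the proof. There is no genuine obstacle here; the only point requiring a little care is recognising that Theorem \ref{Hahn-Banach} only delivers a one-sided bound $F \leq p$, so the symmetrisation trick $-F(x) = F(-x) \leq p(-x) = p(x)$ must be used to recover the two-sided norm estimate.
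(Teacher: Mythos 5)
Your proposal is correct and follows essentially the same route as the paper: the sublinear functional $p(x) := \|f\|\,\|x\|$, the Hahn--Banach extension, the symmetrisation $-F(x) = F(-x) \leq p(x)$ to obtain the two-sided bound, and the observation that an extension cannot have smaller norm. No gaps.
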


\begin{proof} Consider the sublinear functional $p:X \to \R$ defined by, $p(x) := \|f\|\|x\|$. Then $f(y) \leq p(y)$ for all $y \in Y$. 
By the Hahn-Banach Theorem, (Theorem \ref{Hahn-Banach}) there exists a linear functional $F:X \to \R$ such that
$F|_Y =f$ and $F(x) \leq p(x)$ for all $x \in X$. Therefore, $-F(x) = F(-x) \leq p(-x) = p(x)$ for all $x \in X$ too.  Thus,
$|F(x)| \leq p(x)$ for all $x \in X$. This in turn implies that $\|F\| \leq \|f\|$.  On the other hand, since $F$ is an extension of $f$,
we must also have that $\|f\| \leq \|F\|$.
\end{proof}

\medskip

\begin{corollary} \label{HB} Let $(X,\|\cdot\|)$ be a normed linear space. For every 
$x \in X \setminus \{0\}$ there exists an $f \in S_{X^*}$ such that $f(x) = \|x\|$.
\end{corollary}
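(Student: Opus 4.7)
The plan is to reduce the statement to the norm-preserving extension result already established in Corollary \ref{HB in norm}. The idea is standard: we construct the desired functional first on the one-dimensional subspace spanned by $x$, where the formula is forced on us, and then invoke the Hahn-Banach machinery to extend it to all of $X$ without increasing the norm.

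More precisely, fix $x \in X \setminus \{0\}$ and set $Y := \mathrm{span}\{x\}$, which is a subspace of $X$. Every element of $Y$ can be written uniquely as $\lambda x$ for some $\lambda \in \R$, so we may define $g : Y \to \R$ by $g(\lambda x) := \lambda \|x\|$. Linearity is immediate, and for any $\lambda \in \R$ we have $|g(\lambda x)| = |\lambda|\|x\| = \|\lambda x\|$, so $g \in Y^*$ with $\|g\| = 1$ (in particular $g(x) = \|x\|$).

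Now I would apply Corollary \ref{HB in norm} to obtain $f \in X^*$ extending $g$ with $\|f\| = \|g\| = 1$. Since $f$ extends $g$, we have $f(x) = g(x) = \|x\|$, and since $\|f\| = 1$ the functional $f$ lies on the unit sphere $S_{X^*}$, which is exactly what is required.

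There is essentially no obstacle here: the only subtlety is making sure the functional $g$ is well defined on $Y$, which uses the fact that $x \neq 0$ so that the representation $\lambda x$ is unique. Once that is observed, Corollary \ref{HB in norm} does all the work.
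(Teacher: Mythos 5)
Your proposal is correct and is essentially identical to the paper's own proof: both define $g(\lambda x):=\lambda\|x\|$ on $\mathrm{span}\{x\}$, note $\|g\|=1$ and $g(x)=\|x\|$, and then apply Corollary \ref{HB in norm} to obtain a norm-one extension attaining $\|x\|$ at $x$. No further comment is needed.
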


\begin{proof} Let $Y := \mbox{span}\{x\}$ and define $f \in Y^*$ by, $f(\lambda x) := \lambda \|x\|$.
Clearly, $\|f\|=1$ and $f(x) = \|x\|$. By Corollary \ref{HB in norm} there exists an $F \in X^*$
such that $\|F\| = \|f\| =1$ and $F|_Y = f$. Therefore, in particular we have that $F(x) = f(x) = \|x\|$. 
\end{proof}

\begin{proposition}\label{subspace topology} Let $Y$ be a subspace of a normed linear space $(X,\norm)$.  Then the topology $\sigma(Y^*,Y)$ on $Y$ coincides with the relative $\sigma(X^*,X)$ topology
on $Y$.   
\end{proposition}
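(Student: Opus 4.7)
The plan is to establish the two inclusions separately, using Proposition \ref{relative-topology} on one side and the characterization of basic open sets from Proposition \ref{coincide} on the other, with the Hahn-Banach extension theorem (Corollary \ref{HB in norm}) providing the key input for the harder direction.

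For the inclusion \emph{relative $\sigma(X^*,X)$-topology $\subseteq \sigma(Y^*,Y)$-topology}, I would start with a $U \subseteq Y$ that is open in the relative topology and a point $y \in U$. Proposition \ref{relative-topology} supplies a finite set $F = \{x_1^*, \ldots, x_n^*\} \subseteq X^*$ and an $\varepsilon > 0$ with $N_X(y, F, \varepsilon) \cap Y \subseteq U$. The restrictions $y_k^* := x_k^*|_Y$ are continuous linear functionals on $Y$, hence lie in $Y^*$, and the defining inequalities $|x_k^*(z) - x_k^*(y)| < \varepsilon$ for $z \in Y$ are literally the inequalities $|y_k^*(z) - y_k^*(y)| < \varepsilon$. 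Thus $N_Y(y, \{y_1^*, \ldots, y_n^*\}, \varepsilon) = N_X(y, F, \varepsilon) \cap Y \subseteq U$, which shows $U$ is $\sigma(Y^*, Y)$-open by Proposition \ref{coincide}.

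For the reverse inclusion \emph{$\sigma(Y^*,Y)$-topology $\subseteq$ relative $\sigma(X^*,X)$-topology}, take a $\sigma(Y^*, Y)$-open set $U \subseteq Y$ and $y \in U$. By Proposition \ref{coincide} applied inside $Y$, there exist $\{y_1^*, \ldots, y_n^*\} \subseteq Y^*$ and $\varepsilon > 0$ with $N_Y(y, \{y_1^*, \ldots, y_n^*\}, \varepsilon) \subseteq U$. Now I invoke Corollary \ref{HB in norm}: for each $k$, extend $y_k^*$ to a functional $x_k^* \in X^*$ with $x_k^*|_Y = y_k^*$. Then $N_X(y, \{x_1^*, \ldots, x_n^*\}, \varepsilon) \cap Y = N_Y(y, \{y_1^*, \ldots, y_n^*\}, \varepsilon) \subseteq U$, so $U$ is open in the relative topology by Proposition \ref{relative-topology}.

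The main obstacle is the second direction: without a way to lift functionals from $Y$ to $X$, a $\sigma(Y^*,Y)$-open set might fail to be carved out by $X^*$-functionals restricted to $Y$. The Hahn-Banach theorem is exactly what removes this obstacle, guaranteeing that every element of $Y^*$ is the restriction of some member of $X^*$, so the two families of basic neighbourhoods about any point $y \in Y$ coincide.
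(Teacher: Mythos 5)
Your proof is correct and follows essentially the same route as the paper's: restriction of functionals (via Proposition \ref{relative-topology} and Proposition \ref{coincide}) for one inclusion, and Hahn--Banach extension (Corollary \ref{HB in norm}) for the other. Nothing to add.
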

\begin{proof} Let us first show that every relatively $\sigma(X^*,X)$-open set in $Y$ is $\sigma(Y^*,Y)$ open.  To this end, let $U$ be a relatively $\sigma(X^*,X)$-open set in $Y$.
Let $y \in U$. By Proposition \ref{relative-topology}, there exists a finite set $\{x^*_1, x^*_2, \ldots, x_n^*\} \subseteq X^*$ and an $\varepsilon >0$ such that $N_X(y,\{x^*_1, x^*_2, \ldots, x_n^*\}, \varepsilon) \cap Y \subseteq U$. For each $1 \leq k \leq n$ let $y_k^* := x_k^*|_Y$.  Then 
$$N_Y(y, \{y^*_1, y^*_2, \ldots, y_n^*\}, \varepsilon)= N_X(y,\{x^*_1, x^*_2, \ldots, x_n^*\}, \varepsilon) \cap Y   \subseteq U.$$  
Thus, by Proposition \ref{coincide}, $U$ is $\sigma(Y^*,Y)$-open.
Now, suppose that $U$ is a $\sigma(Y^*,Y)$-open subset of $Y$.  Then, by Proposition \ref{coincide}, there exists a finite set $\{y^*_1, y^*_2, \ldots, y_n^*\} \subseteq Y^*$ and an $\varepsilon >0$ 
such that $N_Y(y,\{y^*_1, y^*_2, \ldots, y_n^*\}, \varepsilon) \subseteq U$. By Corollary \ref{HB in norm}, for each $1 \leq k \leq n$, there exists an $x^*_k \in X^*$ such that $x^*_k|_Y = x^*_k$.
Then 
$$N_X(y,\{x^*_1, x^*_2, \ldots, x_n^*\}, \varepsilon) \cap Y  = N_Y(y,\{y^*_1, y^*_2, \ldots, y_n^*\}, \varepsilon) \subseteq U.$$  
Therefore, by Proposition \ref{relative-topology}, $U$ is open in the relative $\sigma(X^*,X)$-topology on $Y$.
\end{proof}

Next we will show how to use the Hahn-Banach Theorem to obtain some geometric properties of locally
convex spaces. 

\medskip

Let $S$ be a nonempty subset of a vector space $V$. We shall say that a point $x \in S$ is a
{\it core point} of $S$ if for every $v \in V$ there exists a $0<\delta < \infty$ such that
$x+\lambda v \in S$ for all $0 \leq \lambda < \delta$.  The set of all core points of $S$ is called the {\it core} of $S$ and is denoted by
{\it $\mathrm{Cor}(S)$}.

\medskip

Let $C$ be a convex set in a vector space $V$ with $0 \in \mathrm{Cor}(C)$. Then the functional
$\mu_C:V \rightarrow \mathbb{R}$ defined by, 
$$\mu_C(x) := \inf\{\lambda >0:x \in \lambda C\}$$
is called the {\it Minkowski functional} generated by the set $C$.

\medskip

\begin{theorem}\label{Core} Let $C$ be a convex subset of a vector space $V$ with $0$ in the core of $C$.
Then $\mu_C:V \rightarrow \mathbb{R}$ is a sublinear functional. Moreover, 
$$\{x \in V: \mu_C(x) <1\} \subseteq C \subseteq \{x \in V:\mu_C(x) \leq 1\}.$$
\end{theorem}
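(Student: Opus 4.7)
The plan is to verify each ingredient in order: well-definedness and finiteness of $\mu_C$, positive homogeneity, subadditivity, and finally the two inclusions. The core hypothesis enters essentially only in the first step, via a simple but crucial observation.

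First I would check that $\mu_C(x) \in [0,\infty)$ for every $x \in V$. The infimum is taken over a set of positive numbers, so $\mu_C(x) \geq 0$ is automatic, and also $\mu_C(0) = 0$ because $0 \in \mathrm{Cor}(C) \subseteq C$ forces $0 \in \lambda C$ for every $\lambda > 0$. To see finiteness, fix $x \in V$ and apply the core property at $0$ with direction $v = x$: there exists $\delta > 0$ with $\lambda x \in C$ for all $0 \leq \lambda < \delta$. Picking any such $\lambda > 0$ gives $x \in \lambda^{-1} C$, so the defining set is nonempty and $\mu_C(x) \leq \lambda^{-1} < \infty$.

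For positive homogeneity, the case $\lambda = 0$ was already handled. For $\lambda > 0$, I would simply rewrite $\{\mu > 0: \lambda x \in \mu C\} = \{\mu > 0: x \in (\mu/\lambda)C\}$ and substitute $\nu = \mu/\lambda$ to obtain $\mu_C(\lambda x) = \lambda \mu_C(x)$. For subadditivity, given $x,y \in V$ and $\varepsilon > 0$, I would pick $\alpha, \beta > 0$ with $\alpha < \mu_C(x) + \varepsilon/2$, $\beta < \mu_C(y) + \varepsilon/2$, and elements $c_1, c_2 \in C$ with $x = \alpha c_1$, $y = \beta c_2$. Then
$$x + y = (\alpha + \beta)\left[\frac{\alpha}{\alpha + \beta} c_1 + \frac{\beta}{\alpha + \beta} c_2\right] \in (\alpha + \beta) C$$
by convexity of $C$, so $\mu_C(x+y) \leq \alpha + \beta < \mu_C(x) + \mu_C(y) + \varepsilon$, and letting $\varepsilon \to 0$ gives the result. (If both $\alpha$ and $\beta$ would be $0$, one can still choose them arbitrarily small and positive, since the set $\{\mu > 0 : x \in \mu C\}$ has infimum $\mu_C(x)$; the convex combination argument still goes through.)

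Finally for the inclusions, if $\mu_C(x) < 1$ then there exists $\lambda \in (0,1)$ with $x \in \lambda C$, say $x = \lambda c$ for some $c \in C$; writing $x = \lambda c + (1-\lambda)\cdot 0$ and using $0 \in C$ together with convexity yields $x \in C$. Conversely, if $x \in C$ then $x \in 1 \cdot C$, so $\mu_C(x) \leq 1$ directly from the definition. I expect the only real obstacle is the finiteness check at the start, since that is the one place the core hypothesis is invoked; everything after that is a routine unwinding of definitions combined with convexity.
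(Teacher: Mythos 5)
Your proof is correct and follows essentially the same route as the paper: positive homogeneity by rescaling, subadditivity via a convex combination of near-optimal representatives (the paper fixes $s>\mu_C(x)$, $t>\mu_C(y)$ and shows $x+y\in(s+t)C$, which is the same argument with slightly different bookkeeping), and the two inclusions exactly as in the text. Your explicit check that $\mu_C$ is finite-valued, using the core hypothesis at $0$ in the direction $x$, is a worthwhile addition that the paper leaves implicit.
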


\begin{proof} Given $\alpha >0$ and $\lambda >0$, clearly $x \in \lambda C$ if, and only if,
$\alpha x \in \lambda \alpha C$. Therefore, $\mu_C(\alpha x) = \alpha \mu_C(x)$ and thus $\mu_C$
is {\it positively homogeneous}. We claim that $\mu_C$ is {\it subadditive}; that is, $\mu_C(x+y) \leq
\mu_C(x) +\mu_C(y)$. Fix any $s>\mu_C(x)$ and $t > \mu_C(y)$. We have that there is some $s_0 <s$ such
that $x \in s_0C$. Note that $s_0C \subseteq sC$. Indeed, $0 \in sC$ and if $c \in C$, then by the convexity
of $sC$, 
$$s_0c = \frac{s_0}{s}\left(sc\right) + \left(1 - \frac{s_0}{s}\right)\!0 \in sC.$$
We see that $x \in sC$ and similarly $y \in tC$. Then $x+y \in sC+tC$ and thus by the convexity of $C$,
$$x+y \in (s+t)\!\left(\frac{s}{s+t}C+\frac{t}{s+t}C\right) \subseteq (t+s)C.$$
Therefore, $\mu_C(x+y) \leq s+t$ and so by the choice of $s$ and $t$ we have that
$\mu_C(x+y) \leq \mu_C(x) + \mu_C(y)$.

\medskip

If $\mu_C(x) <1$ then $x \in \lambda C$ for some $0<\lambda <1$ and so $(1/\lambda)x \in C$. Since
$0 \in C$ and $C$ is convex, 
$$x = \lambda \left(\frac{x}{\lambda}\right) + \left(1 - \lambda\right)0 \in C.$$
If $x \in C$ then $\mu_C(x) \leq 1$ by the definition of the Minkowski functional. 
\end{proof}

\begin{remark}\label{not in} If the set $C$ in Theorem \ref{Core} is a closed and convex subset of a topological vector space $(V,\tau)$, with $0 \in \mathrm{Cor}(C)$ and 
$x_0 \not\in C$ then it is an easy exercise to show that $1 < \mu_C(x_0)$.
\end{remark}

We now give the geometric version of the Hahn-Banach Theorem.

\begin{theorem}[Separation Theorem]\label{separation-theorem} Suppose that $(X,\tau) $ is a locally convex space over $\R$ and $C$ is a nonempty
closed convex subset of $X$. If $x_0 \not\in C$ then there exists a continuous linear functional $x^*$ on $X$ such that 
$$\sup \{x^*(c):c \in C\}< x^*(x_0).$$
\end{theorem}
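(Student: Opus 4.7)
The plan is to construct an open convex neighborhood $D$ of $0$ that contains a translate of $C$ but excludes the corresponding translate of $x_0$, and then to separate them by the Minkowski functional of $D$ extended via the Hahn--Banach Theorem (Theorem \ref{Hahn-Banach}).

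Since $C$ is closed, $X \setminus C$ is open and contains $x_0$, so by local convexity there is an open convex neighborhood $W$ of $0$ with $(x_0 + W) \cap C = \varnothing$; replacing $W$ by $W \cap (-W)$, we may assume $W$ is symmetric. Fix any $c_0 \in C$ and set $D := C + W - c_0$. Then $D$ is open (a union of translates of $W$), convex (as an algebraic sum of convex sets), and contains $0$ (taking $c = c_0$ and $w = 0$). Moreover, $x_0 - c_0 \notin D$: if $x_0 - c_0 = c + w - c_0$ with $c \in C$ and $w \in W$, then $c = x_0 - w \in x_0 + W$ by symmetry of $W$, contradicting the choice of $W$. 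Since $D$ is open and contains $0$, continuity of scalar multiplication makes $0$ a core point of $D$, and the same continuity argument applied at each point of $D$ upgrades Theorem \ref{Core} to the equality $D = \{x : \mu_D(x) < 1\}$. In particular $\mu_D(x_0 - c_0) \geq 1$.

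On $Y := \mbox{span}\{x_0 - c_0\}$ define $f(\lambda(x_0 - c_0)) := \lambda \mu_D(x_0 - c_0)$; positive homogeneity of $\mu_D$ gives $f \leq \mu_D$ on $Y$, the $\lambda < 0$ case being trivial from $\mu_D \geq 0$. Theorem \ref{Hahn-Banach} extends $f$ to a linear $F : X \to \R$ with $F \leq \mu_D$ on all of $X$. Since $F(x_0 - c_0) = \mu_D(x_0 - c_0) \geq 1$, $F$ is nonzero, so by linearity it cannot vanish identically on the absorbing neighborhood $W$, and the symmetry $W = -W$ then forces $\sup_{w \in W} F(w) > 0$. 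For any $c \in C$ and $w \in W$ we have $c + w - c_0 \in D$, so $F(c) + F(w) - F(c_0) \leq \mu_D(c + w - c_0) < 1$; taking the supremum first over $w \in W$ and then over $c \in C$ yields
$$\sup\{F(c) : c \in C\} \leq F(c_0) + 1 - \sup_{w \in W} F(w) < F(c_0) + 1 \leq F(x_0).$$
Continuity of $F$ follows because $F \leq \mu_D < 1$ on $D$ while, by symmetry of the inequality, $F \geq -1$ on the open neighborhood $D \cap (-D)$ of $0$, and a linear functional bounded on a neighborhood of $0$ in a topological vector space is continuous (by a standard rescaling argument using continuity of scalar multiplication).

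The main obstacle is obtaining the strict inequality on the supremum rather than only pointwise: the bound $\mu_D(x_0 - c_0) \geq 1$ gives $F(c) < F(x_0)$ for each individual $c$ but only a non-strict bound on $\sup_{c \in C} F(c)$ a priori. The symmetric open cushion $W$ must be used crucially, and the key observation is that symmetry and absorbency of $W$ force $\sup_{w \in W} F(w)$ to be strictly positive whenever $F$ is nonzero, supplying the required positive gap.
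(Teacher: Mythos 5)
Your proof is correct. It follows the same overall architecture as the paper's: enlarge $C$ by an open convex neighbourhood of $0$ supplied by continuity of the vector operations, form the Minkowski functional of the enlarged set $D$, define a linear functional on a one-dimensional subspace, extend it by the Hahn--Banach Theorem dominated by $\mu_D$, and deduce continuity from boundedness on a neighbourhood of $0$. The one genuine point of divergence is where the \emph{strict} inequality comes from. The paper takes $D := \overline{C+(-V)}$ to be \emph{closed}, so that $x_0 \notin D$ forces $\mu_D(x_0) > 1$ strictly (Remark \ref{not in}, left there as an exercise), and the gap $1 < \mu_D(x_0) = F(x_0)$ does all the work. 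You instead keep $D = C + W - c_0$ \emph{open}, accept the weaker bound $\mu_D(x_0 - c_0) \geq 1$, and recover strictness from the observation that a nonzero linear functional has $\sup_{w\in W} F(w) > 0$ on a symmetric absorbing neighbourhood $W$, which produces the cushion $\sup_C F \leq F(c_0) + 1 - \sup_W F < F(c_0)+1 \leq F(x_0)$. Your route is entirely self-contained (it bypasses Remark \ref{not in} and the need to verify that the closure of a convex set is convex), at the small cost of having to upgrade Theorem \ref{Core} to the equality $D = \{x : \mu_D(x) < 1\}$ for open $D$ and to carry the symmetric set $W$ through the estimate; the paper's route localises all the strictness in one clean inequality about closed sets. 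Both are sound, and your handling of the continuity of $F$ (boundedness on $D \cap (-D)$ plus rescaling) is at the same level of rigour as the paper's appeal to Proposition \ref{cont-equivalence}.
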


\begin{proof}  We may assume, without loss of generality, that $0 \in C$; because otherwise we would consider
$C-x$ and $x_0-x$ for some $x \in C$.  Since vector addition is continuous and $x_0 + 0 \not\in C$ there exist convex
open neighbourhoods $U$ of $x_0$ and $V$ of $0$ such that $(U+V) \cap C = \varnothing$. Thus, $U \cap [C +(-V)] = \varnothing$.
Now, $-V$ is also a convex open neighbourhood of $0$ and so $C + (-V)$ is a convex open set containing the set $C$ and disjoint from $U$.
Let $D:= \overline{C +(-V)}$, then $D$ is a closed and convex set with $0 \in \mathrm{int}(D)$ and $x_0 \not\in D$.
Let $\mu_D$ be the Minkowski functional for $D$.
Since $D$ is closed and $x_0 \not\in D$ we have $\mu_D(x_0) >1$ (see Remark \ref{not in}). Define a linear functional on $\mbox{span}\{x_0\}$ by,
$f(\lambda x_0) := \lambda \mu_D(x_0)$. Then on $\mbox{span}\{x_0\}$ we have that $f(\lambda x_0) \leq \mu_D(\lambda x_0)$.
Indeed, for $0 \leq \lambda$ it is clear from the definition of $f$; whereas for $\lambda <0$ we have
$f(\lambda x_0) = \lambda \mu_D(x_0) <0$ while $\mu_D(\lambda x_0) \geq 0$. By using the Hahn-Banach Theorem we may extend $f$ onto $X$ so that
$f(x) \leq \mu_D(x)$ for all $x \in X$. If $x \in D$ then $\mu_D(x) \leq 1$ and thus, $f(x) \leq \mu_D(x) \leq 1$.
Since $D$ contains a neighbourhood of the origin we have that $f$ is a bounded on a neighbourhood of $0$ and so by Proposition \ref{cont-equivalence}, 
$f \in X^*$. Since $f(x_0) = \mu_D(x_0) > 1$ we get that $\sup\{f(x):x \in C\} \leq \sup\{f(x):x \in D\} \leq 1 < f(x_0)$.
\end{proof}

An immediate consequence of the Separation Theorem is the following result, which is sometimes known as Mazur's Theorem.

\begin{proposition}\label{weakly-closed} 
Let $C$ be a closed convex subset of a normed linear space $(X,\norm)$. Then $C$ is also closed with respect to the weak topology on $X$.
\end{proposition}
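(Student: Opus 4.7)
The plan is to show that the complement $X\setminus C$ is $\sigma(X^*,X)$-open by producing, around each point $x_0\notin C$, a basic weak neighbourhood that misses $C$. Once this is done, $C$ is weakly closed by definition.

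First I would note that the norm topology on $(X,\|\cdot\|)$ is itself a locally convex topological vector space topology: vector addition and scalar multiplication are norm-continuous (routine), and the open balls centred at $0$ form a local base of convex neighbourhoods. So Theorem \ref{separation-theorem} may be applied in the norm topology.

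Next, fix any $x_0 \in X \setminus C$. Since $C$ is norm-closed and convex and $X$ is locally convex, the Separation Theorem produces a norm-continuous linear functional $x^* \in X^*$ with
$$\alpha := \sup\{x^*(c) : c \in C\} < x^*(x_0).$$
Set $\varepsilon := x^*(x_0) - \alpha > 0$ and consider the basic set $N(x_0, \{x^*\}, \varepsilon)$. Any $x$ in this set satisfies $x^*(x) > x^*(x_0) - \varepsilon = \alpha$, so $x \notin C$; hence $N(x_0,\{x^*\},\varepsilon) \subseteq X\setminus C$. By Remark \ref{coincide-remark}, $N(x_0, \{x^*\}, \varepsilon)$ is $\sigma(X^*,X)$-open, so $x_0$ is an interior point of $X\setminus C$ in the weak topology. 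Since $x_0 \in X\setminus C$ was arbitrary, $X\setminus C$ is weakly open and $C$ is weakly closed.

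There is no real obstacle here: the substantive work is entirely inside the Separation Theorem. The only care required is to observe that the separating functional's weak-open half-space $\{x : x^*(x) > \alpha\}$ contains $x_0$ and can be localised to a basic neighbourhood of the form $N(x_0,\{x^*\},\varepsilon)$, which by the results of Subsection 2.2 is a member of the weak topology on $X$.
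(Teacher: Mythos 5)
Your argument is correct and is essentially the paper's own proof: both apply the Separation Theorem to each $x_0\in X\setminus C$ and conclude that the complement is weakly open, the only cosmetic difference being that you localise to a basic neighbourhood $N(x_0,\{x^*\},\varepsilon)$ while the paper writes $X\setminus C$ as a union of preimages $f_{x_0}^{-1}\big((\sup_{x\in C}f_{x_0}(x),\infty)\big)$. The one detail worth adding is the trivial case $C=\varnothing$ (which the paper dispatches explicitly at the outset), since the Separation Theorem as stated requires $C$ to be nonempty.
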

\begin{proof}
If $C$ is empty or the whole space, then $C$ is weakly closed, so let us suppose otherwise. 
Let $x_{0}\in X \setminus C$. Since $C$ is closed and convex, we have, by the Separation Theorem (Theorem \ref{separation-theorem}), the existence of an $f_{x_{0}}\in X^{*}$ such that
 $f_{x_{0}}(x_{0})>\sup_{x\in C}f_{x_{0}}(x)$. Thus, $x_{0}\in f_{x_{0}}^{-1}\big(\!\left(\sup_{x\in C}f_{x_{0}}(x), \infty\right)\!\big)$, 
 which, being the inverse image of an open set, is weakly open. It is then straightforward to check that
$X\setminus C=\bigcup_{x_{0}\in X\setminus C}f_{x_{0}}^{-1}\big((\sup_{x\in C} f_{x_{0}}(x), \infty)\big)$.
Hence, $X\setminus C$, being the union of weakly open sets, is weakly open. Thus, $C$ is weakly closed. 
\end{proof}


\subsection{Weak$^*$ topology}


Let $(X,\norm)$ be a normed linear space. For each $x \in X$ we define, $\widehat{x} \in X^{**} := (X^{*})^{*}$ by, 
$\widehat{x}(x^*):= x^{*}(x)$ for all $x^* \in X^*$. To show that $\widehat{x}$ is really in $X^{**}$ we must first check that it is
linear and then check that it is continuous. So suppose that $x^*$ and $y^*$ are in $X^*$, then
$$\widehat{x}(x^{*}+y^{*}) =  (x^{*}+y^{*})(x) = x^{*}(x) + y^{*}(x) = \widehat{x}(x^{*}) + \widehat{x}(y^{*}).$$
Also, for any  $\lambda \in \mathbb{R}$ and $x^* \in X^*$ we have that
$$\widehat{x}(\lambda x^*) = (\lambda x^*)(x) = \lambda x^*(x) = \lambda \widehat{x}(x^*).$$
Now, $|\widehat{x}(x^{*})| = |(x^{*})(x)| \leq \|x^{*}\| \cdot \|x\|$. Therefore, $\|\widehat{x}\| \leq \|x\|$ and so $\widehat{x}\in X^{**}$.

\medskip

\begin{proposition} Let $(X, \norm)$ be a normed linear space.  Then the mapping $x \mapsto \widehat{x}$ is a linear isometry from $X$ into $X^{**}$.  
\end{proposition}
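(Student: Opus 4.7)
The plan is to verify the two claims separately: linearity of the map $x \mapsto \widehat{x}$, and the isometry property $\|\widehat{x}\| = \|x\|$ for all $x \in X$. Both reductions are natural, and the only nontrivial input comes from the Hahn--Banach theorem (specifically Corollary \ref{HB}), which supplies norming functionals.

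For linearity, I would fix $x, y \in X$ and $\lambda \in \R$, and then verify the identities $\widehat{x+y} = \widehat{x} + \widehat{y}$ and $\widehat{\lambda x} = \lambda \widehat{x}$ pointwise on $X^*$. That is, for arbitrary $x^* \in X^*$, compute $\widehat{x+y}(x^*) = x^*(x+y) = x^*(x) + x^*(y) = \widehat{x}(x^*) + \widehat{y}(x^*)$, and similarly $\widehat{\lambda x}(x^*) = x^*(\lambda x) = \lambda x^*(x) = \lambda \widehat{x}(x^*)$. These are immediate from the linearity of each $x^* \in X^*$ and the definition of $\widehat{x}$, so this step is essentially bookkeeping.

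For the isometry property, one direction is already established in the paragraph preceding the statement: the estimate $|\widehat{x}(x^*)| \leq \|x^*\|\|x\|$ gives $\|\widehat{x}\| \leq \|x\|$. The reverse inequality $\|x\| \leq \|\widehat{x}\|$ is where the Hahn--Banach theorem enters. If $x = 0$ there is nothing to prove, so assume $x \neq 0$. By Corollary \ref{HB} there exists $f \in S_{X^*}$ (so $\|f\|=1$) with $f(x) = \|x\|$. Then $\widehat{x}(f) = f(x) = \|x\|$, and since $\|f\|=1$,
\[
\|\widehat{x}\| = \sup\{|\widehat{x}(g)| : g \in X^*,\ \|g\| \leq 1\} \geq |\widehat{x}(f)| = \|x\|.
\]
Combining the two inequalities yields $\|\widehat{x}\| = \|x\|$, and together with linearity this says $x \mapsto \widehat{x}$ is a linear isometry into $X^{**}$.

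The main (and essentially only) obstacle is the reverse norm inequality, which without Hahn--Banach would be inaccessible; everything else is a direct unpacking of definitions. The argument requires nothing beyond what has already been developed in the excerpt.
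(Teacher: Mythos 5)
Your proof is correct and follows essentially the same route as the paper: linearity is verified pointwise on $X^*$, and the isometry is obtained by combining the elementary bound $\|\widehat{x}\|\leq\|x\|$ with a norming functional from Corollary \ref{HB}. Your explicit handling of the case $x=0$ is a small point of extra care (the corollary is stated only for $x\neq 0$), but otherwise the two arguments coincide.
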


\begin{proof}  The mapping $x \mapsto \widehat{x}$ from $X$ into $X^{**}$ is linear, since for all  $x^{*} \in X^{*}$
$$\widehat{(x+y)}(x^{*})  =  x^{*}(x+y)= x^{*}(x) + x^{*}(y)  = \widehat{x}(x^{*}) + \widehat{y}(x^{*}).$$
Therefore, $\widehat{x+y} = \widehat{x} + \widehat{y}$. Also, for any  $\lambda \in \mathbb{R}$ and $x^* \in X^*$,
$$\widehat{(\lambda x)}(x^{*})  =  x^{*}(\lambda x) = \lambda x^{*}(x) =  \lambda \widehat{x}(x^{*}).$$
Therefore, $\widehat{(\lambda x)} = \lambda \widehat{x}$.  Next we show that $x \mapsto \widehat{x}$ is an isometry.
For each $x \in X$, we have by Corollary \ref{HB}, a linear function 
$x^{*} \in S_{X^*}$ such that $x^{*}(x) = \|x\|$. Therefore, 
$\|\widehat{x}\|  \geq \frac{|\widehat{x}(x^{*})|}{\|x^{*}\|}  =  |\widehat{x}(x^{*})| 
= |x^{*}(x)|= \|x\|$.
\end{proof} 

\medskip

If $(X,\norm)$ is a Banach space then $\widehat{X}$ is a closed subspace of $X^{**}$ where $\widehat{X}$ is defined as 
$\{\widehat{x} : x \in X\}$. We call $\widehat{X}$ the {\it natural embedding} of $X$ into $X^{**}$ and we
call $x \mapsto \widehat{x}$ from $X$ into $X^{**}$ the {\it natural embedding mapping}.

\medskip

An important topology for our concerns is the weak$^*$ topology.  Suppose that $(X,\norm)$ is a normed linear space. Then we call the topology $\sigma(\widehat{X}, X^*)$ on $X^*$, 
the {\it weak$^*$ topology} on $X^*$ and we write $(X^*, \mathrm{weak}^*)$  for $(X^*,\sigma(\widehat{X}, X^*))$. It follows form Proposition \ref{cont-equivalence} that $F \in X^{**}$ is weak$^*$ continuous if, and only if, $F \in \widehat{X}$.

\medskip

Let $(X, \norm)$ be a normed linear space and let $A\seq X$. We define the {\it (upper) polar of $A$} to be the subset  $A^\circ$ of $X^*$ defined by 
$$A^\circ:=\lbrace x^*\in X^*: x^*(a)\leq 1 \ \text{for all} \ a\in A\rbrace.$$ 

\begin{proposition}\label{PolarProperties}
Let $(X, \norm)$ be a normed linear space and let $A\seq X$. Then $A^\circ$ is convex, weak*-closed, and contains $0$.
\end{proposition}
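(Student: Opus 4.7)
The plan is to verify the three properties in increasing order of substance, all directly from the definition of $A^\circ$ and the definition of the weak$^*$ topology on $X^*$.

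For $0 \in A^\circ$, I would simply observe that the zero functional satisfies $0(a) = 0 \leq 1$ for every $a \in A$, so it lies in $A^\circ$ by definition. For convexity, I would take $x^*, y^* \in A^\circ$ and $\lambda \in (0,1)$, and for an arbitrary $a \in A$ compute
$$(\lambda x^* + (1-\lambda)y^*)(a) = \lambda x^*(a) + (1-\lambda) y^*(a) \leq \lambda \cdot 1 + (1-\lambda) \cdot 1 = 1,$$
which shows $\lambda x^* + (1-\lambda)y^* \in A^\circ$.

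The main content is the weak$^*$-closedness, and the natural way to get this is to rewrite $A^\circ$ as an intersection of weak$^*$-closed half-spaces. Specifically, recalling that the weak$^*$ topology on $X^*$ is defined as $\sigma(\widehat{X}, X^*)$, each evaluation functional $\widehat{a}: X^* \to \R$ (for $a \in A$) is weak$^*$-continuous by construction. Since $(-\infty,1]$ is closed in $\R$, the set $\widehat{a}^{-1}((-\infty,1]) = \{x^* \in X^* : x^*(a) \leq 1\}$ is weak$^*$-closed in $X^*$. Then I would observe that
$$A^\circ = \mbox{$\bigcap_{a \in A}$}\{x^* \in X^* : x^*(a) \leq 1\} = \mbox{$\bigcap_{a \in A}$} \widehat{a}^{-1}((-\infty,1]),$$
and conclude that $A^\circ$ is weak$^*$-closed as an intersection of weak$^*$-closed sets.

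There is no serious obstacle here; the only thing to be careful about is citing the definition of the weak$^*$ topology (the paragraph just before Proposition \ref{PolarProperties}) to justify that each $\widehat{a}$ is weak$^*$-continuous, rather than appealing to any deeper result like Proposition \ref{cont-equivalence}. If $A = \varnothing$, the intersection is vacuous and equals $X^*$, which is trivially convex, weak$^*$-closed, and contains $0$, so the argument is unaffected.
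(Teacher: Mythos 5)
Your proof is correct and follows essentially the same route as the paper: both express $A^\circ$ as $\bigcap_{a\in A}\widehat{a}^{-1}((-\infty,1])$ and use weak$^*$-continuity of the evaluation functionals to get weak$^*$-closedness. The only cosmetic difference is that you verify convexity by direct computation, whereas the paper notes the half-spaces themselves are convex so the intersection is too.
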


\begin{proof}
	The fact that $0\in A^\circ$ is trivial. To see that $A^\circ$ is weak*-closed and convex, note that $A^\circ=\bigcap_{a\in A} \widehat{a}^{-1}(-\infty, 1]$ is the intersection of weak*-closed and convex sets, 
	and so is itself, weak*-closed and convex.
\end{proof}

There are many interesting properties of polars that can be easily verified.  For example, (i) if $A \subseteq B$ then $B^\circ \subseteq A^\circ$, (ii) $(B_X)^\circ = B_{X^*}$,
(iii) for any $r>0$, $(rA)^\circ = r^{-1}A^\circ$.   By combining these we see that if $0 \in \mathrm{int}(A)$ then $A^\circ$ is bounded and if $A$ is bounded then $0 \in \mathrm{int}(A^\circ)$. 

\medskip

There is also a dual version of (upper) polars. Let $(X, \norm)$ be a normed linear space and let $A\seq X^*$. We define the {\it (lower) polar of $A$} to be the subset  $A_\circ$ of $X$ defined by 
$$A_\circ:=\lbrace x\in X: a^*(x) \leq 1 \ \text{for all} \ a^* \in A\rbrace.$$ 
There are many interesting relationships between these polars. For example, for any $\varnothing \not= A \subseteq X$, $(A^\circ)_\circ = \overline{\mathrm{co}}(A \cup \{0\})$ and
for any $\varnothing \not= B \subseteq X^*$, $(B_\circ)^\circ = \overline{\mathrm{co}}^{w^*}(B \cup \{0\})$.

\medskip

Perhaps the most famous theorem concerning polars is the following theorem.

\begin{theorem}[Bipolar Theorem] Let $C$ be a closed, convex subset of a normed linear space $(X, \norm)$ with $0 \in C$.  Then $C^{\circ\circ} := (C^\circ)^\circ = \overline{\widehat{C}}^{w^*}$.
\end{theorem}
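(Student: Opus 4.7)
The plan is to prove a two-way containment. For the easy direction $\overline{\widehat{C}}^{w^*} \seq C^{\circ\circ}$, I would observe that for every $c\in C$ and every $x^*\in C^\circ$ we have $\widehat{c}(x^*) = x^*(c) \le 1$, so $\widehat{C} \seq C^{\circ\circ}$. Applying Proposition \ref{PolarProperties} with the roles of $X$ and $X^*$ played by $X^*$ and $X^{**}$ respectively, $C^{\circ\circ}$ is weak$^*$-closed in $X^{**}$, and taking weak$^*$-closures yields $\overline{\widehat{C}}^{w^*} \seq C^{\circ\circ}$.

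For the reverse inclusion I would argue contrapositively. Suppose $F\in X^{**}\setminus \overline{\widehat{C}}^{w^*}$. Since the natural embedding is linear, $\widehat{C}$ is convex, and hence so is its weak$^*$-closure (closures of convex sets in a linear topological space are convex). The space $(X^{**}, \sigma(\widehat{X^*}, X^{**}))$ is locally convex, so the Separation Theorem (Theorem \ref{separation-theorem}) provides a weak$^*$-continuous linear functional $\Phi$ on $X^{**}$ with
$$\sup_{G\in \overline{\widehat{C}}^{w^*}} \Phi(G) < \Phi(F).$$
By Proposition \ref{cont-equivalence}, applied in $X^{**}$, every weak$^*$-continuous linear functional on $X^{**}$ lies in $\mathrm{span}(\widehat{X^*}) = \widehat{X^*}$, so $\Phi = \widehat{x^*}$ for some $x^*\in X^*$; that is, $\Phi(G) = G(x^*)$ for all $G \in X^{**}$.

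Set $\alpha := \sup_{c\in C} x^*(c)$. Restricting the separation inequality to $\widehat{C} \seq \overline{\widehat{C}}^{w^*}$ gives $\alpha \le \sup_{G\in \overline{\widehat{C}}^{w^*}} G(x^*) < F(x^*)$, while $0 \in C$ forces $\alpha \ge 0$. If $\alpha > 0$, then $x^*/\alpha \in C^\circ$ and $F(x^*/\alpha) > 1$, so $F\notin C^{\circ\circ}$. If $\alpha = 0$, then $x^*(c) \le 0 \le 1$ for every $c \in C$, so $tx^* \in C^\circ$ for all $t>0$; since $F(x^*)>0$, choosing $t$ with $tF(x^*)>1$ again gives $F\notin C^{\circ\circ}$.

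The main technical point I expect will be verifying that the Separation Theorem is applicable in $X^{**}$ with its weak$^*$ topology, and then correctly identifying the separating functional as evaluation at some $x^*\in X^*$ via Proposition \ref{cont-equivalence}. Handling the degenerate case $\alpha = 0$ by scaling $x^*$ \emph{up} rather than down is a minor subtlety that should be flagged but is not a serious obstacle.
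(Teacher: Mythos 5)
Your proof is correct and follows essentially the same route as the paper: the easy inclusion via Proposition \ref{PolarProperties} applied one level up, and the hard inclusion by separating a point of $X^{**}\setminus\overline{\widehat{C}}^{w^*}$ in $(X^{**},\mathrm{weak}^*)$, identifying the separating functional with some $\widehat{x^*}$ via Proposition \ref{cont-equivalence}, and rescaling $x^*$ into $C^\circ$. The only differences are cosmetic — you argue contrapositively rather than by contradiction, and you make explicit the case split ($\alpha>0$ versus $\alpha=0$) that the paper compresses into the phrase ``replace $x^*$ by $\lambda x^*$''.
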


\begin{proof}
	It follows directly from the definition of $(C^\circ)^\circ$ that $\widehat{C}\seq C^{\circ\circ}$. Moreover, by Proposition \ref{PolarProperties} we know that $C^{\circ\circ}$ is weak$^*$-closed 
	and so $\overline{\widehat{C}}^{w^*}\seq C^{\circ\circ}$.
	Now suppose, in order to obtain a contradiction, that  $\overline{\widehat{C}}^{w^*}\subsetneq C^{\circ\circ}.$ Then there exists an $F_0\in C^{\circ\circ}\setminus \overline{\widehat{C}}^{w^*}$.
	By Theorem \ref{separation-theorem}, applied in $(X^{**},\mathrm{weak}^*)$, there exists an $x^* \in X^*$ such that 
	$$0 \leq \sup \{F(x^*): F \in \overline{\widehat{C}}^{w^*}\} =  \sup \{\widehat{x^*}(F):F \in \overline{\widehat{C}}^{w^*}\} < \widehat{x^*}(F_0) =F_0(x^*).$$
	If necessary, we may replace $x^*$ by $\lambda x^*$, (for some $0<\lambda$ and relabelling), so that 
	$$\sup\{x^*(c):c \in C\} = \sup\{\widehat{c}(x^*): \widehat{c} \in \widehat{C}\} \leq \sup \{F(x^*): F \in \overline{\widehat{C}}^{w^*}\} \leq 1 <  F_0(x^*).$$
	Therefore, $x^* \in C^\circ$. However, this implies that $F_0(x^*) \leq 1$ since $F_0 \in C^{\circ\circ}$, which contradicts the earlier inequality: $1 <  F_0(x^*)$.
     \end{proof}
	
An important application of the Bipolar Theorem is given next.

\begin{corollary}[Goldstine's Theorem] \label{Goldstine's Theorem} Let $(X, \norm)$ be a normed linear space then $B_{\widehat{X}}$ is weak$^*$ dense in $B_{X^{**}}$.
\end{corollary}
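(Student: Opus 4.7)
The plan is to derive Goldstine's Theorem as an immediate corollary of the Bipolar Theorem applied to $C := B_X$. Note $B_X$ is a closed, convex subset of $(X,\|\cdot\|)$ (and hence, by Proposition \ref{weakly-closed}, closed for the weak topology) with $0 \in B_X$, so the Bipolar Theorem gives
$$(B_X)^{\circ\circ} = \overline{\widehat{B_X}}^{w^*}.$$
Since the natural embedding $x \mapsto \widehat{x}$ is a linear isometry of $X$ into $X^{**}$, we have $\widehat{B_X} = B_{\widehat{X}}$, so the right-hand side equals $\overline{B_{\widehat{X}}}^{w^*}$.

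It therefore suffices to identify the left-hand side with $B_{X^{**}}$. The first step uses property (ii) mentioned after Proposition \ref{PolarProperties}, namely $(B_X)^\circ = B_{X^*}$. (For completeness: if $x^* \in B_{X^*}$ and $x \in B_X$ then $x^*(x) \leq \|x^*\|\|x\| \leq 1$, so $x^* \in (B_X)^\circ$; conversely, if $x^* \in (B_X)^\circ$ then for any $x \in B_X$ both $x$ and $-x$ lie in $B_X$, giving $|x^*(x)| \leq 1$, whence $\|x^*\| \leq 1$.) The second step is to run the identical argument one level up in the dual: I would show
$$(B_{X^*})^\circ = B_{X^{**}},$$
where the outer polar is taken in $X^{**}$. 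If $F \in B_{X^{**}}$ and $x^* \in B_{X^*}$, then $F(x^*) \leq \|F\|\|x^*\| \leq 1$. Conversely, if $F(x^*) \leq 1$ for every $x^* \in B_{X^*}$, then because $B_{X^*}$ is symmetric we also get $-F(x^*) = F(-x^*) \leq 1$, so $|F(x^*)| \leq 1$ for all $x^* \in B_{X^*}$, giving $\|F\| \leq 1$.

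Combining these two identifications yields $(B_X)^{\circ\circ} = (B_{X^*})^\circ = B_{X^{**}}$, and the Bipolar Theorem then gives
$$B_{X^{**}} = \overline{B_{\widehat{X}}}^{w^*},$$
which is exactly the statement that $B_{\widehat{X}}$ is weak$^*$ dense in $B_{X^{**}}$. No step in this argument is hard; the only small subtlety is remembering that the upper polar is a one-sided condition ($x^*(a) \leq 1$), so the symmetry of the balls is needed to recover the norm bound $|\cdot| \leq 1$ at each level. Everything else is simply invocation of the Bipolar Theorem together with the fact that the natural embedding is isometric.
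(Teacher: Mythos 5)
Your proposal is correct and follows essentially the same route as the paper: both apply the Bipolar Theorem to $B_X$ and use the identifications $(B_X)^\circ = B_{X^*}$ and $(B_{X^*})^\circ = B_{X^{**}}$ to conclude $B_{X^{**}} = (B_X)^{\circ\circ} = \overline{B_{\widehat{X}}}^{w^*}$. The only difference is that you spell out the verification of $(B_Y)^\circ = B_{Y^*}$ (including the symmetry point), which the paper simply cites as a previously observed fact.
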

\begin{proof} We apply twice, the general fact (observed before) that if $B_Y$ is the closed unit ball of a normed linear space $Y$  then $B_{Y^*} = (B_Y)^\circ$ to obtain  
$$B_{X^{**}} = (B_{X^*})^\circ = ((B_X)^\circ)^\circ = (B_X)^{\circ\circ}$$ 
and then apply the Bipolar Theorem. 
\end{proof}

Perhaps the main reason for the interest in the weak$^*$ topology is contained in the next theorem.  It says that, although it is too much to ask that the dual ball  be compact with respect to the norm topology
(unless the space is finite dimensional), it is possible that it is compact with respect to a weaker topology.
 
\begin{theorem}[Banach-Alaoglu Theorem \cite{BanachAlaoglu}]\label{Banach-Alaoglu}  Let $(X, \norm)$ be a normed linear space.  Then $(B_{X^*},\mbox{weak$^*$})$ is compact.
\end{theorem}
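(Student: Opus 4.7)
The plan is to exhibit $(B_{X^*}, \text{weak}^*)$ as a closed subset of a compact product space, namely the product $K := \prod_{x \in X}[-\|x\|, \|x\|]$ equipped with the product topology. By Tychonoff's theorem (assumed from general topology), $K$ is compact, and every closed subspace of a compact space is compact, so it will suffice to embed $B_{X^*}$ into $K$ as a closed subset in a way that identifies the weak$^*$ topology with the subspace topology inherited from $K$.

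First I would define the evaluation map $\Phi:B_{X^*}\to K$ by $\Phi(x^*) := (x^*(x))_{x\in X}$. This is well-defined because $|x^*(x)|\leq \|x^*\|\cdot\|x\|\leq\|x\|$ for each $x^*\in B_{X^*}$, and it is clearly injective since $x^*$ is determined by its values on $X$. The next step is to verify that $\Phi$ is a homeomorphism onto its image $\Phi(B_{X^*})\subseteq K$. Both topologies have a natural subbase indexed by points of $X$: the weak$^*$ subbase on $B_{X^*}$ consists of the sets of the form $\{x^*\in B_{X^*}:|x^*(x)-c|<\varepsilon\}$ (by Proposition \ref{coincide} applied to $\mathcal{F}=\widehat{X}$), while the product subbase on $K$ consists of preimages of open intervals under coordinate projections $\pi_x$. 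Since $\pi_x\circ\Phi = \widehat{x}|_{B_{X^*}}$, these subbases correspond exactly under $\Phi$, so $\Phi$ is a homeomorphism onto $\Phi(B_{X^*})$.

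The key step, and the one I expect to be the main technical obstacle, is showing that $\Phi(B_{X^*})$ is closed in $K$. An element $f \in K$ lies in $\Phi(B_{X^*})$ precisely when the function $x\mapsto f(x)$ defines a linear functional on $X$ (automatic boundedness by $1$ is already encoded in the product $K$). Linearity means that for every choice of $x,y\in X$ and $\lambda\in\R$, the quantities $f(x+y)-f(x)-f(y)$ and $f(\lambda x)-\lambda f(x)$ vanish. For each fixed triple $(x,y,\lambda)$, these equalities are closed conditions in the product topology because the coordinate projections $\pi_x,\pi_y,\pi_{x+y},\pi_{\lambda x}$ are continuous on $K$ and the maps in question are continuous combinations of them. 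Thus
$$\Phi(B_{X^*}) = \mbox{$\bigcap_{x,y\in X}$}\{f\in K: \pi_{x+y}(f)=\pi_x(f)+\pi_y(f)\} \cap \mbox{$\bigcap_{\lambda\in\R,\,x\in X}$}\{f\in K:\pi_{\lambda x}(f)=\lambda\pi_x(f)\},$$
which is an intersection of closed sets, hence closed.

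Finally, since $\Phi(B_{X^*})$ is closed in the compact space $K$ it is compact, and since $\Phi$ is a homeomorphism onto its image, $(B_{X^*},\text{weak}^*)$ is compact. The only nontrivial ingredient beyond elementary topology and the structure of the weak$^*$ topology established earlier is Tychonoff's theorem; everything else reduces to matching subbases and observing that linearity is cut out by countably-many-at-a-time closed conditions in the product.
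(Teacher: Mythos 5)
Your proof is correct and follows essentially the same route as the paper: embed $B_{X^*}$ into the compact product $\prod_{x\in X}[-\|x\|,\|x\|]$ via evaluation, check the embedding is a homeomorphism onto its image, and show the image is closed because it consists exactly of the ``linear'' points of the product. The only difference is presentational — you exhibit the image as an intersection of closed sets cut out by the linearity equations, whereas the paper verifies directly that any $g$ in the closure of the image satisfies $g(x+y)=g(x)+g(y)$ and $g(\lambda x)=\lambda g(x)$ by an $\varepsilon/3$ approximation argument; both establish the same closedness fact.
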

\begin{proof} For each $x \in X$, let $I_x := [-x,x]$ and let $Y := \prod_{x \in X} I_x$ be endowed with the product topology.  By Tychonoff's Theorem, $Y$ is compact. 
It follows from the definition of the product topology and 
Proposition \ref{coincide} that $\pi:B_{X^*} \to Y$, defined by, $\pi(x^*)(x) := x^*(x)$ for all $x \in X$, is a homeomorphic embedding of $B_{X^*}$ into $Y$.  So to show that $(B_{X^*}, \mathrm{weak^*})$
is compact it is sufficient to show that $\pi(B_{X^*})$ is a closed subset of $Y$, that is, it is sufficient to show that $\overline{\pi(B_{X^*})} \subseteq \pi(B_{X^*})$.  To this end, let $g \in \overline{\pi(B_{X^*})}$.
We will show that $g$ is ``linear''.  Let $x,y \in X$ and $\varepsilon >0$. Then there exists $x^* \in B_{X^*}$ such that $|g(x) - \pi(x^*)(x)| < \varepsilon/3$, $|g(y) - \pi(x^*)(y)| < \varepsilon/3$ and
$|g(x+y) - \pi(x^*)(x+y)| < \varepsilon/3$.  Then, since $x^*$ is linear,  
\begin{eqnarray*} 
\big|g(x+y) - [g(x)+g(y)]\big| &=& \big|[g(x+y) - \pi(x^*)(x+y)] + \pi(x^*)(x+y)- [g(x)+g(y)]\big| \\
&=&  \big|[g(x+y) - \pi(x^*)(x+y)] +[\pi(x^*)(x)-g(x)]+[\pi(x^*)(y)-g(y)\big | \\
&\leq& |g(x+y) - \pi(x^*)(x+y)| + |\pi(x^*)(x)-g(x)|+|\pi(x^*)(y)-g(y)|\\
&\leq& \varepsilon/3 + \varepsilon/3 + \varepsilon/3 = \varepsilon.
\end{eqnarray*}
Since $\varepsilon >0$ was arbitrary, $g(x+y) = g(x) + g(y)$.
Next, let $x \in X$,  $\lambda \in \R$ and $\varepsilon >0$. Then there exists $x^* \in B_{X^*}$ such that $|g(\lambda x)-\pi(x^*)(\lambda x)| < \varepsilon/2$ and $|g(x) - \pi(x^*)(x)|  < \varepsilon/2(|\lambda|+1)$. Then, since $x^*$ is linear,
\begin{eqnarray*} 
|g(\lambda x) - \lambda g(x)| &=& \big|[g(\lambda x) - \pi(x^*)(\lambda x)] + [\pi(x^*)(\lambda x) - \lambda g(x)]\big| \\
&=& \big|[g(\lambda x) - \pi(x^*)(\lambda x)] + [\lambda \pi(x^*)(x) -\lambda g(x)]\big| \\
&\leq& |g(\lambda x) - \pi(x^*)(\lambda x)|+ |[\lambda \pi(x^*)(x) -\lambda g(x)]|  \\
&\leq& \varepsilon/2 + |\lambda| |\pi(x^*)(x) -g(x)| < \varepsilon/2 + \varepsilon/2 = \varepsilon.
\end{eqnarray*}
Since $\varepsilon >0$ was arbitrary, $g(\lambda x) = \lambda g(x)$. Thus, if we define $y^*:X \to \R$ by, $y^*(x) := g(x)$ for all $x \in X$, then $y^* \in B_{X^*}$ and $g = \pi(y^*) \subseteq \pi(B_{X^*})$.
\end{proof}


\begin{proposition}\label{relative-weak-and-weak*} Let $(X,\norm)$ be a normed linear space.  Then the relative weak topology and the relative weak$^*$ topology coincide on the subspace $\widehat{X}$ of $X^{**}$.
\end{proposition}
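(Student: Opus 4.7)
The plan is to show the two inclusions between the relative topologies by exploiting the characterizations of $\sigma$-topologies developed earlier, together with the fact that every continuous linear functional on $X^{**}$ restricts, on $\widehat{X}$, to something of the form $\widehat{x^*}$ for some $x^* \in X^*$.

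One inclusion is essentially free: the weak topology on $X^{**}$ is $\sigma(X^{***}, X^{**})$ while the weak$^*$ topology on $X^{**}$ is $\sigma(\widehat{X^*}, X^{**})$, where $\widehat{X^*}$ denotes the natural embedding of $X^*$ into $X^{***}$. Since $\widehat{X^*} \subseteq X^{***}$, the general monotonicity fact that $\sigma(\mathcal{F}, X^{**}) \subseteq \sigma(\mathcal{F}', X^{**})$ whenever $\mathcal{F} \subseteq \mathcal{F}'$ gives that the weak$^*$ topology on $X^{**}$ is weaker than the weak topology on $X^{**}$, and so the same holds after restricting to $\widehat{X}$.

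For the reverse inclusion, I would use Proposition \ref{relative-topology}: given a relatively weakly open $U \subseteq \widehat{X}$ and a point $\widehat{y} \in U$, there exist $\Phi_1, \ldots, \Phi_n \in X^{***}$ and $\varepsilon > 0$ with $N_{X^{**}}(\widehat{y}, \{\Phi_1, \ldots, \Phi_n\}, \varepsilon) \cap \widehat{X} \subseteq U$. The crucial observation is that for each $k$, the composition $x_k^*: X \to \mathbb{R}$ defined by $x_k^*(z) := \Phi_k(\widehat{z})$ is a continuous linear functional on $X$ (linear because the natural embedding $z \mapsto \widehat{z}$ is linear and $\Phi_k$ is linear; continuous because $|x_k^*(z)| \leq \|\Phi_k\|\,\|\widehat{z}\| = \|\Phi_k\|\,\|z\|$), and hence $x_k^* \in X^*$. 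By the very definition of the natural embedding $X^* \hookrightarrow X^{***}$, one has $\widehat{x_k^*}(\widehat{z}) = \widehat{z}(x_k^*) = x_k^*(z) = \Phi_k(\widehat{z})$ for every $\widehat{z} \in \widehat{X}$, so $\Phi_k$ and $\widehat{x_k^*}$ agree on $\widehat{X}$.

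Consequently $N_{X^{**}}(\widehat{y}, \{\Phi_1, \ldots, \Phi_n\}, \varepsilon) \cap \widehat{X}$ equals $N_{X^{**}}(\widehat{y}, \{\widehat{x_1^*}, \ldots, \widehat{x_n^*}\}, \varepsilon) \cap \widehat{X}$, which by Proposition \ref{relative-topology} exhibits $U$ as relatively weak$^*$ open at $\widehat{y}$. I do not expect any real obstacle here; the only conceptually delicate step is keeping track of the three layers of duality and verifying the key identity $\widehat{x_k^*}(\widehat{z}) = \Phi_k(\widehat{z})$, which is just the definition of the canonical embedding applied in the right order.
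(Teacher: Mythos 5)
Your proof is correct and follows essentially the same route as the paper: the easy inclusion via monotonicity of $\sigma(\mathcal{F},\cdot)$ in $\mathcal{F}$, and the reverse inclusion by pulling each $\Phi_k\in X^{***}$ back to $x_k^*\in X^*$ via $x_k^*(z):=\Phi_k(\widehat{z})$ and observing that $\widehat{x_k^*}$ agrees with $\Phi_k$ on $\widehat{X}$, so the basic neighbourhoods from Proposition \ref{relative-topology} coincide after intersecting with $\widehat{X}$. No gaps.
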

\begin{proof} It follows immediately from the definitions that each relatively weak$^*$ open subset of $\widehat{X}$ is open in the relative weak topology on $\widehat{X}$.  
So we need only consider the converse statement.  Suppose that $U$ is a relatively weak open subset of $\widehat{X}$.  Let $\widehat{x}$ be any element of $U$. 
Then by, Proposition \ref{relative-topology} there exists a finite subset $\{\mathscr{F}_1, \mathscr{F}_2, \ldots,\mathscr{F}_n\}$ of $X^{***}$ and an $\varepsilon >0$ such that 
$N(\widehat{x}, \mathscr{F}_1, \mathscr{F}_2, \ldots, \mathscr{F}_N, \varepsilon) \cap \widehat{X} \subseteq U$.
For each $1 \leq k \leq n$ let $f_k:X \to \R$ be defined by, $f_k(x) := \mathscr{F}_k(\widehat{x})$.  Then $f_k \in X^*$, in fact $\|f_k\| \leq \|\mathscr{F}_k\|$ for each $1 \leq k \leq n$. We claim that
$$N(\widehat{x}, \widehat{f_1}, \widehat{f_2}, \ldots, \widehat{f_n}, \varepsilon) \cap \widehat{X} \subseteq  N(\widehat{x}, \mathscr{F}_1, \mathscr{F}_2, \ldots, \mathscr{F}_N, \varepsilon) \cap \widehat{X}  \subseteq U.$$
To see this, let $F \in N(\widehat{x}, \widehat{f_1}, \widehat{f_2}, \ldots, \widehat{f_n}, \varepsilon) \cap \widehat{X}$. Then $F = \widehat{y}$ for some $y \in X$ and $\widehat{y} \in N(\widehat{x}, \widehat{f_1}, \widehat{f_2}, \ldots, \widehat{f_n}, \varepsilon)
\cap \widehat{X}$.
Fix $1 \leq k \leq n$. Then
$$|\mathscr{F}_k(F)-\mathscr{F}_k(\widehat{x})| = |\mathscr{F}_k(\widehat{y})-\mathscr{F}_k(\widehat{x})| = |f_k(y)-f_k(x)| = |\widehat{f_k}(\widehat{y})-\widehat{f_k}(\widehat{x})| = 
|\widehat{f_k}(F)-\widehat{f_k}(\widehat{x})| < \varepsilon.$$
Therefore, $F \in N(\widehat{x}, \mathscr{F}_1, \mathscr{F}_2, \ldots, \mathscr{F}_N, \varepsilon) \cap \widehat{X}$; which completes the proof of the claim.  The result now follows from Proposition \ref{relative-topology}.
\end{proof} 

In what follows we will often use (without saying) the fact that if $(Z, \tau)$ is a topological space and $A \subseteq Y \subseteq Z$, then $A$ is compact in $Z$ if, and only if, $A$ is compact in $Y$, with respect to the relative topology on $Y$.

\begin{remark}\label{reflex-remark} Together, Theorem \ref{Banach-Alaoglu} and Proposition \ref{relative-weak-and-weak*} are essential for our future endeavours, as they provide a method for showing that a closed and bounded convex
subset $C$ of a normed linear space $(X,\norm)$ is weakly compact. Namely, to show that $C$ is weakly compact it is sufficient (and necessary) to show that $\overline{\widehat{C}}^{w^*} \subseteq \widehat{X}$.
The reason for this is as follows: $\overline{\widehat{C}}^{w^*}$ is a weak$^*$ compact subset of $(X^{**},\mathrm{weak}^*)$, by Theorem \ref{Banach-Alaoglu}, and hence compact with respect to the
relative weak$^*$ topology on $\widehat{X}$.  Therefore, by Proposition \ref{relative-weak-and-weak*}, $\overline{\widehat{C}}^{w^*}$ is compact with respect to the relative weak topology on $\widehat{X}$. Further, 
by Proposition \ref{subspace topology}, $\overline{\widehat{C}}^{w^*}$ is compact with respect to the $\sigma((\widehat{X})^*, \widehat{X})$-topology on $\widehat{X}$ (i.e., the weak topology on $\widehat{X}$).
Let $j:X \to \widehat{X}$ be the linear isometry defined by, $j(x) := \widehat{x}$ for all $x \in X$. Then $j^{-1}:\widehat{X} \to X$ is also a linear isometry. Thus, by Proposition \ref{linearmap}, $C \subseteq j^{-1}(\overline{\widehat{C}}^{w^*})$ is
weakly compact.  Since $C$ is closed and bounded it is closed in the weak topology on $X$ (see, Proposition \ref{weakly-closed}).  Hence $C$ is compact with respect to the weak topology on $X$. 
\end{remark}

As an example of this approach, we will give our first characterisation of reflexivity in terms of the weak compactness of the unit ball.

\begin{theorem}[\!\!\cite{DunfordSchwartz1}]\label{reflexive} Let $(X,\norm)$ be a normed linear space. Then $X$ is reflexive (i.e., $X^{**} = \widehat{X}$) if, and only if, $B_X$ is compact with respect to the weak topology on $X$.
\end{theorem}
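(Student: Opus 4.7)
The plan is to combine Goldstine's Theorem (Corollary \ref{Goldstine's Theorem}) with the framework set up in Remark \ref{reflex-remark}. The pivotal identity is
$$\overline{\widehat{B_X}}^{w^*} = B_{X^{**}},$$
which is immediate from Goldstine's Theorem applied to the convex set $B_{\widehat{X}} = \widehat{B_X}$. Once this identity is in hand, the theorem reduces to the elementary observation that reflexivity, i.e., $X^{**} = \widehat{X}$, is equivalent to the inclusion $B_{X^{**}} \subseteq \widehat{X}$ (any $F \in X^{**} \setminus \{0\}$ can be rescaled into $B_{X^{**}}$).

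For the forward direction $(\Rightarrow)$, I assume $\widehat{X} = X^{**}$. Then trivially $\overline{\widehat{B_X}}^{w^*} \subseteq \widehat{X}$, so Remark \ref{reflex-remark} applies directly and delivers weak compactness of $B_X$. This direction is essentially packaged already: Banach-Alaoglu (Theorem \ref{Banach-Alaoglu}) gives weak$^*$ compactness, Proposition \ref{relative-weak-and-weak*} converts this into weak compactness inside $\widehat{X}$, and the isometry $j: X \to \widehat{X}$, $j(x) := \widehat{x}$, together with Proposition \ref{linearmap}, transfers the compactness back to $B_X$.

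For the reverse direction $(\Leftarrow)$, suppose $B_X$ is weakly compact. Since $j$ is a linear isometry, Proposition \ref{linearmap} (applied to both $j$ and $j^{-1}$) shows that $\widehat{B_X} = j(B_X)$ is compact with respect to the weak topology on $\widehat{X}$, which by Proposition \ref{subspace topology} is the same as compactness in the relative $\sigma(X^*,X)$-topology on $\widehat{X}$; by Proposition \ref{relative-weak-and-weak*} this coincides with the relative weak$^*$ topology on $\widehat{X}$. So $\widehat{B_X}$ is compact, hence closed (using Hausdorffness, which follows from local convexity and the separating nature of $\widehat{X}$), in $(X^{**}, \mathrm{weak}^*)$. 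Combining with Goldstine,
$$B_{X^{**}} = \overline{\widehat{B_X}}^{w^*} = \widehat{B_X} \subseteq \widehat{X},$$
and rescaling yields $X^{**} = \widehat{X}$.

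The main obstacle is simply bookkeeping of topologies: weak on $X$, weak on $\widehat{X}$, relative weak$^*$ on $\widehat{X}$, and the ambient weak$^*$ topology on $X^{**}$. All of the nontrivial analytic content sits in Banach-Alaoglu and Goldstine; the rest is topological transport through the isometric embedding $j$, and all of it has been set up in the preceding propositions. Once the topologies are lined up, the theorem is essentially a one-line consequence of Goldstine.
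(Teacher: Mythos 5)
Your proposal is correct and follows essentially the same route as the paper: Banach--Alaoglu plus Proposition \ref{relative-weak-and-weak*} and the isometry $j$ for the forward direction, and Goldstine's Theorem together with weak$^*$-compactness (hence closedness) of $\widehat{B_X}$ for the converse. The only blemish is a notational slip in the reverse direction, where ``relative $\sigma(X^*,X)$-topology on $\widehat{X}$'' should read the relative $\sigma(X^{***},X^{**})$-topology (the paper gets to weak$^*$-compactness of $B_{\widehat{X}}$ slightly more directly, by noting that the weak$^*$ topology on $X^{**}$ is weaker than its weak topology), but the argument is sound either way.
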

\begin{proof} Suppose that  $B_X$ is compact with respect to the weak topology on $X$. Then, by Proposition \ref{linearmap}, $B_{\widehat{X}}$ is compact with respect to the weak topology on $X^{**}$
as: (i) the mapping, $x \mapsto \widehat{x}$, is a bounded linear operator from $X$ into $X^{**}$ and (ii) the general fact that the continuous image of a compact set is compact. Now, since the weak$^*$
topology on $X^{**}$ is weaker (and certainly no stronger) than the weak topology on $X^{**}$,  $B_{\widehat{X}}$ is compact with respect to the weak$^*$ topology on $X^{**}$.  Furthermore, since the
weak$^*$ topology is Hausdorff, $B_{\widehat{X}}$ is closed with respect to the weak$^*$ topology on $X^{**}$. Thus, by Goldstine's Theorem, (Theorem \ref{Goldstine's Theorem}) 
$$B_{\widehat{X}} = \overline{B_{\widehat{X}}}^{w^*} = B_{X^{**}}.$$
So, $X^{**} = \bigcup_{n \in \N} nB_{X^{**}} = \bigcup_{n \in \N} nB_{\widehat{X}} = \widehat{X}$.  

\medskip

Conversely, suppose that $X^{**} = \widehat{X}$. Then $B_{X^{**}} = B_{\widehat{X}}$ and so by Theorem \ref{Banach-Alaoglu}, $(B_{\widehat{X}}, \mathrm{weak}^*)$ is compact.  Since
$B_{\widehat{X}} \subseteq \widehat{X}$ we have by Proposition \ref{relative-weak-and-weak*} that $(B_{\widehat{X}}, \mathrm{weak})$ is compact. Finally, since $x \mapsto \widehat{x}$ is a
linear isometry from $X$ onto $X^{**}$ (since we are assuming that $X^{**} = \widehat{X}$), its inverse is a continuous linear operator (in fact an isometry as well) and so by Proposition \ref{linearmap},
$(B_X,\mathrm{weak})$ is compact too, as the continuous image of a compact set is compact.
\end{proof}


\section{James' Theorem on weak compactness }


In this section we will provide three proofs of James' Theorem on weak compactness, listing them in order of increasing generality.  First we provide a proof that is valid in all separable Banach spaces, then 
we give a proof that is valid in any Banach space whose dual ball is weak$^*$ sequentially compact, and then finally, we will present a proof that holds in all Banach spaces.  It is our hope that this
incremental approach to the full James' Theorem will make the final proof more accessible and less intimidating to the reader.


\subsection{James' Theorem on weak compactness: the separable case}


Convexity is the key to all our proofs of James' Theorem.

\medskip

Let $A$ be a nonempty convex subset of a vector space $V$  and let $\varphi:A \rightarrow\R$ be a function. We say that $\varphi$ is {\it convex} if 
		$$\varphi(\lambda x+(1-\lambda)y)\leq \lambda\varphi(x)+(1-\lambda)\varphi(y)$$
		for all $x,y\in A$ and all $0\leq\lambda\leq 1$.

\begin{lemma}[\!\! \cite{MoorsWhite}]\label{convex}
	Let $0< \beta$, $0< \beta'$ and suppose that $\varphi:[0,\beta+\beta']\rightarrow\mathbb{R}$ is a convex function.	Then
$$ \frac{\varphi(\beta)-\varphi(0)}{\beta} \leq \frac{\varphi(\beta+\beta')-\varphi(\beta)}{\beta'}.$$
\end{lemma}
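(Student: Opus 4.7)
The plan is to use the standard fact that the slopes of secants of a convex function are monotone: writing the middle point $\beta$ as a convex combination of the endpoints $0$ and $\beta+\beta'$ and then applying the definition of convexity will give the inequality essentially by algebra.

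More precisely, I would first observe that
$$\beta = \frac{\beta'}{\beta+\beta'}\cdot 0 + \frac{\beta}{\beta+\beta'}(\beta+\beta'),$$
where both coefficients are nonnegative and sum to $1$, so this is a genuine convex combination (this is where I use that $\beta,\beta'>0$). Setting $\lambda := \beta'/(\beta+\beta') \in [0,1]$, the definition of convexity applied with $x=0$ and $y=\beta+\beta'$ then yields
$$\varphi(\beta) \;\leq\; \frac{\beta'}{\beta+\beta'}\,\varphi(0) + \frac{\beta}{\beta+\beta'}\,\varphi(\beta+\beta').$$

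Next I would multiply both sides by $\beta+\beta'>0$ to clear denominators, obtaining
$$(\beta+\beta')\varphi(\beta) \;\leq\; \beta'\varphi(0) + \beta\,\varphi(\beta+\beta'),$$
then split $(\beta+\beta')\varphi(\beta) = \beta\varphi(\beta)+\beta'\varphi(\beta)$ and rearrange to
$$\beta'\bigl(\varphi(\beta)-\varphi(0)\bigr) \;\leq\; \beta\bigl(\varphi(\beta+\beta')-\varphi(\beta)\bigr).$$
Finally, dividing through by the strictly positive quantity $\beta\beta'$ gives exactly the claimed inequality.

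There is really no hard step here: the only thing to be careful about is that the convex combination coefficients are nonnegative, which follows from the hypotheses $\beta,\beta'>0$ (the endpoints of the combination lie in the domain $[0,\beta+\beta']$ where $\varphi$ is defined, so convexity applies). The whole proof is a few lines of arithmetic once one spots the right convex combination, so I would not anticipate any obstacle.
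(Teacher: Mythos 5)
Your proof is correct and is essentially the paper's proof: the paper simply states the convex-combination inequality $\varphi(\beta) \leq \frac{\beta}{\beta+\beta'}\varphi(\beta+\beta') + \frac{\beta'}{\beta+\beta'}\varphi(0)$ and says the result follows by rearranging, which is exactly the computation you carry out in full.
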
	
\begin{proof}
	The inequality given in the statement of the lemma follows by rearranging the inequality \\
\hspace*{3.5cm} $\displaystyle \varphi(\beta) \leq \frac{\beta}{\beta + \beta'} \varphi(\beta + \beta') + \frac{\beta'}{\beta + \beta'}\varphi(0).$  \hspace*{2.0cm}
\end{proof}

 Our first application of convexity is given next.  It plays an important role in all three proofs of James' theorem.
 
\begin{lemma}[\!\!\cite{MoorsWhite}]\label{increasing}
	Let $V$ be a vector space (over $\mathbb{R}$) and let $\varphi:A\rightarrow\mathbb{R}$ be a convex function defined on a convex set $A$ with $0 \in A$.
If $(A_n:n\in\n)$ is a decreasing sequence of nonempty, convex subsets of $V$, $(\beta_n:n \in \n)$ is any sequence of strictly positive numbers such that $(\sum_{n=1}^\infty \beta_n)A_1 \subseteq A$, $r\in \mathbb{R}$ and 
$$\beta_1r+\varphi(0) < \inf_{a\in A_1}\varphi(\beta_1a),$$ then there exists a sequence $(a_n:n \in \n)$ in $V$ such that, for all $n\in\n$:
\begin{enumerate}
\item[{\rm (i)}] $a_n\in A_n$ and 
\item[{\rm (ii)}] $\displaystyle \varphi($\mbox{$\sum_{i=1}^n\beta_ia_i$}$)+\beta_{n+1}r<\varphi($\mbox{$\sum_{i=1}^{n+1}\beta_ia_i)$}.
\end{enumerate}
\end{lemma}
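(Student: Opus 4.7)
My plan is induction on $n$, strengthened by an invariant that ``sees one step ahead.'' Writing $s_n:=\sum_{i=1}^{n}\beta_i a_i$ (with $s_0:=0$) and $M_n:=\inf_{a\in A_{n+1}}\varphi(s_n+\beta_{n+1}a)$, my induction hypothesis is
$$(H_n):\qquad M_n\;>\;\varphi(s_n)+\beta_{n+1}r.$$
The hypothesis of the lemma is precisely $(H_0)$. Note first that once $(H_n)$ is in hand, conclusion (ii) at level $n$ is automatic for any $a_{n+1}\in A_{n+1}$, since $\varphi(s_{n+1})\geq M_n>\varphi(s_n)+\beta_{n+1}r$. So the real task is to choose $a_{n+1}\in A_{n+1}$ so that $(H_{n+1})$ survives.

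The engine of the induction is the one-step lower bound
$$M_{n+1}\;\geq\;M_n+\frac{\beta_{n+2}}{\beta_{n+1}}(M_n-\varphi(s_n)),$$
valid for any choice of $a_{n+1}\in A_{n+1}$. To derive it, for an arbitrary $a\in A_{n+2}$, I set $c:=(\beta_{n+1}a_{n+1}+\beta_{n+2}a)/(\beta_{n+1}+\beta_{n+2})$, which lies in $A_{n+1}$ by convexity and the decreasingness $A_{n+2}\subseteq A_{n+1}$. Then I apply Lemma \ref{convex} to the one-variable convex function $t\mapsto\varphi(s_n+tc)$ on $[0,\beta_{n+1}+\beta_{n+2}]$ (well-defined via a routine check using $(\sum_k\beta_k)A_1\subseteq A$) with $\beta=\beta_{n+1}$ and $\beta'=\beta_{n+2}$. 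The chord-slope inequality rearranges to a lower bound on $\varphi(s_n+\beta_{n+1}a_{n+1}+\beta_{n+2}a)$ that, after replacing $\varphi(s_n+\beta_{n+1}c)$ by its lower bound $M_n$ (permissible since $c\in A_{n+1}$), becomes independent of $a$; taking the infimum over $a\in A_{n+2}$ yields the claimed estimate on $M_{n+1}$.

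Given this estimate, the inductive step is immediate. By $(H_n)$, the quantity $\eta:=(\beta_{n+2}/\beta_{n+1})(M_n-\varphi(s_n))-\beta_{n+2}r$ is strictly positive, and by the definition of $M_n$ as an infimum I can pick $a_{n+1}\in A_{n+1}$ with $\varphi(s_{n+1})<M_n+\eta/2$. Combining with the one-step estimate gives $\varphi(s_{n+1})+\beta_{n+2}r<M_n+\beta_{n+2}r+\eta\leq M_{n+1}$, which is exactly $(H_{n+1})$. The main obstacle I anticipate is the derivation of the one-step estimate: identifying the correct convex combination $c$ is what reduces a two-step issue to a one-dimensional chord inequality, and it is the only place where the decreasingness of $(A_n)$ enters the argument in an essential way.
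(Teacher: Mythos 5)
Your proof is correct and follows essentially the same route as the paper: the heart of both arguments is the convex combination $c=(\beta_{n+1}a_{n+1}+\beta_{n+2}a)/(\beta_{n+1}+\beta_{n+2})\in A_{n+1}$ fed into the chord-slope inequality of Lemma \ref{convex}, with $a_{n+1}$ chosen to nearly attain the infimum $M_n$. Your packaging — isolating the one-step lower bound on $M_{n+1}$ as valid for \emph{any} $a_{n+1}$, separately from the near-optimal choice — is a slightly cleaner bookkeeping of the paper's $\varepsilon$-argument, but not a different proof.
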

\begin{proof}
	We proceed in two parts. Firstly we prove that if  $\beta_nr+\varphi(u)\!<\! \inf_{a\in A_n}\!\varphi(u+\beta_na)$ 
for some $n\in\n$ and some $u\in (\sum_{i=0}^{n-1} \beta_1)A_1$, where $\beta_0 :=0$, then there exists an $a_n\in A_n$, such that $$\beta_{n+1}r+\varphi(u+\beta_na_n) < \inf_{a\in A_n}\varphi(u+\beta_na_n+\beta_{n+1}a).$$
To see this, suppose that $u\in (\sum_{i=0}^{n-1} \beta_1)A_1$ and that $\beta_nr+\varphi(u)  < \inf_{a\in A_n}\varphi(u+\beta_na)$.
Then there exists an $\epsilon>0$ such that 
$$r+2\epsilon < \frac{\inf_{a\in A_n}\varphi(u+\beta_na)-\varphi(u)}{\beta_n}. \mbox{\hspace*{3.0cm} $(*)$}$$
So, choose $a_n\in A_n$ such that 
$\varphi(u+\beta_na_n)<\inf_{a\in A_n}\varphi(u+\beta_na)+\beta_{n+1}\epsilon$.
Let $a\in A_n$. Then \\ 
$v:=(\beta_na_n+\beta_{n+1}a)/(\beta_n+\beta_{n+1})\in A_n$ (since $A_n$ is convex) and so,
\begin{eqnarray*}
r +2\epsilon &<& \frac{\varphi(u+\beta_nv)-\varphi(u + 0v)}{\beta_n} \mbox{\quad \quad (by $(*)$ and the fact that $v \in A_n$)}    \\
&\leq& \frac{\varphi(u+(\beta_n+\beta_{n+1})v)-\varphi(u+\beta_nv)}{\beta_{n+1}}. \mbox{\quad \quad (by Lemma \ref{convex}.)}
\end{eqnarray*}
Rearranging gives  $$\beta_{n+1}(r +\epsilon)+[\varphi(u+\beta_nv) + \beta_{n+1}\epsilon]  < \varphi(u+\beta_na_n+\beta_{n+1}a),$$ for all $a\in A_n$.  Since
$\varphi(u+\beta_na_n)< [\varphi(u+\beta_nv)+\beta_{n+1}\epsilon]$, the desired inequality follows. \\ \\
From this, we may inductively construct a sequence $(a_n:n \in\n)$ with the requisite properties (i) and (ii). 
For the first step, we set $u:=0$ and then, by  hypothesis, we have that 
$$\beta_1r+\varphi(0) < \inf_{a\in A_1}\varphi(\beta_1a) = \inf_{a\in A_1}\varphi(0+\beta_1a).$$ 
So, by the first result, there exists an $a_1\in A_1$, such that $\displaystyle \beta_2r+\varphi(\beta_1a_1)<\inf_{a\in A_1} \varphi(\beta_1a_1+\beta_2a)$. \\ \\
For the $n^{\rm{th}}$ step, set $u:=\sum_{i=1}^{n-1}\beta_i a_i$. Since $A_n\subseteq A_{n-1}$, and by the way $a_{n-1}$ was constructed, we have that 
$$\beta_nr + \varphi(u) <   \inf_{a\in A_{n-1}}\varphi(u+\beta_n a) \leq \inf_{a\in A_n}\varphi(u+\beta_n a).$$
So, by the first result again, there exists $a_n\in A_n$, such that 
$\beta_{n+1}r + \varphi\left(\mbox{$\sum_{i=1}^{n}\beta_i a_i$}\right) <  \inf_{a\in A_n} \varphi\left(\mbox{$\sum_{i=1}^{n}\beta_i a_i+\beta_{n+1}a$}\right)$
which completes the induction. The sequence $(a_n:n\in\n)$ has the properties claimed above. 
\end{proof} 

For the proof of James' theorem we will only require the following special case of this lemma.

\begin{lemma}\label{increasing2}
	Let $V$ be a vector space (over $\mathbb{R}$) and let $\varphi:V\rightarrow\mathbb{R}$ be a sub-linear function. 
If $(A_n:n\in\n)$ is a decreasing sequence of nonempty, convex subsets of $V$, $(\beta_n:n \in \n)$ is any sequence of strictly positive numbers, $r >0$ and 
$$r < \inf_{a\in A_1}\varphi(a),$$ then there exists a sequence $(a_n:n \in \n)$ in $V$ such that, for all $n\in\n$:
\begin{enumerate}
\item[{\rm (i)}] $a_n\in A_n$ and 
\item[{\rm (ii)}] $\displaystyle \varphi($\mbox{$\sum_{i=1}^n\beta_ia_i$}$)+\beta_{n+1}r<\varphi($\mbox{$\sum_{i=1}^{n+1}\beta_ia_i)$}.
\end{enumerate}
\end{lemma}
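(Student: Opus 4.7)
The plan is to deduce Lemma \ref{increasing2} directly from Lemma \ref{increasing} by verifying that every hypothesis of the latter is satisfied in the present sublinear setting. The only work is bookkeeping: recognising that a sublinear functional is automatically convex, that it vanishes at $0$, and that positive homogeneity converts the hypothesis $r < \inf_{a \in A_1} \varphi(a)$ into the hypothesis required by Lemma \ref{increasing}.

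First I would take $A := V$ in Lemma \ref{increasing}. Then $A$ is trivially a convex set containing $0$, and the condition $\left(\sum_{n=1}^\infty \beta_n\right) A_1 \subseteq A$ holds vacuously since $A = V$.

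Next I would verify that the sublinear $\varphi$ is convex on $V$. For $x,y \in V$ and $0 \leq \lambda \leq 1$,
$$\varphi(\lambda x + (1-\lambda) y) \leq \varphi(\lambda x) + \varphi((1-\lambda) y) = \lambda \varphi(x) + (1-\lambda) \varphi(y),$$
using subadditivity followed by positive homogeneity (noting that $\lambda, 1-\lambda \geq 0$). I would also record that $\varphi(0) = 0$, which follows from positive homogeneity applied with scalar $0$: $\varphi(0) = \varphi(0 \cdot 0) = 0 \cdot \varphi(0) = 0$.

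Finally I would translate the hypothesis. Since $\beta_1 > 0$, positive homogeneity gives $\inf_{a \in A_1} \varphi(\beta_1 a) = \beta_1 \inf_{a \in A_1} \varphi(a)$, so the hypothesis $r < \inf_{a \in A_1} \varphi(a)$ (combined with $\beta_1 > 0$ and $\varphi(0) = 0$) is equivalent to
$$\beta_1 r + \varphi(0) < \inf_{a \in A_1} \varphi(\beta_1 a),$$
which is precisely the hypothesis of Lemma \ref{increasing}. That lemma then yields a sequence $(a_n : n \in \N)$ in $V$ with $a_n \in A_n$ and $\varphi\!\left(\sum_{i=1}^n \beta_i a_i\right) + \beta_{n+1} r < \varphi\!\left(\sum_{i=1}^{n+1} \beta_i a_i\right)$ for every $n \in \N$, which is the conclusion of Lemma \ref{increasing2}. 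There is no real obstacle here; the only subtle point is the trivial but essential observation $\varphi(0) = 0$, which allows the hypothesis to take the simpler form $r < \inf_{a \in A_1} \varphi(a)$ after dividing by $\beta_1$.
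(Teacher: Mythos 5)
Your proposal is correct and is exactly the intended derivation: the paper states Lemma \ref{increasing2} as a special case of Lemma \ref{increasing} without further proof, and the specialization you carry out (taking $A:=V$, noting that sublinearity gives convexity and $\varphi(0)=0$, and using positive homogeneity to rewrite $r<\inf_{a\in A_1}\varphi(a)$ as $\beta_1 r+\varphi(0)<\inf_{a\in A_1}\varphi(\beta_1 a)$) is precisely the bookkeeping the authors leave to the reader.
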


In order to formulate our first version of James' theorem on weak compactness we need to introduce the following notions.  

\medskip

Let $K$ be a weak$^*$ compact convex subset of the dual of a Banach space $(X,\norm)$. A subset $B$ of $K$ is
called a {\it boundary} of $K$ if for every $\widehat{x} \in \widehat{X}$ there exists a $b^* \in B$ such that $\widehat{x}(b^*) = \sup\{\widehat{x}(y^*):y^* \in K\}$.
We shall say $B$, {\it $(I)$-generates $K$}, if for every countable cover $(C_n:n\in\N)$ of $B$ by weak$^*$ compact
convex subsets of $K$, the convex hull of $\bigcup_{n \in \N}C_n$ is norm dense in $K$. The following proof is found in \cite{MoorsWhite}.

\begin{theorem}[\!\!\cite{FonfLindenstraussPhelps, FonfLindenstrauss}] \label{Theorem 1} Let $K$ be a weak$^*$ compact convex subset of the dual of a Banach space $(X,\norm)$ and let $B$
be a boundary of $K$. Then $B$, $(I)$-generates $K$.
\end{theorem}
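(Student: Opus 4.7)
The plan is to argue by contradiction: I would assume there is a countable cover $(C_n)_{n \in \N}$ of $B$ by weak$^*$-compact convex subsets of $K$ with $D := \overline{\mathrm{co}}^{\|\cdot\|}\bigl(\bigcup_n C_n\bigr) \neq K$, and aim to construct a single element $x \in X$ for which the supremum $\sup_{y^* \in K} y^*(x)$ is not attained at any point of $\bigcup_n C_n$.  Since $B \subseteq \bigcup_n C_n$, this would contradict $B$ being a boundary of $K$.

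First I would pick $z^* \in K \setminus D$ and introduce the nested family $D_n := \mathrm{co}(C_1 \cup \cdots \cup C_n)$.  Each $D_n$ is weak$^*$-compact convex (as the continuous image of the weak$^*$-compact product $\Delta_n \times C_1 \times \cdots \times C_n$ under the affine-combination map), increases with $n$, and is contained in $D$, so $z^* \notin D_n$.  Since $(X^*, \mathrm{weak}^*)$ is locally convex with continuous dual $\widehat{X}$, Theorem \ref{separation-theorem} would furnish, for each $n$, an $x_n \in X$ with $\widehat{x_n}(z^*) > \sup_{y^* \in D_n}\widehat{x_n}(y^*)$; after rescaling I would arrange $\|x_n\| \leq 1$ and $\widehat{x_n}(z^*) - \sup_{D_n}\widehat{x_n} \geq 1$.

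The key step is fusing the $x_n$'s into a single $x \in X$ satisfying $\widehat{x}(z^*) > \sup_{D_n}\widehat{x}$ for every $n$ \emph{simultaneously}.  For this I would apply Lemma \ref{increasing2} to the sublinear function $\varphi(y) := \sup_{y^* \in K} y^*(y)$ on $X$ and to a decreasing sequence of nonempty convex sets $A_n \subseteq B_X$ that encode the separation constraint at level $n$ (roughly, $a \in A_n$ should force $\widehat{a}(z^*) \geq 1 + \sup_{D_n}\widehat{a}$, which is nonempty thanks to the rescaled $x_n$).  With a suitable $r > 0$ verifying $r < \inf_{A_1}\varphi$ and summable positive weights $(\beta_n)$, the lemma would produce $a_n \in A_n$ whose partial sums $s_n := \sum_{i \leq n} \beta_i a_i$ satisfy the strict-growth inequality $\varphi(s_{n+1}) - \varphi(s_n) > \beta_{n+1} r$.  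The norm-limit $x := \sum_{i \geq 1} \beta_i a_i \in X$ would then exist (by boundedness of $A_1$ and summability of $(\beta_n)$), and combining the cumulative growth produced by Lemma \ref{increasing2} with the $A_n$-constraints propagated through subadditivity of $\sup_{D_n}$ should yield the desired simultaneous separation $\widehat{x}(z^*) > \sup_{D_n}\widehat{x}$ for every $n$.

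Once such an $x$ is in hand, the boundary hypothesis applied to $\widehat{x}$ produces $b^* \in B$ with $\widehat{x}(b^*) = \sup_K \widehat{x}$; since $b^* \in C_{n_0} \subseteq D_{n_0}$ for some $n_0$, one obtains $\sup_{D_{n_0}}\widehat{x} \geq \widehat{x}(b^*) \geq \widehat{x}(z^*)$, contradicting the separation just constructed.  The principal obstacle will be the quantitative calibration of $A_n$, $r$, and $(\beta_n)$: the sets must remain decreasing, convex, and nonempty, the hypothesis $r < \inf_{A_1}\varphi$ of Lemma \ref{increasing2} must be verified, and the $A_n$-constraints must cooperate with the cumulative growth supplied by the lemma so that the defining separation inequality survives the passage to the norm-limit $x$.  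This bookkeeping is the heart of the Fonf--Lindenstrauss argument, and is what the Moors--White approach gains by replacing Simons' inequality with Lemma \ref{increasing2}.
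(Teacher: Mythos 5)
Your skeleton shares the paper's ingredients (argue by contradiction, separate $z^*$ from the increasing sets $D_n$, feed a sublinear $\varphi=\sup_K$ and convex sets built from the separating functionals into Lemma \ref{increasing2}, sum a norm-convergent series, and finally land a boundary point in some $C_{n_0}$), but the logical architecture is different in a way that breaks the proof. Your pivotal intermediate claim is that the limit $x=\sum_i\beta_i a_i$ satisfies the \emph{simultaneous} separation $\widehat{x}(z^*)>\sup_{D_n}\widehat{x}$ for every $n$, established \emph{before} the boundary hypothesis is used. No mechanism you describe delivers this. The natural estimate is
$\widehat{x}(z^*)-\sup_{D_n}\widehat{x}\geq\sum_{i}\beta_i\bigl[\widehat{a_i}(z^*)-\sup_{D_n}\widehat{a_i}\bigr]$,
and while the terms with $i\geq n$ are bounded below by $\beta_i$ (since $D_n\subseteq D_i$ and $a_i\in A_i$), each $a_i$ with $i<n$ is only separated from $D_i$, not from the larger set $D_n$; its contribution $\widehat{a_i}(z^*)-\sup_{D_n}\widehat{a_i}$ can be negative of order $\|a_i\|\sup_{y^*\in K}\|y^*\|$, and these uncontrolled early terms swamp the tail $\sum_{i\geq n}\beta_i\to 0$. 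Lemma \ref{increasing2} does not repair this: its growth inequality concerns $\varphi(s_n)=\sup_K\widehat{s_n}$, not $\sup_{D_n}\widehat{s_n}$ or $\widehat{s_n}(z^*)$. (Requiring a single $a_i$ to separate $z^*$ from all $D_m$ at once would be circular, since that is exactly the functional you are trying to build.)

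The paper's proof deliberately proves something weaker and routes the boundary hypothesis through the \emph{middle} of the argument rather than the end: after obtaining the growth inequality, it picks $b^*\in B$ with $g(b^*)=\sup_K g=p(g)$ and uses the attainment to telescope $\beta_n r<p(s_n)-p(s_{n-1})\leq p(g)-p(s_{n-1})\leq g(b^*)-s_{n-1}(b^*)=\sum_{i\geq n}\beta_i g_i(b^*)$; only then does $b^*\in C_N$ enter, via the inequality $(*\!*\!*)$ bounding $g_i(b^*)$ by $g_i(y^*)-\varepsilon$ for $i\geq n>N$, and the contradiction needs the extra hypothesis $\lim_n\beta_n^{-1}\sum_{i>n}\beta_i=0$ on the weights, which you omit (mere summability does not suffice). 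Two further calibration points: you cannot simultaneously arrange $\|x_n\|\leq 1$ and a separation gap $\geq 1$, since a norm-one functional separates by at most $\mathrm{dist}(z^*,D_n)$ (the paper instead keeps $\|\widehat{x}_n\|=1$ with gap $\varepsilon$); and verifying $0<r<\inf_{A_1}\varphi$ requires first passing to a subsequence so that $\widehat{x}_n(z^*)$ converges, which stabilises the lower bound used both there and in the final limit. As written, the heart of the argument is missing.
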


\begin{proof}  After possibly translating $K$, we may assume that $0 \in B$.
Let $\{C_n:n\in\N\}$ be weak* compact, convex subsets of $K$ such that $B\subseteq\bigcup_{n\in\N}C_n$ and suppose, for a contradiction, 
that co$[\bigcup_{n\in\N}C_n]$ is not norm dense in $K$. 
Then there must exist an $0<\ve$ and $y^*\in K$ such that 
$$y^*\in K\backslash(\mbox{co}[\mbox{$\bigcup_{n\in\N}C_n]$}+\ve\ball)$$
Since, for all $n\in\N$, co$[\bigcup_{j=1}^n C_j]$ is weak* compact and convex, there exist $(\widehat{x}_n: n \in\N)$ in $\widehat{X}$ such that  for every $n \in \N$, $\|\widehat{x}_n\|=1$ and 
$$\max\lbrace\widehat{x}_n(x^*):x^*\in \text{co}\mbox{$[\bigcup_{j=1}^n C_j]$}\rbrace +\ve =\max\lbrace\widehat{x}_n(x^*):x^*\in \text{co}\mbox{$[\bigcup_{j=1}^n C_j]$}+\ve\ball\rbrace <  
\widehat{x}_n(y^*). \mbox{\quad \quad $(**)$}$$
Now,  $(\widehat{x}_n(y^*):n\in\N)$ is a bounded sequence of real numbers and thus has a convergent subsequence $(\widehat{x}_{n_k}(y^*): k\in\N)$. 
Let $\displaystyle s := \lim_{k\rightarrow\infty}\widehat{x}_{n_k}(y^*)$. Then, $\ve \leq s$ and, after relabelling the sequence $(\widehat{x}_n:n \in \N)$ if 
necessary, we may assume that $|\widehat{x}_n(y^*)-s|< \ve/3$ for all $n\in\N$. 
Note that this relabelling does not disturb the inequality in ($**$). 

\medskip

We define $A_n:=\text{co}\lbrace\widehat{x}_k:n\leq k\rbrace$ for all $n\in\N$ and note that: (i) $(A_n:n \in \N)$ is a decreasing sequence of nonempty convex subsets of $\widehat{X}$ and 
(ii) if $N <n$ and  $b^* \in C_N$ then
$$g(b^*) < [g(y^*) - \ve] \mbox{\quad \quad for all $g \in A_n$} \hspace*{3.0cm} (*\!*\!*)$$
since, $\{\widehat{x}_k: n \leq k \} \subseteq \{\widehat{x} \in \widehat{X}: \widehat{x}(b^*-y^*) < -\ve\}$;  which is convex.
Next, we define $p:\widehat{X}\rightarrow\R$ by, 
$$p(\widehat{x}):=\sup_{x^*\in K}\widehat{x}(x^*) \mbox{ \quad  for all }  \widehat{x} \in \widehat{X}.$$ 
Then $p$ defines a sublinear functional on $\widehat{X}$. Moreover, for all $g\in A_1$, we have $(s - \ve/3) < g(y^*) \leq p(g)$ since $\{\widehat{x}_n: n \in \N\} \subseteq 
\{\widehat{x} \in \widehat{X}: (s - \ve/3) < \widehat{x}(y^*)\}$; which is convex and $y^* \in K$.

\medskip

Let $(\beta_n:n \in \N)$ be any sequence of positive numbers such that $\displaystyle \lim_{n\rightarrow\infty}\left(\mbox{$\sum_{i=n+1}^\infty$} \beta_i\right)/\beta_n=0$. Now,
$$(s-\ve/2) < (s - \ve/3) \leq \inf_{g \in A_1} p(g).$$ 
Therefore, by Lemma \ref{increasing2}, there exists a sequence $(g_n:n\in\N)$ in $\widehat{X}$ such that $g_n\in A_n$ and 
$$p(\mbox{$\sum_{i=1}^n$}\beta_ig_i)+\beta_{n+1}(s-\ve/2)<p(\mbox{$\sum_{i=1}^{n+1}$}\beta_ig_i) \mbox{ \quad for all $n\in\N$.}$$
Since $\|g_n\| \leq 1$ for all $n \in \N$, we have that $\sum_{i=1}^{\infty}\|\beta_ig_i\|\leq \sum_{i=1}^{\infty}\beta_i<\infty$. As $X$ is a Banach space, this implies 
that $g:=\sum_{i=1}^{\infty}\beta_ig_i\in\widehat{X}$.
Because $p$ is continuous, this implies that $(p(\sum_{i=1}^n\beta_ig_i):n\in\n)$ is a convergent - and hence bounded - sequence in $\r$. Moreover, Lemma \ref{increasing2} gives that $(p(\sum_{i=1}^n\beta_ig_i):n\in\n)$ is an increasing sequence.
Therefore, by the Convergence Theorem, $(p(\sum_{i=1}^n\beta_ig_i):n\in\n)$ converges to its supremum. That is,
	$$\sup_{n\in\n}p(\mbox{$\sum_{i=1}^n$}\beta_ig_i)=\lim_{n\rightarrow\infty}p(\mbox{$\sum_{i=1}^n$}\beta_ig_i)=p(\lim_{n\rightarrow\infty}\mbox{$\sum_{i=1}^n$}\beta_ig_i)=p(g).$$
Since $g\in\widehat{X}$, and since $B$ is a boundary for $K$, there must exist a $b^*\in B$ such that 
$$g(b^*)=\sup\{g(x^*):x^*\in K\}=p(g).$$ Then,
\begin{align*}
	\beta_n(s-\varepsilon/2) &<  p\!\left(\mbox{$\sum_{i=1}^n\beta_ig_i$}\right)-p\!\left(\mbox{$\sum_{i=1}^{n-1}\beta_ig_i$}\right) \\
	&\leq  p(g)-p\!\left(\mbox{$\sum_{i=1}^{n-1}\beta_ig_i$}\right)  \\
	 &\leq g(b^*)-\mbox{$\sum_{i=1}^{n-1}$}\beta_ig_i(b^*) =\mbox{$\sum_{i=n}^{\infty}$}\beta_ig_i(b^*).
\end{align*}
Since $B\subseteq \bigcup_{n\in\N}C_n$, $b^*\in C_N$ for some $N\in\N$. Thus, if $N<n$, then 
$$(s-\varepsilon/2)< \frac{1}{\beta_n}\!\Big(\mbox{$\sum_{i=n+1}^{\infty}$}\beta_ig_i(b^*)\Big) + g_n(b^*) < \frac{1}{\beta_n}\!\Big(\mbox{$\sum_{i=n+1}^{\infty}$}\beta_ig_i(b^*)\Big) + [g_n(y^*)-\varepsilon] \mbox{\quad by $(*\!*\!*),$}$$
since $g_n \in A_n$.
By taking the limit as $n$ tends to infinity we get that $(s - \varepsilon/2) \leq (s-\varepsilon)$; which is impossible. Therefore, $B$, ($I$)-generates $K$.
\end{proof}

\begin{remark} If $\displaystyle \beta_n := \frac{1}{n !}$ for all $n \in \N$ or, $\displaystyle \beta_n := \frac{1}{2^{n^2}}$ for all $n \in \N$, then $\displaystyle \lim_{n\rightarrow\infty}\frac{\mbox{$\sum_{i=n+1}^\infty$} \beta_i}{\beta_n}=0$.
\end{remark}

\begin{theorem}[James' Theorem: version 1, \cite{James57}]\label{James' Theorem: version 1} Let $C$ be a closed and bounded convex subset of a Banach space $(X,\norm)$. If $C$ is separable and
every continuous linear functional on $X$ attains its supremum over $C$, then $C$ is weakly compact.
\end{theorem}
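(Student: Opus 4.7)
Set $K:=\overline{\widehat{C}}^{\,w^{*}}\subseteq X^{**}$. Since $C$ is norm-bounded, $K$ lies inside a multiple of $B_{X^{**}}$, so by the Banach--Alaoglu Theorem (Theorem \ref{Banach-Alaoglu}) applied to the Banach space $X^{*}$, $K$ is weak$^{*}$-compact. By Remark \ref{reflex-remark} it suffices to show $K\subseteq \widehat{X}$. Viewing $K$ as a weak$^{*}$-compact convex subset of the dual $X^{**}$ of the Banach space $X^{*}$, Theorem \ref{Theorem 1} becomes available and will be the workhorse.

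The first step is to verify that $\widehat{C}$ itself is a boundary of $K$, where in this setting the boundary condition reads: for every $x^{*}\in X^{*}$ there exists $b\in\widehat{C}$ with $b(x^{*})=\sup_{F\in K}F(x^{*})$. By hypothesis, choose $c\in C$ with $x^{*}(c)=\sup_{c'\in C}x^{*}(c')$. Since the evaluation $F\mapsto F(x^{*})$ is weak$^{*}$-continuous on $X^{**}$ and $\widehat{C}$ is weak$^{*}$-dense in $K$,
\[
\widehat{c}(x^{*})=x^{*}(c)=\sup_{c'\in C}\widehat{c'}(x^{*})=\sup_{F\in K}F(x^{*}),
\]
so $\widehat{c}\in\widehat{C}$ does the job.

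Next I would exploit separability to manufacture, for each $m\in\mathbb{N}$, a countable cover of the boundary by weak$^{*}$-compact convex subsets of $K$. Fix a norm-dense sequence $(c_{n})_{n\in\mathbb{N}}$ in $C$ and set
\[
C_{n}^{(m)} := K\cap\bigl(\widehat{c_{n}}+(1/m)B_{X^{**}}\bigr).
\]
Each $C_{n}^{(m)}$ is convex and weak$^{*}$-closed in the weak$^{*}$-compact $K$ (since $B_{X^{**}}$ is weak$^{*}$-compact by Banach--Alaoglu and translations are weak$^{*}$-homeomorphisms), hence weak$^{*}$-compact. For any $c\in C$, choosing $n$ with $\|c-c_{n}\|<1/m$ gives $\widehat{c}\in C_{n}^{(m)}$, so $\widehat{C}\subseteq\bigcup_{n}C_{n}^{(m)}$. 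Theorem \ref{Theorem 1} then asserts that $\mathrm{co}\bigl(\bigcup_{n}C_{n}^{(m)}\bigr)$ is norm-dense in $K$. Consider the distance function $d(F):=\inf_{\widehat{x}\in\widehat{X}}\|F-\widehat{x}\|$, which is convex and $1$-Lipschitz in the norm topology. By construction $d\leq 1/m$ on every $C_{n}^{(m)}$, hence by convexity on $\mathrm{co}\bigl(\bigcup_{n}C_{n}^{(m)}\bigr)$, and by norm continuity on the norm closure, which contains $K$. As $m$ was arbitrary, $d\equiv 0$ on $K$; and since $\widehat{X}$ is norm-closed (because $X$ is complete), $K\subseteq\widehat{X}$. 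Remark \ref{reflex-remark} then gives that $C$ is weakly compact.

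The main obstacle is the construction of the cover in the third step: separability has to be converted into a countable family of weak$^{*}$-compact convex sets that simultaneously covers the boundary $\widehat{C}$ and whose members are uniformly close in norm to $\widehat{X}$. Once this is arranged at every scale $1/m$, $(I)$-generation converts boundary information into norm approximation of $K$ by $\widehat{X}$, which is exactly what Remark \ref{reflex-remark} requires. Without a countable dense set in $C$ this direct reduction breaks down, which explains why the general case (Section 3.3) demands a substantially more subtle argument.
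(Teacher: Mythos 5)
Your proposal is correct and follows essentially the same route as the paper: reduce to showing $\overline{\widehat{C}}^{w^*}\subseteq\widehat{X}$ via Remark \ref{reflex-remark}, use a countable dense subset of $C$ to cover the boundary $\widehat{C}$ by the weak$^*$-compact convex sets $K\cap(\widehat{c_n}+\varepsilon B_{X^{**}})$, and invoke Theorem \ref{Theorem 1} to place $K$ inside a small norm-neighbourhood of the closed subspace $\widehat{X}$. Your explicit verification that $\widehat{C}$ is a boundary and your use of the Lipschitz convex distance function $d(F)=\mathrm{dist}(F,\widehat{X})$ are harmless elaborations of steps the paper leaves implicit.
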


\begin{proof} Let $K := \overline{\widehat{C}}^{w^*}$.
To show that $C$ is weakly compact it is sufficient to show $K \subseteq \widehat{X}$, (see Remark \ref{reflex-remark}).  In fact, since $X$ is a Banach space and $x \mapsto \widehat{x}$ is a linear isometry, we have that $\widehat{X}$
is a Banach subspace of $X^{**}$ and so a closed subspace of $X^{**}$.  Therefore, it is sufficient to show 
that for every $0<\varepsilon$, $K \subseteq \widehat{X}+2\varepsilon B_{X^{**}}$. To this end, fix $0<\varepsilon$
and let $\{x_n:n \in \N\}$ be a dense subset of  $C$.
For each $n \in \N$, let $K^\ve_n := K \cap [\widehat{x_n}+\varepsilon B_{X^{**}}]$. Then $(K^\ve_n:n\in\N)$ is a cover of $\widehat{C}$
by weak$^*$ closed convex subsets of $K$. Since $\widehat{C}$ is a boundary of $K$, we have that
$K \subseteq \overline{\mbox{co}}\bigcup_{n \in \N} K^\ve_n \subseteq \widehat{X}+ 2\varepsilon B_{X^{**}}$; which completes the proof.  \end{proof}

By working a bit harder, we could extend this approach to proving James' theorem, via ($I$)-generation, to spaces whose dual ball is weak$^*$ sequentially compact.
Indeed, this is done in the paper \cite{WarrenSimple}.  
However, in this paper we will take another tack.
 We will prove James' theorem, in the case when the dual ball is weak$^*$ sequentially compact, in a way
that naturally extends to the general case, albeit requiring several extra technical results regarding the extraction of subsequences with ``small'' sets of cluster points.

\medskip One of the strengths of Theorem \ref{James' Theorem: version 1} is that it essentially only relies upon a separation argument (Theorem \ref{separation-theorem}) and Lemma \ref{increasing2}.  
In this way we see that this proof is very elementary.


\subsection{James' Theorem on weak compactness: the weak$^*$ sequentially compact case}


We shall shall start this subsection with two simple preliminary results.

\begin{proposition}\label{FD is closed}
	Let $(X,\norm)$ be a normed linear space. Then every finite-dimensional subspace of $X^*$ is weak$^*$-closed.
\end{proposition}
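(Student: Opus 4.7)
The plan is to prove this by a direct separation argument: for each $x_0^* \in X^* \setminus Y$, exhibit a basic weak$^*$ neighbourhood of $x_0^*$ that is disjoint from $Y$. This shows $X^* \setminus Y$ is weak$^*$-open, and hence $Y$ is weak$^*$-closed.

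The trivial case $Y = \{0\}$ is handled by the fact that $\sigma(\widehat{X}, X^*)$ is Hausdorff. So assume $\dim Y = n \geq 1$ and fix a basis $\{y_1^*, \ldots, y_n^*\}$ of $Y$. Given $x_0^* \in X^* \setminus Y$, we have $x_0^* \notin \mbox{span}\{y_1^*, \ldots, y_n^*\}$. The key step is to apply the \emph{contrapositive} of Lemma \ref{FiniteDimensional} (with $V = X$, $f_i = y_i^*$, and $g = x_0^*$): the conclusion of that lemma fails, so the hypothesis must fail too. That is,
$$\mbox{$\bigcap_{i=1}^n$} \mathrm{ker}(y_i^*) \not\seq \mathrm{ker}(x_0^*).$$
Therefore there exists $x \in X$ with $y_i^*(x) = 0$ for all $1 \leq i \leq n$ yet $x_0^*(x) \neq 0$. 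Scaling $x$ by $1/x_0^*(x)$, we may assume $x_0^*(x) = 1$.

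With this $x$ in hand, define
$$U := \{f \in X^*: |f(x) - 1| < 1/2\} = \widehat{x}^{-1}\!\left((1/2, 3/2)\right)\!.$$
Since $\widehat{x} \in \widehat{X}$, the set $U$ is weak$^*$-open by the definition of $\sigma(\widehat{X}, X^*)$ (see Proposition \ref{coincide} and Remark \ref{coincide-remark}), and $x_0^* \in U$. On the other hand, every $y^* \in Y$ can be written as $y^* = \sum_{i=1}^n \alpha_i y_i^*$ and so $y^*(x) = \sum_{i=1}^n \alpha_i y_i^*(x) = 0$, which gives $|y^*(x) - 1| = 1 \geq 1/2$, i.e., $y^* \notin U$. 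Thus $U \cap Y = \varnothing$, so $x_0^* \notin \overline{Y}^{w^*}$, as required.

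There is essentially no real obstacle here; the entire proof is powered by Lemma \ref{FiniteDimensional}, which was tailor-made for exactly this type of linear-algebraic argument, and the remainder is an elementary application of the definition of the weak$^*$ topology.
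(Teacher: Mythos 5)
Your proof is correct and follows essentially the same route as the paper's: both invoke the contrapositive of Lemma \ref{FiniteDimensional} to produce an $x$ annihilating $Y$ but not $x_0^*$, and then use the weak$^*$-open set determined by $\widehat{x}$ to separate $x_0^*$ from $Y$. The only cosmetic differences are your normalization $x_0^*(x)=1$ with the neighbourhood $\{f:|f(x)-1|<1/2\}$ in place of the paper's choice $\widehat{x}^{-1}(0,\infty)$, and your explicit (and harmless) treatment of the case $Y=\{0\}$.
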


\begin{proof}
	Suppose that $Y:=\text{span}\lbrace x^*_1,\dots , x^*_n\rbrace$ is a finite-dimensional subspace of $X^*$ and let $x_0^*\notin Y$.
	Then, by Lemma \ref{FiniteDimensional},
	we have that $\bigcap_{i=1}^n \ker(x^*_i)\not\seq\ker(x_0^*)$. 
	So, let $$x\in \mbox{$\bigcap_{i=1}^n$} \ker(x^*_i)\backslash \ker(x_0^*).$$
	Then, taking $- x$ if need be, we may assume that   $x_0^*(x)>0$, while $x^*_i(x)=0$ for all $1\leq i\leq n$.
	Observe that $Y=\text{span}\lbrace x^*_1,\dots , x^*_n\rbrace\seq \ker(\widehat{x})$, since $\ker(\widehat{x})$ is a subspace and $x^*_i\in\ker(\widehat{x})$ for all $1\leq i\leq n$. So $y^*(x)=0$ for all $y^*\in Y$.
	Thus, 
	$$\{x^*\in X^*:x^*(x)>0\}=\widehat{x}^{-1}(0,\infty)$$
	is a weak$^*$-open neighbourhood of $x_0^*$, which is disjoint from $Y$.
	Since $x_0^*$ was arbitrary, we have that $Y$ is weak$^*$-closed.
\end{proof}

\begin{lemma}\label{Separation}
	Let $(X,\norm)$ be a normed linear space, let $Y$ be a finite-dimensional subspace of $X^*$, and let $\epsilon>0$. If $x^*\in X^*$ and $\emph{dist}(x^*,Y)>\epsilon$, then there exists an $x\in S_X$ such that $x^*(x)>\epsilon$ and $y^*(x)=0$ for all $y^*\in Y$.
\end{lemma}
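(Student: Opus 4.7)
The plan is to reduce the problem to showing that the restriction $x^*|_Z$ has operator norm strictly greater than $\epsilon$ on the annihilator $Z := \bigcap_{y^*\in Y}\ker(y^*)$, and then normalize. The key duality step will use the fact that finite-dimensional subspaces of $X^*$ are weak$^*$-closed (Proposition \ref{FD is closed}).

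First I would set $Z := \bigcap_{y^*\in Y}\ker(y^*)$, which is a closed subspace of $X$. The goal $x^*(x)>\epsilon$, $\|x\|=1$, $y^*(x)=0$ for all $y^*\in Y$ becomes the statement $\|x^*|_Z\|_{Z^*}>\epsilon$, after which I pick $z\in Z$ with $\|z\|\le 1$ and $x^*(z)>\epsilon$, and normalise $x:=z/\|z\|$ (which still lies in $Z$ and satisfies $x^*(x)\geq x^*(z)>\epsilon$).

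Next I would prove the key identity $Z^\perp=Y$, where $Z^\perp:=\{\xi^*\in X^*:\xi^*|_Z\equiv 0\}$. The inclusion $Y\subseteq Z^\perp$ is immediate from the definition of $Z$. For the reverse inclusion, suppose for contradiction that some $x_0^*\in Z^\perp\setminus Y$. By Proposition \ref{FD is closed}, $Y$ is weak$^*$-closed, and hence, applying the Separation Theorem (Theorem \ref{separation-theorem}) in the locally convex space $(X^*,\mathrm{weak}^*)$, there exists $\widehat{x}\in\widehat{X}$ with
$$\sup_{y^*\in Y}y^*(x)=\sup_{y^*\in Y}\widehat{x}(y^*)<\widehat{x}(x_0^*)=x_0^*(x).$$
Since $Y$ is a linear subspace, the supremum on the left must be $0$, forcing $y^*(x)=0$ for every $y^*\in Y$, i.e.\ $x\in Z$. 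But then $x_0^*(x)=0$ since $x_0^*\in Z^\perp$, contradicting the strict inequality. Hence $Z^\perp=Y$.

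Finally I would establish $\|x^*|_Z\|_{Z^*}>\epsilon$. Suppose otherwise, so that $\|x^*|_Z\|_{Z^*}\leq\epsilon$. By Corollary \ref{HB in norm} applied to the subspace $Z\subseteq X$ and the functional $x^*|_Z\in Z^*$, there exists $\widetilde{x^*}\in X^*$ with $\widetilde{x^*}|_Z=x^*|_Z$ and $\|\widetilde{x^*}\|=\|x^*|_Z\|_{Z^*}\leq \epsilon$. Then $x^*-\widetilde{x^*}$ vanishes on $Z$, so $x^*-\widetilde{x^*}\in Z^\perp=Y$, and consequently
$$\mathrm{dist}(x^*,Y)\leq \|x^*-(x^*-\widetilde{x^*})\|=\|\widetilde{x^*}\|\leq \epsilon,$$
contradicting the hypothesis. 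Hence $\|x^*|_Z\|_{Z^*}>\epsilon$, and the normalisation described above produces the desired $x\in S_X$. The main obstacle is the identification $Z^\perp=Y$; the rest is just Hahn--Banach extension and a rescaling.
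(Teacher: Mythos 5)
Your proof is correct, but it takes a genuinely different route from the paper's. The paper separates $x^*$ directly from the set $Y+\epsilon B_{X^*}$, which is weak$^*$-closed and convex because it is the sum of a weak$^*$-closed convex set (Proposition \ref{FD is closed}) and a weak$^*$-compact convex set; the Separation Theorem in $(X^*,\mathrm{weak}^*)$ then immediately produces an $\widehat{x}$ with $x^*(x)>\sup_{y^*\in Y}y^*(x)+\epsilon$, and the same ``bounded above on a subspace'' argument you use at the end of your $Z^\perp=Y$ step kills the supremum over $Y$. You instead pass to the pre-annihilator $Z={}^{\perp}Y$ and prove the duality facts $Z^\perp=Y$ (via Proposition \ref{FD is closed} and weak$^*$ separation) and $\mathrm{dist}(x^*,Y)\leq\|x^*|_Z\|$ (via the norm-preserving Hahn--Banach extension, Corollary \ref{HB in norm}), then conclude by normalising. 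What your approach buys is the reusable and conceptually clean identity $\mathrm{dist}(x^*,Y)=\|x^*|_{{}^{\perp}Y}\|$ valid for any weak$^*$-closed subspace $Y$, and it avoids having to know that the sum of a weak$^*$-closed set and a weak$^*$-compact set is closed; what it costs is an extra Hahn--Banach extension and the annihilator computation. (Two points worth stating explicitly when you write it up: $z\neq 0$ because $x^*(z)>\epsilon>0$, so the normalisation is legitimate and $x^*(x)=x^*(z)/\|z\|\geq x^*(z)$; and the supremum of $|x^*|$ over $B_Z$ equals the supremum of $x^*$ there by symmetry of the ball, which is what lets you choose $z$ with $x^*(z)>\epsilon$ rather than just $|x^*(z)|>\epsilon$.)
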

\begin{proof}
    Let $Y$ be a finite-dimensional subspace of $X^*$ such that $\text{dist}(x^*, Y)>\epsilon>0$. Then we have that $x^*\notin Y+\epsilon\ball$. Since $Y$ is weak$^*$-closed (Proposition \ref{FD is closed}) and convex, and $\ball$ is weak$^*$-compact (Theorem \ref{Banach-Alaoglu}) and convex, 
    we have that $Y+\epsilon\ball$ is also weak$^*$-closed and convex. Therefore, by Theorem \ref{separation-theorem},
    there exists an $x\in S_X$ such that 
    \begin{align*}
     x^*(x)&>\sup\{y^*(x):y^*\in Y+\epsilon\ball\} \\
     &=\sup\{y^*(x):y^*\in Y\}+\epsilon\geq \epsilon.
     \end{align*}
     Finally observe that for this $x$, we have that $\widehat{x}(Y)$ is bounded above, and since $Y$ is a subspace, the only way this is possible is if $\widehat{x}(y^*)=y^*(x)=0$ for all $y^*\in Y$.
\end{proof}

\begin{theorem}[James' Theorem: version 2]\label{JamesSequential}
		Let $C$ be a closed, bounded, convex subset of a Banach space $X$. If $(\ball, \mbox{weak}^*)$ is sequentially compact,
and every $x^*\in X^*$ attains its supremum over $C$, then $C$ is weakly compact.
\end{theorem}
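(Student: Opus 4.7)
The plan is to argue by contradiction: assume $C$ is not weakly compact and produce an $x^*\in X^*$ that fails to attain its supremum on $C$, contradicting the hypothesis. By Remark \ref{reflex-remark}, if $C$ is not weakly compact then $K:=\overline{\widehat{C}}^{w^*}$ is not contained in $\widehat{X}$, so there is an $F_0\in K\setminus \widehat{X}$. Because $X$ is Banach and the natural embedding is an isometry, $\widehat{X}$ is norm-closed in $X^{**}$, whence $d:=\mathrm{dist}(F_0,\widehat{X})>0$.

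The first technical step is the following separation-style estimate: for any finite set $c_1,\ldots,c_m\in X$, setting $H:=\{x^*\in X^*:x^*(c_k)=0\text{ for all }k\leq m\}$, we have $\|F_0|_H\|\geq d$. Indeed, if the restriction had norm strictly less than $d$, then Corollary \ref{HB in norm} would extend it to some $\widetilde F\in X^{**}$ with $\|\widetilde F\|<d$; the difference $F_0-\widetilde F$ vanishes on $H=\bigcap_k\ker(\widehat{c_k})$, so by Lemma \ref{FiniteDimensional} it lies in $\mathrm{span}\{\widehat{c_1},\ldots,\widehat{c_m}\}\subseteq \widehat{X}$, contradicting $\mathrm{dist}(F_0,\widehat{X})=d$. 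Consequently, given any $0<r<d$ and any $\{c_1,\ldots,c_{n-1}\}\subseteq C$, we can pick $x_n^*\in B_{X^*}$ with $x_n^*(c_k)=0$ for $k<n$ and $F_0(x_n^*)>r$.

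With this in hand, fix $0<r<d$ and a strictly positive summable sequence $(\beta_n)$, and build $x_n^*\in B_{X^*}$ (via the separation estimate) and $c_n\in C$ (a maximizer of $\sum_{k=1}^n\beta_k x_k^*$ on $C$, available by the hypothesis) inductively. Then apply Lemma \ref{increasing2} to the sublinear functional $p:X^*\to\R$ defined by $p(x^*):=\sup_{c\in C}x^*(c)$, with the decreasing convex sets $A_n:=\mathrm{co}\{x_k^*:k\geq n\}$ and the given $r$ and $(\beta_n)$; note $\inf_{a\in A_1}p(a)>r$ because $p(a)\geq F_0(a)>r$ for every convex combination $a$ of the $x_k^*$. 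This yields $a_n\in A_n$ with $p(\sum_{k=1}^n\beta_k a_k)+\beta_{n+1}r<p(\sum_{k=1}^{n+1}\beta_k a_k)$ for all $n$. Define $x^*:=\sum_{n=1}^\infty\beta_n a_n\in X^*$; the series is norm-convergent since $\|a_n\|\leq 1$ and $\sum\beta_n<\infty$, and $X^*$ is complete. Finally, invoke the weak$^*$ sequential compactness of $B_{X^*}$ to pass to a subsequence of $(x_n^*)$ weak$^*$-converging to some $y^*\in B_{X^*}$, which by construction satisfies $y^*(c_k)=0$ for every $k$, capturing the annihilation relations in the limit.

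The main obstacle, and the point where the weak$^*$ sequential compactness of $B_{X^*}$ does essential work, is showing that this $x^*$ has no maximizer on $C$. Telescoping the Lemma \ref{increasing2} inequalities gives $p(x^*)\geq p(\sum_{k=1}^n\beta_k a_k)+r\sum_{k>n}\beta_k$ for each $n$. For a putative maximizer $c\in C$, the tail contributions $\sum_{k>n}\beta_k a_k(c)$ must be controlled (via the weak$^*$ limit $y^*$ and the annihilation $y^*(c_k)=0$) by something that shrinks faster than $r\sum_{k>n}\beta_k$, while the head satisfies $\sum_{k\leq n}\beta_k a_k(c)\leq p(\sum_{k\leq n}\beta_k a_k)\leq p(x^*)-r\sum_{k>n}\beta_k$; making the tail-domination precise is the delicate part of the argument and is the real payoff of choosing the $c_n$ adaptively so that the putative maximizer lies in the weak$^*$-closed cone on which $y^*$ vanishes. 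Combining head and tail estimates forces $x^*(c)<p(x^*)$ for every $c\in C$, contradicting sup-attainment and so establishing weak compactness of $C$.
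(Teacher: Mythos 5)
Your setup is sound as far as it goes: the separation estimate via Corollary \ref{HB in norm} and Lemma \ref{FiniteDimensional} is a correct substitute for the paper's Lemma \ref{Separation}, the lower bound $\inf_{a\in A_1}p(a)\geq F_0(a)>r$ is valid (since $G\mapsto G(a)$ is weak$^*$ continuous, $F_0\in\overline{\widehat{C}}^{w^*}$ gives $F_0(a)\leq p(a)$), and Lemma \ref{increasing2} applies. But the final step — deriving a contradiction from sup-attainment — is not merely ``delicate''; as set up, it fails. Telescoping gives $p(\sum_{k\leq n}\beta_k a_k)+r\sum_{k>n}\beta_k\leq p(x^*)$, so for a maximizer $c$ of $x^*$ you obtain $r\sum_{k>n}\beta_k\leq\sum_{k>n}\beta_k a_k(c)$, and since $a_k(c)\to y^*(c)$ (each $a_k$ is a convex combination of tails of $(x_j^*)$), the only conclusion is $r\leq y^*(c)$. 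Nothing in your construction rules this out: $y^*$ vanishes on your chosen points $c_1,c_2,\dots$, but $c$ is not one of them, $F_0$ is not weak$^*$ continuous so $F_0(y^*)$ is uncontrolled, and maximizers of the partial sums do not converge (weakly or otherwise) to $c$ — indeed weak convergence of the $c_n$ is precisely what cannot be assumed here. So there is no contradiction, only the harmless conclusion $y^*(c)\geq r$.

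The fix requires the two ingredients your construction omits, and they are exactly what the paper's interleaved Part I provides. First, the convex sets fed into Lemma \ref{increasing2} must be the \emph{translated} sets $\mathrm{co}\{x_k^*:k\geq n\}-y^*$, so that the tail terms $(a_k-y^*)(c)\to 0$ at \emph{every} $c\in C$ and the limiting inequality becomes $r\leq 0$, an actual contradiction. Second, once you translate, the lower bound $\inf\{p(a-y^*):a\in A_1\}>r'>0$ is no longer free: you need witnesses $\widehat{x}_m\in\widehat{C}$ with $\widehat{x}_m(x_k^*-y^*)>r'$ simultaneously for all $k$ below any given index. The paper manufactures these by alternating: $\widehat{x}_{n+1}$ is chosen weak$^*$-close to $F$ on the previously built functionals (possible because $F\in\overline{\widehat{C}}^{w^*}$), while $f_{n+1}$ annihilates $\widehat{x}_1,\dots,\widehat{x}_{n+1}$ and satisfies $F(f_{n+1})>\epsilon$; combining these with $\widehat{x}_k(f_\infty)=0$ yields $\widehat{x}_n(f_k-f_\infty)>\epsilon/2$ for $n>k$. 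Your choice of $c_n$ as a maximizer of $\sum_{k\leq n}\beta_k x_k^*$ only controls one particular $\beta$-weighted average of the values $x_k^*(c_n)$, not each $x_k^*(c_n)$ individually, so it cannot supply these witnesses. (A further small point: you only assume $(\beta_n)$ summable, but both the head/tail bookkeeping and the paper's argument need $\lim_{n\to\infty}\bigl(\sum_{i>n}\beta_i\bigr)/\beta_n=0$.)
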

\begin{proof}
To show that $C$ is weakly compact, it is sufficient to show that $K:=\overline{\widehat{C}}^{w*}\seq\widehat{X}$ (see, Remark \ref{reflex-remark}). Suppose, for a contradiction, that this is not the case.
	Then, there exists an $F\in K\backslash\widehat{X}$. Since $X$ is a Banach space, $\widehat{X}$ is a closed subspace of $X^{**}$, and so there must exist an $0<\epsilon<\text{dist}(F,\widehat{X})$. Let $(\beta_n:n\in\n)$ be a sequence of strictly positive numbers such that $\lim_{n\rightarrow\infty}\frac{1}{\beta_n}\sum_{i=n+1}^\infty\beta_i=0$. \\ \\
	\underline{\textbf{Part I:}} Let $f_0:=0$. We inductively create sequences $(f_n:n\in\n)$ in $S_{X^*}$ and $(\widehat{x}_n:n\in\n)$ in $\widehat{C}$, such that the statements 
	\begin{itemize}
		\item $(A_n): -$ $|(F-\widehat{x}_n)(f_j)|<\epsilon/2$ for all $0\leq j<n.$
		\item $(B_n):-$ $F(f_n)>\epsilon$ and $\widehat{x}_j(f_n)=0$ for all $1\leq j\leq n.$ 
	\end{itemize} 
	are true for all $n\in\n$. For the first step, choose any $\widehat{x}_1\in \widehat{C}$. Then it is clear that $|(F-\widehat{x}_1)(f_0)|=0<\epsilon/2$.
	Now note that $$\text{dist}(F,\text{span}\{\widehat{x}_1\})\geq \text{dist}(F,\widehat{X})>\epsilon.$$
	And so, by Lemma \ref{Separation}, there exists $f_1\in S_X$ such that $F(f_1)>\epsilon$ and $\widehat{x}_1(f_1)=0$. So the statements $(A_1)$ and $(B_1)$ hold. \\ \\
	Now fix $k\in\n$. Suppose that we have created $\{\widehat{x}_1,\dots,\widehat{x}_k\}$ and $\{f_1,\dots, f_k\}$ such that the statements $(A_k)$ and $(B_k)$ hold true.
	Then consider the set 
	$$W:=\mbox{$\bigcap_{j=0}^k$} \{G\in X^{**}:|(F-G)(f_j)|<\epsilon/2\}.$$
	Since $W$ is a weak$^*$-open neighbourhood of $F$, and $F\in \overline{\widehat{C}}^{w^*}$, we can choose $\widehat{x}_{k+1}\in \widehat{C}$ such that $\widehat{x}_{k+1}\in W$ 
	i.e., such that the statement $(A_{k+1})$ holds.
	Next, observe that 
	$$\text{dist}(F,\text{span}\{\widehat{x}_1,\dots,\widehat{x}_{k+1}\})\geq \text{dist}(F,\widehat{X})>\epsilon.$$
	So, by Lemma \ref{Separation}, there exists $f_{k+1}\in S_X$ such that $F(f_{k+1})>\epsilon$ and $\widehat{x}_j(f_{k+1})=0$ for all $1\leq j\leq k+1$.
	Therefore the statement $(B_{k+1})$ also holds. This completes the induction. \\ \\
	\underline{\textbf{Part II:}} Now let $(n_k:k\in\n)$ be a strictly increasing sequence of natural numbers. Then for all $k\in\n$, define $f'_k:=f_{n_k}$ and $x'_k:=x_{n_k}$. Also define $f'_0:=0$.
	Then the sequences $(\widehat{x}'_n:n\in\n)$ and $(f'_n:n\in\n)$ still satisfy $(A_n)$ and $(B_n)$ for all $n\in\n$. Therefore, passing to a subsequence does not disturb the statements $(A_n)$ and $(B_n)$.\\ \\
	Now, as $(\ball,\mbox{weak}^*)$ is sequentially compact, and $(f_n:n\in\n)$ is a sequence in $\ball$, we have that $(f_n:n\in\n)$ has a weak$^*$-convergent subsequence.
	So, by passing to subsequences and relabelling if necessary, we may assume that $(f_n:n\in\n)$ is weak$^*$-convergent to some $f_\infty\in\ball$.
	By the above, we know that the statements $(A_n)$ and $(B_n)$ remain true for all $n\in\n$.\\ \\
	\underline{\textbf{Part III:}} Let $k\in\n$. For any $n\geq k$, we have that that $\widehat{x}_k(f_n)=0$ by the statement $(B_n)$. Therefore, it follows that $\widehat{x}_k(f_\infty)=0$. Since $k$ was arbitrary, 
	this is true for all $k\in\n$. \\ \\
	On the other hand, let $k\in\n$ and let $n>k$. Then, by the statement $(A_n)$, we have that $|(F-\widehat{x}_n)(f_k)|<\epsilon/2$. Moreover, from $(B_k)$, we know that $F(f_k)>\epsilon$. 
	Combining these, we get that 
	$$\widehat{x}_n(f_k)=F(f_k)+(\widehat{x}_n-F)(f_k)>\epsilon/2$$
	for all $n>k$. Therefore $\widehat{x}_n(f_k-f_\infty)>\epsilon/2,$ for all $n>k$. \\ 
	
	\underline{\textbf{Part IV:}} For each $n\in\n$, define $C_n:=\text{co}\{f_k:k\geq n\}-f_\infty$ and note that $(C_n:n\in\n)$ is a decreasing sequence of nonempty, convex subsets of $X^*$.  
	Define $p:X^*\rightarrow\r$ to be $p(x^*)=\sup\{x^*(c):c\in C\}$ for all $x^*\in X^*$. Then $p$ is a sublinear function and $\inf_{f\in C_1}p(f)>\epsilon/4.$ \\ \\
	 To see this, let $f\in C_1$. Then $f=\sum_{i=1}^k\lambda_{i}f_{n_i}-f_\infty$ where $\lambda_i\geq 0$ for all $1\leq i\leq k$ and $\sum_{i=1}^k\lambda_{i}=1$. Let $m>\max\{n_1,\dots ,n_k\}$. Then
	 \begin{align*}
	 	p(f)\geq f(x_m)=\widehat{x}_m\!\left(\mbox{$\sum_{i=1}^k$}\lambda_{i}f_{n_i}-f_\infty\right)=\mbox{$\sum_{i=1}^k$}\lambda_{i}\widehat{x}_m(f_{n_i}-f_\infty)>\epsilon/2.
	 \end{align*}
	 Therefore, since $f\in C_1$ was arbitrary, we have that $\inf_{f\in C_1}p(f)>\epsilon/4$ as claimed. 
	 So, by Lemma \ref{increasing2},
	there exists a sequence $(g_n:n\in\n)$ such that for all $n\in\n$:
	\begin{enumerate}[label=(\roman*)]
	\item $g_n\in \text{co}\lbrace f_k:k\geq n\rbrace$ and
	\item $\displaystyle p($\mbox{$\sum_{i=1}^n\beta_i(g_i-f_\infty)$}$)+\beta_{n+1}\epsilon/4<p($\mbox{$\sum_{i=1}^{n+1}\beta_i(g_i-f_\infty))$.} \qquad \mbox{$(*)$}  \\ 
\end{enumerate}

	\underline{\textbf{Part V:}}  Now, since $(X^*,\mathrm{weak}^*)$ is a locally convex space $(g_n:n\in\N)$ also converges to $g_\infty := f_\infty$. Indeed, if $W$ is any convex weak$^*$ open neighbourhood
	of $f_\infty$ then there exists an $N \in \N$ such that $f_n \in W$ for all $n \geq N$.  Therefore, $\mathrm{co}\{f_k:k \geq N\} \subseteq W$. Since $g_k \in \mathrm{co}\{f_i:i \geq N\}$ for all $k \geq N$
	we have that $g_k \in W$ for all $k \geq N$.   This shows that $(g_k:k \in \N)$ converges to $g_\infty = f_\infty$.
	 Set $g:=\sum_{i=1}^\infty\beta_i(g_i-f_\infty)$. Since $\|g_i-f_\infty\|\leq 2$ for all $i\in\n$, we have that 
	 $$\sum_{i=1}^\infty\|\beta_i(g_i-f_\infty)\|= \sum_{i=1}^\infty\beta_i\|g_i-f_\infty\|\leq 2\sum_{i=1}^\infty\beta_i<\infty.$$ 
	Therefore, $g\in X^*$ since $X^*$ is a Banach space. As $p$ is continuous, it is clear that $(p(\sum_{i=1}^n\beta_i(g_i-f_\infty)):n\in\n)$ is a convergent - and in particular bounded - sequence in $\r$.    
	Moreover, the statement $(*)$ above gives that this is also an increasing sequence. Therefore, by the Monotone Convergence Theorem, $(p(\sum_{i=1}^n\beta_i(g_i-f_\infty)):n\in\n)$ converges to its supremum. That is,
	$$\sup_{n\in\n}p( \mbox{$\sum_{i=1}^n$}\beta_i(g_i-f_\infty))\!=\!\lim_{n\rightarrow\infty}\!p(\mbox{$\sum_{i=1}^n$}\beta_i(g_i-f_\infty))\!=\!p(\lim_{n\rightarrow\infty}\mbox{$\sum_{i=1}^n$}\beta_i(g_i-f_\infty))\!=\!p(g).$$  
	\underline{\textbf{Part VI:}} Since $g\in X^*$, there exists a  $c\in C$ such that $\widehat{c}(g)=g(c)=\sup\{g(x):x\in C\}=p(g).$ Then, for any $n>1$,
	\begin{align*}
	\beta_n \epsilon/4&<p(\mbox{$\sum_{i=1}^n$}\beta_i (g_i-f_\infty))-p(\mbox{$\sum_{i=1}^{n-1}$}\beta_i (g_i-f_\infty)) \qquad \mbox{(by $(*)$)}\\
	&\leq p(g)-p(\mbox{$\sum_{i=1}^{n-1}$}\beta_i (g_i-f_\infty)) \qquad\mbox{(since $\displaystyle p(g)=\sup\{p(\mbox{$\sum_{i=1}^n$}\beta_i(g_i-f_\infty)):n \in \N\}$} \\
	&= \widehat{c}(g)-p(\mbox{$\sum_{i=1}^{n-1}$}\beta_i (g_i-f_\infty))\\
	&\leq \widehat{c}(g)-\widehat{c}(\mbox{$\sum_{i=1}^{n-1}$}\beta_i (g_i-f_\infty))  
	=\widehat{c}(\mbox{$\sum_{i=n}^{\infty}$}\beta_i (g_i-f_\infty)) 
	=\beta_n\widehat{c}(g_n-f_\infty)+\mbox{$\sum_{i=n+1}^{\infty}$}\beta_i \widehat{c}(g_i-f_\infty).
\end{align*}	
Rearranging gives that 
$$\epsilon/4<\widehat{c}(g_n-f_\infty)+\frac{1}{\beta_n}\mbox{$\sum_{i=n+1}^{\infty}$}\beta_i \widehat{c}(g_i-f_\infty)\leq\widehat{c}(g_n-f_\infty)+\frac{2\|\widehat{c}\|}{\beta_n}\mbox{$\sum_{i=n+1}^{\infty}$}\beta_i.$$
Taking $n\rightarrow\infty$ we get that 
$$\epsilon/4\leq\lim_{n\rightarrow\infty}g_n(c)-f_\infty(c)+2\|\widehat{c}\|\left(\lim_{n\rightarrow\infty}\frac{1}{\beta_n}\mbox{$\sum_{i=n+1}^{\infty}$}\beta_i\right)=\lim_{n\rightarrow\infty}g_n(c)-f_\infty(c),$$
which contradicts the fact that $\displaystyle f_\infty(c)=\lim_{n\rightarrow\infty}g_n(c)$. Therefore, $K\seq\widehat{X}$ and so $C$ is weakly compact. 
\end{proof}

The power of this result stems from the fact that the class of all Banach spaces whose dual ball is weak$^*$ sequentially compact is very large.
Indeed, in addition to all the separable Banach spaces (whose dual ball is weak$^*$ metrisable), it contains all Asplund spaces, \cite{Larman} 
(i.e., spaces in which every separable subspace has a separable dual space) and all spaces that admit an equivalent smooth norm, \cite{Hagler} (which includes all WCG spaces, \cite{renorming}).  
In fact, it contains all Gateaux differentiability spaces, \cite{Larman}.  


\subsection{James' Theorem on weak compactness: the general case}


The short-coming of the previous subsection is that the dual ball of a Banach space need not be weak$^*$ sequentially compact.  For example, the dual ball  of $(C(\beta \N),\|\cdot\|_\infty)$ is not weak$^*$ sequentially compact,
as it contains a copy of $\beta \N$ - the Stone-Cech compactification of the natural numbers, endowed with the discrete topology and this space is known to have no nontrivial (i.e., not eventually constant) convergent sequences, \cite{Engelking}.

\medskip

So the method of passing to a subsequence which is weak$^*$ convergent must be abandoned.  However we can, by passing to a suitable subsequence, insist that 
$K := \bigcap_{n \in \N} \overline{ \{f_k : k \geq n\}}^{w^*}$ is ``small'' in the sense that for countably many weak$^*$ lower semicontinuous real-valued functions $(p_n:n \in \N)$, the sets $p_n(K)$ are singletons.  
In this way, the set $K$ of all weak$^*$ cluster points of the sequence $(f_n:n \in \N)$ ``acts'' like a singleton set in  \underline{\textbf{Part V}} and \underline{\textbf{Part VI}} of the proof of Theorem \ref{JamesSequential}.

\medskip

So next we will show how to extract ``nice'' subsequences from a given sequence.  The approach we adopt is very general and will provide much more than needed, but these
technical results may possibly be of some independent interest.

\medskip

We shall start with the precise definition of lower semicontinuity. Let $(X,\tau)$ be a topological space. We say a function $f:X \rightarrow \mathbb{R} \cup \{\infty\}$ is {\it lower semicontinuous} 
if for every $\alpha \in \mathbb{R}$,  $\{x\in X: f(x) \leq \alpha \}$ is a closed set.

\medskip

Since we will be working extensively with subsequences we will introduce some concise notation for a subsequence of a given sequence.
Let $\widetilde{x}:\n\rightarrow X$ be the sequence $(x_n:n\in\n)$ and let $J$ be an infinite subset of $\n$, 
i.e., $J=\lbrace n_k:k\in\n\rbrace$ with $n_k<n_{k+1}$ for all $k\in\n$. Then the subsequence $(x_{n_k}:k\in\n)$ will be denoted by $\widetilde{x}|_J$.
We will also be working with the set of all cluster points of a given sequence and so it is worth our while to introduce some notation for the set of all cluster points (and another related set as well).
Let $(X,\tau)$ be a linear topological space and let $\widetilde{x}:\n\rightarrow X$ be the sequence $(x_n:n\in\n)$. We define 
	$$cl_{\tau}(\widetilde{x}):=\mbox{$\bigcap_{n=1}^\infty$}\overline{\{x_k:k\geq n\}}^\tau.$$
	That is, $cl_{\tau}(\widetilde{x})$ is the set of all $\tau$-cluster points of $\widetilde{x}$. Further, define $K_\tau(\widetilde{x}):=\overline{\mathrm{co}}^\tau(cl(\widetilde{x}))$. When there is no ambiguity concerning the topology, we will simply write $cl(\widetilde{x})$ and $K(\widetilde{x})$.

\begin{lemma}\label{SubsequenceBounded}
If $\varphi:A\rightarrow\r$ is a convex lower-semicontinuous function defined on a nonempty closed and convex subset $A$ of a Hausdorff locally convex space $(X,+,\cdot,\tau)$, then for every sequence 
$\widetilde{x}:=(x_n:n\in\n)$ in $A$, there is a subsequence $\widetilde{x}|_{J}$ of $ \widetilde{x}$ such that $\varphi(K(\widetilde{x}|_{J}))$ is either empty, or bounded.
\end{lemma}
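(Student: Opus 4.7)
The plan is to extract a subsequence by scalar control of $\varphi(x_n)$: the sequence $(\varphi(x_n))_{n\in\n}$ lies in the compact two-point compactification $[-\infty,+\infty]$, so there is an infinite $J\seq\n$ along which $\varphi(x_n)\to L$ for some $L\in[-\infty,+\infty]$. A case analysis on $L$, combined with the closed convex sublevel sets of $\varphi$ (available because $\varphi$ is convex and lower-semicontinuous), will transfer this scalar control to $K_\tau(\widetilde{x}|_J)$.

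In the case $L=-\infty$, any supposed cluster point $y\in cl_\tau(\widetilde{x}|_J)$ would be the limit of some subnet $(x_{n_\alpha})$ of $\widetilde{x}|_J$, and lower semicontinuity would then give $\varphi(y)\le\liminf_\alpha\varphi(x_{n_\alpha})=-\infty$, contradicting $\varphi(y)\in\r$. Hence $cl_\tau(\widetilde{x}|_J)=\varnothing$, so $K_\tau(\widetilde{x}|_J)=\overline{\mathrm{co}}^\tau(\varnothing)=\varnothing$ and $\varphi(K_\tau(\widetilde{x}|_J))=\varnothing$. In the case $L\in\r$, after trimming a finite initial piece of $J$ I may assume $\varphi(x_n)\le L+1$ for every $n\in J$. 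The set $C:=\lbrace z\in A:\varphi(z)\le L+1\rbrace$ is closed (by lsc) and convex (by convexity of $\varphi$) and contains the whole subsequence; it therefore contains $cl_\tau(\widetilde{x}|_J)$ and, being convex, also contains its closed convex hull $K_\tau(\widetilde{x}|_J)=\overline{\mathrm{co}}^\tau(cl_\tau(\widetilde{x}|_J))$. Hence $\varphi\le L+1$ on $K_\tau(\widetilde{x}|_J)$, which gives the upper bound.

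The principal obstacle is the case $L=+\infty$, together with producing a matching lower bound when $L\in\r$. For $L=+\infty$ the sublevel-set argument breaks down because the tail of $\widetilde{x}|_J$ eventually leaves every $\lbrace\varphi\le M\rbrace$; my plan there is a further diagonal extraction that either empties the cluster set of the sub-subsequence (in which case $K_\tau=\varnothing$ and we are done) or reduces matters back to the case $L\in\r$ applied to a suitable averaged auxiliary convex lsc function. For the matching lower bound in the case $L\in\r$, after refining $J$ once more so that $\varphi(x_n)\ge L-1$ on $J$ as well, I would exploit the representation of a convex lsc real-valued $\varphi$ as the pointwise supremum of its continuous affine minorants: each such minorant inherits a two-sided bound on $\widetilde{x}|_J$ and, being continuous, transfers that bound to every cluster point, whereupon convexity and closure propagate it to the whole of $K_\tau(\widetilde{x}|_J)$. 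This last propagation step is where I expect the most care to be needed.
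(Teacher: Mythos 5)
Your case $L=-\infty$ is fine (lower semicontinuity forces the cluster set of $\widetilde{x}|_J$ to be empty), and your sublevel-set argument correctly gives the upper bound when $L\in\R$. But there are two genuine gaps, and the first is fatal to the whole strategy of choosing $J$ by inspecting only the scalars $\varphi(x_n)$. For the lower bound in the case $L\in\R$, a continuous affine minorant $h\le\varphi$ inherits only the \emph{upper} bound $h(x_n)\le\varphi(x_n)\le L+1$; the inequality $\varphi(x_n)\ge L-1$ gives no control on $h(x_n)$ from below, so the minorants cannot propagate a lower bound to $K_{\tau}(\widetilde{x}|_J)$. Worse, no argument using only the values $\varphi(x_n)$ can work. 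Take $X=\R^{\N}$ with the product topology, $A=\{x:x_k\ge -k \text{ for all } k\}$ and $\varphi(x)=\sup_k(-x_k-k)$, which is real-valued, convex and lower semicontinuous on $A$. The points $x^{(m,j)}$ whose $k$-th coordinate is $m-k$ for $k\le j$ and $-k$ for $k>j$ all satisfy $\varphi(x^{(m,j)})=0$, yet $x^{(m,j)}\to y^{(m)}$ as $j\to\infty$, where $y^{(m)}_k:=m-k$ and $\varphi(y^{(m)})=-m$. Enumerating the $x^{(m,j)}$ gives a sequence on which $\varphi$ is identically $0$ (so $L=0$ and no value-based refinement discards anything), but whose cluster set has $\varphi$-image $\{-m:m\in\N\}$, unbounded below. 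The missing idea is geometric localisation: pick one cluster point $x$, use regularity and local convexity to find a \emph{closed convex} neighbourhood $N$ of $x$ disjoint from the closed convex set $\varphi^{-1}(-\infty,\varphi(x)-1]$, and pass to the sub-subsequence lying in $N$; since $N$ is closed and convex it contains $K_{\tau}$ of that sub-subsequence, which is the lower bound. This is exactly how the paper proceeds.

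Second, the case $L=+\infty$, which you defer, is where the real work lies, and your sketch does not resolve it: since $\varphi$ is only lower semicontinuous, $\varphi(x_n)\to+\infty$ gives no information about the cluster set, and ``reducing to the case $L\in\R$ via an averaged auxiliary convex function'' is not an argument. The paper's mechanism is again geometric: assuming no sub-subsequence has $\varphi(K(\cdot))$ bounded above, it builds nested infinite sets $J_1\supseteq J_2\supseteq\cdots$ with $K(\widetilde{x}|_{J_n})\cap\varphi^{-1}(-\infty,n]=\varnothing$ (each step using a closed convex neighbourhood of a cluster point avoiding a sublevel set), and a diagonal sequence then has $\varphi(K(\cdot))=\varnothing$, which is one of the permitted conclusions. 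You should rebuild the proof around closed convex neighbourhoods avoiding sublevel sets rather than around the numerical sequence $(\varphi(x_n):n\in\N)$.
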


\begin{proof}
	Let $\widetilde{x}:=(x_n:n\in\n)$ be a sequence in $A$. Suppose that $\widetilde{x}$ has no subsequence, $\widetilde{x}|_{J}$, such that $\varphi(K(\widetilde{x}|_{J}))$ is empty.
	First, we construct an infinite subset $J'$ of $\N$ such that $\varphi(K(\widetilde{x}|_{J'}))$ is bounded below. \\ \\
	Let $x\in cl(\widetilde{x})$. Then $x\notin \varphi^{-1}(-\infty,\varphi(x)-1]$, which is closed and convex. 
	Therefore, there exists a closed and convex neighbourhood, $N$ of $x$ such that $N\cap\varphi^{-1}(-\infty,\varphi(x)-1]=\varnothing$ i.e., $\varphi(N)\seq (\varphi(x)-1,\infty)$. 
	Since $x$ is a cluster point of $\widetilde{x}$, we may choose an infinite set $J'\seq\n$ such that $x_j\in N$ for all $j\in J'$.
	Then, because $N$ is closed and convex, $K(\widetilde{x}|_{J'})\seq N$ and so $\varphi(K(\widetilde{x}|_{J'}))\seq\varphi(N)\seq (\varphi(x)-1,\infty)$. Hence $\varphi(K(\widetilde{x}|_{J'}))$ is bounded below. 
	
	\medskip
	
	We now claim that $J'$ possesses an infinite subset $J$ such that $\varphi(K(\widetilde{x}|_{J}))$ is bounded above.  
	Indeed, suppose in order to obtain a contradiction, that this is not the case. Then we inductively proceed as follows. First, there must be $x\in cl(\widetilde{x}|_{J'})$ with $\varphi(x)>1$, otherwise 
	$\varphi(K(\widetilde{x}|_{J'}))\seq (- \infty,1]$ and we would be done.
	So, we may choose a closed, convex neighbourhood, $N$ of $x$ such that $N\cap\varphi^{-1}(-\infty,1]=\varnothing$. 
    Then, since $x$ is a cluster point of $\widetilde{x}|_{J'}$, we can choose an infinite subset $J_1\seq J'$ such that $x_j \in N$ for all $j\in J_1$.
	Because $N$ is closed and convex, we have that  $K(\widetilde{x}|_{J_1})\seq N$ and so $K(\widetilde{x}|_{J_1}) \cap \varphi^{-1}(-\infty,1] = \varnothing$.
	
	\medskip
	
	In general, suppose that we have chosen infinite subsets $J_n\seq\dots\seq J_1\seq J'$ such that for all $1\leq i\leq n$: $K(\widetilde{x}|_{J_i}) \cap \varphi^{-1}(-\infty,i] = \varnothing$. 
	
	\medskip
	
	For the $(n+1)^{\textrm{th}}$ step, we suppose that $cl(\widetilde{x}|_{J_n}))\not\seq\varphi^{-1}(-\infty,n+1]$, otherwise $\varphi(K(\widetilde{x}|_{J_n}))\seq(-\infty,n+1]$ is bounded above and we are done. 
	Therefore, we can choose $x\in cl(\widetilde{x}|_{J_n})$ such that $\varphi(x)>n+1$, and a closed, convex neighbourhood, $N$ of $x$ such that $N \cap \varphi^{-1}(-\infty,n+1]=\varnothing$.
	Then, since $x$ is a cluster point of $\widetilde{x}|_{J_n}$, we can choose an infinite subset $J_{n+1}\seq J_n$ such that $x_j \in N$ for all $j \in J_{n+1}$. 
	Because $N$ is closed and convex, we have that  $K(\widetilde{x}|_{J_{n+1}})\seq N$ and so $K(\widetilde{x}|_{J_{n+1}}) \cap \varphi^{-1}(-\infty,n+1] = \varnothing$.
	This completes the induction. 
	
	\medskip
	
	Lastly, we apply the so-called diagonalisation argument. Define $J^{''}:=\{n_k:k\in\n\}\seq\n$ such that $n_k<n_{k+1}$  and $n_k \in J_k$ for all $k \in \N$.
	
	\medskip
	
    Consider the subsequence of $\widetilde{x}$ given by $\widetilde{x}|_{J^{''}} = (x_{n_k}:k\in\n)$. 
    Then, since $J_{n+1}\seq J_n$ for all $n\in\n$, we have that $n_k\in J_m$ for all $k\geq m$. 
	Let $m\in\n$. Then $$K(\widetilde{x}|_{J^{''}})=K(\{x_{n_k}:k\geq m\})\seq K(\widetilde{x}|_{J_m})\seq X \setminus \varphi^{-1}(-\infty, m],$$ 
	and so $K(\widetilde{x}|_{J^{''}})\cap \varphi^{-1}(-\infty,m]=\varnothing$. 
	Since $m$ was arbitrary, this holds for all $m\in\n$ and so we have that $\varphi(K(\widetilde{x}|_{J^{''}}))=\varnothing$, which contradicts our original assumption. Thus, there exists a subsequence $\widetilde{x}|_J$ of $\widetilde{x}$ such that $\varphi(K(\widetilde{x}|_J))$ is bounded. 
	\end{proof}
	
We can further refine Lemma \ref{SubsequenceBounded} as follows.

\begin{lemma}\label{Singleton}
If $\varphi:A\rightarrow\r$ is a convex lower-semicontinuous function defined on a nonempty closed and convex subset $A$ of a Hausdorff locally convex space $(X,+,\cdot,\tau)$, then
 for every sequence $\widetilde{x}:=(x_n:n\in\n)$ in $A$, there is a subsequence $\widetilde{x}|_J$ of $ \widetilde{x}$ such that $\varphi(K(\widetilde{x}|_J))$ is at most a singleton. 
\end{lemma}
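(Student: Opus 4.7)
The strategy is to combine Lemma \ref{SubsequenceBounded} with a dyadic bisection and a diagonalisation, mimicking the structure of the proof of Lemma \ref{SubsequenceBounded} itself. First, apply Lemma \ref{SubsequenceBounded} to pass to a subsequence $\widetilde{x}|_{J_0}$ for which $\varphi(K(\widetilde{x}|_{J_0}))$ is either empty (in which case we are done) or contained in some bounded interval $[a_0,b_0]$. The plan is then to extract further subsequences that halve the length of this interval at each step, and finish with a diagonal argument that collapses the image of $\varphi$ on $K$ to a single point.

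Inductively, suppose infinite sets $J_0\supseteq J_1\supseteq\cdots\supseteq J_n$ and intervals $[a_0,b_0]\supseteq\cdots\supseteq[a_n,b_n]$ with $b_n-a_n=(b_0-a_0)/2^n$ and $\varphi(K(\widetilde{x}|_{J_n}))\subseteq[a_n,b_n]$ have been constructed. Let $m_n:=(a_n+b_n)/2$. If $\varphi(K(\widetilde{x}|_{J_n}))\subseteq[a_n,m_n]$, simply set $J_{n+1}:=J_n$, $a_{n+1}:=a_n$ and $b_{n+1}:=m_n$. Otherwise, choose $x\in cl(\widetilde{x}|_{J_n})$ with $\varphi(x)>m_n$. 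Because $\varphi$ is convex and lower semicontinuous, the sublevel set $\varphi^{-1}(-\infty,m_n]$ is closed and convex, and the Hausdorff locally convex structure of $X$ supplies a closed convex neighbourhood $N$ of $x$ disjoint from $\varphi^{-1}(-\infty,m_n]$; that is, $N\subseteq\{y\in X:\varphi(y)>m_n\}$. Since $x$ is a cluster point of $\widetilde{x}|_{J_n}$, pick an infinite $J_{n+1}\subseteq J_n$ with $x_j\in N$ for every $j\in J_{n+1}$. Closedness and convexity of $N$ force $K(\widetilde{x}|_{J_{n+1}})\subseteq N$, and combined with $K(\widetilde{x}|_{J_{n+1}})\subseteq K(\widetilde{x}|_{J_n})$ this gives $\varphi(K(\widetilde{x}|_{J_{n+1}}))\subseteq(m_n,b_n]$; set $a_{n+1}:=m_n$ and $b_{n+1}:=b_n$.

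Finally, perform the diagonalisation used in Lemma \ref{SubsequenceBounded}: choose $n_k\in J_k$ with $n_k<n_{k+1}$ and put $J:=\{n_k:k\in\N\}$. For each fixed $m$, the tail $\{n_k:k\geq m\}$ lies in $J_m$, so $cl(\widetilde{x}|_J)\subseteq cl(\widetilde{x}|_{J_m})$, and hence $K(\widetilde{x}|_J)\subseteq K(\widetilde{x}|_{J_m})$. Therefore $\varphi(K(\widetilde{x}|_J))\subseteq\bigcap_{m\in\N}[a_m,b_m]$, which is at most a single point because $b_m-a_m\to 0$. The principal subtlety is the bisection step, where one must produce a closed convex neighbourhood of $x$ separating it from the closed convex sublevel set of $\varphi$; this is standard in a Hausdorff locally convex space and is essentially the same separation used in Lemma \ref{SubsequenceBounded}, so beyond that the argument is just nested-interval bookkeeping and the familiar diagonal trick.
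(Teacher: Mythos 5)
Your proposal is correct and follows essentially the same route as the paper's proof: reduce to the bounded case via Lemma \ref{SubsequenceBounded}, repeatedly bisect using a closed convex neighbourhood separating a cluster point from the closed convex sublevel set $\varphi^{-1}(-\infty,m_n]$, and finish with the diagonal subsequence. The only cosmetic difference is that you track fixed nested dyadic intervals $[a_n,b_n]$ while the paper tracks $\inf$ and $\sup$ of $\varphi(K(\widetilde{x}|_{J_n}))$ directly; the key step (if no cluster point had value $>m_n$ then $K(\widetilde{x}|_{J_n})\subseteq\varphi^{-1}(-\infty,m_n]$ by closedness and convexity) is identical.
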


\begin{proof}
	Suppose that $\widetilde{x}$ has no subsequence, $\widetilde{x}|_J$, such that $\varphi(K(\widetilde{x}|_J))$ is empty. Then, by Lemma \ref{SubsequenceBounded}, and by passing to a subsequence if necessary, 
	we may assume that $\varphi(K(\widetilde{x}))$ is bounded. 
	Let $\alpha_1,\beta_1\in\r$ denote $\inf\varphi(K(\widetilde{x}))$ and $\sup\varphi(K(\widetilde{x}))$ respectively and let $J_0:=\n$. Of course if $\alpha_1=\beta_1$, then $\varphi(K(\widetilde{x}))$ is a singleton and we are done. If not, we inductively construct 
	a decreasing sequence of infinite subsets $(J_n:n \in \N)$ of $\N$ such that $\text{diam}(\varphi(K(\widetilde{x}|_{J_n}))\leq (\beta_1-\alpha_1)/2^{n}$ for all $n \in \N$.
	
	\medskip
	
	We begin as follows. Set $\delta_1:=(\alpha_1+\beta_1)/2$. Since $\varphi$ is convex and lower-semicontinuous, we have that $\varphi^{-1}(-\infty,\delta_1]$ is a closed, convex set. 
	Then, we can pick $x\in cl(\widetilde{x})$ such that $\delta_1<\varphi(x)\leq\beta_1$. Indeed, if not, then $\varphi(cl(\widetilde{x}))\seq (-\infty, \delta_1]$ and so $\varphi(K(\widetilde{x}))\seq (-\infty, \delta_1]$ also. 
	However, this contradicts the fact that $\beta_1=\sup  \varphi(K(\widetilde{x}))$. \\ \\ 
	Therefore $x\notin\varphi^{-1}(-\infty,\delta_1]$, and so there exists a closed, convex neighbourhood, $N$ of $x$ such that $N\cap\varphi^{-1}(-\infty,\delta_1]=\varnothing$.
	As $x\in cl(\widetilde{x})$, there is an infinite set $J_1\seq \n$ such that $x_{j}\in N$ for all $j\in J_1$.
	 In particular, $K(\widetilde{x}|_{J_1})\seq N$, since $N$ is closed and convex, and so $\inf \varphi(K(\widetilde{x}|_{J_1}))\geq\delta_1$. Also, because $\widetilde{x}|_{J_1}$ is a subsequence of 
	 $\widetilde{x}$, we have that $\sup \varphi(K(\widetilde{x}|_{J_1}))\leq\beta_1$. Therefore, $\text{diam}(\varphi(K(\widetilde{x}|_{J_1}))\leq\beta_1-\delta_1= (\beta_1-\alpha_1)/2$. 
	 
	 \medskip

	Suppose now that we have created the infinite subsets $J_n \subseteq J_{n-1} \subseteq \cdots \subseteq J_0$ such that 
	$$\text{diam}(\varphi(K(\widetilde{x}|_{J_i}))\leq (\beta_1-\alpha_1)/2^{i}  \mbox{\quad for all $1 \leq i \leq n$.}$$
	
	Set $\alpha_n:=\inf\varphi(K(\widetilde{x}|_{J_n}))$, $\beta_n :=\sup\varphi(K(\widetilde{x}|_{J_n}))$ and $\delta_n:=(\alpha_n+\beta_n)/2$. 
	Then, 
	$$\text{diam}(\varphi(K(\widetilde{x}|_{J_n}))=\beta_n-\alpha_n\leq(\beta_1-\alpha_1)/2^n,$$ by construction. If  $\alpha_n = \beta_n$ then let $J_{n+1} := J_n$ and we are done. Otherwise,
	we can choose (as above) $x\in cl(\widetilde{x}|_{J_n})$ such that $x\notin\varphi^{-1}(-\infty,\delta_n]$, because if not, $\varphi(K(\widetilde{x}|_{J_n}))\seq (-\infty,\delta_n]$, 
	which contradicts the fact that $\beta_n=\sup\varphi(K(\widetilde{x}|_{J_n}))$.
	Therefore, there exists a closed, convex neighbourhood, $N$ of $x$ such that $N\cap\varphi^{-1}(-\infty,\delta_{n}]=\varnothing.$ 
	Since $x\in cl(\widetilde{x}|_{J_n})$, there is an infinite set $J_{n+1}\seq J_n$ such that  $x_j\in N$ for all $j \in J_{n+1}$.
In particular, since $N$ is closed and convex, $K(\widetilde{x}|_{J_{n+1}})\seq N$ and so $\inf \varphi(K(\widetilde{x}|_{J_{n+1}}))\geq\delta_n$. Therefore,
 $$\text{diam}(\varphi(K(\widetilde{x}|_{J_{n+1}}))\leq\beta_n-\delta_n=(\beta_n-\alpha_n)/2\leq (\beta_1-\alpha_1)/2^{n+1}.$$ 
 Thus, by induction, we have created a decreasing sequence of infinite subsets $(J_n:n \in \N)$ of $\N$ such that 
 $$\text{diam}(\varphi(K(\widetilde{x}|_{J_n}))\leq (\beta_1-\alpha_1)/2^{n} \mbox{\quad for all $n \in \N$.}$$
 
 \medskip
 
 Lastly, define $J:=\{n_k:k\in\n\}$ such that $n_k<n_{k+1}$ and $n_k\in J_k$ for all $k \in \N$.
	Consider the subsequence of $\widetilde{x}$ given by $\widetilde{x}|_J = (x_{n_k}:k\in\n)$. 
	Then, since $J_{n+1}\seq J_n$ for all $n\in\n$, we have that $n_k\in J_m$ for all $k\geq m$. 
	Let $m\in\n$. Then, 
	$$K(\widetilde{x}|_J)=K(\{x_{n_k}:k\geq m\})\seq K(\widetilde{x}|_{J_m}),$$ which gives that $\text{diam}(\varphi(K(\widetilde{x}|_J)))\leq\text{diam}(\varphi(K(\widetilde{x}|_{J_m})))\leq  (\beta_1-\alpha_1)/2^{m} $. 
	Since $m\in\n$ was arbitrary, we conclude that $\varphi(K(\widetilde{x}|_J))$ is a singleton as required.
	\end{proof}


For our next result we need to recall the definition of the topology of pointwise convergence. If $X$ is a nonempty set and $A$ is a nonempty subset of $X$ then we may put a
topology on the vector space $\R^X$ of all real-valued functions defined on $X$ endowed with pointwise addition and pointwise scalar multiplication. We will call the weak topology
on $\R^X$ generated by $\{\delta_a:a \in A\}$ the {\it topology of pointwise convergence on $A$}, where for each $a \in A$, $\delta_a:\R^X \to \R$ is defined by, $\delta_a(f) := f(a)$.
We shall denote the topology of pointwise convergence on $A$ by $\tau_p(A)$.

\begin{corollary}\label{Diagonal}
 For each $n \in \N$, let $\varphi_n:A\rightarrow\r$ be a convex lower-semicontinuous function defined on a nonempty closed and convex subset $A$ of a Hausdorff locally convex space $(X,+,\cdot,\tau)$, then
 for every sequence $\widetilde{x}:=(x_n:n\in\n)$ in $A$, there exists a subsequence, $\widetilde{x}|_J$, of $\widetilde{x}$ such that $\varphi(K(\widetilde{x}|_J))$ is at most a singleton for all 
	$\varphi \in \overline{\{\varphi_n:n\in\n\}}^{\tau_p(A)}$.
\end{corollary}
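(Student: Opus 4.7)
The plan is to reduce the corollary to Lemma \ref{Singleton} by a standard diagonal-subsequence argument and then to upgrade the conclusion from $\{\varphi_n:n\in\N\}$ to its $\tau_p(A)$-closure using the fact that membership in $\tau_p(A)$-neighbourhoods is determined by finitely many point-evaluations.

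First I would apply Lemma \ref{Singleton} to $\varphi_1$ to extract an infinite $J_1 \subseteq \N$ with $\varphi_1(K(\widetilde{x}|_{J_1}))$ at most a singleton. Then, inductively, having constructed infinite subsets $J_1 \supseteq J_2 \supseteq \cdots \supseteq J_n$ with $\varphi_k(K(\widetilde{x}|_{J_k}))$ at most a singleton for $1 \leq k \leq n$, I would apply Lemma \ref{Singleton} to the sequence $\widetilde{x}|_{J_n}$ and the function $\varphi_{n+1}$ to obtain an infinite $J_{n+1} \subseteq J_n$ with $\varphi_{n+1}(K(\widetilde{x}|_{J_{n+1}}))$ at most a singleton. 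A diagonal choice $J := \{n_k : k \in \N\}$, with $n_k < n_{k+1}$ and $n_k \in J_k$, then satisfies $n_k \in J_m$ for all $k \geq m$, so $K(\widetilde{x}|_J) = \bigcap_{k \in \N} \overline{\{x_{n_j}:j \geq k\}}^\tau \subseteq K(\widetilde{x}|_{J_m})$ for each $m$. Consequently $\varphi_m(K(\widetilde{x}|_J))$ is at most a singleton for every $m \in \N$.

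For the final step, take any $\varphi \in \overline{\{\varphi_n:n\in\N\}}^{\tau_p(A)}$ and any two points $x,y \in K(\widetilde{x}|_J)$; I need to show $\varphi(x) = \varphi(y)$. Fix $\varepsilon > 0$ and consider the $\tau_p(A)$-open neighbourhood
$$U := \{\psi \in \R^A : |\psi(x) - \varphi(x)| < \varepsilon \text{ and } |\psi(y) - \varphi(y)| < \varepsilon\}$$
of $\varphi$, which is a basic open set in the topology generated by $\delta_x$ and $\delta_y$. Since $\varphi$ lies in the $\tau_p(A)$-closure of $\{\varphi_n:n\in\N\}$, there exists some $\varphi_n \in U$. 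By the previous step, $\varphi_n(x)$ and $\varphi_n(y)$ both lie in the singleton $\varphi_n(K(\widetilde{x}|_J))$, hence are equal, and so the triangle inequality gives $|\varphi(x) - \varphi(y)| < 2\varepsilon$. As $\varepsilon > 0$ was arbitrary, $\varphi(x) = \varphi(y)$, which shows $\varphi(K(\widetilde{x}|_J))$ is at most a singleton.

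The only real subtlety is the legitimacy of invoking Lemma \ref{Singleton} at every step of the induction: the lemma requires a sequence in $A$, and $\widetilde{x}|_{J_n}$ is indeed such a sequence, so there is no circularity. The diagonalisation is routine and parallels the one used at the end of Lemmas \ref{SubsequenceBounded} and \ref{Singleton}. The passage to the $\tau_p(A)$-closure is the one genuinely new ingredient, but it is essentially a two-coordinate observation rather than a technical obstacle: the defining neighbourhoods in $\tau_p(A)$ involve only finitely many evaluations, and here two evaluations suffice.
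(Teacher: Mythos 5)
Your proof is correct and follows essentially the same route as the paper's: the same induction via Lemma \ref{Singleton}, the same diagonal choice of $J$ with $n_k\in J_k$, and the same two-point-evaluation neighbourhood argument to pass to the $\tau_p(A)$-closure. The only cosmetic difference is that the paper runs the last step as a contradiction using the midpoint value $(1/2)[\varphi(x)+\varphi(y)]$, whereas you use a direct $\varepsilon$-estimate; both are equivalent.
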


\begin{proof}
	 Let $J_0:=\n$. We inductively construct a decreasing sequence of infinite subsets $(J_n:n \in \N)$ of $\N$ such that
	  $\varphi_n(K(\widetilde{x}|_{J_n}))$ is at most a singleton for each $n \in \N$.
	  
	  \medskip
   
    We begin as follows. Since $\varphi_1$ is convex and lower-semicontinuous, there exists,  by Lemma \ref{Singleton}, an infinite subset $J_1$ of $\N$ such that $\varphi_1(K(\widetilde{x}|_{J_1}))$ is at most a singleton. 
    
    \medskip
    
    Now, suppose that we have created infinite subsets $J_n \subseteq J_{n-1} \subseteq \cdots \subseteq J_1 \subseteq \N$ such that $\varphi_i(K(\widetilde{x}|_{J_i}))$ is at most a singleton for all $1 \leq i \leq n$.
    
    \medskip
   
    Then, for the $(n+1)^{\rm{th}}$ step choose, using Lemma \ref{Singleton},  an infinite subset $J_{n+1}$ of $J_n$ such that $\varphi_{n+1}(K(\widetilde{x}|_{J_{n+1}}))$ is at most a singleton.
    
    \medskip
    
    Now, define $J:=\{n_k:k\in\n\}\seq\n$ such that $n_k<n_{k+1}$ and $n_k\in J_k$ for all $k \in \N$.
    Consider the subsequence of $\widetilde{x}$ given by $\widetilde{x}|_J = (x_{n_k}:k\in\n)$. Then, since $J_{n+1}\seq J_n$ for all $n\in\n$, we have that $n_k\in J_m$ for all $k\geq m$. 
	Let $m\in\n$. Then, 
	$$K(\widetilde{x}|_J)=K(\{x_{n_k}:k\geq m\})\seq K(\widetilde{x}|_{J_m}),$$ 
	and so $\big|\varphi_m(K(\widetilde{x}|_J))\big|\leq \big|\varphi_m(K(\widetilde{x}|_{J_m}))\big|\leq 1$. 
	Since $m$ was arbitrary, this gives that $\varphi_m(K(\widetilde{x}|_J))$ is at most a singleton for all $m\in\n$. 
	
	\medskip
	
	Now, let $\varphi\in\overline{\{\varphi_n:n\in\n\}}^{\tau_p(A)}$ and let $x,y\in K(\widetilde{x}|_J)$. Suppose, for a contradiction, that $\varphi(x)>\varphi(y)$. 
	Then 
	$$N:=\left\{F\in \r^X:F(x)>(1/2)[\varphi(x)+\varphi(y)]\right\}\cap\left\{F\in \r^X:F(y)<(1/2)[\varphi(x)+\varphi(y)]\right\}$$
	is a $\tau_p(A)$-neighbourhood of $\varphi$. Since $\varphi\in\overline{\{\varphi_n:n\in\n\}}^{\tau_p(A)}$ there must exist $k\in\n$ such that $\varphi_k\in N$. However, this is impossible as $\varphi_k(x)=\varphi_k(y)$ for all $k\in\n$, 
	and so $\varphi(K(\widetilde{x}|_J))$ is at most a singleton.
\end{proof}

By applying Corollary \ref{Diagonal} we obtain the following technical result that is needed (i.e., provides the required subsequence) in the proof of the general version of James' weak compactness theorem.

\begin{corollary}\label{SubsequenceUsefulJames}
		Let $\varphi:X \to \R$ be a $\tau$-continuous convex function defined on a locally convex space $(X, +,\cdot, \tau)$.  If $\tau'$ is a Hausdorff locally convex topology on $X$ such that (i) $\tau' \subseteq \tau$
	and (ii) $\varphi$ is $\tau'$-lower semicontinuous then, for every sequence $\widetilde{x}:=(x_n:n\in\n)$ in $X$, there exists a subsequence, $\widetilde{x}|_J$, of $\widetilde{x}$ such that 
	$\varphi(y-aK_{\tau'}(\widetilde{x}|_J))$ is at most a singleton for all $y\in\emph{span}\lbrace x_n:n\in\n\rbrace$ and all $a\in\r$.
\end{corollary}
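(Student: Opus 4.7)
The plan is to apply Corollary \ref{Diagonal} to the locally convex space $(X,+,\cdot,\tau')$ with a carefully chosen countable family of convex $\tau'$-lower semicontinuous functions, and then to leverage the ``pointwise closure'' clause of Corollary \ref{Diagonal} to promote the conclusion from countably many parameter choices to the full index set $\mbox{span}\{x_n:n\in\N\}\times\R$.

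First, I would let $D$ denote the countable set of rational linear combinations of $\{x_n:n\in\N\}$, and for each $y\in D$ and $q\in\Q$ define $\psi_{y,q}:X\to\R$ by $\psi_{y,q}(x):=\varphi(y-qx)$. Each $\psi_{y,q}$ is convex since $\varphi$ is convex and $x\mapsto y-qx$ is affine; each $\psi_{y,q}$ is also $\tau'$-lower semicontinuous because $\varphi$ is $\tau'$-lsc by hypothesis and the affine map $x\mapsto y-qx$ is $\tau'$-continuous (since $\tau'$ is a linear topology on $X$). Enumerating the countable family $\{\psi_{y,q}:y\in D,\ q\in\Q\}$ as $(\psi_n:n\in\N)$ and applying Corollary \ref{Diagonal} with $A=X$ inside $(X,+,\cdot,\tau')$ yields a subsequence $\widetilde{x}|_J$ such that $\psi(K_{\tau'}(\widetilde{x}|_J))$ is at most a singleton for every $\psi\in\overline{\{\psi_n:n\in\N\}}^{\tau_p(X)}$.

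The remaining step is to show that $\psi_{y,a}$ lies in this $\tau_p(X)$-closure for every $y\in\mbox{span}\{x_n:n\in\N\}$ and every $a\in\R$; granting this, the conclusion follows at once because $\varphi(y-aK_{\tau'}(\widetilde{x}|_J))=\psi_{y,a}(K_{\tau'}(\widetilde{x}|_J))$. Writing $y=\sum_{i=1}^k\lambda_ix_{n_i}$, I would pick rationals $\lambda_i^{(m)}\to\lambda_i$ and $q^{(m)}\to a$, set $y^{(m)}:=\sum_{i=1}^k\lambda_i^{(m)}x_{n_i}\in D$, and observe that for each fixed $x\in X$, joint continuity of the linear operations in $\tau$ gives $y^{(m)}-q^{(m)}x\to y-ax$ in $\tau$. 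Then $\tau$-continuity of $\varphi$ gives $\psi_{y^{(m)},q^{(m)}}(x)\to\psi_{y,a}(x)$, exhibiting $\psi_{y,a}$ as a $\tau_p(X)$-limit of a sequence from $\{\psi_n:n\in\N\}$.

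The only delicate point is recognising that the strong conclusion of Corollary \ref{Diagonal}, in which the singleton property passes to the entire $\tau_p(X)$-closure of the family, is exactly what is needed to upgrade from a countable parameter set to the uncountable set $\mbox{span}\{x_n:n\in\N\}\times\R$; without this feature one could only handle rational parameters. It is also essential that $\varphi$ is $\tau$-continuous (not merely $\tau'$-lsc), since the pointwise approximation argument hinges on continuity of $\varphi$ along sequences converging in the stronger topology $\tau$.
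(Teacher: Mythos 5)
Your proposal is correct and follows essentially the same route as the paper: choose a countable dense set of parameters (rational combinations of the $x_n$ paired with rationals), form the countable family $x\mapsto\varphi(y-qx)$ of convex $\tau'$-lower semicontinuous functions, apply Corollary \ref{Diagonal}, and then use the $\tau$-continuity of $\varphi$ together with continuity of the linear operations to place every $x\mapsto\varphi(y-ax)$ in the $\tau_p(X)$-closure of that family. Your explicit justification of the closure step (which the paper leaves as an observation) is accurate and is exactly where the hypothesis that $\varphi$ is $\tau$-continuous is needed.
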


\begin{proof}
	Observe that $Y:=\text{span}\lbrace x_n:n\in\n\rbrace$ is separable, so let $\{y_n:n \in \n \}$ be a countable, dense subset of $Y$. Moreover, let $\{q_n : n \in \n\}$ be an enumeration of 
	$\mathbb{Q}\backslash\{0\}$. Now for all $m,n\in\n$, define $\varphi_{n}^m:X\rightarrow\r$ by 
	$$\varphi_n^m(x)=\varphi(y_n-q_mx) \quad \text{for all} \ x\in X.$$
	Since $x\mapsto (y_n-q_mx)$ is a continuous affine function and $\varphi$ is convex and $\tau'$-lower semicontinuous, we have that $\varphi_n^m$ is $\tau'$-lower-semicontinuous and convex for all $m,n\in\n$. 
	Then, by Corollary \ref{Diagonal}, there exists a subsequence, $\widetilde{x}|_{J}$, of $\widetilde{x}$ such that $\psi(K_{\tau'}(\widetilde{x}|_{J}))$ is at most a singleton for all $\psi$ in the $\tau_p(X)$-closure of 
	$\{\varphi_n^m:m,n\in\n\}$.
	
	\medskip
	
	Now observe that, for all $a\in\r$ and all $y\in Y$, the function $\varphi^a_y:X\rightarrow\r$ given by $\varphi^a_y(x):=\varphi(y-ax)$ is in the $\tau_p(X)$-closure of $\{\varphi_n^m:m,n \in \n \}$.
 Therefore, $\varphi_y^a(K_{\tau'}(\widetilde{x}|_J))=\varphi(y-aK_{\tau'}(\widetilde{x}|_J))$ is at most a singleton for all $y\in\text{span}\lbrace x_n:n\in\n\rbrace$ and all $a\in\r$, as required.
\end{proof}

The last result we need before we can prove the full version of James' theorem concerns the convergence of the subsequences that we constructed in \underline{\textbf{Part IV}} of the proof of Theorem \ref{JamesSequential}.

\begin{proposition}\label{ClusterConvex}
Let $(X,\tau)$ be a locally convex space and let $\widetilde{x}:=(x_n:n\in\n)$ be a sequence in a \\
$\tau$-compact convex subset $K$ of $X$. If $\widetilde{y}:=(y_n:n\in\n)$ is any sequence such that $y_k\in \emph{co}\{x_n:n\geq k\}$ for all $k \in \N$, then $cl(\widetilde{y})\seq K(\widetilde{x})$. 
\end{proposition}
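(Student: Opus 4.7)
The plan is to proceed by contradiction, using the Separation Theorem together with a purely topological fact about cluster points in compact spaces. Suppose $y^{*}\in cl(\widetilde{y})$ is given, and assume, for a contradiction, that $y^{*}\notin K(\widetilde{x})=\overline{\mathrm{co}}^{\tau}(cl(\widetilde{x}))$.

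Since $K(\widetilde{x})$ is a nonempty $\tau$-closed convex subset of the locally convex space $(X,\tau)$ and $y^{*}\notin K(\widetilde{x})$, Theorem \ref{separation-theorem} supplies a $\tau$-continuous linear functional $f\in X^{*}$ with
$$M:=\sup\{f(z):z\in K(\widetilde{x})\}<f(y^{*}).$$
Setting $\epsilon:=f(y^{*})-M>0$, the set $U:=f^{-1}((-\infty,M+\epsilon/2))$ is a $\tau$-open neighbourhood of $cl(\widetilde{x})$.

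The main obstacle is to pass from information about the cluster set $cl(\widetilde{x})$ (a statement about subsequential behaviour) to information about the tails of $\widetilde{x}$ itself; because $K$ need not be sequentially compact, I cannot simply extract a convergent subsequence. Instead I will use the following topological fact: if $(x_{n})$ is a sequence in a compact space and $U$ is an open set containing every cluster point of $(x_{n})$, then $x_{n}\in U$ for all sufficiently large $n$. The proof is a finite-intersection-property argument: if the conclusion failed, some subsequence would lie entirely in the compact closed set $K\setminus U$, and the nested tails $\overline{\{x_{n_{k}}:k\geq m\}}$ of that subsequence would have nonempty intersection, producing a cluster point of $(x_{n_{k}})$ inside $K\setminus U$; but this point is also a cluster point of the full sequence and hence lies in $U$, a contradiction. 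Applied with our $U$, this yields an $N\in\mathbb{N}$ with $f(x_{n})<M+\epsilon/2$ for all $n\geq N$.

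The conclusion is then routine. For every $k\geq N$ we have $y_{k}\in\mathrm{co}\{x_{n}:n\geq k\}\subseteq\mathrm{co}\{x_{n}:n\geq N\}$, so by linearity of $f$,
$$f(y_{k})\leq M+\epsilon/2.$$
The half-space $H:=\{y\in X:f(y)\leq M+\epsilon/2\}$ is $\tau$-closed and contains the tail $\{y_{k}:k\geq N\}$, hence contains every $\tau$-cluster point of $\widetilde{y}$; in particular $y^{*}\in H$. This gives $f(y^{*})\leq M+\epsilon/2<M+\epsilon=f(y^{*})$, which is the desired contradiction.
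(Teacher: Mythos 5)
Your proof is correct. Its engine is the same as the paper's: the observation that in a compact set, if an open set $U$ contains every cluster point of a sequence, then the tail of the sequence eventually lies in $U$ (proved by the same finite-intersection-property argument), followed by the remark that convex combinations of tail terms, and then cluster points of those, cannot escape a closed convex superset. Where you diverge is in the choice of that closed convex superset: you invoke the Separation Theorem (Theorem \ref{separation-theorem}) to manufacture a single closed half-space $\{y: f(y)\leq M+\epsilon/2\}$ strictly between $K(\widetilde{x})$ and the putative bad cluster point $y^*$, whereas the paper avoids Hahn--Banach altogether and instead shows $cl(\widetilde{y})\subseteq K(\widetilde{x})+\overline{W}$ for every open convex neighbourhood $W$ of $0$, using that $K(\widetilde{x})+\overline{W}$ is closed and convex (compact plus closed is closed) and then intersecting over all $W$. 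Your version is arguably cleaner to read because the contradiction is a single numerical inequality, but it imports the full strength of the separation theorem; the paper's version is more elementary, relying only on the regularity of the linear topology, which is in keeping with the paper's stated aim of minimising prerequisites. Both correctly use the compactness of $K$ to guarantee that subsequences have cluster points, which is the one place the hypothesis of $\tau$-compactness is genuinely needed.
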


\begin{proof}
	It is sufficient to show that for any open, convex neighbourhood, $W$ of $0$,  $cl(\widetilde{y})\seq K(\widetilde{x})+\overline{W}$. To this end, let $W$ be an open, convex neighbourhood of $0$.
	Then note that for $k$ sufficiently large, 
	$$\{x_n:n\geq k\}\seq cl(\widetilde{x})+W.$$
	Indeed if this is not the case, then we could construct a subsequence $(x_{n_k}:k \in \N)$ of $(x_n:n \in \N)$ such that $x_{n_k}\notin cl(\widetilde{x})+W$ for all $k \in \N$,
	However, since $X\setminus [cl(\widetilde{x})+W]$ is a closed set containing $\{x_{n_k}:k \in \N\}$ we have that $\overline{\{x_{n_k}:k \in \N\}} \cap [cl(\widetilde{x})+W] = \varnothing$,
	 but this is impossible since $\overline{\{x_{n_k}:k\in\n\}}\cap cl(\widetilde{x})\neq \varnothing$. Thus, we have a contradiction. Therefore, if $y\in cl(\widetilde{y})$, then for $k$ sufficiently large, 
	 we have that
	$$y\in \overline{\{y_n: n \geq k\}} \subseteq \overline{\mathrm{co}\{x_n:n \geq k\}}\seq K(\widetilde{x})+\overline{W} \mbox{\quad since,  $K(\widetilde{x})+\overline{W}$ is closed and convex.}$$
	Hence, $cl(\widetilde{y})\seq K(\widetilde{x})+\overline{W}$ as required.
\end{proof}

\begin{theorem}[James' Theorem: version 3, \cite{James}]\label{JamesFull}
	Let $C$ be a closed, bounded, convex subset of a Banach space $X$. If every $x^*\in X^*$ attains its supremum over $C$, then $C$ is weakly compact.
\end{theorem}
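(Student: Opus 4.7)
The plan is to follow the blueprint of Theorem \ref{JamesSequential} and patch the one place where weak$^*$ sequential compactness of $B_{X^*}$ was used, namely the passage to a weak$^*$ limit $f_\infty$ of the sequence $(f_n)$. In the absence of such a limit, I will apply Corollary \ref{SubsequenceUsefulJames} to extract a subsequence of $(f_n)$ whose entire weak$^*$ cluster set $K_{w^*}(\widetilde{f})$ behaves, for the sublinear functional $p(x^*):=\sup\{x^*(c):c\in C\}$, as if it were a single point, and then use Proposition \ref{ClusterConvex} to locate weak$^*$ cluster points of the convex-combination sequence $(g_n)$ produced by Lemma \ref{increasing2}.

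Set $K:=\overline{\widehat{C}}^{w^*}$ and suppose, for a contradiction, that $K\not\seq\widehat{X}$; fix $F\in K\setminus\widehat{X}$ and $0<\epsilon<\mathrm{dist}(F,\widehat{X})$, and pick a positive sequence $(\beta_n)$ with $(1/\beta_n)\sum_{i>n}\beta_i\to 0$. Exactly as in Part I of Theorem \ref{JamesSequential}, alternately invoke the weak$^*$ density of $\widehat{C}$ in $K$ and Lemma \ref{Separation} to construct $(f_n)\seq S_{X^*}$ and $(\widehat{x}_n)\seq\widehat{C}$ satisfying $(A_n)$ and $(B_n)$. Because $C$ is bounded, $p$ is norm-continuous, and because $p$ is the supremum of the weak$^*$-continuous maps $\widehat{c}$, it is weak$^*$ lower semicontinuous; so Corollary \ref{SubsequenceUsefulJames}, applied with ambient topology the norm, auxiliary topology the weak$^*$, and $\varphi:=p$, supplies a subsequence of $(f_n)$ (which I relabel together with the matching subsequence of $(\widehat{x}_n)$) along which $p(y-aK_{w^*}(\widetilde{f}))$ is at most a singleton for every $y\in\mathrm{span}\{f_n:n\in\n\}$ and every $a\in\r$. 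The properties $(A_n)$ and $(B_n)$ survive the relabelling, since both involve only indices $\leq n$.

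Pick any $f_\infty\in cl_{w^*}(\widetilde{f})\seq K_{w^*}(\widetilde{f})$, which is nonempty by Theorem \ref{Banach-Alaoglu}. Weak$^*$ continuity of each $\widehat{x}_m$ together with $(B_n)$ yields $\widehat{x}_m(f_\infty)=0$ for every $m$, and this lets the Part IV estimate go through to give $\inf\{p(f):f\in\mathrm{co}\{f_k:k\geq 1\}-f_\infty\}>\epsilon/4$. Applying Lemma \ref{increasing2} to the decreasing convex sets $C_n:=\mathrm{co}\{f_k:k\geq n\}-f_\infty$ produces $(g_n)$ with $g_n\in\mathrm{co}\{f_k:k\geq n\}$ and the standard increase inequality
$$p(\mbox{$\sum_{i=1}^n\beta_i(g_i-f_\infty)$})+\beta_{n+1}\,\epsilon/4<p(\mbox{$\sum_{i=1}^{n+1}\beta_i(g_i-f_\infty)$}).$$
Completeness of $X^*$ yields $g:=\sum_{i=1}^\infty\beta_i(g_i-f_\infty)\in X^*$, and by norm-continuity and the Monotone Convergence Theorem, $p(g)=\sup_{n}p(\sum_{i=1}^n\beta_i(g_i-f_\infty))$.

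Here is the step that replaces weak$^*$ convergence, and it is the genuine technical hurdle. Since each $g_n\in\mathrm{co}\{f_k:k\geq n\}\seq B_{X^*}$, the sequence $(g_n)$ has a weak$^*$ cluster point $f'_\infty$, and Proposition \ref{ClusterConvex} (applied with $\tau$ the weak$^*$ topology) places $f'_\infty\in K_{w^*}(\widetilde{f})$. The singleton property from Corollary \ref{SubsequenceUsefulJames} gives $p(\sum_{i=1}^n\beta_i(g_i-f_\infty))=p(\sum_{i=1}^n\beta_i(g_i-f'_\infty))$ for every $n$, whence $p(g)=p(g')$ for $g':=\sum_{i=1}^\infty\beta_i(g_i-f'_\infty)\in X^*$, and the increase inequality remains valid with $f'_\infty$ in place of $f_\infty$. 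The norm-attainment hypothesis now supplies $c\in C$ with $\widehat{c}(g')=p(g')$; telescoping exactly as in Part VI of Theorem \ref{JamesSequential} yields
$$\epsilon/4 \;<\; \widehat{c}(g_n-f'_\infty)+\frac{2\|\widehat{c}\|}{\beta_n}\mbox{$\sum_{i=n+1}^\infty$}\beta_i$$
for every $n>1$. The second summand tends to $0$ by choice of $(\beta_n)$, and along a subnet on which $g_n\to f'_\infty$ in weak$^*$ (with indices cofinal in $\n$) the first summand tends to $0$ by weak$^*$ continuity of $\widehat{c}$, forcing $\epsilon/4\leq 0$ and the desired contradiction. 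In short, $K_{w^*}(\widetilde{f})$ plays the role of the weak$^*$ limit of the sequential proof, and Corollaries \ref{SubsequenceUsefulJames} and Proposition \ref{ClusterConvex} are precisely what is needed to keep the $p$-computations unambiguous and to locate $f'_\infty$ within that set.
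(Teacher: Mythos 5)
Your proposal is correct and follows essentially the same route as the paper's proof: construct $(f_n)$ and $(\widehat{x}_n)$ as in the sequentially compact case, refine via Corollary \ref{SubsequenceUsefulJames} so that $p$ is constant on the relevant translates of $K_{w^*}(\widetilde{f})$, and use Proposition \ref{ClusterConvex} to place a weak$^*$ cluster point of $(g_n)$ (your $f'_\infty$, the paper's $g_\infty$) inside $K_{w^*}(\widetilde{f})$ so that the final telescoping and the $\liminf$ argument go through. The only difference is cosmetic bookkeeping about when one switches from $f_\infty$ to the cluster point of $(g_n)$.
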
	
	\begin{proof}
	To show that $C$ is weakly compact, it suffices to show that $K:=\overline{\widehat{C}}^{w*}\seq\widehat{X}$ (see Remark \ref{reflexive}). Suppose, for a contradiction, that this is not the case.
	Then there exists $F\in K\backslash\widehat{X}$. Since $\widehat{X}$ is a closed subspace of $X^{**}$, this means there must exist $0<\epsilon<\text{dist}(F,\widehat{X})$. Let $(\beta_n:n\in\n)$ be a sequence of strictly positive numbers such that $\lim_{n\rightarrow\infty}\frac{1}{\beta_n}\sum_{i=n+1}^\infty\beta_i=0$.\\ \\
	\underline{\textbf{Part I:}} We inductively create  the two sequences $(\widehat{x}_n:n\in\n)$ in $\widehat{C}$, and $(f_n:n\in\n)$ in $S_{X^*}$, which satisfy the statements $(A_n)$ and $(B_n)$, exactly as in  Part I of the proof of Theorem \ref{JamesSequential}. \\ \\
	\underline{\textbf{Part II:}} 
	Define $p:X^*\rightarrow\r$ to be $p(x^*)=\sup\{x^*(c):c\in C\}$ for all $x^*\in X^*$. Then $p$ is norm-continuous, weak$^*$-lower-semicontinuous and convex.
	Just as in Part II of the proof of Theorem \ref{JamesSequential}, passing to a subsequence does not disturb the statements $(A_n)$ and $(B_n)$.  \\ \\
	 So, by passing to a subsequence and relabelling if necessary, by Corollary \ref{SubsequenceUsefulJames}
we may assume that for all $f\in\text{span}\lbrace f_n:n\in\n\rbrace$ and all $a\in \r$, the set $p(f-aK_{w^*}(f_n:n\in\n))$ is at most a singleton.  
Since $(f_n:n\in\n)$ is a sequence in $\ball$ (which is weak$^*$-compact), it must a have a weak$^*$-cluster point, call it $f_\infty$. \\ \\ 
	\underline{\textbf{Part III:}} This step is exactly the same as Part III of the proof of Theorem \ref{JamesSequential} - we deduce that $\widehat{x}_n(f_k-f_\infty)>\epsilon/2$ for all $n>k$. \\ \\ 
	\underline{\textbf{Part IV:}} As in the proof of Theorem \ref{JamesSequential}, we use Lemma \ref{increasing2} to construct a sequence $(g_n:n\in\n)$ such that for all $n\in\n$:
	\begin{enumerate}[label=(\roman*)]
	\item $g_n\in \text{co}\lbrace f_k:k\geq n\rbrace$ and
	\item $\displaystyle p($\mbox{$\sum_{i=1}^n\beta_i(g_i-f_\infty)$}$)+\beta_{n+1}\epsilon/4<p($\mbox{$\sum_{i=1}^{n+1}\beta_i(g_i-f_\infty))$}.  
\end{enumerate} 
	\underline{\textbf{Part V:}} Since $(g_n:n\in\n)$ is a sequence in $\ball$ (which is weak$^*$-compact), it must a have a weak$^*$-cluster point, call it $g_\infty$.  Then, by Proposition \ref{ClusterConvex}, we have that $g_\infty\in K_{w^*}(f_n:n\in\n)$. 
	While it may no longer be the case that $f_\infty=g_\infty$ as in Theorem \ref{JamesSequential}, we do have that, for all $n\in\n$,
$$ p(\mbox{$\sum_{i=1}^n$}\beta_i(g_i-g_\infty))=p(\mbox{$\sum_{i=1}^{n}$}\beta_ig_i-\mbox{$\sum_{i=1}^{n}$}\beta_i\cdot g_\infty)
  =p(\mbox{$\sum_{i=1}^{n}$}\beta_ig_i-\mbox{$\sum_{i=1}^{n}$}\beta_i\cdot f_\infty) 
  =p(\mbox{$\sum_{i=1}^n$}\beta_i(g_i-f_\infty)) \qquad\mbox{$(**)$} $$
  since $g_\infty\in K_{w^*}(f_n:n\in\n)$ and for all $f\in\text{span}\lbrace f_n:n\in\n\rbrace$ and all $a\in \r$, the set $p(f-aK_{w^*}(f_n:n\in\n))$ is a singleton. As in Part V of the proof of Theorem \ref{JamesSequential}, 
  we set $g:=\sum_{i=1}^\infty\beta_i(g_i-g_\infty)$ and deduce that $g\in X^*.$ \\ \\
	\underline{\textbf{Part VI:}}
	This final step is almost the same as Part VI of the proof of Theorem \ref{JamesSequential}, with two small changes that we note here. We may replace $f_\infty$ with $g_\infty$ throughout the inequalities, 
	not because $f_\infty = g_\infty$ but because of  statement $(**)$ above.
	Lastly, the final contradiction is not because $\lim_{n\rightarrow\infty}g_n(c) = g_\infty(c)$ necessarily, but because  $\liminf_{n\rightarrow\infty}g_n(c) \leq g_\infty(c)$. This still gives a contradiction.
\end{proof}


\subsection{James' Theorem: applications}


\begin{theorem}[\!\!\cite{James63}] \label{James' Theorem Reflexivity} Let $(X,\norm)$ be a Banach space. Then $X$ is reflexive if, and only if, every continuous linear functional $x^*$ on $X$ attains its norm
(i.e., there exists an $x \in B_X$ such that $\|x^*\| = x^*(x)$).
\end{theorem}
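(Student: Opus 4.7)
The plan is to deduce this as a direct corollary of the full version of James' Theorem (Theorem \ref{JamesFull}) together with the characterisation of reflexivity via weak compactness of the unit ball (Theorem \ref{reflexive}). The bridge between the two is the elementary observation that for $x^*\in X^*$, norm attainment is the same as supremum attainment over $B_X$: since $B_X=-B_X$, we have
$$\|x^*\|=\sup_{x\in B_X}|x^*(x)|=\sup_{x\in B_X}x^*(x),$$
and if $x_0\in B_X$ satisfies $x^*(x_0)=\|x^*\|$ (taking $-x_0$ if necessary to ensure $x^*(x_0)\ge 0$) then $x^*$ attains its supremum on $B_X$ at $x_0$; conversely if $x^*(x_0)=\sup_{x\in B_X}x^*(x)$, that supremum equals $\|x^*\|$.

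For the forward implication, assume $X$ is reflexive. By Theorem \ref{reflexive}, $B_X$ is weakly compact. Given any $x^*\in X^*$, Proposition \ref{linearmap} tells us that $x^*:(X,\mathrm{weak})\to\R$ is continuous, so $x^*(B_X)$ is a compact subset of $\R$ and hence attains its supremum. By the observation above, $x^*$ attains its norm.

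For the converse, assume every $x^*\in X^*$ attains its norm. Then, by the observation above, every $x^*\in X^*$ attains its supremum over $B_X$. Since $B_X$ is a closed, bounded, convex subset of the Banach space $X$, Theorem \ref{JamesFull} (James' Theorem, version 3) applies and yields that $B_X$ is weakly compact. Theorem \ref{reflexive} then gives $X^{**}=\widehat{X}$, i.e., $X$ is reflexive.

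There is no real obstacle here: all the heavy lifting has already been done in Theorem \ref{JamesFull}. The only subtlety to flag is the passage between ``$x^*$ attains its norm on $B_X$'' and ``$x^*$ attains its supremum over $B_X$'', which is precisely the use of the symmetry $B_X=-B_X$. Once this is made explicit the proof is a two-line invocation of the preceding machinery.
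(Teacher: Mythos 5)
Your proof is correct and follows essentially the same route as the paper: Theorem \ref{reflexive} converts reflexivity into weak compactness of $B_X$, the forward direction uses weak continuity of $x^*$ on the weakly compact ball, and the converse is Theorem \ref{JamesFull} applied to $B_X$. Your explicit remark that norm attainment and supremum attainment over $B_X$ coincide (via $B_X=-B_X$) just spells out a step the paper leaves implicit.
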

\begin{proof} By Theorem \ref{reflexive}, $X$ is reflexive if, and only if, $B_X$ is weakly compact.  So the result now follows from Theorem \ref{JamesFull} once one remembers that every
continuous linear functional on $X$ is continuous with respect to the weak topology on $X$.
\end{proof}
Note: if $X$ is reflexive then one can use the Hahn-Banach Theorem to directly show that every continuous linear functional on $X$ attains its norm.  Indeed, suppose
that $x^*$ is a nonzero continuous linear functional on $X$.  Then by Corollary \ref{HB} there exists an $x^{**} \in S_{X^{**}}$ such that $x^{**}(x^*) = \|x^*\|$.  However,
since $X$ is reflexive, $x^{**} = \widehat{x}$ for some $x \in S_X$.  Hence, $\|x^*\| = x^{**}(x^*) = \widehat{x}(x^*) = x^*(x)$.  This shows that $x^*$ attains its norm. 

\medskip

We now recall a geometric concept in Banach space theory. We say that a Banach space, $(X,\norm)$, is \emph{uniformly convex} if, for any $\epsilon>0$, 
there exists $\delta_\epsilon>0$ with the following property: if $x,y\in B_X$ and $\|x+y\|>2-\delta_\epsilon$, then $\|x-y\|<\epsilon$.

\begin{theorem}[\!\!\cite{Pettis}]
	Let $(X,\norm)$ be a Banach space. If $(X,\norm)$ is uniformly convex, then $(X,\norm)$ is reflexive.
\end{theorem}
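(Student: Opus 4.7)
The plan is to invoke Theorem \ref{James' Theorem Reflexivity} and reduce the problem to showing that every continuous linear functional on $X$ attains its norm on $B_X$. So I fix an arbitrary nonzero $x^* \in X^*$ and, after rescaling, assume $\|x^*\| = 1$. By the definition of the operator norm, I can select a sequence $(x_n:n \in \N)$ in $B_X$ such that $x^*(x_n) \to 1$. The goal then becomes to produce a norm limit $x \in B_X$ of this sequence, whereupon continuity of $x^*$ yields $x^*(x) = 1 = \|x^*\|$.

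The main tool for extracting convergence is uniform convexity, applied as follows. Given $\epsilon > 0$, let $\delta_\epsilon > 0$ be provided by uniform convexity. Since $x^*(x_n) \to 1$, I can pick $N \in \N$ large enough that $x^*(x_n) > 1 - \delta_\epsilon/2$ for every $n \geq N$. For any $n, m \geq N$, both $x_n$ and $x_m$ lie in $B_X$, and
$$\|x_n + x_m\| \geq x^*(x_n + x_m) = x^*(x_n) + x^*(x_m) > 2 - \delta_\epsilon.$$
The uniform convexity condition then forces $\|x_n - x_m\| < \epsilon$. Therefore $(x_n:n\in\N)$ is a Cauchy sequence.

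Since $(X, \|\cdot\|)$ is a Banach space, the sequence converges in norm to some $x \in X$. The closed unit ball $B_X$ is norm-closed, so $x \in B_X$, and by continuity of $x^*$ we conclude $x^*(x) = \lim_{n \to \infty} x^*(x_n) = 1 = \|x^*\|$. Since $x^*$ was an arbitrary nonzero element of $X^*$ (and the zero functional trivially attains its norm at $0$), every continuous linear functional on $X$ attains its norm. By Theorem \ref{James' Theorem Reflexivity}, $X$ is reflexive.

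There is no real obstacle here; the whole argument is a short Cauchy-sequence construction, with the only delicate moment being the double use of the unit-ball membership of $x_n, x_m$ and the norm estimate $\|x_n+x_m\| > 2 - \delta_\epsilon$ that triggers uniform convexity. The power of James' theorem (Theorem \ref{JamesFull}), via its corollary Theorem \ref{James' Theorem Reflexivity}, is what reduces an a priori delicate weak-compactness question to this elementary computation.
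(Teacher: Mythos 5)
Your proof is correct and follows essentially the same route as the paper: select a maximizing sequence in $B_X$, use the inequality $\|x_n+x_m\|\geq x^*(x_n+x_m)>2-\delta_\epsilon$ together with uniform convexity to show the sequence is Cauchy, pass to the norm limit in $B_X$, and conclude via Theorem \ref{James' Theorem Reflexivity}. The only differences are cosmetic (your explicit normalization of $x^*$ and the choice of threshold $1-\delta_\epsilon/2$ in place of the paper's $1-1/n$ with $N_0:=2/\delta_\epsilon$).
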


\begin{proof}
	Let $x^*\in S_{X^*}$ and define $(x_n:n\in\n)$ in $B_X$ so that $x^*(x_n)>1-\frac{1}{n}$. Let $\epsilon >0$ and choose $\delta_\epsilon>0$ such that if $x,y\in B_X$ and $\|x+y\|>2-\delta_\epsilon$, 
	then $\|x-y\|<\epsilon$ Then, for $n,m \in \N$ greater than $N_0 := 2/\delta_\epsilon$, we have that $2\geq \|x_n+x_m\|\geq x^*(x_n+x_m)>2-\delta_\epsilon$.
	By the uniform convexity of $X$, this gives that for $n,m>N_0$, we have $\|x_n-x_m\|\leq \epsilon.$ So, $(x_n:n\in\n)$ is a Cauchy sequence in $X$.  
	Therefore, $(x_n:n \in \n)$ is convergent to some $x\in B_X$. It is clear that for this $x$, $x^*(x)=1=\|x^*\|$.
	Since $x^*$ was arbitrary in $S_{X^*}$, every $x^*$ in $X^*$ attains its norm, and so, by Theorem \ref{James' Theorem Reflexivity}, $X$ is reflexive.
\end{proof}

Another interesting application of Theorem \ref{JamesFull} is the Krein-Smulian theorem.

\begin{corollary}[Krein-Smulian Theorem,\cite{KreinSmulian}]\label{Closed convex hull}
	Let $C$ we a weakly compact subset of a Banach space $(X,\norm)$.  Then $\overline{\mbox{co}}(C)$ is also weakly compact.
\end{corollary}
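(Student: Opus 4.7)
The plan is to derive the Krein-Smulian theorem as a direct consequence of the full James theorem (Theorem \ref{JamesFull}). That is, I will verify that $\overline{\mathrm{co}}(C)$ is closed, bounded, convex, and has the property that every $x^* \in X^*$ attains its supremum on it; weak compactness then follows immediately.

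First I would dispense with the easy structural conditions. The set $\overline{\mathrm{co}}(C)$ is convex and norm-closed by construction. For boundedness, observe that because $C$ is weakly compact, for each $x^*\in X^*$ the set $x^*(C) \subseteq \R$ is compact (hence bounded), so by the Uniform Boundedness Principle applied to $\{\widehat{c} : c \in C\} \subseteq X^{**}$ we get $\sup_{c\in C}\|c\| < \infty$. Since the convex hull of a bounded set is bounded, and norm-closure preserves boundedness, $\overline{\mathrm{co}}(C)$ is bounded.

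The crucial step is to check the hypothesis of James' theorem. Fix $x^* \in X^*$. Being a norm-continuous linear functional, $x^*$ is also weakly continuous, and since $C$ is weakly compact there exists $c_0 \in C$ with
$$x^*(c_0) = \sup_{c\in C} x^*(c).$$
Now because $x^*$ is linear, $\sup_{y \in \mathrm{co}(C)} x^*(y) = \sup_{c \in C} x^*(c)$, and because $x^*$ is norm-continuous this supremum is preserved by passing to the norm-closure: $\sup_{y \in \overline{\mathrm{co}}(C)} x^*(y) = \sup_{c \in C} x^*(c) = x^*(c_0)$. Since $c_0 \in C \subseteq \overline{\mathrm{co}}(C)$, the functional $x^*$ attains its supremum over $\overline{\mathrm{co}}(C)$ at the point $c_0$.

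Having assembled all the hypotheses, I would conclude by invoking Theorem \ref{JamesFull} on the closed, bounded, convex set $\overline{\mathrm{co}}(C)$ in the Banach space $X$: every $x^* \in X^*$ attains its supremum there, therefore $\overline{\mathrm{co}}(C)$ is weakly compact. I do not anticipate any genuine obstacle here; the whole point of proving James' theorem in its full generality is precisely that statements like Krein-Smulian become effectively immediate. The only subtlety worth flagging in the write-up is the need to justify norm boundedness of $C$ from weak compactness, since this is the one place where a tool outside the immediate statement of James' theorem (namely the Uniform Boundedness Principle) is used.
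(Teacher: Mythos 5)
Your proposal is correct and follows essentially the same route as the paper: show that every $x^*\in X^*$ attains its supremum over $\overline{\mathrm{co}}(C)$ (at a point of $C$, using that the supremum over $C$, $\mathrm{co}(C)$ and $\overline{\mathrm{co}}(C)$ all coincide) and then invoke Theorem \ref{JamesFull}. Your extra care with norm-boundedness via the Uniform Boundedness Principle is a point the paper's proof leaves implicit, but it is the same argument.
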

\begin{proof}
	Let $K:=\overline{\text{co}}(C)$. Since $C$ is weakly compact, every $x^*\in X^*$ must attain its supremum over $C$ i.e. for every $x^*\in X^*$, there exists $c\in C\seq K$ such that $x^*(c)=\sup_{x\in C}x^*(x)$.
	However, for every $x^*\in X^*$, it is a routine observation that 
	$$\sup_{x\in C}x^*(x)=\sup_{x\in \text{co}(C)}x^*(x)=\sup_{x\in K}x^*(x).$$ 
	And so, every $x^*\in X^*$ attains its supremum over $K$ too. Therefore, by James' Theorem (Theorem \ref{JamesFull}), $K$ is weakly compact. 
\end{proof}

Using Theorem \ref{Theorem 1} we can prove some well-known results of S.~Simons, see \cite{S.Simons}. For a detailed survey of Simons' results and applications thereof, see \cite{Cascales-simple}.

\begin{theorem}[Simons, \cite{S.Simons}]\label{Simons1}
	Let $K$ be a weak$^*$-compact, convex subset of the dual of a Banach space $(X,\|\cdot\|)$, let $B$ be a boundary for $K$, and let $f_n:K\rightarrow\r$ be a weak$^*$-lower-semicontinuous, convex function for all $n\in\n$. If $(f_n:n\in\n)$ is equicontinuous with respect to the norm, and $\displaystyle\limsup_{n\rightarrow\infty}f_n(b^*)\leq 0$ for all $b^*\in B$, then $\displaystyle\limsup_{n\rightarrow\infty}f_n(x^*)\leq 0$ for all $x^*\in K$. 
\end{theorem}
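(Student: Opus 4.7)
The plan is to invoke Theorem \ref{Theorem 1} directly: we will cover the boundary $B$ by countably many weak$^*$-compact convex subsets of $K$ on which all $f_n$, with $n$ sufficiently large, are bounded above by $\varepsilon$, and then push this bound through convex hulls (using convexity of the $f_n$) and then through norm closures (using equicontinuity).

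Fix $\varepsilon > 0$. For each $N \in \N$ set
$$C_N := \{x^* \in K : f_n(x^*) \leq \varepsilon \text{ for all } n \geq N\} = \mbox{$\bigcap_{n \geq N}$}\{x^* \in K : f_n(x^*) \leq \varepsilon\}.$$
Each $C_N$ is weak$^*$-closed (hence weak$^*$-compact as a subset of $K$) because each $f_n$ is weak$^*$-lower semicontinuous, and each $C_N$ is convex because each $f_n$ is convex. Observe that $C_N \subseteq C_{N+1}$ and that $B \subseteq \bigcup_{N \in \N} C_N$, since by hypothesis $\limsup_n f_n(b^*) \leq 0 < \varepsilon$ for every $b^* \in B$. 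By Theorem \ref{Theorem 1}, $B$ $(I)$-generates $K$, and so
$$K = \overline{\mathrm{co}\mbox{$\left(\bigcup_{N \in \N} C_N\right)$}}^{\|\cdot\|}.$$

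Next I would show that $\limsup_n f_n(y^*) \leq \varepsilon$ for every $y^* \in \mathrm{co}(\bigcup_{N \in \N} C_N)$. Such a $y^*$ is a convex combination of finitely many points, each lying in some $C_{N_i}$; since the sequence $(C_N)$ is increasing, all of these points lie in a common $C_M$ for $M := \max_i N_i$. Then for every $n \geq M$, convexity of $f_n$ yields $f_n(y^*) \leq \varepsilon$, and hence $\limsup_n f_n(y^*) \leq \varepsilon$.

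Now let $x^* \in K$ be arbitrary and let $\delta > 0$. By the norm-equicontinuity of $(f_n : n \in \N)$, choose $\eta > 0$ such that $\|x^* - y^*\| < \eta$ implies $|f_n(x^*) - f_n(y^*)| < \delta$ for every $n \in \N$. Using that $x^* \in \overline{\mathrm{co}(\bigcup_N C_N)}^{\|\cdot\|}$, pick $y^* \in \mathrm{co}(\bigcup_N C_N)$ with $\|x^* - y^*\| < \eta$. Then $f_n(x^*) \leq f_n(y^*) + \delta$ for all $n$, so $\limsup_n f_n(x^*) \leq \limsup_n f_n(y^*) + \delta \leq \varepsilon + \delta$. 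Since $\delta > 0$ and $\varepsilon > 0$ were arbitrary, $\limsup_n f_n(x^*) \leq 0$, which is exactly what we wished to prove.

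The only delicate point is passing from the (easy) inequality on finite convex combinations to the norm closure, which is where the uniform norm-equicontinuity assumption enters in an essential way; everything else is a clean application of $(I)$-generation together with the convexity and weak$^*$-lower semicontinuity of the $f_n$.
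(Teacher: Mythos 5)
Your proof is correct and follows essentially the same route as the paper: define the increasing weak$^*$-compact convex sets $C_N$ from the sublevel sets of the tail of $(f_n)$, cover $B$ with them, invoke Theorem \ref{Theorem 1} to get norm density of the convex hull in $K$, and finish with norm-equicontinuity. The only cosmetic difference is that the paper observes $\mathrm{co}[\bigcup_N C_N]=\bigcup_N C_N$ directly from the nesting of the $C_N$, whereas you push the bound through convex combinations using convexity of the $f_n$; both are valid.
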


\begin{proof}
	Let $\epsilon>0$. For each $n\in\n$, define:
	$$C_n:=\bigcap_{k\geq n}\{y^*\in K:f_k(y^*)\leq \epsilon/2\}.$$
	Let $k\in\n$. Since $f_k:K\rightarrow\r$ is weak$^*$-lower-semicontinuous and convex, the set $\{y^*\in K:f_k(y^*)\leq\epsilon/2\}$ is weak$^*$-closed and convex. It follows that for all $n\in\n$, $C_n$ is the intersection of weak$^*$-closed and convex sets, and so is weak$^*$-closed and convex itself.
	Then, since $C_n\seq K$ for all $n\in\n$, we have that $C_n$ is weak$^*$-compact and convex for all $n\in\n$. 
	Moreover, if $b^*\in B$, then $\limsup_{n\rightarrow\infty}f_n(b^*)\leq 0$ and so $b^*\in C_N$ for some $N\in\n$. Hence, $(C_n:n\in\n)$ is a countable cover of $B$ by weak$^*$-compact, convex subsets of $K$. \\ \\
	Therefore, since $B$ is a boundary for $K$, by Theorem \ref{Theorem 1} we have that $\text{co}[\bigcup_{n\in\n}C_n]=\bigcup_{n\in\n}C_n$ (since $C_n \subseteq C_{n+1}$ for all $n \in \N$) is norm-dense in $K$. 
	Let $x^*\in K$. Since $(f_n:n\in\n)$ is equicontinuous with respect to the norm, there exists a $\delta>0$ such that $f_n(x^*)<f_n(y^*)+\epsilon/2$ for all $n\in\n$ and all $y^*\in B(x^*,\delta)$.  \\ \\
	However since $\bigcup_{n\in\n}C_n$ is norm-dense in $K$, there exists $N\in\n$ such that $B(x^*,\delta)\cap C_N\neq\varnothing$. 
	Therefore, $f_n(x^*)<\epsilon$ for all $n>N$ and so $\limsup_{n\rightarrow\infty}f_n(x^*)\leq\epsilon$. Since $\epsilon>0$ and $x^*\in K$ were arbitrary, we have that $\limsup_{n\rightarrow\infty}f_n(x^*)\leq 0$ for all $x^*\in K$ as claimed.
\end{proof}

\medskip
\begin{corollary}\label{RainwaterSimons}
	Let $K$ be a weak$^*$-compact, convex subset of the dual of a Banach space $(X,\|\cdot\|)$ and let $B$ be a boundary for $K$. Let $(x_n:n\in\n)$ be a bounded sequence in $X$ and let $x\in X$. 
	If $\displaystyle \lim_{n \to \infty} b^*(x_n)= b^*(x)$ for all $b^*\in B$, then $\displaystyle \lim_{n \to \infty} x^*(x_n) = x^*(x)$ for all $x^*\in K$.
\end{corollary}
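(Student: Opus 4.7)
The plan is to apply Simons' Theorem (Theorem \ref{Simons1}) twice, once in each direction, to the naturally arising sequence of affine functionals.

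First I would define, for each $n \in \N$, the function $f_n : K \to \R$ by
$$f_n(x^*) := x^*(x_n) - x^*(x) = x^*(x_n - x).$$
Each $f_n$ is the restriction to $K$ of a weak$^*$-continuous linear functional on $X^*$ (namely $\widehat{x_n - x}$), so $f_n$ is in particular convex and weak$^*$-lower semicontinuous on $K$. Since $(x_n)$ is bounded, there exists $M < \infty$ such that $\|x_n - x\| \leq M$ for all $n$, and therefore
$$|f_n(x^*) - f_n(y^*)| = |(x^* - y^*)(x_n - x)| \leq M \|x^* - y^*\|$$
for all $x^*, y^* \in K$ and all $n$, showing that $(f_n : n \in \N)$ is equicontinuous with respect to the norm. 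By the hypothesis, $\lim_{n \to \infty} f_n(b^*) = 0$, and in particular $\limsup_{n \to \infty} f_n(b^*) \leq 0$ for every $b^* \in B$.

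Theorem \ref{Simons1} then applies and yields $\limsup_{n \to \infty} f_n(x^*) \leq 0$ for every $x^* \in K$, i.e., $\limsup_{n \to \infty} x^*(x_n) \leq x^*(x)$. Next I would apply exactly the same argument to the sequence $g_n := -f_n$, which is also a sequence of convex, weak$^*$-lower semicontinuous (in fact weak$^*$-continuous), norm-equicontinuous functions on $K$ satisfying $\limsup_{n \to \infty} g_n(b^*) = 0$ for every $b^* \in B$. A second application of Theorem \ref{Simons1} gives $\limsup_{n \to \infty} g_n(x^*) \leq 0$ for every $x^* \in K$, which rearranges to $\liminf_{n \to \infty} x^*(x_n) \geq x^*(x)$.

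Combining the two inequalities gives $\lim_{n \to \infty} x^*(x_n) = x^*(x)$ for every $x^* \in K$, as required. There is no real obstacle here; the only thing worth double-checking is the equicontinuity, which follows immediately from the boundedness of the sequence $(x_n)$, and the fact that affine weak$^*$-continuous functionals are both convex and weak$^*$-lower semicontinuous (so that Simons' hypotheses are met for both $f_n$ and $-f_n$).
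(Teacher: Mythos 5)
Your proof is correct and follows essentially the same route as the paper: both reduce the statement to Theorem \ref{Simons1} applied to the functionals arising from $\widehat{x_n - x}$. The only cosmetic difference is that the paper sets $f_n(x^*) := |x^*(x_n - x)|$ (convex as the composition of $|\cdot|$ with a weak$^*$-continuous linear map) and invokes Simons' theorem once, whereas you apply it twice, to $\pm\, x^*(x_n - x)$; both versions are valid.
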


\begin{proof}
	For all $n\in\n$, define $f_n:K\rightarrow\r$ to be given by
	$$f_n(x^*):=|x^*(x_n)-x^*(x)| = |\widehat{(x_n - x)}(x^*)|  \quad \text{for all} \ x^*\in K.$$
	Then $f_n:K\rightarrow\r$ is a weak$^*$-lower-semicontinuous and convex function for all $n\in\n$, as $x^* \mapsto \widehat{(x_n - x)}(x^*)$ is weak$^*$ continuous and linear (into $\R$) and $r \mapsto |r|$ is continuous
	and convex. Furthermore, $(f_n:n\in\n)$ is equicontinuous with respect to the norm. 
	Finally, $\limsup_{n\rightarrow\infty}f_n(b^*)\leq 0$ for all $b^*\in B$ and so, by Theorem~\ref{Simons1},  $\limsup_{n\rightarrow\infty}f_n(x^*)\leq 0$ for all $x^*\in K$. From this it is clear that $\displaystyle \lim_{n \to \infty} x^*(x_n) =x^*(x)$ for all $x^*\in K$.
\end{proof}

Sometimes called the Rainwater-Simons Theorem, Corollary \ref{RainwaterSimons} is due to S.~Simons (although he proved it differently). It generalises a famous result of J.~Rainwater, originally from \cite{rainwater}. 

\begin{corollary}[Simons]\label{Simons2}
	Let $K$ be a weak$^*$-compact, convex subset of the dual of a Banach space $(X,\|\cdot\|)$, let $B$ be a boundary for $K$, and let $(x_n:n\in\n)$ be a bounded sequence in $X$. Then
	$$\sup_{b^*\in B}\left\{\limsup_{n\rightarrow\infty}\widehat{x}_n(b^*)\right\}=\sup_{x^*\in K}\left\{\limsup_{n\rightarrow\infty}\widehat{x}_n(x^*)\right\}.$$
\end{corollary}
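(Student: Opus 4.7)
The direction $\sup_{b^*\in B}\{\limsup_{n} \widehat{x}_n(b^*)\} \leq \sup_{x^*\in K}\{\limsup_{n} \widehat{x}_n(x^*)\}$ is immediate, since $B\subseteq K$. The plan therefore is to establish the reverse inequality by a direct appeal to Theorem~\ref{Simons1}.

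Let $\alpha := \sup_{b^*\in B}\{\limsup_{n\rightarrow\infty}\widehat{x}_n(b^*)\}$. Since $(x_n:n\in\n)$ is bounded (say by $M$) and $K$, being weak$^*$-compact, is norm-bounded (say by $R$), the quantities $\widehat{x}_n(x^*)$ are uniformly bounded on $K$ by $MR$; in particular $\alpha \in \r$ (we may assume $K \neq \varnothing$, otherwise both sides equal $-\infty$ trivially, and hence $B\neq\varnothing$). For each $n \in \n$ define $f_n:K\rightarrow\r$ by $f_n(x^*) := \widehat{x}_n(x^*) - \alpha$. Each $f_n$ is weak$^*$-continuous (in particular weak$^*$-lower semicontinuous) and affine (in particular convex). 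Moreover, for any $x^*,y^*\in K$,
$$|f_n(x^*) - f_n(y^*)| = |\widehat{x}_n(x^*-y^*)| \leq \|x_n\|\cdot\|x^*-y^*\| \leq M\|x^*-y^*\|,$$
so the sequence $(f_n:n\in\n)$ is equicontinuous with respect to the norm.

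By the definition of $\alpha$, for every $b^*\in B$ we have $\limsup_{n\rightarrow\infty} f_n(b^*) = \limsup_{n\rightarrow\infty}\widehat{x}_n(b^*) - \alpha \leq 0$. Theorem~\ref{Simons1} then applies and yields $\limsup_{n\rightarrow\infty} f_n(x^*) \leq 0$ for all $x^*\in K$; that is, $\limsup_{n\rightarrow\infty}\widehat{x}_n(x^*) \leq \alpha$ for every $x^*\in K$. Taking the supremum over $x^*\in K$ gives the desired reverse inequality, completing the proof.

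There is no real obstacle here — the entire argument is essentially a packaging of Theorem~\ref{Simons1} applied to the shifted sequence $f_n = \widehat{x}_n - \alpha$. The only small points requiring attention are verifying that $\alpha$ is finite (so that the shift makes sense) and checking the three hypotheses of Theorem~\ref{Simons1} (weak$^*$-lower semicontinuity, convexity, and norm-equicontinuity), all of which follow immediately from linearity and the boundedness of $(x_n)$.
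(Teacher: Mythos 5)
Your proof is correct and follows essentially the same route as the paper: both reduce the statement to Theorem~\ref{Simons1} applied to a family of functions shifted down by the boundary supremum. The only (cosmetic) difference is that you apply Theorem~\ref{Simons1} directly to the affine functions $f_n = \widehat{x}_n - \alpha$, whereas the paper uses the running suprema $f_n(x^*) = \sup\{\widehat{x}_k(x^*) : k \geq n\} - r$; since Theorem~\ref{Simons1} only requires $\limsup_{n\to\infty} f_n(b^*) \leq 0$ on $B$, your simpler choice works just as well.
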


\begin{proof}
	Since $B\seq K$, clearly
	$$\sup_{b^*\in B}\left\{\limsup_{n\rightarrow\infty}\widehat{x}_n(b^*)\right\}\leq\sup_{x^*\in K}\left\{\limsup_{n\rightarrow\infty}\widehat{x}_n(x^*)\right\}.$$
	So it only remains to show that 
	$$\sup_{b^*\in B}\left\{\limsup_{n\rightarrow\infty}\widehat{x}_n(b^*)\right\}\geq\sup_{x^*\in K}\left\{\limsup_{n\rightarrow\infty}\widehat{x}_n(x^*)\right\}.$$
	To this end, let 
	$$r:=\sup_{b^*\in B}\left\{\limsup_{n\rightarrow\infty}\widehat{x}_n(b^*)\right\},$$
	and for each $n\in\n$, let $f_n:K\rightarrow\r$ be defined by
	$f_n(x^*):=\sup\{\widehat{x}_k(x^*):k\geq n\}-r$ for all $x^*\in K$. Then, for all $n\in\n$, $f_n$ is weak$^*$-lower-semicontinuous and convex, as the pointwise supremum of a family of convex
	functions is again convex and the pointwise supremum of a family of lower semi-continuous functions is again lower semi-continuous. Furthermore,  $(f_n:n\in\n)$ is equicontinuous with respect to the norm and
	moreover, $\lim_{n\rightarrow\infty}f_n(b^*)\leq 0$ for all $b^*\in B$. Therefore, by Theorem \ref{Simons1}, $\lim_{n\rightarrow\infty}f_n(x^*)\leq 0$ for all $x^*\in K$. From this, the result is immediate.
\end{proof}

\medskip

In the next part of this subsection we will show that in order to deduce that a closed and bounded convex subset $C$ of Banach space $(X,\norm)$ is weakly compact it is not
necessary to show that {\bf all} the elements of $X^*$ attain their maximum value of $C$, but only a ``large'' subset of $X^*$.  To achieve this goal we need some more definitions.

\medskip

Let $K$ be a subset of the dual of a normed linear space $(X,\norm)$.  A point $x^* \in K$ is called a {\it weak$^*$ exposed point of $K$} if there exists a $x \in X \setminus \{0\}$ such that
$\widehat{x}(x^*) \geq  \sup_{y^* \in K} \widehat{x}(y^*)$.  There are some simple, but useful, facts that we can easily deduce about weak$^*$ exposed points.

\medskip

Firstly, (i) if $x^*$ is a weak$^*$ exposed point of $K$ then $\lambda x^*$ is a weak$^*$ exposed point of $\lambda K$ for any $\lambda \in \R\setminus\{0\}$; (ii) if  $x^*$ is a weak$^*$ exposed point of $K$ then $x^* + y^*$
is a weak$^*$ exposed point of $K + y^*$ for any $y^* \in X^*$; (iii) if $x^* \in A \subseteq K$ is a weak$^*$ exposed point of $K$ then $x^*$ is a weak$^*$ exposed point of $A$.

\medskip

The next result shows that weak$^*$ exposed points are directly related to weak compactness.

\begin{proposition}\label{weak* exposed} Let $K$ be a closed and convex subset of the dual of a Banach space $(X,\norm)$.  If $0 \in \mathrm{int}(K)$ and every point of $\mathrm{Bb}(K)$ is a weak$^*$ exposed point of $K$,
then $K_\circ$ is a weakly compact subset of $X$.
\end{proposition}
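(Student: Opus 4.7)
The strategy is to verify the hypothesis of James' Theorem (Theorem \ref{JamesFull}) for $K_\circ$. I first note that $K_\circ$ is closed and convex directly from its definition, and bounded: from $0 \in \mathrm{int}(K)$ choose $r>0$ with $rB_{X^*} \subseteq K$, so for each $y \in K_\circ$ and each $y^* \in rB_{X^*}$ one has $y^*(y) \leq 1$; taking the supremum over $\|y^*\| \leq r$ and invoking Corollary \ref{HB} to realise $\|y\|$ gives $\|y\| \leq 1/r$. It therefore suffices to show that every $x^* \in X^*$ attains its supremum on $K_\circ$.

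Fix a nonzero $x^* \in X^*$ and set $s := \sup_{x \in K_\circ} x^*(x)$, which is finite (by boundedness of $K_\circ$) and nonnegative (as $0 \in K_\circ$). If $s = 0$ the supremum is attained at $0$, so assume $s > 0$ and let $f^* := x^*/s$, so that $\sup_{K_\circ} f^* = 1$. The central claim, and what I expect to be the main obstacle, is that $f^* \in K$. Suppose for contradiction that $f^* \notin K$. Since $K$ is closed with $0$ in its interior, Remark \ref{not in} yields $\mu_K(f^*) > 1$, so $\lambda := 1/\mu_K(f^*) \in (0,1)$ puts $\lambda f^* \in \mathrm{Bb}(K)$. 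The hypothesis then produces $x \in X \setminus \{0\}$ with $\lambda f^*(x) = \sup_{y^* \in K} y^*(x) =: t$. Since $rB_{X^*} \subseteq K$, if $t \leq 0$ then $y^*(x) \leq 0$ on $rB_{X^*}$ and hence (by symmetry) $\widehat{x} \equiv 0$, forcing the impossibility $x = 0$; thus $t > 0$. Then $x/t \in K_\circ$ and $f^*(x/t) = 1/\lambda > 1$, contradicting $\sup_{K_\circ} f^* = 1$. So $f^* \in K$.

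It remains to show $f^*$ attains the value $1$ on $K_\circ$. I first rule out $f^* \in \mathrm{int}(K)$: if $f^* + \delta B_{X^*} \subseteq K$ for some $\delta > 0$, then for any $y \in K_\circ$ Corollary \ref{HB} supplies $g^* \in S_{X^*}$ with $g^*(y) = \|y\|$, so $(f^* + \delta g^*)(y) \leq 1$ forces $f^*(y) \leq 1 - \delta\|y\|$; choosing $y_n \in K_\circ$ with $f^*(y_n) \to 1$ then gives $\|y_n\| \to 0$ and hence $f^*(y_n) \to 0$, a contradiction. So $f^* \in \mathrm{Bb}(K)$. Applying the weak$^*$-exposedness hypothesis to $f^*$ itself, there is $x \in X \setminus \{0\}$ with $f^*(x) = \sup_{y^* \in K} y^*(x) =: t$, and the same argument as above gives $t > 0$; then $x_0 := x/t \in K_\circ$ and $x^*(x_0) = sf^*(x_0) = s$. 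Hence $x^*$ attains its supremum on $K_\circ$, and James' Theorem completes the proof.
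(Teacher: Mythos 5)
Your proof is correct and follows essentially the same route as the paper: both reduce the claim to Theorem \ref{JamesFull} by showing each $x^*\in X^*$ attains its supremum on $K_\circ$, and both do so by locating the boundary point of $K$ on the ray $\{rx^*: r\geq 0\}$ and rescaling its exposing functional into $K_\circ$. Your parametrisation via $s:=\sup_{K_\circ}x^*$ and the Minkowski functional is a slightly more roundabout (but equivalent) way of producing the paper's $\lambda_0:=\max\{r\geq 0: rx^*\in K\}$, since $\lambda_0=1/s$.
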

\begin{proof} We shall appeal directly to Theorem \ref{JamesFull}. To this end, let $x^* \in X^*\setminus\{0\}$. We consider two cases.

{\bf Case (I)} Suppose that for every $0< \lambda$, $\lambda x^* \in K$.  Let $k \in K_\circ$ and let $0< \lambda$. Then 
$$\lambda x^*(k) = (\lambda x^*)(k) \leq 1 \mbox{\quad since, $\lambda x^* \in K \subseteq (K_\circ)^\circ$.}$$ 
Therefore, $x^*(k) \leq \lambda^{-1}$. Since $0<\lambda$ was arbitrary, $x^*(k) \leq 0$ and so $x^*$ attains its maximum value over $K_\circ$ at $0 \in K_\circ$.

\medskip

{\bf Case(II)} Suppose that for some $0<\lambda$, $(\lambda x^*) \not\in K$. Let $\Lambda := \{r \in [0, \infty): rx^* \in K\}$. Then $\Lambda$ is a closed and bounded interval of $[0, \infty)$ since $K$ is closed
and convex and $\lambda \not\in \Lambda$.  Let $\lambda_0 := \max_{r \in \Lambda} r$. Then $\lambda_0x^* \in \mathrm{Bd}(K)$.  Hence there exists a $x \in X \setminus \{0\}$ such that
$$\lambda_0\widehat{x}(x^*) = \widehat{x}(\lambda_0x^*) = \sup_{y^* \in K} \widehat{x}(y^*) >0 \mbox{\quad since, $0 \in \mathrm{int}(K)$.}$$
By replacing $x$ by $\mu x$ for some $\mu >0$ and relabelling if necessary, we can assume that
$$1 = \widehat{x}(\lambda_0x^*) = \sup_{y^* \in K} \widehat{x}(y^*) =  \sup_{y^* \in K}  y^*(x).$$
Therefore $x \in K_\circ$. On the other hand, since $\lambda_0 x^* \in K \subseteq (K_\circ)^\circ$ we have that
$$(\lambda_0x^*)(k) \leq 1 = (\lambda_0x^*)(x) \mbox{\quad for all $k \in K_\circ$.}$$
Therefore $\lambda_0x^*$ attains its maximum value over $K_\circ$ at $x$, and hence so does $x^*$.  Therefore, by Theorem \ref{JamesFull}, $K_\circ$ is weakly compact.
\end{proof}

\begin{theorem}[\!\!\cite{Moreno}]\label{Weak^*Interior}
	Let $(X,\norm)$ be a Banach space. If there exists a weak$^*$ open subset $U$ of $X^*$ such that $\varnothing \not= S_{X^*} \cap U$ and every member of
	 $S_{X^*} \cap U$ attains its norm on $X$, then $X$ is reflexive.
\end{theorem}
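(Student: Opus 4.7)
The plan is to apply Proposition~\ref{weak* exposed}: I will construct a weak$^*$-closed convex bounded set $K\subseteq X^*$ with $0\in\mathrm{int}(K)$ and every boundary point weak$^*$-exposed. Granted such a $K$, Proposition~\ref{weak* exposed} gives $K_\circ$ weakly compact; since $K\subseteq M B_{X^*}$ for some $M>0$, polar calculus yields $M^{-1}B_X \subseteq K_\circ$, and because $B_X$ is weakly closed (Mazur, Proposition~\ref{weakly-closed}), $B_X$ is weakly compact as a weakly closed subset of the weakly compact set $MK_\circ$. Then $X$ is reflexive by Theorem~\ref{reflexive}.

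Fix $y_0^*\in S_{X^*}\cap U$ with norming vector $y_0\in S_X$ (so $y_0^*(y_0)=1$). By shrinking $U$ to a basic weak$^*$-open neighborhood of $y_0^*$, and then replacing its defining vectors $v_j$ by $x_j := v_j - y_0^*(v_j)y_0$ (which lie in $\ker(y_0^*)$), we may assume that
$$U = \{x^* \in X^* : |x^*(y_0) - 1| < \varepsilon \text{ and } |x^*(x_i)| < \varepsilon \text{ for } i=1,\ldots,n\}$$
for some $\varepsilon > 0$ and $x_1,\ldots,x_n \in \ker(y_0^*)$. The key observation is that any $x^* \in S_{X^*} \cap U$ attains its norm at some $y \in S_X$ by hypothesis, and is therefore weak$^*$-exposed in $B_{X^*}$ by $\widehat{y}$, since $\widehat{y}(x^*) = 1 = \sup_{B_{X^*}} \widehat{y}$.

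I propose to construct $K$ as
$$K := \overline{\mathrm{co}}^{w^*}(A_+ \cup A_-), \qquad A_\pm := \{x^* \in B_{X^*} : \pm x^*(y_0) \geq 1-\varepsilon/2,\ |x^*(x_i)| \leq \varepsilon/2 \text{ for all } i\}.$$
This $K$ is weak$^*$-closed, convex, symmetric about $0$, and contained in $B_{X^*}$. The property $0\in\mathrm{int}(K)$ follows from a direct convex-combination argument: for small $z^*\in X^*$, one writes $z^* = \tfrac{1}{2}u^* + \tfrac{1}{2}v^*$ with $u^*$ a small perturbation of $y_0^*$ in $A_+$ and $v^* := 2z^* - u^*$ a corresponding perturbation of $-y_0^*$ in $A_-$, using crucially that $y_0^*(x_i) = 0$. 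The boundary of $K$ is intended to decompose into (i) spherical pieces lying inside $A_+ \cup A_- \subseteq U \cup (-U)$, weak$^*$-exposed by their norming vectors; and (ii) flat facet pieces from the slab constraints $x^*(y_0) = \pm(1-\varepsilon/2)$ and $x^*(x_i) = \pm\varepsilon/2$, weak$^*$-exposed by $\pm\widehat{y_0}$ or $\pm\widehat{x_i}$.

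The main obstacle is verifying rigorously that every boundary point of $K$ falls into category (i) or (ii); that is, that taking the weak$^*$-closed convex hull of $A_+ \cup A_-$ does not produce ``transitional'' boundary points strictly inside $B_{X^*}$ and off every slab hyperplane. This must be done via a careful support-function analysis: for each $x \in X$, identify where $\sup_K \widehat{x}$ is attained and show that those maximisers necessarily lie in the designated cap or facet pieces. Once the verification is complete, Proposition~\ref{weak* exposed} closes the argument.
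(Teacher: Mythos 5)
Your overall strategy coincides with the paper's: build a bounded, closed, convex body $K$ with $0\in\mathrm{int}(K)$ all of whose boundary points are weak$^*$ exposed, invoke Proposition~\ref{weak* exposed} to get $K_\circ$ weakly compact, and conclude reflexivity from $0\in\mathrm{int}(K_\circ)$ and Theorem~\ref{reflexive}. The gap is exactly where you locate it, and for your particular $K:=\overline{\mathrm{co}}^{w^*}(A_+\cup A_-)$ it is not a removable technicality: the claimed decomposition of $\mathrm{Bd}(K)$ is false. First, the hyperplanes $\{x^*:x^*(y_0)=\pm(1-\varepsilon/2)\}$ cut through the interior of $K$ (the hull contains points with every intermediate value of $x^*(y_0)$, and $\sup_K\widehat{y_0}=1$), so they do not support $K$ and contribute no facets. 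More seriously, the hull manufactures ``lateral'' boundary points $z^*=\lambda u^*+(1-\lambda)v^*$ with $u^*\in A_+$, $v^*\in A_-$, $0<\lambda<1$, which generically satisfy $\|z^*\|<1$ and $|z^*(x_i)|<\sup_K\widehat{x_i}$ for every $i$; such points lie in none of your designated pieces. Although every boundary point of $K$ is supported by some element of $X^{**}$, and $\sup_K\widehat{x}$ is always attained somewhere on the weak$^*$ compact set $A_+\cup A_-$, neither fact places a supporting functional from $\widehat{X}$ at a prescribed lateral point: the hypothesis only yields exposing functionals at points of $S_{X^*}\cap U$, and $\widehat{y_0},\widehat{x_i}$ only expose their own faces. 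So the ``careful support-function analysis'' you defer cannot succeed for this $K$.

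The repair is to replace the convex hull of two caps by an intersection, which is what the paper does. Put $W:=\bigcap_{k=1}^n\{x^*\in X^*:|x^*(x_k)-x_0^*(x_k)|\le\varepsilon\}$, a finite intersection of \emph{closed} slabs contained in $U$ with $S_{X^*}\cap\mathrm{int}(W)\neq\varnothing$, and set $K':=W\cap B_{X^*}$. The elementary inclusion $\mathrm{Bd}(A\cap B)\subseteq(\mathrm{Bd}(A)\cap B)\cup(A\cap\mathrm{Bd}(B))$ gives $\mathrm{Bd}(K')\subseteq[\mathrm{Bd}(W)\cap B_{X^*}]\cup[S_{X^*}\cap W]$ --- there is no lateral piece. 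Points of the first set satisfy $x^*(x_k)-x_0^*(x_k)=\pm\varepsilon$ for some $k$ and are exposed in $W$ by $\pm\widehat{x_k}$; points of the second set are exposed in $B_{X^*}$ by their norming vectors (this is why one shrinks $U$ to closed slabs first, so that the hypothesis applies to all of $S_{X^*}\cap W$); and in both cases property (iii) of weak$^*$ exposed points transfers exposedness to the subset $K'$. Translating $K'$ so that $0$ is interior and applying Proposition~\ref{weak* exposed} finishes the proof. A minor further point: in your verification that $0\in\mathrm{int}(K)$ you perturb $y_0^*$ itself, but $y_0^*+w^*$ need not remain in $B_{X^*}$; one should perturb $(1-\delta)y_0^*$ instead.
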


\begin{proof} Suppose that $\{x_1, x_2, \ldots, x_n\} \subseteq X$, $\varepsilon >0$ and $x_0^* \in X^*$ are chosen so that if 
$$W' := \bigcap_{k=1}^n \{x^*\in X^* : |x^*(x_k) - x_0^*(x_k)| < \varepsilon\}  \mbox{ \quad and \quad }  W := \bigcap_{k=1}^n \{x^*\in X^* : |x^*(x_k) - x_0^*(x_k)| \leq \varepsilon\}$$
then $\varnothing \not= S_{X^*} \cap W'$ and every member of $S_{X^*} \cap W$ attains its norm on $X$ (i.e., every point of  $S_{X^*} \cap W$ is a weak$^*$ exposed point of $B_{X^*}$ - 
just consider the $x \in B_X$ such that $\widehat{x}(x^*) = \|x^*\| =1$). Let $K' := W \cap B_{X^*}$. Then $K'$ is closed and bounded and convex. Furthermore, $\mathrm{int}(K') \not= \varnothing$.
Let us now recall some basic facts from general topology.  If $A$ and $B$ are closed subsets of a topological space $(T, \tau)$ then 
$$\mathrm{Bd}(A \cap B) \subseteq (\mathrm{Bd}(A) \cap B) \cup (\mathrm{Bd}(B) \cap A) \subseteq \mathrm{Bd}(A) \cup \mathrm{Bd}(B).$$
Perhaps the easiest way to convince yourself of this is to first show that $\mathrm{Bd}(A \cap B) \subseteq \mathrm{Bd}(A) \cup \mathrm{Bd}(B)$.  Then
\begin{eqnarray*}
\mathrm{Bd}(A \cap B) &=& [\mathrm{Bd}(A \cap B)]	\cap [\mathrm{Bd}(A) \cup \mathrm{Bd}(B)]	\\
&\subseteq&  [A \cap B] \cap [\mathrm{Bd}(A) \cup \mathrm{Bd}(B)] =  (A \cap B) \cap \mathrm{Bd}(A) \cup (A \cap B) \cap \mathrm{Bd}(B) \\
&\subseteq& [B \cap \mathrm{Bd}(A)]  \cup [A \cap \mathrm{Bd}(B)].
\end{eqnarray*}
So $\mathrm{Bd}(K') \subseteq [\mathrm{Bd}(W) \cap B_{X^*}] \cup [S_{X^*} \cap W]$. We claim that every point
of $\mathrm{Bd}(K')$ is a weak$^*$ exposed point.  To see this, suppose that $x^* \in \mathrm{Bd}(W) \cap B_{X^*} \subseteq \mathrm{Bd}(W)$.  Then clearly, $x^*$ is a weak$^*$ exposed point of the set
$W$ (exposed by $\widehat{x_k}$ for some $1 \leq k \leq n$).  Then by property (iii) above, $x^*$ is a weak$^*$ exposed point of $W \cap B_{X^*}$.  If $x^* \in S_{X^*} \cap W$ then
by the way $W$ was chosen, $x^*$ is a weak$^*$ exposed point of $B_{X^*}$ and hence by property (iii) above, also a weak$^*$ exposed point of $B_{X^*} \cap W$. Choose $x^* \in \mathrm{int}(K')$ and
let $K := K' - x^*$.  Then $0 \in \mathrm{int}(K)$ and by property (ii) above, each point of $\mathrm{Bd}(K)$ is a weak$^*$ exposed point. Thus, by Proposition~\ref{weak* exposed}, $K_\circ$ is weakly compact.
Now since $K$ is bounded, $0 \in \mathrm{int}(K_\circ)$.  Hence $X$ is reflexive.
\end{proof}

The proof of the next theorem can be found in \cite{moors-sublevel}, (see also \cite{OrihuelaCoercive, OrihuelaLebesgue}).

\begin{theorem}[\!\!\cite{SaintRaymond}]\label{sub level sets} Let $(X,\norm) $ be a Banach space and let $f:X \to \mathbb{R}\cup\{\infty\}$ be a proper function on $X$.  If $f - x^*$ attains minimum for every $x^* \in X^*$ then for each $a \in \mathbb{R}$, 
$S(a) := \{(y,s) \in X\times \mathbb{R}: f(y) \leq s \leq a\}$ is relatively weakly compact.
\end{theorem}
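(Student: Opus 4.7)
The strategy is to apply the full James theorem (Theorem \ref{JamesFull}) to the set $K := \overline{\mathrm{co}}(S(a))$, viewed as a closed, bounded, convex subset of the Banach space $X \times \R$ (with the sum norm, so that $(X \times \R)^* \cong X^* \times \R$). Once one verifies that every continuous linear functional on $X \times \R$ attains its supremum over $K$, James' theorem gives that $K$ is weakly compact, and hence $S(a) \subseteq K$ is relatively weakly compact, as desired.

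First I would check that $S(a)$ is bounded in $X \times \R$. Since $f = f - 0$ attains its minimum, $m := \inf_X f \in \R$ is finite, so the $\R$-coordinate of any $(y,s) \in S(a)$ lies in $[m,a]$. For the $X$-coordinate, given $x^* \in X^*$, pick $y_{x^*}$ attaining $\inf(f - x^*)$; then for every $y \in L_a := \{y' \in X : f(y') \leq a\}$ one has $x^*(y) \leq f(y) - f(y_{x^*}) + x^*(y_{x^*}) \leq a - f(y_{x^*}) + x^*(y_{x^*})$. Thus every $x^* \in X^*$ is bounded on $L_a$, and the Uniform Boundedness Principle yields that $L_a$, and therefore $S(a)$, is norm-bounded.

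For the attainment condition, write an arbitrary $\phi \in (X \times \R)^*$ as $\phi(y, s) = x^*(y) + \lambda s$. The clean case is $\lambda < 0$: setting $\mu := -\lambda > 0$, one observes that for each fixed $y \in L_a$ the map $s \mapsto x^*(y) + \lambda s$ is decreasing on $[f(y), a]$, so
\begin{equation*}
\sup_{(y,s) \in S(a)}[x^*(y) + \lambda s] = \sup_{y \in L_a}[x^*(y) - \mu f(y)].
\end{equation*}
Applying the hypothesis to $\mu^{-1}x^* \in X^*$ produces $y_0 \in X$ with $(f - \mu^{-1}x^*)(y_0) = \inf_X(f - \mu^{-1}x^*)$, and provided $f(y_0) \leq a$, the point $(y_0, f(y_0)) \in S(a)$ realises the supremum.

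The hard part is to deal with the remaining sub-cases $\lambda \geq 0$, and $\lambda < 0$ with $f(y_0) > a$: here the sup involves maximising a linear functional subject to the hard sublevel-constraint $\{f \leq a\}$, and attainment does not follow directly from the hypothesis. The idea (carried out in detail in \cite{moors-sublevel}) is to perturb $\phi$ by setting $\phi_\varepsilon := (x^*, \lambda - \varepsilon)$ for $\varepsilon \downarrow 0^+$; each $\phi_\varepsilon$ lies in the clean case and produces a maximiser $(y_\varepsilon, f(y_\varepsilon)) \in S(a)$. One then invokes Banach--Alaoglu, Proposition \ref{relative-weak-and-weak*}, and the subsequence-extraction tools of Corollary \ref{SubsequenceUsefulJames} together with Proposition \ref{ClusterConvex} to locate a suitable weak$^*$ cluster point of the natural images of these maximisers in $X^{**} \times \R$, and to complete the approximation argument in the spirit of Parts IV--VI of the proof of Theorem \ref{JamesFull}. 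With the attainment property established for every $\phi$, James' theorem yields the weak compactness of $K$, completing the proof.
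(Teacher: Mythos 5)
There is a genuine gap, and it sits exactly where you label the ``hard part''. Your plan is to apply Theorem \ref{JamesFull} to $K:=\overline{\mathrm{co}}(S(a))$, which requires showing that \emph{every} $\phi=(x^*,\lambda)\in (X\times\R)^*$ attains its supremum over $K$. The hypothesis only hands you unconstrained maximisers of $x^*-\mu f$ over all of $X$; it says nothing about maximising a linear functional over the sublevel set $L_a=\{y\in X: f(y)\le a\}$. For $\lambda\ge 0$ the supremum of $\phi$ over $S(a)$ is $\lambda a+\sup_{y\in L_a}x^*(y)$, and for $\lambda<0$ whose unconstrained maximiser $y_0$ of $x^*-\mu f$ has $f(y_0)>a$, the supremum is again a genuinely constrained problem. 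In neither case does the hypothesis produce a maximiser, and attainment over $S(a)$ itself can indeed fail (the theorem only asserts \emph{relative} weak compactness; $f$ need not even be lower semicontinuous). The perturbation-plus-cluster-point sketch you offer cannot close this as stated: weak$^*$ cluster points of the images $(\widehat{y_\varepsilon},f(y_\varepsilon))$ live in $X^{**}\times\R$, and showing that such a cluster point lies in $\widehat{X}\times\R$ (so as to yield an actual maximiser in $K$) is essentially the weak compactness statement you are trying to prove. So the crux of the theorem is left unproved.

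The paper's proof avoids the constrained problem altogether. After normalising $\inf_X f=1$, it applies James' theorem not to $S(a)$ but to $\overline{\mathrm{co}}\,[T(\mathrm{epi}(f))\cup\{(0,0)\}]$, where $T(y,s):=s^{-1}(y,-1)$ is a projective inversion of the \emph{whole} epigraph. Because no level constraint is imposed at this stage, every functional $(x^*,r)$ falls into one of two cases governed by the Fenchel conjugate $f^*$ (which is finite by hypothesis): either $f^*(\lambda x^*)\le \lambda r$ for all $\lambda>0$, in which case the supremum is $0$ and is attained at $(0,0)$; or the intermediate value theorem applied to $\lambda'\mapsto f^*(\lambda'x^*)-\lambda' r$ produces $\mu>0$ with $f^*(\mu x^*)=\mu r$, and the hypothesis (attainment of the minimum of $f-\mu x^*$) gives a point $z$ at which $(x^*,r)$ attains its supremum, namely at $T(z,f(z))$. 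Only at the very end is the level $a$ reintroduced: $T(S(a))$ sits inside the intersection of the resulting weakly compact set with the weakly closed half-space $\{(x,r):r\le -a^{-1}\}$, hence is relatively weakly compact, and $T^{-1}$ is weak-to-weak continuous there. To salvage your outline you must either reproduce this inversion trick or find some other device that converts the constrained attainment problem on $L_a$ into the unconstrained one that the hypothesis actually controls; the direct application of James' theorem to $\overline{\mathrm{co}}(S(a))$ does not do this.
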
 

\begin{proof} In this proof we will identify the dual of $X \times \R$ with $X^*\times \R$.  We will also consider $X \times \R$ endowed with the norm $\|(x,r)\|_1 := \|x\| + |r|$ and note that with this norm, $(X \times \R, \|\cdot\|_1)$ is a Banach space.  We shall apply James' theorem, (Theorem \ref{JamesFull}),  
in $X \times \R$.  Let $H := \{(x,r) \in X\times\R:r =0\}$ and define $T:(X \times \R) \setminus H \to (X \times \R)\setminus H$ by, 
$T(x,r) := r^{-1}(x,-1)$. Then $T$ is a bijection.  In fact, $T$ is a homeomorphism when $(X\times \R)\setminus H$ is considered with the relative weak topology.  Note that since $f$ is bounded below we may assume, after
possibly translating, that $1 = \inf_{x \in X} f(x)$.  Our proof relies upon the {\it Fenchel conjugate}, $f^*:X^* \to \R$ {\it of $f$}, which is defined by, 
$$f^*(x^*) := \sup_{x \in X} [x^*(x)-f(x)] = -\inf_{x \in X} [f(x)-x^*(x)]  = -\min_{x \in X} [f(x)-x^*(x)] =  \max_{x \in X} [x^*(x)-f(x)].$$  
It is routine to check that $f^*$ is convex on $X^*$. 
We claim that $\overline{\mbox{co}}[T(\mbox{epi}(f)) \cup \{(0,0)\}]$ is weakly compact. To show this, it is sufficient, because of James' theorem, to show that every non-zero continuous linear functional attains
its maximum value over $T(\mbox{epi}(f)) \cup \{(0,0)\}$.  To this end, let $(x^*, r) \in (X^* \times \R)\setminus\{(0,0)\}$.  We consider two cases.

\medskip

{\bf Case (I)} Suppose that for every $0< \lambda$, $f^*(\lambda x^*) \leq \lambda r$. Then $x^*(x) - \lambda^{-1}f(x) \leq r$ for all $x \in X$ and 
all $0< \lambda$. Let $(y, s) \in \mbox{epi}(f)$ and let $0 < \lambda$. Then,
$$(x^*,r)(T(y,s)) = s^{-1}(x^*(y) - r) \leq s^{-1}(x^*(y) - [x^*(y) - \lambda^{-1}f(y)])  =s^{-1} f(y) \lambda^{-1} \leq \lambda^{-1}$$
since $f(y) \leq s$. As $0<\lambda$ was arbitrary,  $(x^*,r)(T(y,s)) \leq 0 = (x^*,r)(0,0)$.  Thus, $(x^*,r)$ attains its maximum value over $T(\mbox{epi}(f)) \cup \{(0,0)\}$ at $(0,0)$.

\medskip

{\bf Case(II)} Suppose that for some $0<\lambda$, $\lambda r < f^*(\lambda x^*)$. Then, since the mapping, $\lambda' \mapsto f^*(\lambda'x^*)$, is real-valued and convex, it is continuous.
Furthermore, 
it follows, from the intermediate value theorem applied to the function $g:[0,\lambda] \to \R$, defined by, 
$$g(\lambda') := f^*(\lambda'x^*) - \lambda'r \mbox{\quad for all $\lambda' \in [0,\lambda]$,}$$
 that there exists a $0 < \mu < \lambda$ such that 
$g(\mu) =0$, i.e., $f^*(\mu x^*) = \mu r$, since $g(0) = -1 < 0 <g(\lambda)$. 
Thus, $\mu(x^*, r) = (\mu x^*, f^*(\mu x^*))$.  Choose $z \in X$ such that 
$f^*(\mu x^*) = \mu x^*(z) - f(z)$.  We claim that $(x^*,r)$ attains its maximum value over $T(\mbox{epi}(f)) \cup \{(0,0)\}$ at $T(z, f(z)) = f(z)^{-1}(z,-1)$. Now,
\begin{eqnarray*}
(x^*,r)(T(z, f(z))) &=& f(z)^{-1}(x^*(z) - r) = f(z)^{-1}(x^*(z) - [\mu^{-1}f^*(\mu x^*)]) \\
&=& f(z)^{-1}(x^*(z) - [x^*(z) - \mu^{-1}f(z)]) = \mu^{-1} >0.
\end{eqnarray*}
On the other hand, if $(y,s) \in \mbox{epi}(f)$ then 
\begin{eqnarray*}
(x^*,r)(T(y, s)) &=& s^{-1}(x^*(y) - r) = s^{-1}(x^*(y) - [\mu^{-1}f^*(\mu x^*)]) \\
&\leq & s^{-1}(x^*(y) - [x^*(y) - \mu^{-1}f(y)]) = s^{-1}f(y)\mu^{-1} \leq \mu^{-1}= (x^*,r)(T(z, f(z)))
\end{eqnarray*}
since $f(y) \leq s$. Note also that $(x^*,r)(0,0) = 0 < \mu^{-1} = (x^*,r)(T(z, f(z)))$.  Therefore, by James' Theorem \ref{JamesFull} , $\overline{\mbox{co}}[T(\mbox{epi}(f)) \cup \{(0,0)\}]$ is weakly compact.

\medskip

Let $1 \leq a$, then $T(S(a)) \subseteq \overline{\mbox{co}}[T(\mbox{epi}(f)) \cup \{(0,0)\}] \cap \{(x,r)\in X \times \R: r \leq -a^{-1}\}$; which is weakly compact.  Therefore,
$$S(a) \subseteq T^{-1}(\overline{\mbox{co}}[T(\mbox{epi}(f)) \cup \{(0,0)\}] \cap \{(x,r)\in X \times \R:  r \leq -a^{-1}\});$$ 
which completes the proof. 
\end{proof}

\medskip

For each $a \in \R$, let $L(a) := \{x \in X: f(x) \leq a\}$.  It follows from Theorem \ref{sub level sets} that if $(X, \norm)$ is a Banach space, $f:X \to \R\cup\{\infty\}$ is a proper function on $X$ and $f - x^*$ attains minimum for every $x^* \in X^*$  then, for each $a \in \R$,  $L(a)$ is relatively weakly compact, since $L(a) = \pi(S(a))$, where $\pi: X\times \R \to X$ is defined by, $\pi(x,r) := x$ for all $(x,r) \in X \times \R$ and is weak-to-weak continuous, 
(see Proposition \ref{linearmap}).

\medskip

An interesting corollary of this result is the following.

\begin{corollary}[\!\!\cite{SaintRaymond}] Let $\varphi:U \to \R$ be a continuous convex function defined on a nonempty open convex subset $U$ of a Banach space $(X,\norm)$. If $\varphi - x^*$ attains minimum for every $x^* \in X^*$ then
$X$ is reflexive.
\end{corollary}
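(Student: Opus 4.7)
The plan is to reduce to Theorem \ref{sub level sets} by extending $\varphi$ to a proper function on all of $X$, and then to use the local nature of the weak compactness of sublevel sets to deduce the weak compactness of $B_X$, whence reflexivity via Theorem \ref{reflexive}.

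First, I would define $f:X\to\R\cup\{\infty\}$ by $f(x):=\varphi(x)$ for $x\in U$ and $f(x):=\infty$ for $x\in X\setminus U$. This $f$ is proper: taking $x^*=0$ in the hypothesis shows $\varphi$ attains its minimum on $U$, so $f$ is bounded below, and $f$ is not identically $\infty$ since $U\neq\varnothing$. I would then verify the key input: for every $x^*\in X^*$, the infimum of $f-x^*$ is not changed by the extension (on $X\setminus U$ we have $f-x^*\equiv\infty$), so the minimum of $\varphi-x^*$ over $U$, which exists by hypothesis, is also a minimum of $f-x^*$ over $X$.

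Having verified the hypotheses of Theorem \ref{sub level sets} for $f$, I invoke the remark after it to conclude that each sublevel set $L(a)=\{x\in X:f(x)\leq a\}=\{x\in U:\varphi(x)\leq a\}$ is relatively weakly compact in $X$. Now I would exploit continuity of $\varphi$ locally: pick any $x_0\in U$, and use the continuity of $\varphi$ at $x_0$ together with the openness of $U$ to select $\delta>0$ such that the closed norm-ball $\overline{B}(x_0,\delta)$ is contained in $U$ and $\varphi(x)\leq \varphi(x_0)+1$ for every $x$ in it. Then $\overline{B}(x_0,\delta)\subseteq L(\varphi(x_0)+1)$, and since $\overline{B}(x_0,\delta)$ is closed and convex it is weakly closed by Proposition \ref{weakly-closed}; being a weakly closed subset of a relatively weakly compact set, it is weakly compact.

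Finally, $B_X$ is the image of $\overline{B}(x_0,\delta)$ under the affine map $x\mapsto \delta^{-1}(x-x_0)$, which is weak-to-weak continuous by (a translation-adjusted application of) Proposition \ref{linearmap}. Hence $B_X$ is weakly compact, and Theorem \ref{reflexive} gives that $X$ is reflexive. The only step that requires a moment of care is the translation: showing that passing from a weakly compact neighbourhood of an arbitrary interior point of $U$ to the unit ball really is legitimate. The rest is routine bookkeeping around Theorem \ref{sub level sets}.
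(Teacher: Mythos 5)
Your proposal is correct and follows the paper's own route: extend $\varphi$ to a proper function $f$ on all of $X$, apply Theorem \ref{sub level sets} (and the remark following it) to get relative weak compactness of the sublevel sets, locate a closed ball inside one of them, and transfer weak compactness to $B_X$ via an affine map and Theorem \ref{reflexive}. The only divergence is in how the ball is found: the paper writes $U=\bigcup_{n\in\N}\{x\in U:\varphi(x)\le n\}$ and invokes a Baire category argument to produce a sublevel set with nonempty interior, whereas you use the continuity of $\varphi$ at a single point $x_0$ directly --- which is slightly more elementary and also makes explicit the extension of $\varphi$ and the verification of the hypotheses of Theorem \ref{sub level sets} that the paper leaves implicit.
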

\begin{proof} For each $n \in \N$, let $F_n:= \{x \in U:\varphi(x) \leq n\}$.  Then each set $F_n$ is closed and $U = \bigcup_{n \in \N} F_n$.  Since $X$ is a Banach space, $U$ is of the second
Baire category. Thus, there exists an $n_0 \in \N$ such that $\mathrm{int}(F_{n_0}) \not= \varnothing$.  In particular, there exists an $x_0 \in F_{n_0}$ and a $\delta_0 >0$ such that $B[x_0,\delta_0] \subseteq F_{n_0}$. Therefore, by Theorem \ref{sub level sets}, $B[x_0,\delta_0] = x_0 + \delta_0B_X$ is compact with respect to the weak topology, and hence so is $B_X$.  The result now follows from Theorem \ref{reflexive}.
\end{proof}

For any nonempty bounded subset $A$ of a Banach space $(X,\norm)$ and any $x^* \in X^*$ we shall denote by, $\sup(x^*,A) := \sup \{x^*(a):a \in A\}$ and by  $\inf(x^*,A) := \inf \{x^*(a):a \in A\}$.

\begin{lemma}[\!\!\cite{Moors-1-sided}] \label{Lemma 1} Let $(Y,\|\cdot\|)$ be a Banach space and $C$ be a nonempty bounded subset of $Y \times \R$, endowed with the norm $\|(y,r)\|_1 := \|y\| +|r|$.  If for every $x^* \in Y^*$, 
$\max\{(x^*,-1)(y,s): (y,s) \in C\}$ exists then 
$C$ is relatively weakly compact.
\end{lemma}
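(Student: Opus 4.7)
The plan is to reduce the statement to Theorem \ref{sub level sets} by introducing a convex function on $Y$ whose Fenchel-type duality is controlled by the hypothesis. Since it suffices to show that $\overline{\mathrm{co}}(C)$ is weakly compact (because $\overline{C}^w \subseteq \overline{\mathrm{co}}(C)$ by Proposition \ref{weakly-closed}, and a weakly closed subset of a weakly compact set is weakly compact), I would build from $\overline{\mathrm{co}}(C)$ its lower envelope in the $s$-direction: define $f: Y \to \R \cup \{+\infty\}$ by
$$f(y) := \inf\{s \in \R : (y, s) \in \overline{\mathrm{co}}(C)\},$$
with the convention $f(y) = +\infty$ when this set is empty. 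The hope is that the hypothesis ``$(x^*, -1)$ attains its maximum on $C$'' translates precisely into ``$f - x^*$ attains its minimum on $Y$'', which is the key hypothesis of Theorem \ref{sub level sets}.

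Two properties of $f$ must then be verified. First, $f$ is a proper convex function bounded below: the boundedness of $C$ in the norm $\|\cdot\|_1$ forces uniform bounds on $s$-coordinates across $\overline{\mathrm{co}}(C)$, so $f > -\infty$; the infimum defining $f(y)$ is attained whenever finite because the vertical section $\{s : (y,s) \in \overline{\mathrm{co}}(C)\}$ is closed and bounded below; and convexity of $f$ follows by taking convex combinations of points $(y_i, f(y_i)) \in \overline{\mathrm{co}}(C)$. Second, for each $x^* \in Y^*$, the hypothesis supplies a maximiser $(y^*, s^*) \in C$ of $(y,s) \mapsto x^*(y) - s$; since this functional is affine and continuous on $Y \times \R$, its supremum over $C$ equals its supremum over $\overline{\mathrm{co}}(C)$, so the inequality $x^*(y) - s \leq x^*(y^*) - s^*$ persists on $\overline{\mathrm{co}}(C)$. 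Specialising to $y = y^*$ forces $f(y^*) = s^*$, and applying it to $(y, f(y))$ for arbitrary $y \in \mathrm{dom}(f)$ gives $f(y) - x^*(y) \geq f(y^*) - x^*(y^*)$; hence $y^*$ minimises $f - x^*$.

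Having verified the hypotheses of Theorem \ref{sub level sets}, I take $M$ to be any upper bound for $s$ over $C$ (finite, by boundedness), and conclude that $S(M) = \{(y,s) \in Y \times \R : f(y) \leq s \leq M\}$ is relatively weakly compact. By construction $\overline{\mathrm{co}}(C) \subseteq S(M)$ (the bound $s \leq M$ on $\overline{\mathrm{co}}(C)$ holds because the affine functional $(y,s) \mapsto s$ has the same supremum on $\overline{\mathrm{co}}(C)$ as on $C$), and Proposition \ref{weakly-closed} ensures $\overline{\mathrm{co}}(C)$ is weakly closed; so $\overline{\mathrm{co}}(C)$ is a weakly closed subset of a relatively weakly compact set, hence weakly compact, which yields the result. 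The main technical delicacy lies in the minimiser step: one must recognise that attainment of the max of $(x^*,-1)$ on $C$ automatically transports by linearity to attainment on $\overline{\mathrm{co}}(C)$, and that the maximising point pins down the infimum defining $f(y^*)$ exactly.
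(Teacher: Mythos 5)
Your proposal is correct and follows essentially the same route as the paper: both define the lower envelope $f(y) := \inf\{s : (y,s) \in \cdot\,\}$ in the $s$-direction, transport the hypothesised maximiser of $(x^*,-1)$ over $C$ into a minimiser of $f - x^*$, and invoke Theorem \ref{sub level sets} together with the boundedness of $C$ to place everything inside a relatively weakly compact sublevel set $S(a)$. The only difference is that you build $f$ from $\overline{\mbox{co}}(C)$ (gaining convexity and attainment of the defining infimum, neither of which Theorem \ref{sub level sets} actually requires), whereas the paper defines $f$ directly from $C$ and handles the possibly unattained infimum by a short chain of sup/max identities over $\mbox{epi}(f)$ and $C$.
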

\begin{proof} Let $\pi:Y \times \R \to Y$ be defined by $\pi(y,r) :=y$, $A := \pi(C)$ and $f:Y \to \R \cup\{\infty\}$ be defined by,
$$f(y) := \left\{\begin{array}{ll}
\inf\{s \in \R:(y,s) \in C\} & \mbox{if $y \in A$} \\
\infty & \mbox{if $y \not\in A$.}
\end{array} \right.$$
Then $f$ is a proper function on $Y$ and $x^*-f$ attains it maximum for every $x^* \in Y^*$.  To see this, consider the following. Let $x^* \in Y^*$, then
\begin{eqnarray*}
\sup_{y \in Y} (x^* -f)(y) &=& \sup_{(y,s) \in \mathrm{epi}(f)} (x^*,-1)(y,s) \\
&=&  \sup_{(y,s) \in C} (x^*,-1)(y,s) =   \max_{(y,s) \in C} (x^*,-1)(y,s)\\
&=&  \max_{(y,s) \in \mathrm{epi}(f)} (x^*,-1)(y,s) \\
&=& \max_{y \in Y} (x^* -f)(y).
\end{eqnarray*}
Therefore, by  Theorem \ref{sub level sets}, for each $a \in \R$, $S(a) := \{(y,s) \in Y \times \R: f(y) \leq s \leq a\}$
is relatively weakly compact.  Since $C$ is bounded there exists an $a \in \R$ such that $C \subseteq  S(a)$. 
\end{proof}

\begin{theorem}[\!\!\cite{Moors-1-sided}] \label{one-sided} Let $(X,\norm)$ be a Banach space and let $A$ and $B$ be bounded, closed and convex sets with $\mbox{dist}(A,B) >0$. 
If every $x^* \in X^*$ with $\sup(x^*, B) < \inf(x^*, A)$ attains its infimum on $A$ and its supremum on $B$, then both $A$ and $B$ are weakly compact.
\end{theorem}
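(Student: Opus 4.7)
The plan is to reduce the statement to Lemma~\ref{Lemma 1}, which will be applied in the product Banach space $X \times \R$ (endowed with the norm $\|(x,t)\|_1 := \|x\| + |t|$) to a bounded set $C$ whose $X$-projection contains (translates of) $A$ and $B$. Because the coordinate projection $\pi:X \times \R \to X$ is bounded linear, it is weak-to-weak continuous by Proposition~\ref{linearmap}, so any relative weak compactness of $C$ transfers to $\pi(C)$. Since $A$ and $B$ are closed and convex, they are weakly closed by Proposition~\ref{weakly-closed}, and so will be weakly compact once exhibited as weakly closed subsets of a weakly compact set.

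First I would use $\mbox{dist}(A,B) > 0$ together with the Hahn-Banach Separation Theorem (Theorem~\ref{separation-theorem}): applied to the closed convex set $\overline{B - A + (d/2)B_X}$ (which excludes $0$, since $\mbox{dist}(0, A-B) \geq d$), it produces an $x_0^* \in X^*$ with $\sup(x_0^*, B) < \inf(x_0^*, A)$. After rescaling and translating, I may assume $\inf(x_0^*, A) = 1$ and $\sup(x_0^*, B) = -1$. This $x_0^*$ will be used as a ``weight'' in the construction.

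Next I would construct $C \subseteq X \times \R$ as a union of two bounded pieces, one associated with $A$ and one with $B$, with heights (second coordinates) that are affine functions of $x_0^*$ evaluated at the corresponding points. The design goal is that, for an arbitrary $y^* \in X^*$, the restriction of $(y^*,-1)$ to the $A$-piece reduces (up to an additive constant) to the functional $y^* + \alpha_A\, x_0^*$ acting on $A$, and similarly the restriction to the $B$-piece reduces to $y^* + \alpha_B\, x_0^*$ acting on $B$, where $\alpha_A$ and $\alpha_B$ are chosen so that the relevant shifted functional lies either in the cone $U := \{z^* \in X^* : \sup(z^*,B) < \inf(z^*,A)\}$ or in $-U$. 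The one-sided hypothesis (and its symmetric form, obtained by replacing $z^*$ with $-z^*$) then forces the corresponding extremum on $A$ or $B$ to be attained, so the hypothesis of Lemma~\ref{Lemma 1} is met. Lemma~\ref{Lemma 1} gives $C$ relatively weakly compact, and then $\pi(C)$ is relatively weakly compact in $X$; since $A$ and $B$ embed (up to a controlled vertical translation) inside $\pi(C)$ and are weakly closed by Proposition~\ref{weakly-closed}, both are weakly compact.

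The main obstacle is the calibration of the two pieces of $C$: the scalars $\alpha_A, \alpha_B$ and the embedding heights must be chosen so that, for \emph{every} $y^* \in X^*$, the overall maximum of $(y^*,-1)$ on $C$ is realised on the piece whose associated shifted functional lies in $U \cup (-U)$. This is subtle because as $y^*$ ranges over $X^*$, the piece realising the maximum changes, and the one-sided hypothesis supplies attainment of $\inf$ on $A$ and $\sup$ on $B$ only (not the ``opposite'' extrema). Exploiting the fact that $U$ is an open convex cone in $X^*$ containing $x_0^*$, and that shifting $y^*$ by a sufficiently large multiple of $x_0^*$ always brings it into $U$, is what makes the calibration possible, and this is the technical heart of the argument.
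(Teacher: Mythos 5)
Your overall strategy---reduce to Lemma~\ref{Lemma 1} in $X\times\R$ with the $\|\cdot\|_1$ norm and recover $A$ and $B$ through the weak-to-weak continuous projection---is the right target, and it is the paper's target too. But the step you defer as ``the technical heart'' is a genuine gap, and the specific construction you propose cannot be made to work. If the two pieces of $C$ are graphs over $A$ and over $B$ with heights affine in $x_0^*$, say $h_A(a)=\lambda_A x_0^*(a)+c_A$ and $h_B(b)=\lambda_B x_0^*(b)+c_B$, then for $y^*\in X^*$ the functional $(y^*,-1)$ restricted to the $A$-piece is $y^*-\lambda_A x_0^*$ acting on $A$ (up to an additive constant), and on the $B$-piece it is $y^*-\lambda_B x_0^*$ acting on $B$. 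Since $C$ must be fixed \emph{before} Lemma~\ref{Lemma 1} is invoked, the scalars $\lambda_A,\lambda_B$ have to work simultaneously for every $y^*\in X^*$. They cannot: writing $U:=\{z^*\in X^*:\sup(z^*,B)<\inf(z^*,A)\}$, choose $w^*$ with $\sup(w^*,B)>\inf(w^*,A)$ and $\sup(w^*,A)>\inf(w^*,B)$ (such $w^*$ exist whenever some functional's ranges on $A$ and on $B$ overlap, which is the only case of interest) and set $y^*:=\lambda_Ax_0^*+Mw^*$. Then the $A$-piece functional is $Mw^*$ and the $B$-piece functional is $Mw^*+(\lambda_A-\lambda_B)x_0^*$; for $M$ large the $O(M)$ gaps $M[\sup(w^*,B)-\inf(w^*,A)]$ and $M[\sup(w^*,A)-\inf(w^*,B)]$ dominate the fixed $O(1)$ shift by $(\lambda_A-\lambda_B)x_0^*$, so neither functional lies in $U\cup(-U)$ and the one-sided hypothesis supplies no attainment on either piece. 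No fixed affine calibration can absorb the unbounded ``tilt'' of an arbitrary $y^*$.

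The paper escapes this with a projective, not affine, device. It forms the single set $C:=\overline{B-A}$ (so that $0\notin C$ and every $x^*$ with $\sup(x^*,C)<0$ attains its supremum on $C$), separates $0$ from $C$ by some $y^*$, splits $X\cong Y\times\R$ with $Y:=\ker(y^*)$ via $S(y,r):=y+rx_0$ so that $S^{-1}(C)\seq\{(y,r):\varepsilon\leq r\}$, and applies Lemma~\ref{Lemma 1} to $T(S^{-1}(C))$, where $T(y,s):=s^{-1}(y,-1)$. For a given $x^*\in Y^*$, the intermediate value theorem applied to $\lambda'\mapsto f^*(\lambda'x^*)+\lambda'$ produces a $\mu>0$ with $f^*(\mu x^*)=-\mu<0$; that is, $(\mu x^*,-1)$ has strictly negative supremum over $S^{-1}(C)$ and therefore attains it by the one-sided hypothesis, and the map $T$ converts this into attainment of the maximum of $(x^*,-1)$ over $T(S^{-1}(C))$. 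The scaling parameter $\mu=\mu(x^*)$, chosen separately for each functional, is exactly the per-functional calibration that your fixed constants $\lambda_A,\lambda_B$ cannot provide. To complete your argument you would need to replace the affine heights by this projective normalisation (equivalently, route the construction through the epigraph transformation of Theorem~\ref{sub level sets}, as Lemma~\ref{Lemma 1} itself does).
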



\begin{proof} To show that both $A$ and $B$ are weakly compact it is sufficient (and necessary) to show that $B-A$ is weakly compact. This will be our approach. 
From the hypotheses it follows that if $C := \overline{B-A}$, then $C$ is a bounded nonempty closed and convex subset of $X$ with $0 \not\in C$. Furthermore, it follows that each $x^* \in X^*$ with
$\sup(x^*, C) <0$ attains it supremum on $C$. Choose $y^* \in X^*$ such that $\sup(y^*,C) <0$.  Note that such a functional exists by the Hahn-Banach theorem.  Let $Y := \mbox{ker}(y^*)$ and choose
$x_0 \in C$. Define $S: Y \times \R \to X$ by, $S(y,r) := y +rx_0$ and let us consider $Y \times \R$ endowed with the norm $\|(y,r)\|_1 := \|y\| +|r|$.  Then $S$ is an isomorphism and there exists an $0<\varepsilon$
such that $S^{-1}(C) \subseteq \{(y,r) \in Y \times \R: \varepsilon \leq r\}$.  Moreover, each $(x^*,r) \in (Y \times \R)^*$ with $\sup((x^*,r),S^{-1}(C)) <0$ attains its supremum over $S^{-1}(C)$.
Let $\pi:Y \times \R \to Y$ be defined by $\pi(y,r) :=y$, $A := \pi(S^{-1}(C))$ and $f:Y \to \R \cup\{\infty\}$ be defined by,
$$f(y) := \left\{\begin{array}{ll}
\inf\{s \in \R:(y,s) \in S^{-1}(C)\} & \mbox{if $y \in A$} \\
\infty & \mbox{if $y \not\in A$.}
\end{array} \right.$$
Next, we define $T:Y \times (\R \setminus \{0\}) \to Y \times (\R \setminus\{0\})$ by
$T(y,s) := s^{-1}(y,-1)$. Then $T$ is a bijection.  In fact, $T$ is a homeomorphism when $Y\times (\R\setminus\{0\})$ is considered with the relative weak topology.
Let $f^*:Y^* \to \R$ be defined by, 
$$f^*(x^*) := \sup_{y \in Y} [x^*(y)-f(y)] = \sup((x^*,-1),S^{-1}(C)).$$  
It is routine to check that $f^*$ is real-valued and convex on $Y^*$. 
To show that $C$ is weakly compact it is sufficient to show that $T(S^{-1}(C))$ is a relatively weakly compact subset of $Y \times \R$. To achieve this we appeal to Lemma \ref{Lemma 1}. 
First note that $T(S^{-1}(C))$ is a nonempty bounded subset of $Y \times \R$.  Then consider any  $x^* \in Y^*$. We consider two cases.

\medskip

{\bf Case (I)} Suppose that for every $0< \lambda$, $f^*(\lambda x^*) \leq -\lambda$. Then $x^*(y) - \lambda^{-1}f(y) \leq -1$ for all $y \in Y$ and 
all $0< \lambda$.  In particular, $-\lambda^{-1}f(0) \leq -1$ for all $0 < \lambda$, i.e., $\lambda \leq f(0)$ for all $0< \lambda$. On the other hand,
$S(0,1) = x_0 \in C$, i.e., $(0,1) \in S^{-1}(C)$ and so $f(0) \leq 1$. Thus, Case (I) does not occur.

\medskip

{\bf Case(II)} Suppose that for some $0<\lambda$, $-\lambda < f^*(\lambda x^*)$. Then, since the mapping, $\lambda' \mapsto f^*(\lambda'x^*)$, is real-valued and convex, it is continuous.
Furthermore,  it follows from the intermediate value theorem applied to the function $g:[0,\lambda] \to \R$, defined by, 
$$g(\lambda') := f^*(\lambda'x^*) + \lambda' \mbox{\quad for all $\lambda' \in [0,\lambda]$,}$$
 that there exists a $0 < \mu < \lambda$ such that 
$g(\mu) =0$, i.e., $f^*(\mu x^*) = -\mu$, since 
$$g(0) = f^*(0x^*) = -\inf_{y \in Y} f(y) \leq -\varepsilon < -0 =  0 <g(\lambda).$$
 Thus, $\mu(x^*, -1) = (\mu x^*, f^*(\mu x^*))$ and so $f^*(\mu x^*) = \sup((\mu x^*, -1), S^{-1}(C)) = -\mu <0$.

\medskip

Choose $(z,s) \in S^{-1}(C)$ such that $(\mu x^*, -1)(z,s) = \sup((\mu x^*, -1), S^{-1}(C)) = f^*(\mu x^*)$. Note that $z \in A$ and $s = f(z)$.
We claim that $(x^*,-1)$ attains its maximum value over $T(S^{-1}(C))$ at $T(z, f(z)) = f(z)^{-1}(z,-1)$. Now,
\begin{eqnarray*}
(x^*,-1)(T(z, f(z))) &=& f(z)^{-1}(x^*(z) +1) = f(z)^{-1}(x^*(z) - [\mu^{-1}f^*(\mu x^*)]) \\
&=& f(z)^{-1}(x^*(z) - [x^*(z) - \mu^{-1}f(z)]) = \mu^{-1}.
\end{eqnarray*}
On the other hand, if $(y,s) \in S^{-1}(C)$ then 
\begin{eqnarray*}
(x^*,-1)(T(y, s)) &=& s^{-1}(x^*(y) +1 ) = s^{-1}(x^*(y) - [\mu^{-1}f^*(\mu x^*)]) \\
&\leq & s^{-1}(x^*(y) - [x^*(y) - \mu^{-1}f(y)]) = s^{-1}f(y)\mu^{-1} \leq \mu^{-1}= (x^*,-1)(T(z, f(z)))
\end{eqnarray*}
since $f(y) \leq s$.  This completes the proof. 
\end{proof}

\begin{remark} It might be interesting to note the following: If $(X,\|\cdot\|)$ is a Banach space, $A$ and $B$ are nonempty bounded, closed and convex sets such that
every $x^* \in X^*$ with $\inf(x^*, A) <  \sup(x^*, B)$ attains its infimum on $A$ and its supremum on $B$, then both $A$ and $B$ are weakly compact.
To see this, note that $C := \mbox{co}[\{0\} \cup \overline{B-A}]$ is a closed and bounded convex subset of $X$ with the property that every continuous linear function attains it supremum
over $C$. 
\end{remark}

A special case of the previous theorem was given in \cite{CascalesOneSided}.  

\begin{example}\label{Example} Let $(X,\|\cdot\|)$ be a non-trivial normed linear space. Then there exists an equivalent norm $|\!|\!|\cdot |\!|\!|$ on $X$ and a nonempty open subset $U$ of $X^*$
such that every member of $U$ attains its norm on $(X, |\!|\!|\cdot |\!|\!|)$.
\end{example}
\begin{proof} Choose $x_0 \in X$ with $\|x_0\|=2$.  Then, by the Hahn-Banach theorem, there exists a continuous linear functional $x^* \in S_{X^*}$ such that $x^*(x_0) = 2$. Let $U := \{y^* \in X^*:\|y^*-x^*\| <1/3\}$
and let $B := \mathrm{co}(B_X \cup \{x_0,-x_0\})$. Then $B$ is convex, bounded, symmetric and $0 \in \mathrm{int}(B)$.  Therefore,  $B$ is the closed unit ball of some equivalent
norm $|\!|\!|\cdot |\!|\!|$ on $X$.  Furthermore, every member of $U$ attains its maximum value over $B$ at $x_0$. Indeed, if $y^* \in U$ then 
$$y^*(x_0) = x^*(x_0) + [y^*(x_0) - x^*(x_0)] \geq 2 - \|y^*-x^*\|\|x_0\| > 4/3.$$ 
On the other hand, for any $x \in B_X$, 
$$y^*(x) = x^*(x) + [y^*(x)-x^*(x)] \leq 1 + \|y^*-x^*\|\|x\| < 1+ 1/3 = 4/3 < y^*(x_0)$$ 
and $y^*(-x_0) = -y^*(x_0) = -2 < y^*(x_0)$.  Therefore, $y^*$ attains its maximum value over $B$ at $x_0$.
\end{proof}

Together, Example \ref{Example} and Theorem \ref{Weak^*Interior}  give rise to the following conjecture.

\begin{conjecture} Let $(X,\norm)$ be a Banach space. If there exists a weak open subset $U$ of $X^*$ such that $\varnothing \not= S_{X^*} \cap U$ and every member of
	 $S_{X^*} \cap U$ attains its norm on $X$, then $X$ is reflexive.
\end{conjecture}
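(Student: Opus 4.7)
My plan is to adapt the argument of Theorem \ref{Weak^*Interior} (due to Moreno), which settles the analogue of this conjecture under the stronger hypothesis that $U$ is weak$^*$-open. By Theorem \ref{James' Theorem Reflexivity}, reflexivity is equivalent to every continuous linear functional on $X$ attaining its norm, so the goal is to promote the ``local'' norm attainment on $U \cap S_{X^*}$ to the entirety of $X^*$. First I would shrink $U$: pick $x_0^* \in U \cap S_{X^*}$ and choose $\varepsilon > 0$ together with $F_1, \ldots, F_n \in X^{**}$ so that the basic weak neighbourhood
$$W := \mbox{$\bigcap_{k=1}^n$}\,\{x^* \in X^* : |F_k(x^* - x_0^*)| \leq \varepsilon\}$$
is contained in $U$ and has $x_0^*$ in its weak interior (hence in its norm interior, since the norm topology is finer than the weak topology). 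If every $F_k$ already lies in $\widehat{X}$, then $W$ is weak$^*$-open and the conjecture follows immediately from Theorem \ref{Weak^*Interior}; so the real content of the problem is the case when some $F_k \in X^{**} \setminus \widehat{X}$.

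Next I would study $K' := W \cap B_{X^*}$, a closed, bounded and convex set with non-empty norm interior. As in Moreno's proof, the elementary topological inclusion
$$\mathrm{Bd}(K') \seq \big(\mathrm{Bd}(W) \cap B_{X^*}\big) \cup \big(S_{X^*} \cap W\big)$$
holds, and every point of $S_{X^*} \cap W$ is weak$^*$-exposed in $B_{X^*}$ by the norm-attainment hypothesis. The boundary contributions from $\mathrm{Bd}(W) \cap B_{X^*}$, however, are only ``weak-exposed'' in $W$ by the defining functionals $F_k$. If for each such boundary point one could find a substitute $\widehat{x} \in \widehat{X}$ that weak$^*$-exposes the same point of $W \cap B_{X^*}$, then a suitable translate $K$ of $K'$ would have $0 \in \mathrm{int}(K)$ with every boundary point weak$^*$-exposed; Proposition \ref{weak* exposed} would then give weak compactness of $K_\circ$, and since $K$ is bounded we would have $0 \in \mathrm{int}(K_\circ)$, whence $B_X$ itself is weakly compact and $X$ is reflexive by Theorem \ref{reflexive}. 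The natural tool for producing such substitutes is Goldstine's Theorem (Corollary \ref{Goldstine's Theorem}), which represents each $F_k$ as a weak$^*$-limit of a bounded net from $\widehat{X}$, possibly combined with a Bishop-Phelps-type density argument.

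The hard part is precisely this last step: the weak topology on $X^*$ is, unless $X$ is already reflexive, strictly finer than the weak$^*$ topology, so if some $F \in X^{**} \setminus \widehat{X}$ appears among the $F_k$'s then the slab $\{x^* : |F(x^*)| < 1\}$ contains no non-empty weak$^*$-open subset, and pointwise Goldstine approximation cannot convert the weak-exposure coming from $F$ into a weak$^*$-exposure by some element of $\widehat{X}$. Overcoming this obstacle seems to require either (i) an alternative geometric route via Theorem \ref{one-sided}, choosing a pair of closed, bounded, convex sets $(A,B)$ in $X$ for which every separating functional is forced by the hypothesis (after rescaling) to lie in $W \cap S_{X^*}$ and hence attains both its infimum over $A$ and its supremum over $B$; or (ii) a strengthening of Proposition \ref{weak* exposed} that accommodates boundary points exposed only by elements of $X^{**}$. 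Either avenue appears to demand a genuinely new idea, and this is what makes the conjecture substantive rather than a routine extension of Theorem \ref{Weak^*Interior}.
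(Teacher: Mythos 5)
First, a point of order: the paper does not prove this statement at all. It is posed as an open conjecture, motivated by Example \ref{Example} and Theorem \ref{Weak^*Interior}, with only a special case known (attributed to \cite{Moreno}). So there is no proof in the paper against which your attempt can be matched, and any complete argument you produced would have been a new result rather than a reconstruction.

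Second, your proposal is not a proof either, and you say as much yourself. The opening reduction is sound: shrinking $U$ to a basic weak neighbourhood $W = \bigcap_{k=1}^n\{x^*\in X^*: |F_k(x^*-x_0^*)|\leq\varepsilon\}$ with $F_1,\dots,F_n\in X^{**}$ is legitimate, and your observation that the case $F_1,\dots,F_n\in\widehat{X}$ makes $W$ weak$^*$-open and collapses the statement to Theorem \ref{Weak^*Interior} is correct. The gap is the entire remaining case: when some $F_k\in X^{**}\setminus\widehat{X}$, the points of $\mathrm{Bd}(W)\cap B_{X^*}$ are exposed only by functionals that are not weak$^*$ continuous, so they need not be weak$^*$ exposed points of $W\cap B_{X^*}$, and Proposition~\ref{weak* exposed} cannot be invoked. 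Your diagnosis of why Goldstine approximation cannot repair this is accurate: a slab determined by $F\in X^{**}\setminus\widehat{X}$ contains no nonempty weak$^*$-open set, because $F$ is unbounded on every subspace of the form $\bigcap_{i}\ker(\widehat{x_i})$ unless $F$ lies in $\mathrm{span}\{\widehat{x_1},\dots,\widehat{x_n}\}\subseteq\widehat{X}$ (this is exactly the content of Lemma \ref{FiniteDimensional} combined with Proposition \ref{cont-equivalence}). That obstruction is precisely what separates the conjecture from Theorem \ref{Weak^*Interior}. In short: the approach is a sensible first pass, the obstruction you identify is real, and no proof has been produced --- which is the honest state of affairs, since none is known to the authors either.
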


A special case of this conjecture was proven in \cite{Moreno}. For some further results in this direction see \cite{Debs}.


\section{Convex analysis and minimal uscos}


In this section we prove a generalisation of James' weak compactness theorem.  Unfortunately, to achieve this generalisation we will need to take an excursion into
convex analysis and set-valued analysis.  Hopefully, some of the results along the way are of some interest in their own right.

\subsection{Convex functions and monotone operators}

We shall need the following very important fact regarding the continuity of convex functions.

\begin{proposition}[\!\!\mbox{\cite[Proposition 1.6]{PhelpsBook}}]\label{Lipschitz}
	Let $U$ be a nonempty open convex subset of a Banach space $(X,\norm)$ and let $\varphi:U\rightarrow\r$ be a convex function. 
	If $\varphi$ is {\it locally bounded above on $U$}, that is, for every $x_0 \in U$ there exists an $M>0$ and a $\delta >0$ such that $B(x_0,\delta)\seq U$ and 
	$\varphi(x) \leq M$ for all $x \in B(x_0,\delta)$, then it is {\it locally Lipschitz on $U$}; that is, for every $x_0\in U$, there exists an $L>0$ and $\delta>0$ such that $B(x_0,\delta)\seq U$ and 
	$$|\varphi(x)-\varphi(y)|\leq L\|x-y\|$$
	for all $x,y\in B(x_0,\delta)$.
\end{proposition}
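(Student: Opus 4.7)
The plan is to carry out the classical two-step argument: first bootstrap the local upper bound to a two-sided local bound, then use a convexity-extrapolation trick to turn the two-sided bound into a Lipschitz estimate.

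First I would fix $x_0 \in U$ and use the hypothesis to produce $M > 0$ and $\delta > 0$ such that $B(x_0,2\delta) \subseteq U$ and $\varphi(x) \leq M$ for every $x \in B(x_0,2\delta)$ (shrinking the $\delta$ from the hypothesis if necessary). The first substantive step is to show that $\varphi$ is also bounded below on $B(x_0,2\delta)$. To do this I would exploit the symmetric reflection: given $y \in B(x_0,2\delta)$, the point $z := 2x_0 - y$ lies in $B(x_0,2\delta)$ as well, and $x_0 = \tfrac{1}{2}y + \tfrac{1}{2}z$. Convexity then yields $\varphi(x_0) \leq \tfrac{1}{2}\varphi(y) + \tfrac{1}{2}\varphi(z) \leq \tfrac{1}{2}\varphi(y) + \tfrac{M}{2}$, so $\varphi(y) \geq 2\varphi(x_0) - M$. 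Setting $M' := \max\{|M|,\,|2\varphi(x_0) - M|\}$ gives $|\varphi| \leq M'$ on $B(x_0,2\delta)$.

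The second step is the Lipschitz bound on $B(x_0,\delta)$. Given distinct points $x,y \in B(x_0,\delta)$, I would set $\alpha := \|y-x\|/\delta \in (0,1]$ and push past $y$ along the ray from $x$ through $y$ by defining
$$z := y + \tfrac{1}{\alpha}(y-x).$$
A short norm computation shows $\|z - x_0\| \leq \|y - x_0\| + \|y-x\|/\alpha < \delta + \delta = 2\delta$, so $z \in B(x_0,2\delta)$. Solving for $y$ gives the convex combination $y = \tfrac{1}{1+\alpha}x + \tfrac{\alpha}{1+\alpha}z$, and convexity then yields
$$\varphi(y) - \varphi(x) \leq \tfrac{\alpha}{1+\alpha}\bigl(\varphi(z) - \varphi(x)\bigr) \leq \alpha \cdot 2M' = \tfrac{2M'}{\delta}\|y-x\|.$$
Swapping the roles of $x$ and $y$ gives the reverse inequality, so $|\varphi(x)-\varphi(y)| \leq L\|x-y\|$ with $L := 2M'/\delta$.

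The only real obstacle is choosing the correct geometric configuration in the second step; once one hits on the idea of extrapolating past $y$ by exactly the factor $1/\alpha$, the rest is bookkeeping. There is one minor subtlety worth mentioning: the upper bound $M$ in the hypothesis is at a single point $x_0$, so I am implicitly relying on the fact that local boundedness above \emph{at one point} of a convex set already forces local boundedness above \emph{throughout} the interior (again via a convex-combination argument), but since the statement already assumes local upper boundedness at every point, I can apply the hypothesis directly at $x_0$ and do not need this extra propagation step.
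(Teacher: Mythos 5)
Your proof is correct and follows essentially the same two-step argument as the paper: a reflection through $x_0$ to upgrade the upper bound to a two-sided bound, followed by extrapolation past $y$ along the ray from $x$ and the convex-combination identity to extract the Lipschitz constant $2M'/\delta$. The only (harmless) slip is the claim $\alpha\in(0,1]$ --- for $x,y\in B(x_0,\delta)$ one only gets $\alpha=\|y-x\|/\delta<2$ --- but nothing in your estimate uses $\alpha\leq 1$, since $\tfrac{\alpha}{1+\alpha}\leq\alpha$ holds for all $\alpha>0$.
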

\begin{proof} Let $x_0 \in U$.  Choose $M^* >0$ and $\delta >0$ such that $B(x_0,2\delta) \subseteq U$ and $\varphi(x) \leq M^*$ for all $x \in B(x_0,2\delta)$. Then for all $x \in B(x_0,\delta)$ we have that
$2x_0-x=x_0-(x-x_0) \in B(x_0,\delta)$ and $x_0 = (1/2)(2x_0-x) + (1/2)x$.  Hence,
$$\varphi(x_0) \leq \frac{\varphi(2x_0-x) + \varphi(x)}{2} \leq \frac{M^* + \varphi(x)}{2},$$
so $-\varphi(x) \leq M^* +2|\varphi(x_0)|$; that is, $|\varphi(x)| \leq (M^* + 2|\varphi(x_0)|) =:M'$ for all $x \in B(x_0,\delta)$.  So $|\varphi|$ is bounded by $M'$ on  $B(x_0,\delta)$. Let $\delta' := \delta/2$.
If $x$ and $y$ are distinct points in $B(x_0,\delta')$, let $\alpha := \|x-y\|$ and let $z := y + (\delta/\alpha)(y-x)$.  Note that $z \in B(x_0,2\delta')$. Since  
$y = [\alpha/(\alpha + \delta')]z + [\delta'/(\alpha + \delta')]x$ is a convex combination (lying in $B(x_0, 2\delta')$), we have that
$\varphi(y) \leq [\alpha/(\alpha + \delta')] \varphi(z) +  [\delta'/(\alpha + \delta')] \varphi(x)$ and so
$$\varphi(y) -\varphi(x) \leq [\alpha/(\alpha + \delta')] (\varphi(z) -\varphi(x)) +  [\delta'/(\alpha + \delta')](\varphi(x) -\varphi(x)) \leq (\alpha/\delta')2M' = (2M'/\delta')\|x-y\|.$$
Interchanging $x$ and $y$ gives the desired result, with $M := 2M'/\delta'$.
\end{proof}

Suppose that $f:C \to \R$ is a convex function defined on a nonempty convex subset of a normed linear
 space $(X, \norm)$ and $x \in C$.  Then we define the {\it subdifferential $\partial f(x)$} by, 
$$\partial f(x) := \{x^* \in X^*: x^*(y) - x^*(x) \leq f(y) - f(x) \mbox{ for all $y \in C$}\}.$$

We can also define the subdifferential in terms of the right-hand derivative of $f$.  Suppose that $f:U \to \R$ is a convex function defined on a nonempty
open convex subset $U$ of a normed linear space $(X,\norm)$. Let $x_0 \in U$ and let $v \in X$. Then the {\it right-hand directional derivative of $f$, at the point $x_0 \in U$, in the direction $v$},
is defined to be $$ f_+'(x_0;v) := \lim_{\lambda \to 0^+} \frac{f(x_0+\lambda v)-f(x_0)}{\lambda}.$$
Now there is a subtlety that we have overlooked. Namely, how do we know if the limit exists?  Well, if we revisit Lemma \ref{convex}, then we can see why.
So suppose $f$, $x_0$ and $v \not= 0$ are as in the definition of $f_+'(x_0;v)$ and suppose that $0<\beta$ and $0< \beta'$ Then,
\begin{eqnarray*}
\frac{f(x_0 + (\beta + \beta')v) - f(x_0)}{\beta + \beta'} &=& \frac{[f(x_0 + (\beta + \beta')v)- f(x_0 + \beta v)] + [f(x_0 + \beta v) -f(x_0)]}{\beta + \beta'} \\
&\geq& \frac{1}{\beta + \beta}\left(\frac{\beta'}{\beta}[f(x_0+\beta v) - f(x_0)] + [f(x_0+\beta v) - f(x_0)]\right) \mbox{\quad by Lemma \ref{convex}.}\\
&=& \frac{1}{\beta + \beta}\left(\frac{\beta + \beta'}{\beta}[f(x_0+\beta v) - f(x_0)]\right) \\
&=& \frac{f(x_0+\beta v) - f(x_0)}{\beta}.
\end{eqnarray*}
Therefore, $t \mapsto \frac{f(x_0+tv) - f(x_0)}{t}$ is an increasing function over $(0,\delta)$ for some $\delta >0$ small enough so that
$x_0 + tv \in U$ whenever $0 <t < \delta$. Since one can also use Lemma~\ref{convex} to  show that
$$\frac{f(x_0+sv) - f(x_0)}{s} \leq \frac{f(x_0+tv) - f(x_0)}{t} \mbox{\quad for any $s < 0$ and $0<t$, (but small enough to stay in $U$),} $$
we see that the limit in the definition of the right-hand directional derivative always exists.

\medskip

We can now give the basic properties of the subdifferential mapping $x \mapsto \partial \varphi(x)$.

\begin{lemma}[\!\!\mbox{\cite[Proposition 1.11]{PhelpsBook}}] \label{nonempty} Let $U$ be a nonempty open and convex subset of a normed linear space $(X,\norm)$ and let $\varphi:U\rightarrow\r$ be a continuous convex function.  If $x_0 \in U$
then $\partial \varphi(x_0) \not= \varnothing$.
\end{lemma}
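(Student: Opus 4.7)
The plan is to apply the Hahn--Banach theorem (Theorem \ref{Hahn-Banach}) with the right-hand directional derivative of $\varphi$ at $x_0$ playing the role of the sublinear majorant. First I would define $p:X\to\R$ by $p(v):=\varphi_+'(x_0;v)$. This is well defined on all of $X$ because $x_0$ is an interior point of $U$ and, as observed in the discussion of right-hand derivatives immediately preceding this lemma, the difference quotient $t\mapsto[\varphi(x_0+tv)-\varphi(x_0)]/t$ is monotone increasing on a small positive interval, so the limit as $t\to 0^+$ exists.

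Next I would verify that $p$ is sublinear. Positive homogeneity $p(\alpha v)=\alpha p(v)$ for $\alpha\geq 0$ is immediate by a change of variables in the defining limit. For subadditivity, the key identity is $x_0+\lambda(v+w)=\tfrac{1}{2}(x_0+2\lambda v)+\tfrac{1}{2}(x_0+2\lambda w)$, so the convexity of $\varphi$ gives
$$\frac{\varphi(x_0+\lambda(v+w))-\varphi(x_0)}{\lambda}\leq \frac{\varphi(x_0+2\lambda v)-\varphi(x_0)}{2\lambda}+\frac{\varphi(x_0+2\lambda w)-\varphi(x_0)}{2\lambda},$$
and letting $\lambda\to 0^+$ yields $p(v+w)\leq p(v)+p(w)$.

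I would then invoke Proposition \ref{Lipschitz} to bound $p$: since $\varphi$ is continuous (and hence locally bounded above) on $U$, there exist constants $L>0$ and $\delta>0$ such that $|\varphi(x)-\varphi(y)|\leq L\|x-y\|$ for all $x,y\in B(x_0,\delta)$. For any $v\in X$ and sufficiently small $\lambda>0$, the difference quotient $[\varphi(x_0+\lambda v)-\varphi(x_0)]/\lambda$ is therefore bounded in absolute value by $L\|v\|$, so $|p(v)|\leq L\|v\|$ for every $v\in X$. With sublinearity and this bound in hand, I would apply the Hahn--Banach theorem to the trivial subspace $Y:=\{0\}$ with $f\equiv 0$ (trivially $f\leq p$ there, since $p(0)=0$) to produce a linear functional $F:X\to\R$ satisfying $F(v)\leq p(v)$ for all $v\in X$. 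The estimate $-F(v)=F(-v)\leq p(-v)\leq L\|v\|$ combined with $F(v)\leq L\|v\|$ then yields $|F(v)|\leq L\|v\|$, so $F\in X^*$.

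Finally, to conclude that $F\in\partial\varphi(x_0)$, I would use the monotonicity of the difference quotient once more: for any $y\in U$, setting $v:=y-x_0$,
$$F(y)-F(x_0)=F(v)\leq p(v)=\lim_{\lambda\to 0^+}\frac{\varphi(x_0+\lambda v)-\varphi(x_0)}{\lambda}\leq \varphi(x_0+v)-\varphi(x_0)=\varphi(y)-\varphi(x_0),$$
which is precisely the defining inequality for membership in $\partial\varphi(x_0)$. The main technical point is the subadditivity of $p$; it is not visible from the definition and genuinely requires applying convexity at the doubled scale $2\lambda$ before passing to the limit $\lambda\to 0^+$.
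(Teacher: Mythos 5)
Your proposal is correct and follows essentially the same route as the paper: define $p(v):=\varphi_+'(x_0;v)$, verify sublinearity, extend by the Hahn--Banach theorem, use Proposition \ref{Lipschitz} to get $F\in X^*$, and conclude via the monotonicity of the difference quotient. The only (immaterial) deviations are that you prove subadditivity directly with the doubling identity rather than, as the paper does, exhibiting $p$ as a pointwise limit of convex functions and combining convexity with positive homogeneity, and that you start the Hahn--Banach extension from the trivial subspace $\{0\}$ rather than from a one-dimensional span.
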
 
\begin{proof} Let $x_0 \in U$ and define $p:X \to \R$ by, $p(x) := f_+'(x_0;x)$ for all $x \in X$. Note that $p$ is well-defined.  Let $0 < \mu < \infty$ and let $x \in X$ then
\begin{eqnarray*}
p(\mu x) &=& \lim_{\lambda \to 0^+} \frac{\varphi(x_0+\lambda(\mu x)) - \varphi(x_0)}{\lambda} \\
&=& \mu  \lim_{\lambda \to 0^+} \frac{\varphi(x_0+(\lambda\mu)x) - \varphi(x_0)}{\lambda \mu} \\
&=& \mu  \lim_{\lambda' \to 0^+} \frac{\varphi(x_0+\lambda' x) - \varphi(x_0)}{\lambda'}  \mbox{ \quad \quad (where, $\lambda' := \lambda \mu$.)}\\
&=& \mu p(x).
\end{eqnarray*}
So $p$ is positively homogeneous on $X$.  Next, choose $\delta >0$ such that $B[x_0,\delta] \subseteq U$.  We claim that $p$ is convex on $B[0,\delta]$.
Fix $n \in \N$ and define $p_n:B[0,\delta] \to \R$ by, 
$$ p_n(x) := \frac{\varphi(x_0 + (1/n)x)-\varphi(x_0)}{(1/n)} \mbox{\quad \quad for all $x \in B[0,\delta]$.}$$
Since, $x \mapsto x_0 + (1/n)x$, is an affine map, $x \mapsto \varphi(x_0 + (1/n)x)$, is convex, and so $p_n$ is also convex.
Now, $p(x) = \lim_{n \to \infty}p_n(x)$ for each $x \in B[0,\delta]$.  Therefore, $p|_{B[0,\delta]}$ is convex, as the pointwise limit of convex functions is again convex.
Since $p$ is also positively homogeneous on $X$ it is an easy exercise to show that $p$ is sublinear on $X$.

\medskip

Let $y_0$ be any element of $S_X$ and define $f:\mathrm{span}\{y_0\} \to \R$ by, $f(\lambda y_0) := \lambda p(y_0)$ for all $\lambda \in \R$. Then 
$f(\lambda y_0) =  \lambda p(y_0) = p( \lambda y_0) \leq p( \lambda y_0)$ for all $0 < \lambda < \infty$. Now, fix 
$0<\lambda < \infty$, then $$0 = p(0) = p((-\lambda)y_0 + \lambda y_0) \leq p((-\lambda) y_0) + p(\lambda y_0).$$
Therefore, $(-\lambda) p(y_0) =  -p(\lambda y_0) \leq p((-\lambda) y_0)$. Thus,
$$f((-\lambda) y_0) = (-\lambda)p(y_0) \leq p((-\lambda )y_0).$$
Hence, $f(\lambda y_0) \leq p(\lambda y_0)$ for all $\lambda \in \R$.  Thus, by the Hahn-Banach Theorem (Theorem \ref{Hahn-Banach}) there exists a linear
functional $F:X \to \R$ such that $F(x) \leq p(x)$ for all $x \in X$.  Note also, that by Proposition \ref{Lipschitz} and the definition of $p$, there exists an $L >0$
such that $F(x) \leq p(x) \leq L\|x\|$ for all $x \in X$.  Thus, $F \in X^*$. We claim that $F \in \partial \varphi(x_0)$.  To see this, let $x \in U$ then
\begin{eqnarray*}
F(x) - F(x_0) &=& F(x-x_0) \\
&\leq& p(x-x_0) \\
&=& \lim_{\lambda \to 0^+} \frac{\varphi(x_0+\lambda (x-x_0))-\varphi(x_0)}{\lambda} \\
&\leq&  \frac{\varphi(x_0+1 (x-x_0))-\varphi(x_0)}{1}  \mbox{\quad since, $\lambda \mapsto \frac{\varphi(x_0+\lambda(x-x_0)) - \varphi(x_0)}{\lambda}$, is increasing over $(0,1]$.}\\
&=& \varphi(x) - \varphi(x_0).
\end{eqnarray*}
This completes the proof.
\end{proof}

\begin{proposition}[\!\!\mbox{\cite[Proposition 1.11]{PhelpsBook}}] \label{SubdifferentialProperties}
		Let $U$ be a nonempty open and convex subset of a normed linear space $(X,\norm)$ and let $\varphi:U\rightarrow\r$ be a continuous convex function. 
		If $x_0\in U$, then $\partial\varphi(x_0)$ is a weak$^*$-compact convex subset of $X^*$. 
		Moreover, the map $x\mapsto \partial\varphi(x)$ is locally bounded at $x_0$. That is, there exists an $L>0$ and a $\delta >0$
		such that $B(x_0,\delta) \subseteq U$ and $\|x^*\|\leq M$ whenever $x\in B(x_0,\delta)$ and $x^*\in\partial\varphi(x)$.  
\end{proposition}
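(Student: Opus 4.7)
The plan is to establish the three properties separately, using convexity, weak$^*$-closedness, and local norm-boundedness, with the Banach--Alaoglu theorem pulling it all together.

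First I would dispose of convexity by a direct calculation: if $x^*,y^*\in\partial\varphi(x_0)$ and $0\leq\lambda\leq 1$, then for every $y\in U$,
$$(\lambda x^*+(1-\lambda)y^*)(y-x_0)=\lambda x^*(y-x_0)+(1-\lambda)y^*(y-x_0)\leq \varphi(y)-\varphi(x_0),$$
so $\lambda x^*+(1-\lambda)y^*\in\partial\varphi(x_0)$. Next I would show weak$^*$-closedness by rewriting the subdifferential as an intersection of weak$^*$-closed half-spaces:
$$\partial\varphi(x_0)=\bigcap_{y\in U}\widehat{(y-x_0)}^{\,-1}\bigl(-\infty,\varphi(y)-\varphi(x_0)\bigr],$$
each of which is weak$^*$-closed since $\widehat{(y-x_0)}\in X^{**}$ is weak$^*$-continuous on $X^*$.

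The main content is the local boundedness statement, and this is the step I expect to be the real workhorse (though still routine thanks to Proposition \ref{Lipschitz}). Since $\varphi$ is continuous at $x_0$, there exists $\delta_0>0$ with $B(x_0,\delta_0)\seq U$ on which $\varphi$ is bounded (by $|\varphi(x_0)|+1$, say); hence $\varphi$ is locally bounded above, so Proposition \ref{Lipschitz} applies. Choose $L>0$ and $\delta>0$ such that $B(x_0,2\delta)\seq U$ and $|\varphi(x)-\varphi(y)|\leq L\|x-y\|$ for all $x,y\in B(x_0,2\delta)$. Now fix any $x\in B(x_0,\delta)$ and $x^*\in\partial\varphi(x)$. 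For every $v\in X$ with $\|v\|\leq\delta$ we have $x\pm v\in B(x_0,2\delta)\seq U$, so the defining inequality gives
$$x^*(v)=x^*(x+v)-x^*(x)\leq \varphi(x+v)-\varphi(x)\leq L\|v\|,$$
and applying the same estimate to $-v$ yields $|x^*(v)|\leq L\|v\|$. Rescaling, $\|x^*\|\leq L$, so $\partial\varphi(x)\seq LB_{X^*}$ for all $x\in B(x_0,\delta)$, which is exactly the claimed local boundedness (with $M:=L$).

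Finally, weak$^*$-compactness follows at once: taking $x=x_0$ above gives $\partial\varphi(x_0)\seq LB_{X^*}$, and $LB_{X^*}$ is weak$^*$-compact by the Banach--Alaoglu Theorem (Theorem \ref{Banach-Alaoglu}). Since $\partial\varphi(x_0)$ is weak$^*$-closed and sits inside a weak$^*$-compact set, it is itself weak$^*$-compact, completing the proof. The only subtle point is ensuring that we can apply Proposition \ref{Lipschitz}, which requires local upper-boundedness rather than continuity per se, but continuity trivially supplies this.
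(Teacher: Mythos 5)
Your proposal is correct and follows essentially the same route as the paper: weak$^*$-closedness and convexity via writing $\partial\varphi(x_0)$ as an intersection of sets of the form $(\widehat{y-x_0})^{-1}(-\infty,\varphi(y)-\varphi(x_0)]$, local boundedness of the subdifferential from the local Lipschitz property supplied by Proposition \ref{Lipschitz}, and weak$^*$-compactness from the Banach--Alaoglu Theorem. The only cosmetic differences are that you verify convexity by a direct computation rather than as an intersection of convex sets, and you use the two-sided estimate with $\pm v$ where the paper uses the one-sided bound $x^*(v)\leq L$ for all $v\in S_X$; both are fine.
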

\begin{proof} For each $x \in U$, let $F_x := \{x^* \in X^*: x^*(x-x_0) \leq \varphi(x) - \varphi(x_0)\} = (\widehat{x-x_0})^{-1}(-\infty, \varphi(x)-\varphi(x_0)]$.
Thus, each set $F_x$ is weak$^*$ closed and convex. Now, $\partial \varphi (x_0) = \bigcap_{x \in U} F_x$. Therefore, $\partial \varphi(x_0)$ is weak$^*$
closed and convex. Let us now show that, $x \mapsto \partial \varphi(x)$, is locally bounded at $x_0$ (Note: this will then automatically show that $\partial \varphi(x)$ is weak$^*$ compact,
by Theorem \ref{Banach-Alaoglu}). By Proposition \ref{Lipschitz}, there exists a $L >0$ and a $\delta >0$ such that $B(x_0,\delta) \subseteq U$ and $|\varphi(x)-\varphi(y)\| \leq L\|x -y\|$
for all $x,y \in B(x_0,\delta)$. We claim that $\|x^*\| \leq L$ whenever $x \in B(x_0,\delta)$ and $x^* \in \partial \varphi(x)$. To this end, let $x \in B(x_0,\delta)$ and $x^* \in \partial \varphi(x)$.
Let $v \in S_X$ and choose $0<\mu$ such that $x + \mu v \in B(x_0,\delta)$.  Then,
$$x^*(v) = \frac{x^*((x+\mu v)-x)}{\mu} \leq \frac{\varphi(x+\mu v) - \varphi(x)}{\mu} \leq \frac{L\|\mu v\|}{\mu} = L.$$
Thus, $\|x^*\| \leq L$.  Note: we used here the simple fact that if $x^*(v) \leq L$ for all $v \in S_X$ then $\|x^*\| \leq L$.
\end{proof}

\medskip

One of the most important features of the subdifferential mapping of a convex function is that it belongs to a much studied class of set-valued mappings called ``monotone operators''.

\medskip

Let $T:X \to 2^{X^*}$ be a set-valued mapping from $(X, \norm)$ be a normed linear space into subsets of its dual $X^*$.  $T$ is said to be a {\it monotone operator} provided $(x^*-y^*)(x-y) \geq 0$
whenever $x,y \in X$ and $x^* \in T(x)$, $y^* \in T(y)$.

\medskip

\begin{proposition}[\!\!\mbox{\cite[Example 2.2]{PhelpsBook}}] \label{monotone} If $\varphi:U\rightarrow\r$ be a continuous convex function defined on a nonempty open convex subset $U$ of a normed linear space $(X,\norm)$ then
$T:X \to 2^{X^*}$ defined by,
\[ T(x):= \begin{cases}
	\partial \varphi(x), & x\in U \\
	\varnothing, & x\notin U
\end{cases}
\]
is a monotone operator on $X$.
\end{proposition}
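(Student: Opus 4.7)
The proposition is an essentially immediate consequence of the definition of the subdifferential, so the ``plan'' is really just an outline of a short, direct calculation.

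First I would dispose of the trivial cases. Suppose $x, y \in X$ with $x^* \in T(x)$ and $y^* \in T(y)$. If either $x \notin U$ or $y \notin U$, then $T(x) = \varnothing$ or $T(y) = \varnothing$ respectively, so no such $x^*$ or $y^*$ can be chosen; the monotonicity inequality is vacuous in this case. Hence we may assume $x, y \in U$, so $x^* \in \partial\varphi(x)$ and $y^* \in \partial\varphi(y)$ in the proper sense defined before Lemma \ref{nonempty}.

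The main step is then to invoke the defining inequality of the subdifferential twice. Since $x^* \in \partial\varphi(x)$, applied to the point $y \in U$, one has
$$x^*(y) - x^*(x) \leq \varphi(y) - \varphi(x).$$
Symmetrically, since $y^* \in \partial\varphi(y)$, applied to the point $x \in U$,
$$y^*(x) - y^*(y) \leq \varphi(x) - \varphi(y).$$
I would then add these two inequalities; the right-hand sides cancel, yielding
$$x^*(y-x) + y^*(x-y) \leq 0,$$
which rearranges to $(x^* - y^*)(x-y) \geq 0$. This is exactly the monotonicity condition.

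There is no real obstacle here: the argument uses only the definition of $\partial\varphi$, and non-emptiness of $\partial\varphi(x)$ (which is guaranteed by Lemma \ref{nonempty}) is not even needed for the verification of monotonicity itself. The only minor care is to observe that the definition of $\partial\varphi(x)$ requires the testing inequality against all $y \in U$, and in our application the two points $x, y$ are both in $U$, so each inequality is legitimate.
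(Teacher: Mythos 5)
Your proposal is correct and follows essentially the same argument as the paper: apply the defining inequality of the subdifferential at $x$ tested against $y$ and at $y$ tested against $x$, add, and rearrange to get $(x^*-y^*)(x-y)\geq 0$. The handling of the case where one of the points lies outside $U$ (vacuous monotonicity) matches the paper's observation that $x,y\in U$ is forced whenever $T(x)$ and $T(y)$ are nonempty.
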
 
\begin{proof} Let $x^*, y^* \in X^*$ and suppose that $x^* \in T(x)$ and $y^* \in T(y)$ for some $x,y \in X$.  Then $x,y \in U$ since $T(x) \not=\varnothing$ and $T(y) \not= \varnothing$.
In fact, $T(x) = \partial \varphi(x)$ and $T(y) = \partial \varphi(y)$.  Therefore,
$$x^*(y-x) \leq \varphi(y)-\varphi(x) \mbox{ \quad and \quad } (-y^*)(y-x) = y^*(x-y) \leq \varphi(x) - \varphi(y).$$
If we add these two inequalities together we get $(x^*-y^*)(y-x) \leq 0$ and so $(x^*-y^*)(x-y) \geq 0$. Hence, $T$ is indeed a monotone operator.
\end{proof}

\subsection{Minimal Uscos}

In order to prove our final ``convex analysts'' proof of James' theorem, we will need to briefly consider some notions from set-valued analysis.

\medskip

A set-valued mapping $\varphi$ from a topological space $A$ into subsets of a topological space $(X,\tau)$ is {\it $\tau$-upper semicontinuous} at a point $x_0 \in A$ if for each $\tau$-open set $W$ in $X$, containing
$\varphi(x_0)$, there exists an open neighbourhood $U$ of $x_0$ such that $\varphi(U) \subseteq W$.  If $\varphi$ is $\tau$-upper semicontinuous at each point of $A$ then we say that $\varphi$ is 
{\it $\tau$-upper semicontinuous on $A$}. In the case when $\varphi$ also has nonempty compact images then we call $\varphi$ a {\it $\tau$-usco} mapping. Finally, if  $(X,\tau)$ is a linear topological space then 
we call a $\tau$-usco mapping into convex subsets of $X$  a {\it $\tau$-cusco} mapping.

\medskip

Our interest in cusco mappings is revealed in the next proposition.

\begin{proposition}[\!\!\mbox{\cite[Proposition 2.5]{PhelpsBook}}]\label{uscoprop} If $\varphi:U \to \R$ is a continuous convex function defined on a nonempty open convex subset $U$ of a normed linear space $(X,\norm)$, then the subdifferential mapping, $x \mapsto \partial \varphi(x)$,
 is a weak$^*$-cusco on $U$.
\end{proposition}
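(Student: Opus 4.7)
The plan is to verify the three requirements for a weak$^*$-cusco: nonempty, convex, weak$^*$-compact images on $U$, and weak$^*$-upper semicontinuity on $U$. The first requirement is already supplied by Lemma \ref{nonempty}, and the convexity and weak$^*$-compactness of $\partial\varphi(x)$ for $x\in U$ are given by Proposition \ref{SubdifferentialProperties}. So the only real task is to establish weak$^*$-upper semicontinuity, and the main tools will be the local boundedness of $x\mapsto\partial\varphi(x)$ (Proposition \ref{SubdifferentialProperties}), the Banach--Alaoglu Theorem, and the continuity of $\varphi$.

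To prove upper semicontinuity at a fixed $x_0\in U$, I will argue by contradiction. Let $W$ be a weak$^*$-open subset of $X^*$ with $\partial\varphi(x_0)\subseteq W$ and suppose that no norm-ball around $x_0$ is mapped into $W$. Then one can extract a sequence $(x_n:n\in\N)$ in $U$ with $x_n\to x_0$ in norm and functionals $x_n^*\in \partial\varphi(x_n)\setminus W$. By local boundedness there is $L>0$ with $\|x_n^*\|\leq L$ for all $n$ sufficiently large, so the net $(x_n^*)$ sits in the weak$^*$-compact set $LB_{X^*}$ (Theorem \ref{Banach-Alaoglu}) and therefore possesses a weak$^*$-cluster point $x^*$. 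Since $X^*\setminus W$ is weak$^*$-closed and contains each $x_n^*$, the cluster point $x^*$ lies in $X^*\setminus W$, in particular $x^*\notin \partial\varphi(x_0)$.

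The heart of the proof is therefore to derive a contradiction by showing that $x^*\in \partial\varphi(x_0)$. Fix any $y\in U$; I want to show $x^*(y-x_0)\leq \varphi(y)-\varphi(x_0)$. Starting from the subdifferential inequality $x_n^*(y-x_n)\leq \varphi(y)-\varphi(x_n)$, I decompose $x_n^*(y-x_n)=x_n^*(y-x_0)+x_n^*(x_0-x_n)$. The error term satisfies $|x_n^*(x_0-x_n)|\leq L\|x_0-x_n\|\to 0$, and continuity of $\varphi$ (granted by hypothesis; Proposition \ref{Lipschitz} is not even needed here) gives $\varphi(x_n)\to\varphi(x_0)$. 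Given any $\ve>0$, the fact that $x^*$ is a weak$^*$-cluster point of $(x_n^*)$ means the basic weak$^*$-neighbourhood $N(x^*,\widehat{(y-x_0)},\ve/3)$ catches infinitely many $x_n^*$, so $|x_n^*(y-x_0)-x^*(y-x_0)|<\ve/3$ for such $n$. Choosing among these $n$ large enough that $L\|x_0-x_n\|<\ve/3$ and $|\varphi(x_n)-\varphi(x_0)|<\ve/3$ as well, and combining the three estimates, yields $x^*(y-x_0)<\varphi(y)-\varphi(x_0)+\ve$. Letting $\ve\downarrow 0$ gives the subdifferential inequality, and since $y\in U$ was arbitrary, $x^*\in \partial\varphi(x_0)\subseteq W$, contradicting $x^*\notin W$.

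The only subtle point to keep straight is working with weak$^*$-cluster points rather than weak$^*$-limits of subsequences, since the weak$^*$ topology on $LB_{X^*}$ need not be metrisable. This is handled cleanly by invoking the definition of a cluster point against the single basic neighbourhood generated by the functional $\widehat{(y-x_0)}\in \widehat{X}$, which is all the argument requires. With this, the upper semicontinuity at $x_0$ is established, and since $x_0\in U$ was arbitrary, $x\mapsto \partial\varphi(x)$ is weak$^*$-upper semicontinuous on $U$, completing the verification that it is a weak$^*$-cusco.
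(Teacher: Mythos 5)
Your proof is correct and follows essentially the same route as the paper: reduce the problem to weak$^*$-upper semicontinuity, argue by contradiction via a norm-bounded sequence of subgradients $x_n^*\in\partial\varphi(x_n)\setminus W$ with $x_n\to x_0$, extract a weak$^*$-cluster point using local boundedness and the Banach--Alaoglu Theorem, and show that this cluster point lies in $\partial\varphi(x_0)\subseteq W$. You are in fact slightly more careful than the paper at one step: the paper writes $x_n^*(x-x_0)\leq\varphi(x)-\varphi(x_n)$ as if it were the subdifferential inequality, silently absorbing the term $x_n^*(x_n-x_0)$, whereas you decompose $x_n^*(y-x_n)=x_n^*(y-x_0)+x_n^*(x_0-x_n)$ and bound the error explicitly by $L\|x_n-x_0\|\to 0$.
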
 
\begin{proof}  It follows from Lemma \ref{nonempty} and Proposition \ref{SubdifferentialProperties} that we need only show that, $x \mapsto \partial \varphi(x)$, is weak$^*$-upper semicontinuous on $U$.  
So suppose, in order to obtain a contradiction, that $\partial \varphi$ is not weak$^*$ upper semicontinuous at some point $x_0 \in U$.
Then there exists a weak$^*$ open subset $W$ of $X^*$, containing $\partial \varphi (x_0)$, such that for every $0<\delta$, $\partial \varphi(B(x_0,\delta)) \not\subseteq W$. Therefore, in particular, there exist sequences $(x_n:n \in \N)$
in $U$ and $(x^*_n:n \in \N)$ in $X^*$ such that $\lim_{n \to \infty}x_n = x_0$ and $x^*_n \in \partial \varphi(x_n) \setminus W$. Furthermore, by Proposition~\ref{SubdifferentialProperties}, we can assume that the sequence $(x^*_n:n \in \N)$
is norm bounded in $X^*$. Hence, by the Banach-Alaoglu Theorem (Theorem \ref{Banach-Alaoglu}), the sequence $(x^*_n:n \in \N)$ has a weak$^*$ cluster-point $x_\infty^*$, which must lie in $X^* \setminus W$.
We will obtain our desired contradiction by showing that $x_\infty^* \in \partial \varphi(x_0) \subseteq W$.  To this end, fix $x \in U$ and $\varepsilon >0$.  Since $\varphi$ is continuous at $x_0$ there exists an $N \in \N$
such that $|\varphi(x_n)-\varphi(x_0)| < \varepsilon$ for all $n >N$.  Let $n >N$ then,
$$(\widehat{x-x_0})(x_n^*) = x_n^*(x-x_0) \leq \varphi(x) - \varphi(x_n) = [\varphi(x) - \varphi(x_0)] +[\varphi(x_0) - \varphi(x_n)] <  [\varphi(x) - \varphi(x_0)] + \varepsilon.$$
Therefore, $x_\infty^*(x-x_0) = (\widehat{x-x_0})(x_\infty^*) \leq [\varphi(x) - \varphi(x_0)] + \varepsilon$.  Since $\varepsilon >0$ was arbitrary, we have that $x_\infty^*(x-x_0) \leq \varphi(x) - \varphi(x_0)$. Since $x \in U$
was arbitrary, we have that $x_\infty^* \in \partial \varphi(x_0)$, as desired.
\end{proof}

Among the class of usco (cusco) mappings, special attention is given to the so-called minimal usco (minimal cusco) mappings.

\medskip

An usco (cusco) from a topological space $A$ into subsets of a topological space $X$ (linear topological space $X$) is said to be a {\it minimal usco} ({\it minimal cusco}) if its graph
does not contain, as a proper subset, the graph of any other usco (cusco) on $A$.

\medskip

It is not immediately obvious from this definition that there are any interesting minimal usco mappings at all, apart from single-valued continuous functions (e.g. $f:A \to X$), which are trivially minimal uscos once
one replaces $f(x)$ with $\{f(x)\}$ -  to make them set-valued mappings.  So our first task is to show that there are always many minimal uscos.

\begin{proposition}[\!\!\cite{Christensen}] \label{existence-of-minimal}
Suppose that  $(X, \tau)$ and $(Y, \tau')$ are topological spaces and $\varphi:X \to 2^Y$ is an usco on $X$. If $(Y, \tau')$ is Hausdorff then there exists a minimal usco mapping $\Psi:X \to 2^Y$ such that $\mathrm{Gr}(\Psi) \subseteq \mathrm{Gr}(\varphi)$ (i.e., every usco contains a minimal usco). 
\end{proposition}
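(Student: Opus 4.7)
The plan is to use Zorn's lemma applied to the family
$$\mathcal{F}:=\{\psi:X\to 2^Y : \psi \mbox{ is an usco and } \mathrm{Gr}(\psi)\subseteq \mathrm{Gr}(\varphi)\},$$
partially ordered by $\psi_1 \leq \psi_2$ iff $\mathrm{Gr}(\psi_1)\subseteq \mathrm{Gr}(\psi_2)$. Note $\mathcal{F}\neq\varnothing$ since $\varphi \in \mathcal{F}$. A minimal element of $(\mathcal{F},\leq)$, if one exists, is precisely a minimal usco contained in $\varphi$, so the content of the proof is to show that every chain in $\mathcal{F}$ has a lower bound.

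So let $\{\varphi_\alpha:\alpha\in A\}$ be a chain in $\mathcal{F}$, and define $\Psi:X\to 2^Y$ by $\Psi(x) := \bigcap_{\alpha \in A} \varphi_\alpha(x)$. First I would check that $\Psi$ has nonempty compact images. Fix $x \in X$. Since $(Y,\tau')$ is Hausdorff, each compact set $\varphi_\alpha(x)$ is closed, so within the compact set $\varphi(x)$ (which contains every $\varphi_\alpha(x)$ since $\varphi_\alpha \leq \varphi$) each $\varphi_\alpha(x)$ is closed. Because the family is totally ordered, any finite sub-collection $\varphi_{\alpha_1}(x),\ldots,\varphi_{\alpha_n}(x)$ has a smallest member whose intersection with the others equals itself, hence is nonempty. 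The finite intersection property applied in the compact space $\varphi(x)$ then yields $\Psi(x)\neq\varnothing$, and $\Psi(x)$ is compact as a closed subset of the compact set $\varphi(x)$.

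Next I would verify upper semicontinuity of $\Psi$ at an arbitrary $x_0 \in X$; this is the main obstacle. Let $W$ be an open set in $Y$ containing $\Psi(x_0)$. For each $\alpha\in A$ set $C_\alpha := \varphi_\alpha(x_0)\setminus W$; each $C_\alpha$ is closed in the compact set $\varphi(x_0)$, hence compact. By construction $\bigcap_{\alpha \in A} C_\alpha = \Psi(x_0)\setminus W = \varnothing$. Compactness of $\varphi(x_0)$ gives indices $\alpha_1,\ldots,\alpha_n$ with $\bigcap_{i=1}^n C_{\alpha_i}=\varnothing$, and since the chain is totally ordered there exists $\alpha_0$ with $\varphi_{\alpha_0}(x_0)\subseteq \varphi_{\alpha_i}(x_0)$ for every $i$, so $C_{\alpha_0}\subseteq \bigcap_{i=1}^n C_{\alpha_i}=\varnothing$. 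Thus $\varphi_{\alpha_0}(x_0)\subseteq W$, and upper semicontinuity of $\varphi_{\alpha_0}$ furnishes an open neighbourhood $U$ of $x_0$ with $\varphi_{\alpha_0}(U)\subseteq W$. Since $\Psi(x)\subseteq \varphi_{\alpha_0}(x)$ for every $x\in X$, we get $\Psi(U)\subseteq W$, as required.

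Therefore $\Psi\in\mathcal{F}$, and by construction $\mathrm{Gr}(\Psi)\subseteq \mathrm{Gr}(\varphi_\alpha)$ for every $\alpha\in A$, so $\Psi$ is a lower bound of the chain. Zorn's lemma then supplies a minimal element of $\mathcal{F}$; this is the required minimal usco contained in $\varphi$. The Hausdorff assumption on $(Y,\tau')$ enters in two essential places (closedness of each $\varphi_\alpha(x)$ and the finite intersection property argument), while the chain structure is used precisely so that finite collections admit a smallest member, allowing both the nonemptiness and the upper semicontinuity arguments to go through without any additional hypothesis on the index set $A$.
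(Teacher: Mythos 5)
Your proposal is correct and follows essentially the same route as the paper: Zorn's lemma applied to the family of uscos contained in $\varphi$, with the pointwise intersection over a chain shown to be an usco via the Hausdorff assumption and the finite intersection property. The only difference is that you spell out in full the compactness argument that the paper compresses into the phrase ``by the finite intersection property, there exists some $\gamma_0$ with $\Psi_{\gamma_0}(x_0)\subseteq W$''.
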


\begin{proof} Let $\mathcal{U}$ denote the family of all usco mappings defined on
$X$ whose graphs are contained in the graph of $\varphi$. Obviously
$\mathcal{U}\neq \varnothing$ as the mapping $\varphi$ is contained in
$\mathcal{U}$. We may now partially order $\mathcal{U}$ as follows. If
$\Psi_{1}$ and $\Psi_{2}$ are members of $\mathcal{U}$, then we write
$\Psi_{1} \leq \psi_{2}$ if $\Psi_{1}(x) \subseteq \Psi_{2}(x)$ for
each $x\in X$. Next, we apply Zorn's lemma to show that $(\mathcal{U}, \le)$ possesses a minimal element. To this end, let
$\{\Psi_{\gamma}:\gamma\in \Gamma\}$ be a totally ordered subset of
$\mathcal{U}$ and let $\varphi_{M}:X\rightarrow 2^{Y}$ be defined by,
$\varphi_{M}(x):= \bigcap\{\Psi_{\gamma}(x):\gamma\in \Gamma\}$.
Since each $\Psi_{\gamma}(x)$ is nonempty and compact,
$\varphi_{M}(x)$ too is nonempty and compact. Let $W$ be an open
subset of $Y$ and consider $U:=\{x\in X:\varphi_{M}(x)\subseteq W\}$. We need to show that $U$ is open in $X$. 
We may, without loss of generality, assume that $U\neq \varnothing$ and consider $x_{0}\in U$.
By the finite intersection property, there exists some \mbox{$\gamma_{0}\in \Gamma$} such that 
$\Psi_{\gamma_{0}} (x_{0})\subseteq W$. Hence
there exists an open neighbourhood $U_{0}$ of $x_{0}$ such that
$\Psi_{\gamma_{0}} (U_{0})\subseteq W$, which means that
$\varphi_{M}(U_{0})\subseteq W$. Therefore $x_{0}\in U_{0}\subseteq U$
and so $U$ is open in $X$. From this, it follows that $\varphi_{M}\in
\mathcal{U}$ and $\varphi_{M}\leq \Psi_{\gamma}$ for each $\gamma \in
\Gamma$. Thus, by Zorn's lemma, $(\mathcal{U},\le)$ possesses a
minimal element. It is now easy to see that this element is in fact a
minimal usco. \end{proof}

A similar argument shows that every cusco contains a minimal cusco.  However, there is a much more concrete supply of minimal cuscos.

\begin{proposition}\label{subgradientis minimal} Let $\varphi:A \to 2^{X^*}$ be a weak$^*$-cusco defined on a nonempty open subset $A$ of a normed linear space $(X, \norm)$.  If the mapping $T:X \to 2^{X^*}$ defined by,
$T(x) := \varphi(x)$ if $x \in A$ and by $T(x) := \varnothing$ if $x \in X \setminus A$, is a monotone operator, then $\varphi$ is a minimal weak$^*$-cusco.
\end{proposition}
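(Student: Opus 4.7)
The plan is to argue by contradiction: suppose $\psi:A\to 2^{X^*}$ is a weak$^*$-cusco with $\mathrm{Gr}(\psi)\subsetneq\mathrm{Gr}(\varphi)$, and derive a contradiction from monotonicity of $T$. Since the containment is proper, there exist $x_0\in A$ and $x_0^*\in\varphi(x_0)\setminus\psi(x_0)$. Now $\psi(x_0)$ is weak$^*$-compact and convex, and $(X^*,\mathrm{weak}^*)$ is a Hausdorff locally convex space, so the Separation Theorem (Theorem \ref{separation-theorem}) yields a weak$^*$-continuous linear functional on $X^*$ that strictly separates $x_0^*$ from $\psi(x_0)$. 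By Proposition \ref{cont-equivalence} every weak$^*$-continuous functional is of the form $\widehat{v}$ for some $v\in X$; hence there exists $v\in X$ with
$$\beta := \sup\{y^*(v):y^*\in\psi(x_0)\} < x_0^*(v).$$
Set $\epsilon := x_0^*(v)-\beta>0$ (so in particular $v\neq 0$).

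Next I would exploit the upper semicontinuity of $\psi$ at $x_0$. The set $W := \{y^*\in X^*: y^*(v) < \beta+\epsilon/2\}$ is weak$^*$-open (being $\widehat{v}^{-1}(-\infty,\beta+\epsilon/2)$) and contains $\psi(x_0)$. Since $A$ is open and $\psi$ is weak$^*$-upper semicontinuous, there exists $\delta>0$ with $B(x_0,\delta)\subseteq A$ such that $\psi(x)\subseteq W$ for every $x\in B(x_0,\delta)$. Choose any $t\in(0,\delta/\|v\|)$ and set $x_t := x_0+tv\in A$. Since $\psi$ is a cusco, $\psi(x_t)$ is nonempty; pick $y^*\in\psi(x_t)\subseteq W$, so that $y^*(v) < \beta+\epsilon/2 = x_0^*(v)-\epsilon/2$.

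Finally I would apply monotonicity of $T$. By hypothesis, $y^*\in\psi(x_t)\subseteq\varphi(x_t)=T(x_t)$ and $x_0^*\in\varphi(x_0)=T(x_0)$, so
$$0 \leq (y^*-x_0^*)(x_t-x_0) = t\,(y^*-x_0^*)(v).$$
Since $t>0$ this forces $y^*(v)\geq x_0^*(v)$, contradicting $y^*(v) < x_0^*(v)-\epsilon/2$. Therefore no such $\psi$ exists, and $\varphi$ is a minimal weak$^*$-cusco.

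I do not expect a serious obstacle here; the three ingredients (separation, upper semicontinuity, monotonicity) combine in a very direct way. The only mildly delicate point is ensuring that the separating functional produced by Theorem \ref{separation-theorem} corresponds to an element of $X$ rather than an arbitrary element of $(X^*)^*$, which is handled by citing Proposition \ref{cont-equivalence}. One should also check that the weak$^*$-compactness of $\psi(x_0)$ (needed for the separation to be strict on the supremum $\beta$) is automatic from $\psi$ being a weak$^*$-cusco; this is built into the definition.
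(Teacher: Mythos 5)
Your proof is correct and follows essentially the same route as the paper's: separate $x_0^*$ from $\psi(x_0)$ by a weak$^*$-continuous functional $\widehat{v}$, use upper semicontinuity of $\psi$ to find a nearby point $x_0+tv$ whose $\psi$-values still lie in the separating half-space, and then contradict monotonicity of $T$. The only cosmetic differences are your explicit appeal to Proposition \ref{cont-equivalence} to identify the separating functional with an element of $X$ and your use of the threshold $\beta+\epsilon/2$ instead of $\widehat{v}(x_0^*)$ in defining $W$.
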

\begin{proof} Suppose, in order to obtain a contradiction, that $\varphi$ is not a minimal weak$^*$-cusco. Then there exists a weak$^*$-cusco $\Psi:A \to 2^{X^*}$ such that $\Psi(x) \subseteq \varphi(x)$
for all $x \in A$, but $\Psi(x_0) \not= \varphi(x_0)$ for some $x_0 \in A$. Choose $x_0^* \in \varphi(x_0)\setminus \Psi(x_0) = T(x_0) \setminus \Psi(x_0)$. By the Separation Theorem (Theorem \ref{separation-theorem}),
applied in $(X^*, \mathrm{weak}^*)$, there exists a $y \in X$ such that $\sup_{y^* \in \Psi(x_0)} \widehat{y}(y^*) < \widehat{y}(x_0^*)$. Let $W := \{x^* \in X^*: \widehat{y}(x^*) < \widehat{y}(x_0^*)\}$. Then $W$
is a weak$^*$-open subset of $X^*$, containing $\Psi(x_0)$.  Therefore, there exists an open neighbourhood $U \subseteq A$ of $x_0$ such that $\Psi(U) \subseteq W$.  Choose $0<t<\infty$
such that $x_0+ty \in U$. Let $y^* \in \Psi(x_0+ty) \subseteq \varphi (x_0+ty) = T(x_0+ty)$.  Since $T$ is a monotone operator,  $x^*_0 \in T(x_0)$ and  $y^* \in T(x_0+ty)$, we have that:
$$t(y^*-x_0^*)(y) =(x_0^*-y^*)(-ty)  = (x_0^*-y^*)(x_0 - (x_0+ty)) \geq 0;$$
which implies that $y^*(y) \geq x_0^*(y)$. However, this contradicts the fact that $y^* \in W$, i.e., $\widehat{y}(y^*) < \widehat{y}(x_0^*)$. Thus, $\varphi$ must be a minimal weak$^*$-cusco on $A$.
\end{proof}

\begin{corollary}\label{subdifferential-is-minimal} If $\varphi:U \to \R$ is a continuous convex function defined on a nonempty open convex subset $U$ of a normed linear space $(X,\norm)$, then the subdifferential mapping, $x \mapsto \partial \varphi(x)$,
 is a minimal weak$^*$-cusco on $U$.
\end{corollary}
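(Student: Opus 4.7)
The plan is to combine three previously established results in a very short chain. First, by Proposition \ref{uscoprop}, the subdifferential mapping $x \mapsto \partial\varphi(x)$ is known to be a weak$^*$-cusco on the open convex set $U$. This verifies the first hypothesis of Proposition \ref{subgradientis minimal}.

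Second, I would extend the subdifferential to all of $X$ by setting $T(x) := \partial\varphi(x)$ for $x \in U$ and $T(x) := \varnothing$ for $x \in X \setminus U$. By Proposition \ref{monotone}, $T$ is a monotone operator on $X$. This supplies the second hypothesis of Proposition \ref{subgradientis minimal}.

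Finally, applying Proposition \ref{subgradientis minimal} directly to $\varphi_{\text{subdiff}} := \partial\varphi$ on the open set $A := U$, we conclude that $\partial\varphi$ is a minimal weak$^*$-cusco on $U$. There is no real obstacle here since the corollary is essentially a repackaging of the preceding proposition for the particular case of the subdifferential. The only point that warrants a brief mention in the write-up is that $U$ is indeed open (so that Proposition \ref{subgradientis minimal} applies), which is part of the hypotheses.
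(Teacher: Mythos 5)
Your proposal is correct and follows exactly the same route as the paper: invoke Proposition \ref{uscoprop} for the weak$^*$-cusco property, Proposition \ref{monotone} for monotonicity of the extension by $\varnothing$, and then apply Proposition \ref{subgradientis minimal}. No gaps.
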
 
\begin{proof} By Proposition \ref{uscoprop} we have that, $x \mapsto \partial \varphi(x)$, is a weak$^*$-cusco on $U$.  So the result follows from Proposition \ref{monotone} and Proposition \ref{subgradientis minimal}.
\end{proof}

We will end our detour into set-valued analysis by giving two more results concerning uscos.  The first one shows that minimal usco behave a lot like quasi-continuous mappings, while the
last result shows how to convert an usco into a cusco.

\begin{proposition}[\!\!\cite{Christensen}]\label{quasicontinuity} Let $\varphi:A \to 2^X$ be a minimal $\tau$-usco acting from a topological space $A$ into nonempty subsets of a topological space $(X, \tau)$. Then, for every pair of 
open subsets $U$ of $A$ and $W$ of $X$ such that $\varphi(U) \cap W \not= \varnothing$, there exists a nonempty open subset $V$ of $U$ such that $\varphi(V) \subseteq W$.
\end{proposition}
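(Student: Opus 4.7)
I would argue by contradiction. Suppose that $U$ and $W$ are open subsets of $A$ and $X$ respectively with $\varphi(U)\cap W\neq\varnothing$, but that no nonempty open $V\subseteq U$ satisfies $\varphi(V)\subseteq W$. The first observation is that under this hypothesis, $\varphi(a)\not\subseteq W$ for every $a\in U$. Indeed, if some $a_0\in U$ had $\varphi(a_0)\subseteq W$, then by $\tau$-upper semicontinuity at $a_0$ there would exist an open neighbourhood $V$ of $a_0$ with $V\subseteq U$ and $\varphi(V)\subseteq W$, contradicting our assumption. Consequently, $\varphi(a)\cap(X\setminus W)\neq\varnothing$ for every $a\in U$.

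The plan now is to produce a strictly smaller $\tau$-usco contained in $\varphi$, violating minimality. Define $\psi:A\to 2^X$ by
$$\psi(a):=\begin{cases} \varphi(a)\cap(X\setminus W) & \text{if } a\in U, \\ \varphi(a) & \text{if } a\in A\setminus U. \end{cases}$$
By the previous paragraph, $\psi(a)$ is nonempty for every $a\in A$, and $\psi(a)$ is $\tau$-compact at every point (as a closed subset of the compact set $\varphi(a)$ when $a\in U$). Clearly $\mathrm{Gr}(\psi)\subseteq\mathrm{Gr}(\varphi)$.

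Next, I would verify $\tau$-upper semicontinuity of $\psi$. Fix $a_0\in A$ and a $\tau$-open $O\supseteq\psi(a_0)$. If $a_0\in U$, then $\varphi(a_0)\subseteq O\cup W$, so by upper semicontinuity of $\varphi$ there is an open neighbourhood $N\subseteq U$ of $a_0$ with $\varphi(N)\subseteq O\cup W$, and hence $\psi(N)\subseteq (O\cup W)\cap(X\setminus W)\subseteq O$. If $a_0\in A\setminus U$, then $\psi(a_0)=\varphi(a_0)\subseteq O$, so by upper semicontinuity of $\varphi$ there is an open neighbourhood $N$ of $a_0$ with $\varphi(N)\subseteq O$, and then $\psi(N)\subseteq\varphi(N)\subseteq O$. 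Thus $\psi$ is a $\tau$-usco on $A$.

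Finally, the hypothesis $\varphi(U)\cap W\neq\varnothing$ yields some $a_0\in U$ with $\varphi(a_0)\cap W\neq\varnothing$; for this point $\psi(a_0)=\varphi(a_0)\cap(X\setminus W)\subsetneq\varphi(a_0)$, so $\mathrm{Gr}(\psi)$ is a proper subset of $\mathrm{Gr}(\varphi)$, contradicting the minimality of $\varphi$. The main subtlety is simply the case analysis in checking upper semicontinuity of $\psi$ on the boundary of $U$, which is handled cleanly because outside $U$ the mapping $\psi$ agrees with $\varphi$.
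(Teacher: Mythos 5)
Your proof is correct and follows essentially the same route as the paper: reduce to the case where $\varphi(a)\not\subseteq W$ for all $a\in U$ (which the paper handles as its Case (I)/Case (II) split), then build the competing usco $\psi=\varphi\cap(X\setminus W)$ on $U$ and contradict minimality. The only cosmetic difference is that you phrase the final contradiction as $\mathrm{Gr}(\psi)\subsetneq\mathrm{Gr}(\varphi)$, while the paper deduces $\varphi=\Psi$ and then contradicts $\varphi(U)\cap W\neq\varnothing$; these are equivalent.
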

\begin{proof} Let $U$ be an open subset of $A$ and let $W$ be an open subset of $X$ such that $\varphi(U) \cap W \not= \varnothing$.  We consider two cases. 

\medskip

{\bf Case(I):} If there exists a $x \in U$ such that $\varphi(x) \subseteq W$, then the result follows directly from the $\tau$-upper semicontinuity of $\varphi$.  

\medskip

{\bf Case(II):} Suppose that for each $x \in U$, $\varphi(x) \not\subseteq W$.  Let $\Psi:A \to 2^X$ be defined by,
$\Psi(x) := \varphi(x) \cap (X \setminus W)$ if $x \in U$ and by $\Psi(x) := \varphi(x)$ if $x \not\in U$.  Then, by assumption, $\Psi$ has nonempty compact images.  In fact, we claim that $\Psi$ is a $\tau$-usco
on $A$. To show this, we need only show that $\Psi$ is $\tau$-upper semicontinuous. Let $x_0 \in A$ and let $W'$ be a $\tau$-open set in $X$ containing $\Psi(x_0)$.  If $x_0 \not\in U$ then
clearly there exists an open neighbourhood $U$ of $x_0$ such that $\Psi(U) \subseteq W'$ since, in this case, $\varphi(x_0) = \Psi(x_0) \subseteq W'$ and $\Psi(x) \subseteq \varphi(x)$ for all $x \in A$.
So we are left to consider the case when $x_0 \in U$.  Suppose $x_0 \in U$.  Then $\varphi(x_0) \subseteq W' \cup  W$, since $\varphi(x_0) \cap (X\setminus W) = \Psi(x_0) \subseteq W'$. Since $\varphi$
is $\tau$-upper semicontinuous there exists an open neighbourhood $U$ of $x_0$ such that $\varphi(U) \subseteq W' \cup W$.  Therefore,
$$\Psi(U) = \varphi(U) \cap (X \setminus W) \subseteq (W' \cup W) \cap (X \setminus W) = W' \cap (X \setminus W) \subseteq W'.$$
This shows that $\Psi$ is an $\tau$-usco. Since, $\varphi$ is a minimal $\tau$-usco, we must have that  $\varphi = \Psi$, but then $\varphi(U) = \Psi(U) \subseteq (X \setminus W)$, which contradicts our
original assumption that $\varphi(U) \cap W \not= \varnothing$. Therefore, Case(II) does not occur, and so the result follows from Case(I).
\end{proof}

\begin{proposition}[\!\!\cite{Jokl, PhelpsBook}] \label{convex-usco}
Suppose that $\varphi:A \to 2^X$ is a $\tau$-usco acting from a topological space $A$
into nonempty subsets of a locally convex space $(X, +, \cdot, \tau)$. If for each $t \in A$, $\overline{\mbox{co}}^\tau \varphi(t)$ is a compact subset of $X$, 
then the mapping $\Psi:A \to 2^X$ defined by, $\Psi(t) := \overline{\mbox{co}}^\tau \varphi(t)$ for all $t \in A$, is a $\tau$-cusco on $A$.
\end{proposition}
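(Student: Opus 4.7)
The plan is to verify each of the four defining properties of a $\tau$-cusco for $\Psi$: nonempty-, convex-, and compact-valuedness, together with $\tau$-upper semicontinuity. The first three are immediate from the hypotheses: $\varnothing \neq \varphi(t) \subseteq \Psi(t)$ gives nonemptiness, $\Psi(t)$ is by definition a closed convex hull hence convex, and compactness of $\Psi(t)$ is assumed. So the only real content is the $\tau$-upper semicontinuity of $\Psi$ at each fixed $t_0 \in A$.

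Fix $t_0 \in A$ and a $\tau$-open set $W$ with $\Psi(t_0) \subseteq W$. The plan is to produce an open convex set $V \subseteq X$ with $\Psi(t_0) \subseteq V$ and $\overline{V} \subseteq W$. Granted such a $V$, the $\tau$-upper semicontinuity of $\varphi$ at $t_0$ yields an open neighbourhood $U$ of $t_0$ with $\varphi(U) \subseteq V$, and then for every $t \in U$ we get $\Psi(t) = \overline{\mathrm{co}}^\tau \varphi(t) \subseteq \overline{V} \subseteq W$, using that $\overline{V}$ is closed and convex (the closure of a convex set in a topological vector space is convex, as follows easily from continuity of the linear operations). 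This establishes the desired upper semicontinuity.

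The construction of $V$ is the main step and combines local convexity with compactness of $\Psi(t_0)$. For each $x \in \Psi(t_0)$, local convexity supplies an open convex $C_x \ni 0$ with $x + C_x \subseteq W$. Applying continuity of addition twice and intersecting with the negative to symmetrise, we obtain an open convex symmetric neighbourhood $N_x$ of $0$ with $N_x + N_x + N_x \subseteq C_x$. Compactness of $\Psi(t_0)$ lets us extract a finite subcover $\{x_i + N_{x_i}\}_{i=1}^n$ of the open cover $\{x + N_x : x \in \Psi(t_0)\}$. Set $N := \bigcap_{i=1}^n N_{x_i}$, which is open, convex, and symmetric, and define $V := \Psi(t_0) + N$. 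Then $V$ is open (a union of translates of $N$), convex (a sum of two convex sets), and contains $\Psi(t_0)$.

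It remains to verify $\overline{V} \subseteq W$. For any symmetric open $N' \ni 0$ and any open $V' \subseteq X$ one has $\overline{V'} \subseteq V' + N'$: indeed if $y \in \overline{V'}$ then the open neighbourhood $y + N'$ of $y$ meets $V'$, say $y + n = v'$, so $y = v' - n \in V' + (-N') = V' + N'$. Applying this with $V' = V$ and $N' = N$ gives $\overline{V} \subseteq V + N = \Psi(t_0) + N + N$. Any element $p + n_1 + n_2$ here satisfies $p = x_i + m$ for some $i$ and $m \in N_{x_i}$, while $n_1, n_2 \in N \subseteq N_{x_i}$, and therefore it lies in $x_i + N_{x_i} + N_{x_i} + N_{x_i} \subseteq x_i + C_{x_i} \subseteq W$. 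The main obstacle is precisely this juggling: one must simultaneously secure openness, convexity, containment of $\Psi(t_0)$, and enough margin for the closure to remain inside $W$, which is exactly why the three-fold inclusion $N_x + N_x + N_x \subseteq C_x$ is required rather than the usual two-fold one.
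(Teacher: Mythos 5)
Your proof is correct and follows essentially the same route as the paper's: reduce to upper semicontinuity, take a finite subcover of the compact set $\Psi(t_0)$ by fattened neighbourhoods, intersect the corresponding zero-neighbourhoods to form the open convex set $V = \Psi(t_0)+N$, push $\varphi(U)$ into $V$ by the upper semicontinuity of $\varphi$, and finish by taking closed convex hulls. The only difference is in how the margin is secured: the paper arranges $U_x + \overline{V_x}^\tau \subseteq W$ via regularity of linear topological spaces and concludes using the fact that a compact set plus a closed set is closed, whereas you use symmetric neighbourhoods with a three-fold sum $N_x+N_x+N_x \subseteq C_x$ together with the elementary inclusion $\overline{V} \subseteq V+N$ --- both are valid.
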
 
\begin{proof} Clearly, $\Psi$ has nonempty, compact convex images.  So it is sufficient to show that $\Psi$ is $\tau$-upper semicontinuous on $A$.  Let $x_0 \in A$
and let $W$ be a $\tau$-open subset of $X$, containing $\Psi(x_0)$.  Since vector addition is continuous, for each $x \in \Psi(x_0)$ there exist $\tau$-open convex neighbourhoods
$U_x$ of $x$ and $V_x$ of $0$ such that $x = x+0 \subseteq U_x + V_x \subseteq W$. Since linear topological spaces are also regular we can assume, by possibly making
$V_x$ smaller, that $U_x + \overline{V_x}^\tau \subseteq W$.  Now, $\{U_x:x \in \Psi(x_0)\}$ is an open cover of $\Psi(x_0)$.  Therefore, there exists a finite subcover 
$\{U_{x_k}:1 \leq k \leq n\}$ of $\{U_x:x \in \Psi(x_0)\}$.  Let $V := \bigcap_{1 \leq k \leq n} V_{x_k}$.  Then $V$ is a convex open neighbourhood of $0$ and futhermore,
$$\Psi(x_0) +\overline{V}^\tau \subseteq (\mbox{$\bigcup_{1 \leq k \leq n}$} U_{x_k}) + \overline{V}^\tau = \mbox{$\bigcup_{1 \leq k \leq n}$} (U_{x_k} + \overline{V}^\tau) =
\subseteq \mbox{$\bigcup_{1 \leq k \leq n}$} (U_{x_k}  +\overline{V_{x_k}}^\tau) \subseteq W.$$
Since $\varphi(x_0) \subseteq \Psi(x_0) +V$, which is $\tau$-open, there exists an open neighbourhood $U$ of $x_0$ such that $\varphi(U) \subseteq \Psi(x_0) +V$.  Let $x \in U$.  Then
$$\Psi(x) = \overline{\mbox{co}}^\tau \varphi(x) \subseteq \overline{\mbox{co}}^\tau (\Psi(x_0) +V) \subseteq \Psi(x_0) + \overline{V}^\tau \subseteq W \mbox{\quad since, $\Psi(x_0) + \overline{V}^\tau$ is closed and convex.}$$
Here we used the fact that the sum of a closed set with a compact set is closed.   \end{proof}

\subsection{A generalisation of James' Theorem} 

By making an obvious modification to Corollary \ref{SubsequenceUsefulJames}, we obtain the following lemma. 

\begin{lemma}\label{needed lemma}
Let $\varphi:A \to \R$ be a $\tau$-continuous convex function defined on a nonempty convex subset $A$ of a locally convex space $(X, \tau)$ and
let $\tau'$ is a Hausdorff locally convex topology on $X$ such that (i) $\tau' \subseteq \tau$ and (ii) $\varphi$ is $\tau'$-lower semicontinuous. 
If $T$ is a nonempty $\tau'$-closed and convex subset of $X$ and $S$ is any $\tau$-separable subset of $A$ such that $S-T\seq A$
then, for every sequence $\widetilde{x}:=(x_n:n\in\n)$ in $T$, there exists a subsequence, $\widetilde{x}|_J$, of $\widetilde{x}$ such that $\varphi(y-aK_{\tau'}(\widetilde{x}|_J))$
is at most a singleton for all $y \in S$ and all $a\in [0,1]$.
\end{lemma}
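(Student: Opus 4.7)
The plan is to mimic the proof of Corollary \ref{SubsequenceUsefulJames}, replacing the separable subspace $\mbox{span}\{x_n:n\in\N\}$ with the already-separable set $S$, restricting the rational scalars to $\mathbb{Q}\cap[0,1]$, and taking the domain of the auxiliary functions to be $T$ rather than all of $X$. The role of the new hypothesis $S-T\seq A$ is precisely to guarantee that every point at which $\varphi$ is evaluated stays in its domain.

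First I would fix a countable $\tau$-dense subset $\{y_n:n\in\N\}$ of $S$ and an enumeration $\{q_m:m\in\N\}$ of $\mathbb{Q}\cap[0,1]$, and for each $m,n\in\N$ define $\varphi_n^m:T\to\R$ by $\varphi_n^m(x):=\varphi(y_n-q_m x)$. The crucial check is well-definedness: for $y_n\in S$, $x\in T$ and $q_m\in[0,1]$ the identity
$$y_n-q_m x \;=\; q_m(y_n-x)+(1-q_m)y_n$$
realises $y_n-q_m x$ as a convex combination of the point $y_n-x\in S-T\seq A$ and the point $y_n\in S\seq A$, so convexity of $A$ gives $y_n-q_m x\in A$. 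Since $(X,\tau')$ is a linear topological space, the affine map $x\mapsto y_n-q_m x$ is $\tau'$-continuous, so $\varphi_n^m$ inherits convexity and $\tau'$-lower semicontinuity from $\varphi$.

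Next I would apply Corollary \ref{Diagonal} to the Hausdorff locally convex space $(X,\tau')$, with the $\tau'$-closed and convex set $T$ in place of $A$, the sequence $\widetilde{x}$ in $T$, and the countable family $\{\varphi_n^m:m,n\in\N\}$. This yields a subsequence $\widetilde{x}|_J$ of $\widetilde{x}$ such that $\psi(K_{\tau'}(\widetilde{x}|_J))$ is at most a singleton for every $\psi$ in the $\tau_p(T)$-closure of $\{\varphi_n^m:m,n\in\N\}$.

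It then remains to show that for each $y\in S$ and $a\in[0,1]$ the function $\varphi_y^a:T\to\R$ defined by $\varphi_y^a(x):=\varphi(y-ax)$ (well-defined by the same convex-combination trick) lies in that $\tau_p(T)$-closure. Given a basic $\tau_p(T)$-neighbourhood of $\varphi_y^a$ determined by finitely many points $x_1,\ldots,x_k\in T$ and $\varepsilon>0$, I would use the $\tau$-density of $\{y_n\}$ in $S$ and the density of $\{q_m\}$ in $[0,1]$, together with the $\tau$-continuity of the affine maps $(y',a')\mapsto y'-a'x_i$ and of $\varphi$ at each of the points $y-ax_i\in A$, to simultaneously select indices $m$ and $n$ with $|\varphi(y_n-q_m x_i)-\varphi(y-ax_i)|<\varepsilon$ for every $1\leq i\leq k$. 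Consequently $\varphi_y^a(K_{\tau'}(\widetilde{x}|_J))=\varphi(y-aK_{\tau'}(\widetilde{x}|_J))$ is at most a singleton, as required. The only real obstacle is pure bookkeeping — verifying that every argument fed into $\varphi$ lies in $A$ — and this is handled uniformly by the hypothesis $S-T\seq A$ combined with the above convex-combination identity.
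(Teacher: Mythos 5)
Your proposal is correct and follows exactly the route the paper intends: the paper gives no proof beyond the remark that the lemma is an ``obvious modification'' of Corollary \ref{SubsequenceUsefulJames}, and your write-up is precisely that modification. In particular you correctly isolate the one genuinely new point, namely that the identity $y-ax=a(y-x)+(1-a)y$ together with $S-T\seq A$, $S\seq A$, $a\in[0,1]$ and the convexity of $A$ keeps every evaluation of $\varphi$ inside its domain, and you correctly apply Corollary \ref{Diagonal} on the $\tau'$-closed convex set $T$ with the countable family indexed by a dense subset of $S$ and by $\Q\cap[0,1]$.
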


The following lemma shows us that in Theorem \ref{JamesUsco} we get the weak$^*$ lower semicontinuity of $\varphi$ for free.

\begin{lemma}\label{LSC}
	Let $(X,\norm)$ be a Banach space and let $A$ be a nonempty, open, convex subset of $X^*$. 
	If $\varphi:A\rightarrow\r$ is a continuous, convex function and $\partial\varphi(x^*)\cap\widehat{X}\neq\varnothing$ for all $x^*\in A$, then $\varphi$ is weak$^*$-lower-semicontinuous on $A$.
\end{lemma}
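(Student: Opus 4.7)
The plan is to represent $\varphi$ on $A$ as the pointwise supremum of a family of weak$^*$-continuous affine functions, and then invoke the general fact that a supremum of lower semicontinuous functions is lower semicontinuous. The key observation that makes this possible is precisely the hypothesis $\partial\varphi(x^*)\cap\widehat{X}\neq\varnothing$ for all $x^*\in A$: it guarantees that every supporting affine functional to $\varphi$ at any point of $A$ may be chosen to be weak$^*$-continuous.

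First I would, for each $x^*\in A$, select some $\widehat{x}_{x^*}\in\partial\varphi(x^*)\cap\widehat{X}$ and define the associated affine function $\ell_{x^*}:X^*\to\mathbb{R}$ by
$$\ell_{x^*}(y^*):=\varphi(x^*)+\widehat{x}_{x^*}(y^*-x^*).$$
Since $\widehat{x}_{x^*}\in\widehat{X}$, the function $\ell_{x^*}$ is weak$^*$-continuous on $X^*$, and in particular weak$^*$-lower-semicontinuous. The subdifferential inequality, applied with $x^*$ fixed and $y^*\in A$ arbitrary, gives $\ell_{x^*}(y^*)\leq\varphi(y^*)$ for all $y^*\in A$, while evaluation at $y^*=x^*$ yields $\ell_{x^*}(x^*)=\varphi(x^*)$.

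Consequently, defining $\Phi:X^*\to\mathbb{R}\cup\{+\infty\}$ by $\Phi(y^*):=\sup_{x^*\in A}\ell_{x^*}(y^*)$, we obtain an envelope that satisfies $\Phi(y^*)=\varphi(y^*)$ for every $y^*\in A$ (the supremum is attained by choosing $x^*=y^*$, and no affine function in the family exceeds $\varphi$ on $A$). The pointwise supremum of any family of weak$^*$-lower-semicontinuous functions is itself weak$^*$-lower-semicontinuous, because for each $\alpha\in\mathbb{R}$ the set $\{y^*\in X^*:\Phi(y^*)\leq\alpha\}=\bigcap_{x^*\in A}\{y^*\in X^*:\ell_{x^*}(y^*)\leq\alpha\}$ is an intersection of weak$^*$-closed sets. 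Hence $\Phi$ is weak$^*$-lower-semicontinuous on $X^*$.

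Finally I would note that $\varphi$ is the restriction of $\Phi$ to $A$, and a restriction of a weak$^*$-lower-semicontinuous function to any subset is weak$^*$-lower-semicontinuous in the relative topology; equivalently, for each $\alpha\in\mathbb{R}$, $\{y^*\in A:\varphi(y^*)\leq\alpha\}=A\cap\{y^*\in X^*:\Phi(y^*)\leq\alpha\}$ is relatively weak$^*$-closed in $A$. This yields the desired weak$^*$-lower-semicontinuity of $\varphi$ on $A$. There is no real obstacle here: the only subtlety is noticing that the hypothesis $\partial\varphi(x^*)\cap\widehat{X}\neq\varnothing$ is exactly the right ingredient to upgrade the standard ``convex equals sup of supporting affine functions'' representation from a norm-continuous family to a weak$^*$-continuous one.
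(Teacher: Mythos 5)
Your proof is correct and rests on exactly the same idea as the paper's: the hypothesis supplies, at each point of $A$, a weak$^*$-continuous affine minorant of $\varphi$ that touches $\varphi$ there. The paper verifies lower semicontinuity pointwise using a single such minorant and an $\epsilon$-neighbourhood, whereas you package the same minorants into a global supremum envelope; the difference is purely cosmetic.
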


\begin{proof}
	Let $x^*_0\in A$ and let $\epsilon>0$. Then, there exists an $\widehat{x}\in\partial\varphi(x_0^*)\cap\widehat{X}$. Define $h:A\rightarrow\r$ to be $$h(x^*):=\widehat{x}(x^*)-\widehat{x}(x^*_0)+\varphi(x^*_0) \ \ \text{for all} \ x^*\in A.$$
	Then observe that, since $\widehat{x}\in\partial\varphi(x_0^*)$, we have $h(x^*)\leq \varphi(x^*)$ for all $x^*\in A$.
	Now the set
	$$U:=\{x^*\in A:|\widehat{x}(x^*-x^*_0)|<\epsilon\},$$
	is a weak$^*$-open neighbourhood of $x^*_0$, and for  all $x^*\in U$, we have that
	$$\varphi(x^*_0)-\epsilon<\widehat{x}(x^*)-\widehat{x}(x^*_0)+\varphi(x^*_0)=h(x^*)\leq\varphi(x^*).$$
	Therefore, $\varphi$ is weak$^*$-lower-semicontinuous at $x^*_0$. Since $x^*_0$ was arbitrary, we conclude that $\varphi$ is weak$^*$-lower-semicontinuous on $A$.
\end{proof}

At last, we can present our ``convex analysts'' version of James' weak compactness theorem.

\begin{theorem}\label{JamesUsco}
	Let $(X,\norm)$ be a Banach space and let $A$ be a nonempty, open, convex subset of $X^*$. 
	If $\varphi:A\rightarrow\r$ is a continuous, convex function and $\partial\varphi(x^*)\cap\widehat{X}\neq\varnothing$ for all $x^*\in A$, then $\partial\varphi(x^*)\seq\widehat{X}$ for all $x^*\in A$.
\end{theorem}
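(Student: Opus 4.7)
The plan is to argue by contradiction, adapting the proof of the general James' theorem (Theorem \ref{JamesFull}) via the dictionary $X \leftrightarrow X^*$, $X^* \leftrightarrow X^{**}$, with the weak$^*$-compact convex set $K := \partial\varphi(x_0^*) \subseteq X^{**}$ playing the role of $\overline{\widehat{C}}^{w^*}$ and the sublinear $p(\xi) := \varphi'_+(x_0^*;\xi)$---which is continuous, locally Lipschitz, and satisfies $p(\xi) = \max\{F(\xi): F \in K\}$---playing the role of the support function of $C$. Two supplementary tools are pivotal: Lemma \ref{LSC} grants weak$^*$-lower semicontinuity of $\varphi$ for free, and Corollary \ref{subdifferential-is-minimal} tells us $\partial\varphi$ is a minimal weak$^*$-cusco on $A$.

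Suppose, for contradiction, that $F_0 \in K \setminus \widehat{X}$ for some $x_0^* \in A$. Fix $\epsilon > 0$ with $2\epsilon < \mathrm{dist}(F_0, \widehat{X})$, a $\delta > 0$ with $B_{X^*}[x_0^*, 2\delta] \subseteq A$, and a positive sequence $(\beta_n)$ with $\sum \beta_n < \infty$ and $\beta_n^{-1}\sum_{k>n}\beta_k \to 0$. Setting $\xi_0 := 0$, I would inductively build $(\widehat{x_n})_{n \geq 1}$ in $\widehat{X}$ and $(\xi_n)_{n \geq 1}$ in $\delta B_{X^*}$ satisfying the analogues
\[
|(F_0 - \widehat{x_n})(\xi_j)| < \epsilon\delta/2 \ \text{ for } 0 \leq j < n \qquad (A_n)
\]
and
\[
F_0(\xi_n) > 2\epsilon\delta, \quad \widehat{x_j}(\xi_n) = 0 \text{ for } 1 \leq j \leq n \qquad (B_n)
\]
of the analogous statements in Theorem \ref{JamesFull}. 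At step $n$, one forms the weak$^*$-open neighbourhood $W_n := \bigcap_{j<n}\{G \in X^{**} : |(F_0 - G)(\xi_j)| < \epsilon\delta/2\}$ of $F_0$, and exploits the minimality of $\partial\varphi$ (via a Proposition \ref{quasicontinuity}-style argument, iterated over the finitely many half-space constraints cutting out $W_n$) to locate a nonempty open $V_n \subseteq B_{X^*}(x_0^*, 1/n) \cap A$ with $\partial\varphi(V_n) \subseteq W_n$; any $y_n \in V_n$ then has $\partial\varphi(y_n) \cap \widehat{X} \neq \varnothing$ by hypothesis, supplying $\widehat{x_n}$ that automatically meets $(A_n)$. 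Lemma \ref{Separation} in $X^*$---applied to $F_0$ and the finite-dimensional $\mathrm{span}\{\widehat{x_1},\ldots,\widehat{x_n}\} \subseteq \widehat{X}$, whose distance from $F_0$ exceeds $2\epsilon$---yields $\xi_n \in \delta S_{X^*}$ enforcing $(B_n)$.

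Next, refine $(\xi_n)$ via Lemma \ref{needed lemma}, applied with $T = \delta B_{X^*}$ and a separable $S \subseteq A$ for which $S - T \subseteq A$, obtaining the singleton property $\varphi(y - af) = \varphi(y - af')$ for $y \in S$, $a \in [0,1]$, and $f, f' \in K_{w^*}(\widetilde{\xi})$; differentiating in $a$ transfers this to $p$. Fixing any weak$^*$-cluster point $f_\infty$, weak$^*$-continuity of each $\widehat{x_n} \in \widehat{X}$ gives $\widehat{x_n}(f_\infty) = 0$, whence $(A_n)$ and $(B_k)$ yield $\widehat{x_n}(\xi_k - f_\infty) > \epsilon\delta/2$ for $n > k$. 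The hypothesis $\inf_{C_1} p > \epsilon\delta/4$ required by Lemma \ref{increasing2} on $C_n := \mathrm{co}\{\xi_k : k \geq n\} - f_\infty$ is verified via the approximate domination $\widehat{x_m}(a) \leq \varphi'_+(y_m; a) \leq p(a) + o(1)$ as $y_m \to x_0^*$ (upper semicontinuity of $y \mapsto \varphi'_+(y; a)$, a consequence of $\partial\varphi$ being a weak$^*$-usco). Lemma \ref{increasing2} then produces $(g_n)$ with $g_n \in \mathrm{co}\{\xi_k : k \geq n\}$, and Proposition \ref{ClusterConvex} together with the singleton property identifies $g := \sum \beta_i(g_i - g_\infty) \in X^*$ (for a weak$^*$-cluster $g_\infty$ of $(g_n)$), satisfying $p(g) = \sup_n p(\sum_{i \leq n}\beta_i(g_i - f_\infty))$.

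The main obstacle lies in the final contradiction step, because the hypothesis does not directly furnish a single $\widehat{c} \in K \cap \widehat{X}$ attaining $p(g)$ exactly, only subgradients in $\widehat{X}$ at every point of $A$. For each small $s > 0$ the hypothesis supplies $\widehat{c_s} \in \partial\varphi(x_0^* + sg) \cap \widehat{X}$; convexity of $t \mapsto \varphi(x_0^* + tg)$ combined with the subgradient inequality forces $\widehat{c_s}(g) \geq [\varphi(x_0^* + sg) - \varphi(x_0^*)]/s \downarrow p(g)$ as $s \to 0^+$, while upper semicontinuity of the directional derivative gives $\widehat{c_s}(h) \leq \varphi'_+(x_0^* + sg; h) \to p(h)$ for each fixed $h$. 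Substituting $\widehat{c_s}$ for $\widehat{c}$ in the telescoping chain of Part VI of Theorem \ref{JamesFull}, and then letting $s \to 0^+$ to absorb the approximation errors, produces the desired contradiction $\epsilon\delta/2 \leq 0$.
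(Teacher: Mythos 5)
Your construction in Parts I--V is essentially the paper's: you use minimality of $\partial\varphi$ (via Proposition \ref{quasicontinuity}) to manufacture the $\widehat{x}_n$ at points converging to $x_0^*$, Lemma \ref{Separation} for the $\xi_n$, Lemma \ref{needed lemma} for the singleton property, and the increasing-sum lemma. The genuine divergence is that you linearise at $x_0^*$, running the argument on the sublinear function $p:=\varphi_+'(x_0^*;\cdot)$, whereas the paper runs Lemma \ref{increasing} on $\varphi$ itself (with $\sum\beta_n<m/2$ so that all partial sums, and the limit $g$, stay inside $A$). That choice is exactly what creates the obstacle you flag at the end, and your workaround does not close it.

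Here is the gap. Your final chain needs a \emph{single} element $\widehat{c}\in\widehat{X}$ with two properties: $\widehat{c}(g)\geq p(g)$, and $\widehat{c}(h)\leq p(h)+o(\beta_n)$ at $h=\sum_{i=1}^{n-1}\beta_i(g_i-g_\infty)$ for \emph{every} $n$ (the error must be $o(\beta_n)$ because the telescoped inequality is divided by $\beta_n$). Your $\widehat{c_s}\in\partial\varphi(x_0^*+sg)\cap\widehat{X}$ only satisfies the second property in the limit $s\to 0^+$, at a rate depending on $n$; for fixed $s$ the error $\varphi_+'(x_0^*+sg;h)-\varphi_+'(x_0^*;h)$ is merely $O(\|h\|)$, which is not $o(\beta_n)$. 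So you are forced into a diagonal choice $s=s_n$, and the concluding step becomes $r\leq\liminf_{n\to\infty}(g_n-g_\infty)(c_{s_n})+\eta_n$. But the fact that $g_\infty$ is a weak$^*$-cluster point of $(g_n:n\in\N)$ only gives $\liminf_{n\to\infty}(g_n-g_\infty)(x)\leq 0$ for a \emph{fixed} $x\in X$; it gives no control over $(g_n-g_\infty)(c_{s_n})$ when the test vector moves with $n$, and no uniformity over bounded sets is available (that would amount to norm-clustering). Passing instead to a weak$^*$-cluster point $F_\dagger$ of $(\widehat{c_{s_n}})$ does give $F_\dagger\in\partial\varphi(x_0^*)$ with $F_\dagger(g)=p(g)$ and $F_\dagger\leq p$, but $F_\dagger$ need not lie in $\widehat{X}$, so it is not weak$^*$-continuous and the step $\liminf_n F_\dagger(g_n-g_\infty)\leq 0$ fails --- indeed $F_\dagger$ could be the offending functional $F_0$ itself. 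The paper avoids all of this by arranging $\|g\|\leq 2\sum_{i}\beta_i\leq m$, so that $g\in A$, and then invoking the hypothesis \emph{at the point $g$}: a single exact subgradient $\widehat{x}\in\partial\varphi(g)\cap\widehat{X}$ satisfies $\varphi(g)-\varphi(h)\leq\widehat{x}(g-h)$ for all $h\in A$, which simultaneously replaces ``attainment at $g$'' and ``domination on the partial sums'' with no error terms, and being in $\widehat{X}$ it is weak$^*$-continuous, so the cluster-point inequality applies. You would need to restructure your Parts IV--VI around $\varphi$ (and Lemma \ref{increasing} rather than Lemma \ref{increasing2}) to repair the argument.
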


\begin{proof}
	Let $x_0^*\in A$. Without loss of generality, we may assume that $x_0^*=0$. Indeed, if not, we consider the function $\psi:(A-x_0^*)\rightarrow\r$ given by $\psi(x^*):=\varphi(x^*+x_0^*)$. Note that $\psi$ is continuous and convex and that $\partial\varphi(x^*+x^*_0)=\partial\psi(x^*)$ for all $x^*\in A$. In particular, $\partial\psi(x^*)\cap\widehat{X}\neq\varnothing$ for all $x^*\in (A-x^*_0)$ and $\partial\varphi(x^*_0)=\partial\psi(0)$. So, if $x_0^*\neq 0$, we can simply translate $\varphi$ and use the argument at 0. \\ \\
	Since $A$ is open and since $x^*\mapsto\partial\varphi(x^*)$ is locally bounded (Proposition \ref{SubdifferentialProperties}), there exist $m,L>0$ such that $m\ball\seq A$ and $\|x^{**}\|\leq L$ for all $x^{**}\in\partial\varphi(B(0,m))$.
	Let $(\beta_n:n\in\n)$ be a sequence of strictly positive numbers such that $\sum_{n=1}^\infty\beta_n<m/2$ and $\lim_{n\rightarrow\infty}\frac{1}{\beta_n}\sum_{i=n+1}^\infty\beta_i=0$.  \\ \\
	Since $x^*\mapsto \partial\varphi(x^*)$ is a minimal weak$^*$-cusco, (see, Corollary \ref{subdifferential-is-minimal})
	we know that there exists a minimal weak$^*$-usco, $M:A\rightarrow 2^{X^{**}}$, such that $M(x^*)\seq\partial\varphi(x^*)$ for all $x^*\in A$, by Proposition \ref{existence-of-minimal}. In fact, by Proposition \ref{convex-usco},
	we know that $\partial\varphi(x^*)=\overline{\text{co}}^{w^*}[M(x^*)]$ for all $x^*\in A$. \\ \\
	Therefore, to show that $\partial\varphi(0)\seq\widehat{X}$, it suffices to show that $M(0)\seq\widehat{X}$. 
	This is because if $M(0)\seq \widehat{X}$, then $M(0)$ is weakly compact (see Remark \ref{reflexive}) and then, by the Krein-Smulian Theorem (Corollary \ref{Closed convex hull}), $\overline{\text{co}}[M(0)]$ is also weakly compact.
	Since the weak$^*$ topology is weaker than the weak topology, $\overline{\text{co}}[M(0)]$ is clearly weak$^*$-compact and hence weak$^*$-closed. Therefore,
	$$\partial\varphi(0)=\overline{\text{co}}^{w^*}[M(0)]=\overline{\text{co}}[M(0)]\seq\widehat{X}.$$
	So suppose, for a contradiction, that $M(0)\not\seq\widehat{X}$. Then there exists an $F\in M(0)\setminus\widehat{X}$.
Since $\widehat{X}$ is a closed subspace of $X^{**}$, this means there must exist an $0<\epsilon<\text{dist}(F,\widehat{X})$. \\ \\
\underline{\textbf{Part I:}} Let $f_0:=0$. We inductively create sequences $(f_n:n\in\n)$ in $S_{X^*}$, $(v_n:n\in\n)$ in $B(0,m)$, and $(\widehat{x}_n:n\in\n)$ in $\widehat{X}$, such that the statements 
	\begin{itemize}
	    \item $(A_n):-$ $\|v_n\|<m/n$ and $\widehat{x}_n\in\partial\varphi(v_n)$.
		\item $(B_n): -$ $(F-\widehat{x}_n)(f_j)|\leq\epsilon/2$ for all $0\leq j<n$.
		\item $(C_n):-$ $F(f_n)>\epsilon$ and $\widehat{x}_j(f_n)=0$ for all $1\leq j\leq n$. 
	\end{itemize} 
	are true for all $n\in\n$. For the first step, choose any $v_1\in B(0,m)\seq A$. 
	Then $\partial\varphi(v_1)\cap\widehat{X}\neq\varnothing$ and so we may choose $\widehat{x}_1\in \partial\varphi(v_1)\cap\widehat{X}$ which clearly satisfies $|(F-\widehat{x}_1)(f_0)|=0\leq\epsilon/2$. 
Now note that 
$$\text{dist}(F,\text{span}\{\widehat{x}_1\})\geq \text{dist}(F,\widehat{X})>\epsilon,$$
	and so, by Lemma \ref{Separation}, there exists $f_1\in S_X$ such that $F(f_1)>\epsilon$ and $\widehat{x}_1(f_1)=0$. So the statements $(A_1)$, $(B_1)$ and $(C_1)$ hold. \\ \\
	Now fix $k\in\n$. Suppose that we have created $\{v_1,\dots, v_k\}$, $\{\widehat{x}_1,\dots,\widehat{x}_k\}$ and $\{f_1,\dots, f_k\}$ such that the statements $(A_k)$, $(B_k)$ and $(C_k)$ hold true. Then consider the set
$$W:=\mbox{$\bigcap_{j=0}^{k}$}\lbrace G\in X^{**}:|(F-G)(f_j)|<\epsilon/2\rbrace.$$
 Note that $F\in M(0)\cap W$ and so $M(B(0,\frac{m}{k+1}))\cap W\neq\varnothing$. Therefore, by the minimality of $M$ and Proposition~\ref{quasicontinuity}, there exists a nonempty open set $V\seq B(0,\frac{m}{n+1})$ such that $M(V)\seq W$. \\ \\
 Choose $v_{k+1}\in V$. Then $\|v_{k+1}\|<m/(k+1)$. 
 By hypothesis, since $v_{k+1}\in A$, we have that $\partial\varphi(v_{k+1})\cap\widehat{X}\neq\varnothing$, and so we may choose $\widehat{x}_{k+1}\in \widehat{X}$ such that  
 $$\widehat{x}_{k+1}\in\partial\varphi(v_{k+1})=\overline{\text{co}}^{w^*}[M(v_{k+1})]\seq \overline{W}^{w^*}\!\seq \lbrace G\in X^{**}:|(F-G)(f_j)|\leq\epsilon/2\rbrace.$$
 Thus the statements $(A_{k+1})$ and $(B_{k+1})$ hold. Finally, observe that 
	$$\text{dist}(F,\text{span}\{\widehat{x}_1,\dots,\widehat{x}_{k+1}\})\geq \text{dist}(F,\widehat{X})>\epsilon,$$
	and so, by Lemma \ref{Separation}, there exists $f_{k+1}\in S_X$ such that $F(f_{k+1})>\epsilon$ and $\widehat{x}_j(f_{k+1})=0$ for all $1\leq j\leq k+1$.
	Therefore the statement $(C_{k+1})$ also holds. This completes the induction. \\ \\
	\underline{\textbf{Part II:}} Now let $(n_k:k\in\n)$ be a strictly increasing sequence of natural numbers. Then for all $k\in\n$, define $v'_k:=v_{n_k}$ and $x'_k:=x_{n_k}$ and $f'_k:=f_{n_k}$. 
	Also define $f'_0:=0$.
	Then the sequences $(v'_n:n\in\n)$, $(\widehat{x}'_n:n\in\n)$ and $(f'_n:n\in\n)$ still satisfy $(A_n)$, $(B_n)$ and $(C_n)$ for all $n\in\n$. Therefore, the properties $(A_n)$, $(B_n)$ and $(C_n)$ are stable   
	under passing to subsequences.\\ \\
	Now, since $\partial\varphi(x^*)\cap\widehat{X}\neq\varnothing$ for all $x^*\in A$, we have that $\varphi$ is weak$^*$-lower-semicontinuous on $A$, by Lemma~\ref{LSC}.
	Let $S := \frac{m}{2}\ball\cap\text{span}\{f_n:n\in\n\}$ and $T := \frac{m}{2}\ball$ and note that $S-T \subseteq mB_X \subseteq A$.
	Then by passing to a subsequence and relabelling if necessary, we may assume that the set $\varphi(f-a[(m/2)K_{w^*}(f_n:n\in\n)])$ is a singleton for all $0\leq a\leq 1$ and all $f\in S$, by 
	Lemma \ref{needed lemma}.  
Since $(f_n:n\in\n)$ is a sequence in $\ball$ (which is weak$^*$-compact), it must a have a weak$^*$-cluster point, call it $f_\infty$.\\ \\
	\underline{\textbf{Part III:}}  As in Part III of the proof of Theorem \ref{JamesSequential}, we can derive that $\widehat{x}_n(f_k-f_\infty)>\epsilon/2$ for all $n>k$ from the statements $(B_n)$ and $(C_n)$. 
	We also note that, from the statement $(A_n)$, we have $v_n\rightarrow 0$ in norm. 
	Therefore, since $\varphi$ is norm-continuous, there exists $N_0\in\n$ such that $$|\varphi(v_n)-\varphi(0)|<\beta_1\epsilon/8, \quad \text{for all} \ n>N_0. $$ 
	Lastly observe that for all $n\in\n$, $v_n\in B(0,m)$ and $\widehat{x}_n\in\partial\varphi(v_n)$ and thus $\|\widehat{x}_n\|\leq L$ by the local-boundedness of $\partial\varphi$. Therefore, if $n>\frac{8Lm}{\beta_1\epsilon}$, we have that
	$$|\widehat{x}_n(v_n)|\leq\|\widehat{x}_n\|\|v_n\|\leq \frac{Lm}{n}<\frac{\beta_1\epsilon}{8}.$$  
    \underline{\textbf{Part IV:}} For each $n\in\n$, let $K_n:=\text{co}\{f_k:k\geq n\}-f_\infty$ and note that $(K_n:n\in\n)$ is a decreasing sequence of nonempty, convex subsets of $X^*$. Set $r:=\epsilon/8$. 
    Then we have that
    $$\beta_1r+\varphi(0)<\inf_{f\in K_1}\varphi(\beta_1f).$$ 
	 To see this, let $f\in K_1$. Then $f=\sum_{i=1}^k\lambda_{i}f_{n_i}-f_\infty$ where $\lambda_i\geq 0$ for all $1\leq i\leq k$ and  $\sum_{i=1}^k\lambda_{i}=1$.  
	 Set $N>\max\lbrace n_1,\dots, n_k, N_0, \frac{8Lm}{\beta_1\epsilon}\rbrace$. Then we have,
  	\begin{align*}
	\varphi(\beta_1f)\!-\!\varphi(0)&=[\varphi(\beta_1f)-\varphi(v_N)]+[\varphi(v_N)-\varphi(0)] \\
	&\geq \widehat{x}_{N}(\beta_1f)-\widehat{x}_{N}(v_N)+[\varphi(v_N)-\varphi(0)] \ \ \mbox{(since $\widehat{x}_N\! \in\! \partial\varphi(v_N)$)} \\
	&>\beta_1\widehat{x}_N(f)-\widehat{x}_N(v_N)-\beta_1\epsilon/8 \\
	&>\beta_1\widehat{x}_N(f)-\beta_1\epsilon/8-\beta_1\epsilon/8 \\
	&=\beta_1( \widehat{x}_{N}\mbox{$(\sum_{i=1}^k\lambda_{i}f_{n_i}-f_\infty))$}-\beta_1\epsilon/4 \\
	&=\beta_1\mbox{$\sum_{i=1}^k\lambda_{i}\widehat{x}_N(f_{n_i}-f_\infty)$}-\beta_1\epsilon/4 >\beta_1\epsilon/4.
	\end{align*}
	 Therefore, since $f\in K_1$ was arbitrary, we have that $\beta_1r+\varphi(0)<\inf_{f\in K_1}\varphi(\beta_1f)$ as claimed. 
	 So, by Lemma \ref{increasing}, there exists a sequence $(g_n:n\in\n)$ such that for all $n\in\n$:
	\begin{enumerate}[label=(\roman*)]
	\item $g_n\in \text{co}\lbrace f_k:k\geq n\rbrace$ and
	\item $\displaystyle \varphi($\mbox{$\sum_{i=1}^n\beta_i(g_i-f_\infty)$}$)+\beta_{n+1}r<\varphi($\mbox{$\sum_{i=1}^{n+1}\beta_i(g_i-f_\infty)).$} \qquad \mbox{$(*)$}  
\end{enumerate}
\underline{\textbf{Part V:}} Since $(g_n:n\in\n)$ is a sequence in $\ball$, it must have a weak$^*$-cluster point.
So, let $g_\infty$ be a weak$^*$-cluster point of $(g_n:n\in\n)$. Then, by Proposition \ref{ClusterConvex}, we have that $g_\infty\in K_{w^*}(f_n:n\in\n)$. Since for all $n\in\n$, we have that
 $\sum_{i=1}^n\beta_ig_i\in  S = \frac{m}{2}\ball\cap\text{span}\{f_n:n\in\n\}$, and $0\leq \sum_{i=1}^n\beta_i\leq m/2$, then
$$
    \varphi(\mbox{$\sum_{i=1}^n$}\beta_i(g_i-g_\infty))=\varphi(\mbox{$\sum_{i=1}^{n}$}\beta_ig_i-\mbox{$\sum_{i=1}^{n}$}\beta_i\cdot g_\infty)
  =\varphi(\mbox{$\sum_{i=1}^{n}$}\beta_ig_i-\mbox{$\sum_{i=1}^{n}$}\beta_i\cdot f_\infty) 
  =\varphi(\mbox{$\sum_{i=1}^n$}\beta_i(g_i-f_\infty)), \qquad\mbox{$(**)$}
$$
  by the observation made in Part II. As in Part V of the proof of Theorem \ref{JamesSequential}, we set $g:=\sum_{i=1}^\infty\beta_i(g_i-g_\infty)$ and deduce that $g\in X^*.$ \\ \\
\underline{\textbf{Part VI:}} 	Lastly note that $\|g\|\leq 2\sum_{i=1}^\infty\beta_i\leq m$
 and so $g\in m\ball\seq A$. 
 Therefore, in order to contradict our original assumption, and thus complete the proof, it suffices to show that $\partial\varphi(g)\cap\widehat{X}=\varnothing$.  So, suppose that there exists $\widehat{x}\in\widehat{X}$ such that $\widehat{x}\in\partial\varphi(g)$. Then, if $n>1$, 
\begin{align*}
	\beta_n r&<\varphi(\mbox{$\sum_{i=1}^n$}\beta_i (g_i-f_\infty))-\varphi(\mbox{$\sum_{i=1}^{n-1}$}\beta_i (g_i-f_\infty)) \qquad \mbox{(by $(*)$)}\\
	&=\varphi(\mbox{$\sum_{i=1}^n$}\beta_i (g_i-g_\infty))-\varphi(\mbox{$\sum_{i=1}^{n-1}$}\beta_i (g_i-g_\infty)) \qquad \mbox{(by $(**)$)} \\
	&\leq \varphi(g)-\varphi(\mbox{$\sum_{i=1}^{n-1}$}\beta_i (g_i-g_\infty)) \qquad\mbox{(since $\displaystyle\varphi(g)=\mbox{$\sup_{n\in\n}$}\varphi(\mbox{$\sum_{i=1}^n$}\beta_i(g_i-g_\infty))$} \\
	&\leq \widehat{x}(g)-\widehat{x}(\mbox{$\sum_{i=1}^{n-1}$}\beta_i (g_i-g_\infty)) \qquad \mbox{(since $\widehat{x}\in \partial\varphi(g))$} \\
	&=\widehat{x}(\mbox{$\sum_{i=n}^{\infty}$}\beta_i (g_i-g_\infty))  =\beta_n\widehat{x}(g_n-g_\infty)+\mbox{$\sum_{i=n+1}^{\infty}$}\beta_i \widehat{x}(g_i-g_\infty).
\end{align*}	
Rearranging gives us that 
$$r<\widehat{x}(g_n-g_\infty)+\frac{1}{\beta_n}\mbox{$\sum_{i=n+1}^{\infty}$}\beta_i \widehat{x}(g_i-g_\infty)\leq\widehat{x}(g_n-g_\infty)+\frac{2\|\widehat{x}\|}{\beta_n}\mbox{$\sum_{i=n+1}^{\infty}$}\beta_i.$$
Taking the limit as $n\rightarrow\infty$ we get that 
$$r\leq\liminf_{n\rightarrow\infty}g_n(x)-g_\infty(x)+2\|\widehat{x}\|\left(\lim_{n\rightarrow\infty}\frac{1}{\beta_n}\mbox{$\sum_{i=n+1}^{\infty}$}\beta_i\right)=\liminf_{n\rightarrow\infty}g_n(x)-g_\infty(x),$$
which contradicts the inequality $\displaystyle\liminf_{n\rightarrow\infty}g_n(x)\leq g_\infty(x)$. Thus, $\partial\varphi(g)\cap\widehat{X}=\varnothing$, which contradicts our original assumption concerning
the function $\varphi$.  This completes the proof. \end{proof}

\begin{remark} To see that Theorem \ref{JamesUsco} is indeed a generalisation of Theorem \ref{JamesFull} consider the following.  Suppose that $C$ is a nonempty closed and bounded
convex subset of a Banach space $(X,\norm)$ with $0 \in C$.  Define $p:X^* \to \R$ by, $p(x^*) := \sup_{c \in C} x^*(c)$ for all $x^* \in X^*$. 
Then  $\widehat{C} \subseteq \partial p(0)$.
If every $x^*\in X^*$ attains its supremum over $C$ then $\partial p(x^*) \cap \widehat{X} \not= \varnothing$ for every $x^* \in X^*$.  This last fact follows because,
if $x^* \in X^*\setminus\{0\}$, $c \in C$ and $p(x^*) = x^*(c)$ then $\widehat{c} \in  \partial p(x^*)$.  Thus, by Theorem \ref{JamesUsco}, 
$$\overline{\widehat{C}}^{w^*} \subseteq \partial p(0) \subseteq \widehat{X} \mbox{\quad since, $\partial p(0)$ is weak$^*$-closed.}$$
Hence, $C$ is weakly compact by Remark \ref{reflex-remark}.  Let us also note that an earlier version of Theorem \ref{JamesUsco} appeared in \cite[Theorem 2.2]{Warren1996}.
\end{remark}


\section{Variational Principle}


\noindent The corner stone of this section is the Br\o ndsted-Rockafellar Theorem which gives the existence of subgradients for lower semicontinuous convex functions defined on Banach spaces.
The key notion behind this theorem is the notion of an ``$\varepsilon$-subgradient''. Suppose that $f:X \to \R\cup\{\infty\}$ is a convex proper lower semicontinuous function on a normed linear
 space $(X, \norm)$ and $x \in \mbox{Dom}(f)$.  Then, for any $\varepsilon >0$, we define the {\it $\varepsilon$-subdifferential $\partial_\varepsilon f(x)$} by, 
$$\partial_\varepsilon f(x) := \{x^* \in X^*: x^*(y) - x^*(x) \leq f(y) - f(x) + \varepsilon \mbox{ for all $y \in \mbox{Dom}(f)$}\}.$$

\begin{theorem}[\!\!\cite{BrondstedRockafellar}]\label{B.R} Suppose that $f:X \to \R\cup\{\infty\}$ is a convex proper lower semicontinuous function on a Banach space $(X, \norm)$. Then, given any point $x_0 \in \mbox{Dom}(f)$, $\varepsilon >0$ and any $x_0^* \in \partial_\varepsilon f(x_0)$, there exists $x \in \mbox{Dom}(f)$ and $x^* \in X^*$ such that $x^* \in \partial f(x)$, $\|x-x_0\| \leq \sqrt{\varepsilon}$ and $\|x^*-x_0^*\| \leq \sqrt{\varepsilon}$. 
\end{theorem}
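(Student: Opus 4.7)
The plan is to derive the conclusion from Ekeland's variational principle applied to the convex function $g(y) := f(y) - x_0^*(y)$, and then read off the required subgradient from the resulting minimization property. First I would verify that $g$ is bounded below on $X$ with $x_0$ as an $\varepsilon$-almost-minimizer. Indeed, unpacking $x_0^* \in \partial_\varepsilon f(x_0)$ gives
$$f(y) - x_0^*(y) \;\geq\; f(x_0) - x_0^*(x_0) - \varepsilon \qquad \text{for all } y \in \mathrm{Dom}(f),$$
i.e.\ $g(x_0) \leq \inf_{X}\, g + \varepsilon$. Of course $g$ also inherits properness, convexity and lower semicontinuity from $f$.

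Applying Ekeland's variational principle to $g$ at the point $x_0$, with the parameter $\lambda := \sqrt{\varepsilon}$ (splitting $\varepsilon = \sqrt{\varepsilon} \cdot \sqrt{\varepsilon}$), would then produce a point $x \in \mathrm{Dom}(f)$ satisfying $\|x - x_0\| \leq \sqrt{\varepsilon}$, $g(x) \leq g(x_0)$, and such that $x$ is a global minimizer over $X$ of the perturbed convex function
$$h(y) \;:=\; f(y) - x_0^*(y) + \sqrt{\varepsilon}\,\|y - x\|.$$
In particular, $0 \in \partial h(x)$.

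To extract the desired subgradient of $f$, I would then invoke the Moreau--Rockafellar sum rule. Since both extra summands $y \mapsto -x_0^*(y)$ and $y \mapsto \sqrt{\varepsilon}\,\|y-x\|$ are continuous convex functions on the whole space, one obtains
$$\partial h(x) \;=\; \partial f(x) - x_0^* + \sqrt{\varepsilon}\,\partial \|\cdot\|(0) \;=\; \partial f(x) - x_0^* + \sqrt{\varepsilon}\,B_{X^*},$$
where the identity $\partial \|\cdot\|(0) = B_{X^*}$ is immediate from the definition of the subdifferential together with the observation that $\|x^*\| \leq 1$ is equivalent to the inequality $x^*(y) \leq \|y\|$ holding for every $y \in X$. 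Combining $0 \in \partial h(x)$ with the above decomposition produces $x^* \in \partial f(x)$ and $b^* \in B_{X^*}$ with $x^* - x_0^* + \sqrt{\varepsilon}\,b^* = 0$, so $\|x^* - x_0^*\| \leq \sqrt{\varepsilon}$, exactly as required.

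The principal obstacle is Ekeland's variational principle itself, which has not been proved earlier in the excerpt and must either be invoked or supplied; the standard argument is a Zorn's-lemma or inductive construction of a sequence decreasing with respect to the partial order $y \preceq z \iff g(y) + \sqrt{\varepsilon}\,\|y - z\| \leq g(z)$, with convergence of the sequence guaranteed by the completeness of $X$ combined with the lower semicontinuity of $g$. A minor secondary point is the sum rule for subdifferentials used in the last step; however, because the two added summands $-x_0^*$ and $\sqrt{\varepsilon}\,\|\cdot - x\|$ are continuous convex functions defined on all of $X$, the required formula is elementary and can be proved directly by a Hahn--Banach separation argument (Theorem \ref{separation-theorem}) in $X \times \mathbb{R}$ separating the epigraph of $f$ from an appropriate affine-plus-cone set built from the other two summands.
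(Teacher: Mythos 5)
The paper does not actually prove Theorem \ref{B.R}; it is stated and cited to the original Br\o ndsted--Rockafellar paper, so there is no in-paper argument to compare yours against. Judged on its own terms, your outline is the standard modern derivation and it is correct: the reduction of the $\varepsilon$-subdifferential hypothesis to ``$x_0$ is an $\varepsilon$-almost-minimizer of $g=f-x_0^*$'' is right, the application of Ekeland's principle with the parameter split $\varepsilon=\sqrt{\varepsilon}\cdot\sqrt{\varepsilon}$ gives exactly the two distance estimates, and the identification $\partial\|\cdot\|(0)=B_{X^*}$ together with the sum rule yields the subgradient $x^*$ with $\|x^*-x_0^*\|\leq\sqrt{\varepsilon}$ (and $x\in\mathrm{Dom}(f)$ follows from $g(x)\leq g(x_0)<\infty$). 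The two places where real work remains are the ones you flag yourself: Ekeland's variational principle and the Moreau--Rockafellar sum rule, neither of which is available in the paper. Both are standard, but note that the sum rule is not a triviality here --- the inclusion you need is $\partial h(x)\subseteq\partial f(x)-x_0^*+\sqrt{\varepsilon}\,B_{X^*}$, which genuinely requires a Hahn--Banach (sandwich/separation) argument because $f$ is only lower semicontinuous; the reverse inclusion is the easy one. It is worth knowing that the original Br\o ndsted--Rockafellar proof predates Ekeland and proceeds by a direct iterative construction: one builds sequences $(x_n)$ and $(x_n^*)$ by repeatedly separating the epigraph of $f$ from a translated cone, with convergence coming from completeness; that route uses only tools the paper has already developed (Theorem \ref{separation-theorem} and completeness of $X$) and avoids importing either black box, at the cost of a somewhat fiddlier induction. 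Your version buys conceptual clarity; the original buys self-containedness relative to this paper.
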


We will use the Br\o ndsted-Rockafellar Theorem (Theorem \ref{B.R}) to show that certain functions attain their maximum value in a rather strong way, that we now make precise.  We shall say that a function
$f:X\to [-\infty, \infty)$ defined on a normed linear space $(X,\norm)$ attains a (or has a)  {\it strong maximum at $x_0 \in X$} if, $f(x_0) = \sup_{x \in X} f(x)$ and $\lim_{n \to \infty} x_n =x_0$
whenever $(x_n:n \in \N)$ is a sequence in $X$ such that $\lim_{n \to \infty} f(x_n) =  \sup_{x \in X} f(x) = f(x_0)$.

\medskip

In addition to the  Br\o ndsted-Rockafellar Theorem and the definition of a strong maximum, we shall require one more definition.   
Let $\varphi:X \to 2^Y$ be a set-valued mapping acting between a
topological space $(X, \tau)$ and a normed linear space $(Y,\norm)$. Then we say that $\varphi$ is {\it single-valued and norm upper semicontinuous at 
$x_0 \in X$} if, $\varphi(x_0) =: \{y_0\}$ is a singleton
subset of $Y$ and for each $\varepsilon >0$ there exists an open neighbourhood $U$ of $x_0$ such that $\varphi(U) \subseteq B[y_0, \varepsilon]$.

\medskip

We shall now combine the Br\o ndsted-Rockafellar Theorem with these definitions in order to obtain the following preliminary result.

\medskip

\begin{proposition}\label{prelim} Suppose that $f:X \to \R\cup\{\infty\}$ is a proper function on a Banach space $(X, \norm)$ and suppose that $f^*:X^* \to \R\cup\{\infty\}$ 
(the Fenchel conjugate of $f$)  is defined by, 
$$f^*(x^*) := \sup_{x \in X} [x^*(x) - f(x)] \  = \sup_{x \in \mathrm{Dom}(f)}  [x^*(x) - f(x)].$$  
Then, 
\begin{enumerate}
\item[{\rm (i)}] $f^*$ is a convex and weak$^*$ lower semicontinuous function on $\mathrm{Dom}(f^*)$;
\item[{\rm (ii)}] $f^*$ is continuous on $\mathrm{int}(\mathrm{Dom}(f^*))$;
\item[{\rm (iii)}] if $x^* \in \mathrm{Dom}(f^*)$ and  $x \in \mathrm{argmax} (x^*-f)$ then $\widehat{x} \in \partial f^*(x^*)$;
\item[{\rm(iv)}] if $\varepsilon >0$, $x^* \in \mathrm{Dom}(f^*)$, $x \in X$ and $f^*(x^*) - \varepsilon < x^*(x) - f(x)$ then $\widehat{x} \in \partial_\varepsilon f^*(x^*)$;
\item[{\rm (v)}] if $x_0^* \in  \mathrm{int}(\mathrm{Dom}(f^*))$, $x \in \mathrm{argmax}(x_0^*-f)$ and $x^* \mapsto \partial f^*(x^*)$ is single-valued and norm upper semicontinuous at
$x_0^*$ then $x_0^* - f$ has a strong maximum at $x$.
\end{enumerate}
\end{proposition}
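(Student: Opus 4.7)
The plan is to prove each of the five items in turn, with (i)--(iv) being direct computations from the definitions, and (v) being the substantive application of the Br\o ndsted--Rockafellar Theorem.

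For (i), I would note that $f^*$ is the pointwise supremum of the family of functions $x^* \mapsto \widehat{x}(x^*) - f(x)$ indexed by $x \in \mathrm{Dom}(f)$; each member of this family is affine, and in particular convex and weak$^*$ continuous. Convexity of $f^*$ on $\mathrm{Dom}(f^*)$ follows since the pointwise supremum of convex functions is convex, and weak$^*$ lower semicontinuity follows since the supremum of a family of weak$^*$ continuous functions is weak$^*$ lower semicontinuous. For (ii), since $f^*$ is weak$^*$ lower semicontinuous, it is certainly norm lower semicontinuous. Fix $x_0^* \in \mathrm{int}(\mathrm{Dom}(f^*))$ and choose $r>0$ with $B[x_0^*,2r] \subseteq \mathrm{Dom}(f^*)$. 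Writing $B[x_0^*,2r] = \bigcup_{n \in \N} \{y^* : f^*(y^*) \leq n\}$ expresses a complete metric space as a countable union of norm-closed sets, so by the Baire category theorem one of these level sets has nonempty interior; a routine convexity argument then propagates local boundedness to $x_0^*$, and Proposition \ref{Lipschitz} gives (local Lipschitz) continuity.

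For (iii) and (iv), these are just rearrangements of the defining inequality of $f^*$. If $x \in \mathrm{argmax}(x^*-f)$ then $f^*(x^*) = x^*(x) - f(x)$, so for any $y^* \in X^*$ I compute
\[
f^*(y^*) \;\geq\; y^*(x) - f(x) \;=\; f^*(x^*) + (y^* - x^*)(x) \;=\; f^*(x^*) + \widehat{x}(y^*-x^*),
\]
which is exactly $\widehat{x} \in \partial f^*(x^*)$. For (iv), the only change is replacing the equality $f^*(x^*) = x^*(x)-f(x)$ by the inequality $f^*(x^*) - \varepsilon < x^*(x) - f(x)$; tracking the $\varepsilon$ through the same manipulation gives $\widehat{x} \in \partial_\varepsilon f^*(x^*)$.

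The main work is in (v). Let $(x_n)$ be any sequence in $X$ with $\lim_n (x_0^* - f)(x_n) = f^*(x_0^*)$; I must show $x_n \to x$ in norm. Choose $\varepsilon_n \downarrow 0$ with $f^*(x_0^*) - \varepsilon_n^2 < x_0^*(x_n) - f(x_n)$ for all $n$. By (iv), $\widehat{x_n} \in \partial_{\varepsilon_n^2} f^*(x_0^*)$. Since $f^*$ is proper, convex and norm lower semicontinuous on the Banach space $X^*$ (applying (i) and the fact that $x_0^* \in \mathrm{int}(\mathrm{Dom}(f^*))$), I apply Theorem \ref{B.R} (Br\o ndsted--Rockafellar), replacing the role of $X$ there by $X^*$ and the role of $X^*$ there by $X^{**}$, to obtain for each $n$ a point $y_n^* \in \mathrm{Dom}(f^*)$ and a functional $F_n \in X^{**}$ with $F_n \in \partial f^*(y_n^*)$, $\|y_n^* - x_0^*\| \leq \varepsilon_n$ and $\|F_n - \widehat{x_n}\| \leq \varepsilon_n$. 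Since $x_0^* \in \mathrm{int}(\mathrm{Dom}(f^*))$ and $y_n^* \to x_0^*$ in norm, eventually $y_n^* \in \mathrm{int}(\mathrm{Dom}(f^*))$. By hypothesis, $\partial f^*$ is single-valued at $x_0^*$ and norm upper semicontinuous there; from (iii) applied to the maximiser $x$ we have $\widehat{x} \in \partial f^*(x_0^*)$, so necessarily $\partial f^*(x_0^*) = \{\widehat{x}\}$, and norm upper semicontinuity combined with $F_n \in \partial f^*(y_n^*)$ forces $\|F_n - \widehat{x}\| \to 0$. The triangle inequality then yields $\|\widehat{x_n} - \widehat{x}\| \leq \|F_n - \widehat{x}\| + \varepsilon_n \to 0$, and since $x \mapsto \widehat{x}$ is an isometry this gives $\|x_n - x\| \to 0$, as required. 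The hard part is precisely this last step: using Br\o ndsted--Rockafellar to perturb the approximate subgradient $\widehat{x_n}$ into an exact subgradient at a nearby point, so that the single-valued/norm upper semicontinuity hypothesis at $x_0^*$ can be activated.
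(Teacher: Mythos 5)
Your proposal is correct and follows essentially the same route as the paper: (i) as a pointwise supremum of weak$^*$ continuous affine maps, (ii) via Baire category plus the convexity propagation of local boundedness and Proposition \ref{Lipschitz}, (iii)--(iv) by direct rearrangement, and (v) by feeding the $\varepsilon$-subgradients $\widehat{x_n}$ from (iv) into the Br\o ndsted--Rockafellar Theorem and invoking single-valuedness and norm upper semicontinuity of $\partial f^*$ at $x_0^*$. The only cosmetic difference is that you run (v) with a sequence $\varepsilon_n\downarrow 0$ where the paper fixes $\varepsilon>0$ and a tail index $N$; the substance is identical.
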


\begin{proof} For those people familiar with the Fenchel conjugate, they may want to skip the proofs of (i)-(iv).
\begin{enumerate}
\item[{\rm (i)}] For each $x \in \mathrm{Dom}(f)$ define $g_x:X^* \to \R$ by, $g_x(x^*) := \widehat{x}(x^*) - f(x)$. Then each function $g_x$ is weak$^*$ continuous and affine.
Now for each $x^* \in X^*$, $f^*(x^*) = \sup_{x \in \mathrm{Dom}(f)} g_x(x^*)$.
Thus, as the pointwise supremum of a family of weak$^*$ continuous affine mappings, the Fenchel conjugate of $f$, is itself convex and weak$^*$ lower semicontinuous.
[Recall the general fact that the pointwise supremum of a family of convex functions is convex and the pointwise supremum of a family of lower semicontinuous mappings is again lower semicontinuous].
\item[{\rm (ii)}] Since this statement is vacuously true when  $\mathrm{int}(\mathrm{Dom}(f^*)) = \varnothing$, we will assume that $\mathrm{int}(\mathrm{Dom}(f^*))$ is nonempty. Let us first recall that by Proposition \ref{Lipschitz}, and by the fact that $f^*$ is convex and the fact that $\mathrm{int}(\mathrm{Dom}(f^*))$ is also convex, it is sufficient to show that $f^*$ is locally 
bounded above on $\mathrm{int}(\mathrm{Dom}(f^*))$. In fact, as we shall now show, it is sufficient to show that $f^*$ is locally bounded above at a single point $x_0^*\in \mathrm{int}(\mathrm{Dom}(f^*))$. To this end, suppose that $f^*$ is locally bounded above at $x_0^* \in \mathrm{int}(\mathrm{Dom}(f^*))$. Then there exist an $0<M$ and a $0< \delta$ such that 
$f^*(y^*) \leq M$ for all $y^* \in B[x_0^*, \delta]$. 
Let $x^*$ be any point in $\mathrm{int}(\mathrm{Dom}(f^*))$. Since $\mathrm{int}(\mathrm{Dom}(f^*))$ is an open convex set, there exists a point $y^* \in \mathrm{int}(\mathrm{Dom}(f^*))$ and a 
$0 < \lambda <1$ such that $x^* = \lambda y^* + (1 - \lambda) x_0^*$. Let $M^* := \max\{M, f^*(y^*)\}$ and note that
$$x^* \in B[x^*,(1-\lambda)\delta] = \lambda y^* + (1-\lambda)B[x_0^*,\delta] \subseteq \mathrm{int}(\mathrm{Dom}(f^*)), \mbox{\quad since $\mathrm{int}(\mathrm{Dom}(f^*))$ is convex.}$$
We claim that $f^*$ is bounded above by $M^*$ on $B[x^*,(1-\lambda)\delta]$. To see this, let $z^*$ be any element of $B[x^*,(1-\lambda)\delta]$. Then $z^* = \lambda y^* + (1-\lambda)w^*$
for some $w^* \in B[x_0^*,\delta]$ since, 
$$B[x^*,(1-\lambda)\delta] = x^*+ (1-\lambda)B[0,\delta] =  \lambda y^* +(1-\lambda)x_0^*+ (1-\lambda)B[0,\delta] =      \lambda y^* + (1-\lambda)B[x_0^*,\delta].$$
 Therefore,
$$f^*(z^*) = f^*( \lambda y^* + (1-\lambda)w^*) \leq \lambda f^*(y^*) + (1-\lambda) f^*(w^*) \leq \lambda M^* + (1- \lambda) M^* = M^*.$$
Next, we will use that fact that since $\mathrm{int}(\mathrm{Dom}(f^*))$ is a nonempty open subset of a complete metric space, it is itself a Baire space with the relative topology.
Now, for each $n \in \N$, let 
$$F_n := \{x^* \in \mathrm{int}(\mathrm{Dom}(f^*)): f^*(x^*) \leq n\}.$$ 
Since $f^*$ is weak$^*$ lower semicontinuous, it is lower semicontinuous with respect
to the norm topology too.  Therefore, each set $F_n$ is closed with respect to the relative norm topology on $\mathrm{int}(\mathrm{Dom}(f^*))$. Since $\mathrm{int}(\mathrm{Dom}(f^*))
= \bigcup_{n \in \N} F_n$, there exists an $n_0 \in \N$ such that $\mathrm{int}(F_{n_0}) \not= \varnothing$. Hence, $f^*$ is locally bounded above at each point of  $\mathrm{int}(F_{n_0})$.
This completes the proof of part (ii).
\item[{\rm (iii)}] Let $y^*$ be any element of  $\mathrm{Dom}(f^*)$. Then,
$$\widehat{x}(y^*) - \widehat{x}(x^*) = y^*(x) - x^*(x) = [y^*(x) - f(x)] - [x^*(x) - f(x)] =  [y^*(x) - f(x)] - f^*(x^*) \leq f^*(y^*) - f^*(x^*).$$
Therefore, $\widehat{x} \in \partial f^*(x^*)$.
\item[{\rm(iv)}]  Let $y^*$ be any element of  $\mathrm{Dom}(f^*)$. Then,
\begin{eqnarray*}
\widehat{x}(y^*) - \widehat{x}(x^*) &=& y^*(x) - x^*(x) = [y^*(x) - f(x)] - [x^*(x) - f(x)]  \\
&\leq &  [y^*(x) - f(x)] - [f^*(x^*) - \varepsilon] \leq f^*(y^*) - f^*(x^*) + \varepsilon.
\end{eqnarray*}
Therefore, $\widehat{x} \in \partial_\varepsilon f^*(x^*)$.
\item[{\rm (v)}] Let $(x_n:n \in \N)$ be a sequence in $X$ such that 
$$\lim_{n \to \infty} (x_0^*-f)(x_n) = \sup_{x \in X} (x_0^* -  f)(x) = f^*(x_0^*).$$
We will show that $(x_n:n \in \N)$ converges to $x$.
Let $\varepsilon >0$.  By (iii) and the assumption that $\partial f^*(x_0^*)$ is a singleton we have that $\partial f^*(x_0^*) = \{\widehat{x}\}$.  Since, $x^* \mapsto \partial f^*(x^*)$,
is norm upper semicontinuous at $x_0^*$ there exists a $0 < \delta < \varepsilon$ such that  if $\|x^* - x_0^*\| \leq \delta$ then $\|F-\widehat{x}\| < \varepsilon$ for all
$F \in \partial f^*(x^*)$.  Choose $N \in \N$ such that $(x_0^* - f)(x_n) > f^*(x_0) - \delta^2$ for all $n >N$.  Then, by (iv), $\widehat{x_n} \in \partial_{\delta^2} f^*(x_0^*)$ for
all $n >N$.  Let $n >N$. Then, by the  Br\o ndsted-Rockafellar Theorem, there exist $x_n^* \in \mathrm{Dom}(f^*)$ and $F_n \in X^{**}$ such that $F_n \in \partial f^*(x_n^*)$,
$\|x_n^*- x_0^*\| \leq \delta$ and $\|F_n - \widehat{x_n}\| \leq \delta < \varepsilon$.  Therefore, 
$$\|x_n- x\| = \|\widehat{x_n} - \widehat{x}\| 
\leq  \|\widehat{x_n} - F_n\| + \|F_n - \widehat{x}\| 
\leq  \varepsilon + \varepsilon  = 2\varepsilon.$$
\end{enumerate}
This completes the proof. \end{proof}

Our first variational principle applies to dual differentiation spaces, \cite{Warren4Authors}.  Recall that a Banach space $(X,\norm)$ is called a {\it dual differentiability space (or DD-space for short)}
if every continuous convex function $\varphi:A \to \R$ defined on a nonempty open convex subset $A$ of $X^*$ such that 
$\{x^* \in A:\partial \varphi(x^*) \cap \widehat{X} \not= \varnothing\}$ contains a dense and $G_\delta$ subset of $A$, has the property that its subdifferential mapping $\partial \varphi :A \to 2^{X^{**}}$ is single-valued and norm upper semicontinuous at each point of a dense and $G_\delta$ subset of $A$ (or equivalently, $\varphi$ is Fr\'echet differentiable at the points of a dense and $G_\delta$ subset of $A$, \cite[Proposition 2.8]{PhelpsBook}).

\begin{theorem}\label{DD theorem} Let $f:X \to \R\cup\{\infty\}$ be a proper function on a dual differentiation space $(X, \norm)$.  If there exists a nonempty open subset $A$ of $\mathrm{Dom}(f^*)$
and a dense and $G_\delta$ subset $R$ of $A$ such that $\mathrm{argmax}(x^*-f) \not= \varnothing$ for each $x^* \in R$, then there exists a dense and $G_\delta$ subset $R'$ of $A$ such that
$(x^*-f):X \to \R \cup\{-\infty\}$ has a strong maximum for each $x^* \in R'$. In addition, if $0 \in A$ and $\varepsilon >0$ then there exists an 
$x_0^* \in X^*$ with $\|x^*_0\| < \varepsilon$ such that $(x_0^*-f):X \to \R \cup\{-\infty\}$ has a strong maximum.
\end{theorem}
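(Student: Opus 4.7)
The proof is a gluing together of Proposition \ref{prelim}(iii), (v), and the defining property of a DD-space, with the content being the identification of the right dense $G_\delta$ set $R'$.

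First, since $A$ is open in $\mathrm{Dom}(f^*)$, we have $A\subseteq \mathrm{int}(\mathrm{Dom}(f^*))$, and by Proposition \ref{prelim}(ii) the restriction $f^*|_A$ is a continuous convex function. For each $x^*\in R$, part (iii) of the same proposition applied to any $x\in\mathrm{argmax}(x^*-f)$ gives $\widehat{x}\in \partial f^*(x^*)\cap\widehat{X}$. Hence $R$ is contained in $\{x^*\in A: \partial f^*(x^*)\cap\widehat{X}\neq\varnothing\}$.

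Next I would invoke the DD-space hypothesis locally: the definition is stated for convex open sets, so pick any open ball $B\subseteq A$; then $R\cap B$ is still a dense $G_\delta$ in $B$, so the DD-space property guarantees that $\partial f^*$ is single-valued and norm upper semicontinuous at every point of some dense $G_\delta$ subset of $B$. Setting $R_1:=\{x^*\in A: \partial f^*$ is single-valued and norm upper semicontinuous at $x^*\}$, these local assertions force $R_1$ to be dense in $A$. Moreover $R_1$ is $G_\delta$ in $A$, since the Fr\'echet differentiability set of a continuous convex function on an open set in a Banach space is always $G_\delta$ (being describable by countably many open conditions, e.g. on a suitable modulus of subdifferentiability).

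Set $R':=R\cap R_1$. As $A$ is Baire (being open in a Banach space), $R'$ is a dense $G_\delta$ subset of $A$. For each $x^*\in R'$, pick $x\in\mathrm{argmax}(x^*-f)$ using $x^*\in R$, and apply Proposition \ref{prelim}(v) (the single-valued and norm upper semicontinuous hypothesis comes from $x^*\in R_1$) to conclude that $x^*-f$ attains a strong maximum at $x$. For the ``in addition'' clause, if $0\in A$ and $\varepsilon>0$, the openness of $A$ gives some $0<r\leq\varepsilon$ with $B(0,r)\subseteq A$; density of $R'$ in $A$ then furnishes an $x_0^*\in R'\cap B(0,r)$, which satisfies $\|x_0^*\|<\varepsilon$ and at which $x_0^*-f$ has a strong maximum by the first conclusion.

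The main obstacle is the global/local issue in the DD-space definition: if $A$ fails to be convex, one has to glue the dense $G_\delta$ subsets of Fr\'echet points from a covering of $A$ by open convex balls into a single dense $G_\delta$ subset of $A$. This relies on the standard fact that the Fr\'echet differentiability set of a continuous convex function is $G_\delta$, and on the Baire property of $A$ to ensure that the intersection with $R$ remains dense.
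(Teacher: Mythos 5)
Your proof is correct and follows essentially the same route as the paper's: combine Proposition \ref{prelim}(iii) with the DD-space hypothesis to obtain a dense $G_\delta$ set of points where $\partial f^*$ is single-valued and norm upper semicontinuous, intersect with the set where a maximiser exists, and apply Proposition \ref{prelim}(v), with the ``in addition'' clause following from density. If anything you are slightly more careful than the paper, which applies the DD-space definition directly to the possibly non-convex set $A$ without your localization to convex balls, and which forms $R'$ by intersecting with $\{x^*\in A:\partial f^*(x^*)\cap\widehat{X}\neq\varnothing\}$ rather than with $R$ itself, even though Proposition \ref{prelim}(v) requires $\mathrm{argmax}(x^*-f)\neq\varnothing$, i.e.\ membership in $R$.
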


\begin{proof} Consider $\partial f^*:A \to 2^{X^{**}}$. Then by Proposition \ref{prelim} part (iii) 
$$R_1 := \{x^* \in A: \partial f^*(x^*) \cap \widehat{X} \not= \varnothing\}$$
contains a dense and $G_\delta$ subset of $A$.  Since $X$ is a dual differentiation space,  
$$R_2 := \{x^* \in A: \partial f^* \mbox{ is single-valued and norm upper semicontinuous at $x^*$}\}$$
 contains a dense and $G_\delta$ subset of $A$. Let $R' := R_1 \cap R_2$. Then $R'$ contains a dense and $G_\delta$ subset of 
$A$ and by Proposition \ref{prelim} part (v), $(x^* -f)$ has a strong maximum for each $x^* \in R'$.
\end{proof}

\begin{remark}\label{DD-space} There are two main weaknesses with this theorem: (i) although it is known that many Banach spaces (e.g.~all spaces with the Radon-Nikod\' ym property, \cite{Warren4Authors}, all weakly Lindel\"of spaces, \cite{WarrenLindelof}, all spaces that admit an equivalent locally uniformly rotund norm \cite{GilesMoors} and all spaces whose dual space $X^*$ is weak Asplund, \cite{Warren4Authors}) are dual differentiation spaces, it is still
an open question as to whether every Banach space is a dual differentiation space; (ii) it is not clear how one would go about showing that there exists a ``large'' subset $R$ of
$\mathrm{int}(\mathrm{Dom}(f^*))$ with the property that $\mathrm{argmax}(x^*-f) \not= \varnothing$ for each $x^* \in R$.
\end{remark}

For our next result, and main variational principle, we address concern (i) of Remark \ref{DD-space} by giving a variational principle that holds in all Banach spaces.  Unfortunately, there is a cost for this level of generality. Namely, we need to impose a strong assumption upon the mapping $x^* \mapsto \mathrm{argmax}(x^*- f)$.  We also need to employ the following non-trivial result concerning minimal weak cuscos, 
which was first proved by J.~Christensen in \cite{Christensen},  using topological games (in the domain space), and later rephrased in \cite{Warren4Authors}.

\begin{theorem}\label{Warren4Authors} A minimal weak$^*$ cusco $\varphi: A \to 2^{X^{**}}$ from a complete metric space $A$ into subsets of the second dual $X^{**}$ of a Banach space $(X,\norm)$, where the set 
$\{x \in A: \varphi(x) \subseteq \widehat{X}\}$ contains a dense and $G_\delta$ subset of $A$, is single-valued and norm upper semicontinuous at the points of a dense and $G_\delta$ subset of $A$.
\end{theorem}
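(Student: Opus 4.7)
The plan is to reduce the conclusion to a density statement about open sets, and then do the work via a combination of the minimality of $\varphi$, the quasi-continuity property of minimal uscos (Proposition \ref{quasicontinuity}), and a Baire-category / topological-game argument in the complete metric space $A$. For each $n \in \N$, define
$$G_n := \{x \in A : \text{there exists an open } U \ni x \text{ with } \|\cdot\|\text{-diam}(\varphi(U)) < 1/n\}.$$
Each $G_n$ is open by construction, and $\bigcap_{n \in \N} G_n$ is precisely the set where $\varphi$ is single-valued and norm upper semicontinuous; at such a point the unique value of $\varphi$ necessarily lies in $\widehat{X}$ because it is the norm-limit of values from the dense $G_\delta$ where $\varphi \subseteq \widehat{X}$ and $\widehat{X}$ is norm-closed. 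Thus it suffices to prove that each $G_n$ is dense in $A$.

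Next I would pass to a minimal weak$^*$-usco contained in $\varphi$. By Proposition \ref{existence-of-minimal} there is a minimal weak$^*$-usco $M$ with $M(x) \subseteq \varphi(x)$ for all $x \in A$; by Proposition \ref{convex-usco}, the map $x \mapsto \overline{\mathrm{co}}^{w^*}M(x)$ is a weak$^*$-cusco contained in $\varphi$, so by minimality of $\varphi$ as a cusco the two coincide. Since a bounded set and its norm-closed convex hull have the same norm diameter, it is enough to show that for any nonempty open $W \subseteq A$ and any $\varepsilon > 0$, there exists a nonempty open $V \subseteq W$ with $\|\cdot\|\text{-diam}(M(V)) < \varepsilon$.

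The analytical core of the proof then runs as follows. Let $D \subseteq A$ be the dense $G_\delta$ on which $\varphi$ (and hence $M$) takes values in $\widehat{X}$, and pick $x_0 \in W \cap D$. Then $M(x_0) \subseteq \widehat{X}$ is weak$^*$-compact, and since the relative weak$^*$ and weak topologies coincide on $\widehat{X}$ (Proposition \ref{relative-weak-and-weak*}) and the canonical embedding is an isometry, $M(x_0)$ corresponds to a weakly compact subset $K$ of $X$. The classical fact that weakly compact subsets of a Banach space are norm-fragmented yields a weak$^*$-open half-space $H = \{F \in X^{**} : F(y^*) > \alpha\}$ such that $M(x_0) \cap H \neq \varnothing$ while $H \cap \overline{\mathrm{co}}^{w^*}(M(x_0))$ has norm diameter less than $\varepsilon/2$. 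Applying the quasi-continuity of the minimal weak$^*$-usco $M$ (Proposition \ref{quasicontinuity}) to the pair $(W, H)$ produces a nonempty open $V_1 \subseteq W$ with $M(V_1) \subseteq H$.

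The main obstacle is that $M(V_1) \subseteq H$ alone does not control $\|\cdot\|\text{-diam}(M(V_1))$, because the half-space $H$ is weak$^*$-open but norm-unbounded; the slice diameter estimate only directly bounds $H \cap \widehat{X}$ on bounded regions. To close this gap one iterates: since $D \cap V_1$ is still dense $G_\delta$ in the Baire space $V_1$ (open subsets of the complete metric space $A$ being Baire), one selects a new $x_1 \in V_1 \cap D$ and repeats the fragmentation argument, producing a nested sequence of nonempty open sets $W \supseteq V_1 \supseteq V_2 \supseteq \cdots$ whose metric diameters shrink to $0$, along with a corresponding shrinking sequence of half-spaces. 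This is Christensen's topological-game scheme: completeness of $A$ gives a unique limit point $x^* \in \bigcap_k \overline{V_k}$, and the combination of minimality of $M$ with the density of $D$ at every stage forces $M(x^*)$ to be a singleton in $\widehat{X}$ and $M$ to be norm upper semicontinuous at $x^*$, which puts $x^* \in W \cap G_n$ and completes the density argument.
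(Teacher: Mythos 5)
First, a point of reference: the paper does not actually prove Theorem \ref{Warren4Authors} --- it is quoted from Christensen's work (where it is proved via topological games in the domain space) and from the reference in which it was later rephrased. So your proposal has to stand on its own, and your set-up is sound as far as it goes: the sets $G_n$ are open, $\bigcap_n G_n$ is exactly the set of points of single-valuedness and norm upper semicontinuity, density of each $G_n$ suffices by Baire category, and replacing $\varphi$ by a minimal weak$^*$-usco $M$ with $\varphi(x)=\overline{\mathrm{co}}^{w^*}M(x)$ costs nothing in norm diameter. The application of Proposition \ref{quasicontinuity} to push $M(V_1)$ into a weak$^*$-open set $H$ meeting $M(x_0)$ is also legitimate.

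The genuine gap is in your final paragraph, and it is exactly the obstacle you yourself flag. After $k$ stages of the iteration all you know is $M(V_k)\subseteq H_1\cap\cdots\cap H_k$, and an intersection of weak$^*$-open half-spaces is norm-unbounded: the small-diameter sets $H_j\cap\varphi(x_{j-1})$ live inside the \emph{particular} compact images $\varphi(x_{j-1})$, and nothing forces $M(x)$, for $x\in V_k$ with $x\neq x_{j-1}$, to lie anywhere near $\varphi(x_{j-1})$. So the assertion that ``minimality of $M$ with the density of $D$ at every stage forces $M(x^*)$ to be a singleton and $M$ to be norm upper semicontinuous at $x^*$'' is precisely the statement to be proved, and no mechanism for it is supplied; note also that $x^*\in G_n$ requires small oscillation of $\varphi$ on an entire neighbourhood of $x^*$, which membership of $\bigcap_k\overline{V_k}$ does not give. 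The actual proof must argue by contradiction from the failure of density of some $G_n$: one extracts a separated sequence $F_k\in M(t_k)$ with $t_k$ converging to a point $x^\infty$ of the residual set $D$, uses weak$^*$ upper semicontinuity to place all weak$^*$ cluster points of $(F_k)$ inside the single weakly compact set $\varphi(x^\infty)\subseteq\widehat{X}$, and only \emph{then} invokes the compactness/fragmentability (or a double-limit/Eberlein--Smulian) argument against that limiting set. Applying fragmentability once, to the initial image $\varphi(x_0)$, cannot substitute for this limiting argument. A secondary imprecision: norm-fragmentability of a weakly compact set yields relatively weakly open subsets of small diameter, not half-spaces; to obtain a slice you need dentability of weakly compact convex sets, a different (and stronger, though still classical) fact.
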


\medskip

In order to extend the applicability of Theorem \ref{Warren4Authors}, we will show that some sets that are not necessarily complete metric spaces under their given metrics can be ``re-metrized'' to become a complete metric space under a new metric, while retaining the same topology.  Indeed, suppose that $A$ is a nonempty open subset of a complete metric space $(M, d)$. Then $(M \times \R, \rho)$
is also a complete metric space under the metric, 
$$\rho((x_1, r_1),(x_2, r_2)) := d(x_1, x_2) + |r_1 -r_2|.$$
Let $f:A \to \R$ be defined by, $f(x) := \inf\{d(x,y) \in \R: y \in M \setminus A\} = \mathrm{dis}(x, M\setminus A)$. Note that $f$ is continuous on $A$.  Let $G:= \{(x,r) \in M \times \R:
x \in A \mbox{ and } r = 1/f(x)\}$.  Then $G$ is a closed subset of $M \times \R$, and hence is a complete metric space with respect to the restriction of the metric $\rho$ to $G$. Finally, let us note
that $G$ is homeomorphic to $A$. Indeed, the mapping $\pi:G \to A$ defined by, $\pi(x,r) := x$, is such a homeomorphism.  Thus, a nonempty open subset of a complete metric space is
``completely metrisable''.

\begin{theorem}\label{variation theorem} Let $f:X \to \R\cup\{\infty\}$ be a proper function on a Banach space $(X, \norm)$.  If there exists a nonempty open subset $A$ of $\mathrm{Dom}(f^*)$
 such that $\mathrm{argmax}(x^*-f) \not= \varnothing$ for each $x^* \in A$, then there exists a dense and $G_\delta$ subset $R'$ of $A$ such that
$(x^*-f):X \to \R \cup\{-\infty\}$ has a strong maximum for each $x^* \in R'$. In addition, if $0 \in A$ and $\varepsilon >0$ then there exists an 
$x_0^* \in X^*$ with $\|x^*_0\| < \varepsilon$ such that $(x_0^*-f):X \to \R \cup\{-\infty\}$ has a strong maximum.
\end{theorem}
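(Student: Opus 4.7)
The plan is to recast the existence of strong maxima in terms of the Fenchel conjugate $f^*$ and its subdifferential. Specifically, I would show that $\partial f^* : A \to 2^{X^{**}}$ takes values in $\widehat{X}$ throughout $A$ using the generalization of James' theorem (Theorem \ref{JamesUsco}), then apply the Christensen-type selection result (Theorem \ref{Warren4Authors}) to extract a dense $G_\delta$ subset of $A$ on which $\partial f^*$ is single-valued and norm upper semicontinuous, and finally translate this into the strong maximum assertion via Proposition \ref{prelim}(v). To begin, Proposition \ref{prelim}(i)-(ii) gives that $f^*$ is convex, weak$^*$-lower semicontinuous on $\mathrm{Dom}(f^*)$, and norm-continuous on $\mathrm{int}(\mathrm{Dom}(f^*))$, which contains the open set $A$. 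The hypothesis that $\mathrm{argmax}(x^* - f) \neq \varnothing$ for all $x^* \in A$, combined with Proposition \ref{prelim}(iii), shows $\partial f^*(x^*) \cap \widehat{X} \neq \varnothing$ for every $x^* \in A$.

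Since $A$ is open but not necessarily convex, I would work locally. For arbitrary $x^*_0 \in A$, choose a convex open ball $B = B(x^*_0, r) \subseteq A$; applying Theorem \ref{JamesUsco} to $f^*|_B$ gives $\partial(f^*|_B)(x^*_0) \subseteq \widehat{X}$. A short scaling argument exploiting convexity confirms that the subdifferential of $f^*$ at an interior point $x^*_0$ is the same whether computed on $B$, on $A$, or on all of $\mathrm{Dom}(f^*)$, so $\partial f^*(x^*_0) \subseteq \widehat{X}$. Since $x^*_0$ was arbitrary, this holds on all of $A$. The same local-to-global argument, using Corollary \ref{subdifferential-is-minimal} on each ball $B$, shows that $\partial f^* : A \to 2^{X^{**}}$ is a minimal weak$^*$-cusco (any strict weak$^*$-cusco refinement over $A$ would restrict to a strict refinement over some ball, contradicting minimality there). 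Now $A$, being a nonempty open subset of the Banach space $X^*$, is completely metrizable in a metric equivalent to the norm topology via the embedding $G = \{(x^*, r) \in X^* \times \mathbb{R} : x^* \in A, \ r = 1/\mathrm{dist}(x^*, X^* \setminus A)\}$ described just before the theorem's statement. Since minimality and the weak$^*$-cusco property are purely topological, Theorem \ref{Warren4Authors} applies and yields a dense $G_\delta$ subset $R' \subseteq A$ on which $\partial f^*$ is single-valued and norm upper semicontinuous.

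To conclude, fix any $x^* \in R'$; the hypothesis provides an $x \in \mathrm{argmax}(x^* - f)$, and Proposition \ref{prelim}(v) then gives that $x^* - f$ attains a strong maximum at $x$. For the additional assertion, suppose $0 \in A$ and $\varepsilon > 0$. The set $A \cap B(0, \varepsilon)$ is a nonempty open subset of $A$, so by density it contains a point $x^*_0 \in R'$, which satisfies $\|x^*_0\| < \varepsilon$ and is such that $x^*_0 - f$ has a strong maximum. The principal technical obstacle is the reduction to a convex open ball inside $A$: one must verify that Theorem \ref{JamesUsco} and Corollary \ref{subdifferential-is-minimal} can both be applied ball-by-ball and that the pointwise conclusions transfer back to the non-convex set $A$. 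Everything else is bookkeeping that cleanly combines Proposition \ref{prelim}, the deep results Theorem \ref{JamesUsco} and Theorem \ref{Warren4Authors}, and the standard completion of an open set in a Banach space.
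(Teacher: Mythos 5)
Your proposal is correct and follows essentially the same route as the paper: Proposition \ref{prelim}(iii) to get $\partial f^*(x^*)\cap\widehat{X}\neq\varnothing$ on $A$, Theorem \ref{JamesUsco} to upgrade this to $\partial f^*(x^*)\subseteq\widehat{X}$, minimality of the subdifferential cusco plus the complete metrizability of the open set $A$ to invoke Theorem \ref{Warren4Authors}, and Proposition \ref{prelim}(v) to convert single-valuedness and norm upper semicontinuity into strong maxima. Your ball-by-ball localization to handle the fact that $A$ need not be convex (Theorem \ref{JamesUsco} is stated for convex $A$) is a detail the paper's proof silently elides, and it is handled correctly.
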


\begin{proof} Consider $\partial f^*:A \to 2^{X^{**}}$. Then, by Proposition \ref{prelim} part (iii), $\partial f^*(x^*) \cap \widehat{X} \not= \varnothing$ for all $x^* \in A$. Thus, by
Theorem \ref{JamesUsco}, $\partial f^*(x^*) \subseteq \widehat{X}$ for all $x^* \in A$. Hence, $x^* \mapsto \partial f^*(x^*)$, is a minimal weak cusco on $A$.  
Therefore, by Theorem \ref{Warren4Authors}, there exists a dense and $G_\delta$ subset $R'$ of $A$ such that $\partial f^*$ is single-valued and norm upper semicontinuous at each point of $R'$. 
So, by Proposition \ref{prelim} part (v), $(x^*-f)$ has a strong maximum for each $x^* \in R'$.
\end{proof}

Note that the conclusion of this theorem is identical to that of Stegall's variational principle, see \cite{Stegall}.

\begin{question} Is every Banach space $(X,\|\cdot\|)$ a dual differentiation space?
\end{question}
If the answer to this question is ``yes'' then Theorem \ref{DD theorem} will supersede Theorem \ref{variation theorem}.

\section*{Index of notation and assumed knowledge}

\begin{itemize}

\item The {\it natural numbers}, $\mathbb{N}:=\{1,2,3,\ldots\}$.
\item The {\it integers}, $\mathbb{Z}:=\{\ldots,-2,-1,0,1,2\ldots\}$.
\item The {\it rational numbers}, $\mathbb{Q}:=\left\{a/b:a,b\in \mathbb{Z}, b\neq 0\right\}$.
\item The {\it real numbers}, $\R$.

\item For any set $X$, $\mathcal{P}(X)$ is the set of all subsets of $X$.

\item For any subset $A$ of a topological space $\left(X, \tau\right)$, we define
\begin{itemize}
\item $\mathrm{int}(A)$, called the {\it interior} of $A$, is the union of all open sets contained in $A$;
\item $\overline{A}$, called the {\it closure} of $A$, is the intersection of all closed sets containing $A$;
\item $\mathrm{Bd}(A)$, called the {\it boundary} of $A$, is $\overline{A}\setminus \mathrm{int}(A)$,
\end{itemize}

\item For any points $x$ and $y$ in a vector space $X$, we define the following intervals:
\begin{itemize}
\item $[x,y]:=\{x+\lambda(y-x):0\leq \lambda \leq 1\};$
\item $(x,y):=\{x+\lambda(y-x):0< \lambda < 1\};$
\item $[x,y):=\{x+\lambda(y-x):0\leq \lambda < 1\};$
\item $(x,y]:=\{x+\lambda(y-x):0< \lambda \leq 1\}.$
\end{itemize}

\item For any normed linear space $\left(X,\norm{\cdot}\right)$, we define
\begin{itemize}
\item $B[x,r]:=\left\{y\in X: \|x-y\|\leq r\right\}$, for any $x \in X$ and $r > 0$;
\item $B(x;r):=\left\{y\in X: \|x-y\|< r\right\}$ , for any $x \in X$ and $r>0$;
\item $B_{X}:=B[0,1]$;
\item $S_{X}:=\left\{x\in X: \|x\|=1\right\}$ .
\end{itemize}


\item
Given a compact Hausdorff space $K$, we write $C(K)$ for the set of all real-valued continuous functions on $K$. 
This is a vector space under the operations of pointwise addition and pointwise scalar multiplication. $C(K)$ becomes a 
Banach space when equipped with the uniform norm $\norm_\infty$, defined by
\[\|f\|_\infty:=\sup_{x\in K}|f(x)|, \mbox{\quad for all } f\in C(K). \]

\item Let $A$ and $B$ be sets. Given a function $f:A\rightarrow B$, we define $f(A):=\bigcup_{a\in A}\{f(x)\}$.
Similarly, given a set valued mapping $\varphi:A\rightarrow \mathcal{P}(B)$, we define $\varphi(A):=\bigcup_{a\in A}\varphi(x)$.

\item For a normed linear space $(X,\norm{\cdot})$, $X^*$, the set of bounded linear maps from $X$ to $\mathbb{R}$, is called the 
{\it dual space} of $X$. $X^*$ is a Banach space when equipped with the operator norm, given by
\[\|f\|:=\sup_{x\in B_X}|f(x)|, \text{ \quad for all } f\in X^*.\]

\item Let $X$ be a set and $Y$ a totally ordered set. For any function $f:X\rightarrow Y$ we define 
\begin{align*}
\text{argmax}(f):&=\{x\in X:f(y)\leq f(x) \text{ for all } y\in X\},\\
\text{argmin}(f):&=\{x\in X:f(x)\leq f(y) \text{ for all } y\in X\}.
\end{align*}

\item Let $A$ be a subset of a vector space $X$. Then the {\it convex hull} of $A$, denoted by {\it $\mathrm{co}(A)$}, is defined to be the intersection of all convex subsets of $X$ that contain $A$. 

\item Let $X$ be a set and let $f:X \rightarrow \mathbb{R}\cup \{\infty\}$ a function. Then $$\mbox{Dom}(f):=\{x\in X: f(x)<\infty\}.$$  
We say that the function $f$ is a {\it proper function} if $\mathrm{Dom}(f) \not=\varnothing$.

\item Let $(X,\norm)$ be a normed linear space and $f:X \to [-\infty,\infty]$. Then the {\it Fenchel conjugate} of $f$ is the function $f^*: X^* \to [-\infty,\infty]$ defined by,
$$f^*(x^*) := \sup_{x \in X} \{x^*(x) - f(x)\}.$$
The function $f^*$ is convex and if $f$ is a proper function then $f^*$ never takes the value $-\infty$.


\item If $f$ is a convex function defined on a nonempty convex subset $K$ of a normed linear space $(X, \norm{\cdot})$ and $x \in K$, then we define the 
{\it subdifferential of $f$ at $x$} to be the set $\boldsymbol{\partial f(x)}$ of all $x^* \in X^*$ satisfying
$$x^*(y-x) \leq f(y) - f(x) \mbox{\quad for all $y \in K$}.$$

\item It is assumed that the reader has a basic working knowledge of metric spaces,  normed linear spaces and even basic general topology.  In particular, it is assumed that the reader is familiar with 
Tychonoff's theorem.

\begin{thm*}[Tychonoff's Theorem \cite{Engelking}] The Cartesian product $\prod_{s \in S} S_s$, where $X_s \not= \varnothing$ for all $s \in S$, is compact if, and only if, all spaces $X_s$ are compact.
\end{thm*}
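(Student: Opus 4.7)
The forward direction will be a one-liner. For each $s \in S$, the projection $\pi_s:\prod_{t \in S} X_t \to X_s$ is continuous, and the hypothesis $X_t \neq \varnothing$ for all $t$ (via the Axiom of Choice) makes $\pi_s$ surjective, so if the product is compact then $X_s$ is the continuous image of a compact space and hence compact.

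For the nontrivial direction, my plan is to use the ultrafilter characterisation of compactness: a topological space $Y$ is compact if, and only if, every ultrafilter on $Y$ converges to some point of $Y$. I will first record (or recall as a separate lemma) this characterisation, and also the elementary fact that if $f:Y \to Z$ is any function and $\mathcal{U}$ is an ultrafilter on $Y$, then $f_{*}\mathcal{U} := \{A \subseteq Z : f^{-1}(A) \in \mathcal{U}\}$ is an ultrafilter on $Z$ (this is because $f^{-1}$ commutes with unions, intersections and complements, and an ultrafilter contains exactly one of each complementary pair).

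Granted these, the argument runs as follows. Let $X := \prod_{s \in S} X_s$ and fix an ultrafilter $\mathcal{U}$ on $X$. For each $s \in S$, the pushforward $(\pi_s)_{*}\mathcal{U}$ is an ultrafilter on the compact space $X_s$, so it converges to some point of $X_s$; by the Axiom of Choice, pick one such limit $x_s$ for every $s$ and form $x := (x_s)_{s \in S}$. To see that $\mathcal{U} \to x$ in the product topology, note that a base of neighbourhoods of $x$ consists of finite intersections $\pi_{s_1}^{-1}(U_1) \cap \cdots \cap \pi_{s_n}^{-1}(U_n)$ with $U_i$ an open neighbourhood of $x_{s_i}$ in $X_{s_i}$. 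Since $(\pi_s)_{*}\mathcal{U} \to x_s$ gives $\pi_s^{-1}(U) \in \mathcal{U}$ for every open $U \ni x_s$, and since ultrafilters are closed under finite intersection, each such basic neighbourhood of $x$ lies in $\mathcal{U}$. Hence $\mathcal{U}$ converges to $x$, and $X$ is compact.

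The main obstacle, and the only real work, is the ultrafilter characterisation of compactness itself; the rest is a mechanical unwinding of definitions. I would prove that characterisation either directly (a maximal centred filter on a compact space has nonempty intersection of closures, and any cluster point is a limit of the ultrafilter) or via Alexander's subbase theorem, whichever is most economical given the rest of the exposition. It is worth flagging that the argument uses the Axiom of Choice twice — once to know the product is nonempty, and once to select the coordinate limits $x_s$ — and that the ultrafilter lemma (needed to extend a filter to an ultrafilter inside the compactness characterisation) is itself a form of Choice.
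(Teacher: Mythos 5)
Your argument is correct: it is the standard Cartan--Bourbaki ultrafilter proof of Tychonoff's theorem, and every step checks out (the pushforward of an ultrafilter is an ultrafilter, each pushforward converges by compactness of the factor, and convergence of $\mathcal{U}$ to the assembled point $x$ follows because the basic neighbourhoods $\pi_{s_1}^{-1}(U_1)\cap\cdots\cap\pi_{s_n}^{-1}(U_n)$ all lie in $\mathcal{U}$). There is, however, nothing in the paper to compare it against: the statement appears only in the closing ``Index of notation and assumed knowledge,'' where Tychonoff's theorem is cited to Engelking and explicitly treated as background the reader is assumed to know; the authors give no proof. So your proposal supplies a proof where the paper deliberately supplies none. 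The one thing to be aware of is that essentially all of the substance of your argument is concentrated in the deferred lemma --- the characterisation of compactness by convergence of ultrafilters --- which you only sketch; that lemma (in the direction ``every ultrafilter converges $\Rightarrow$ compact'') is where the ultrafilter lemma enters, and together with your two other invocations of Choice this is unavoidable, since Tychonoff for arbitrary (non-Hausdorff) spaces is equivalent to the Axiom of Choice. If you were to write this out in full for a first-year graduate audience of the kind the paper targets, that lemma would need to be proved carefully rather than waved at.
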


\end{itemize}


\bibliographystyle{amsplain}
\bibliography{SurveyPaperReferences}

\medskip

\hrule

\bigskip

{\it Corresponding author}:  Warren B. Moors.  

\medskip

{\it Email}:  w.moors@auckland.ac.nz  

\medskip

{\it Full mailing address}: Warren B. Moors, Department of Mathematics \\
The University of Auckland, Private Bag 92019, Auckland Mail Centre \\
Auckland 1142, NEW ZEALAND.

\end{document}